\documentclass[a4paper,oneside, reqno]{amsart}
\usepackage{mathrsfs}
\usepackage{amsfonts}
\usepackage{amssymb}
\usepackage{amsxtra}
\usepackage{dsfont}
\usepackage{amsthm}
\usepackage[colorlinks=true,linkcolor=blue]{hyperref}%
\usepackage[T1]{fontenc}
\usepackage[utf8]{inputenc}
\usepackage[english]{babel}
\usepackage{amsmath} 
\usepackage{bm}
\usepackage{graphicx}
\usepackage{hyperref}
\usepackage{color}
\usepackage{relsize}

\usepackage{hyperref}
\usepackage{stmaryrd}
\usepackage{listings}

\usepackage{enumitem}

\numberwithin{equation}{section}
\usepackage{url}
\newtheorem{Thm}{Theorem}[section]

\newtheorem{Cor}[Thm]{Corollary}
\newtheorem{Lem}[Thm]{Lemma}
\newtheorem{Conj}[Thm]{Conjecture}
\newtheorem{Que}[Thm]{Question}
\newtheorem{Prop}[Thm]{Proposition}
\theoremstyle{definition}
\newtheorem{Rem}[Thm]{Remark}      
\theoremstyle{definition}
\newtheorem{Defn}[Thm]{Definition}
\newtheorem{Ex}[Thm]{Example}
\newtheorem{Fact}[Thm]{Fact}
\theoremstyle{definition}
\newtheorem*{Prob*}{Problem}
      
\theoremstyle{definition}

\newcommand{\E}{\mathop{\mathbb{E}}}
\newcommand{\Pro}{\mathop{\mathbb{P}}}
\newcommand{\Z}{\mathbb{Z}}
\newcommand{\R}{\mathbb{R}}
\newcommand{\C}{\mathbb{C}}
\newcommand{\D}{\mathbb{D}}

\newcommand{\N}{\mathbb{N}}
\newcommand{\T}{\mathbb{T}}
\DeclareMathOperator{\Sym}{Sym}
\DeclareMathOperator{\sgn}{sgn}
\DeclareMathOperator{\Vol}{Vol}
\DeclareMathOperator{\supp}{supp}

\DeclareMathOperator{\conv}{conv}
\DeclareMathOperator{\mult}{mult}
\newcommand{\se}[1]{\underset{_{\substack{#1}}}{\mathlarger{\mathlarger{\E}}}}

\newcommand{\spr}[1]{\underset{_{\substack{#1}}}{\mathlarger{\mathlarger{\Pro}}}}

\title[High-dimensional discrete 1-symmetric convex bodies]{High-dimensional discrete 1-symmetric convex bodies and dimension-free estimates for maximal functions}
\author{Jakub Niksi\'nski}
\address{\scriptsize Jakub Niksiński (\textnormal{jakub.niksinski.math@gmail.com})
\newline  Department of Mathematics, Rutgers University, Piscataway, NJ 08854, USA
     \vspace{-0.25em}
}

\begin{document}
\begin{abstract}
We investigate the behavior of lattice points in 1-symmetric convex bodies—those invariant under both coordinate permutations and sign changes. In this setting we introduce a discrete analogue of the isotropic constant and establish concentration of mass properties for lattice points that parallel classical mass concentration results for convex bodies in isotropic position. These geometric estimates are applied to prove dimension-free $\ell^p$ bounds ($p \ge 2$) for discrete dyadic maximal operators and for maximal functions in the small-scale regime. Furthermore, we demonstrate that coordinate permutation invariance is a necessary condition by constructing 1-unconditional convex bodies in isotropic position for which these dimension-free estimates fail. Our results provide a framework that generalizes several recent works on dimension-free estimates for discrete maximal functions, including those by Bourgain, Mirek, Stein, and Wróbel.

\end{abstract}
\subjclass[2020]{42B15, 42B25, 52C07}
 \keywords{lattice points in convex bodies, discrete maximal function, dimension-free estimates}
\thanks{Jakub Niksiński was supported by the NSF CAREER grant DMS-2236493. This research was funded in whole or in part by National Science Centre, Poland, grant Sonata Bis 2022/46/E/ST1/00036.}

\maketitle
\tableofcontents
\section{Introduction}
\subsection{Statements of the results}
\begin{Defn}
We call $G \subseteq \R^d$ a \textit{convex body} if it is a closed, bounded, convex set with non-empty interior. We categorize such a body based on the following properties.
\begin{itemize}
    \item \textbf{Symmetry}: $G$ is \textit{symmetric} if it is centrally symmetric.
    \item \textbf{Isotropic Position}: $G$ is in \textit{isotropic position} if there exists an \textit{isotropic constant} $L(G) > 0$ such that for all $i, j \in \{1, \ldots, d\}$:$$\frac{1}{|G|} \int_G x_i x_j dx = \delta^i_j L(G)^2.$$
    \item \textbf{1-symmetry}: $G$ is \textit{1-symmetric} if it is invariant under both coordinate permutations and sign changes:
     \begin{enumerate}
        \item For any $\sigma \in \Sym(d)$, $x \in G \implies (x_{\sigma(1)}, \ldots, x_{\sigma(d)}) \in G$.
        \item For any $\epsilon \in \{-1, 1\}^d$, $x \in G \implies (\epsilon_1 x_1, \ldots, \epsilon_d x_d) \in G$.
    \end{enumerate} A body satisfying the first condition is called \textit{permutation invariant}, whereas a body satisfying the second condition (sign-invariance) is called \textit{1-unconditional}.
\end{itemize}
 We denote the family of $d$-dimensional 1-symmetric convex bodies by:$$\mathcal{F}_d = \{ G \subseteq \mathbb{R}^d : G \text{ is a 1-symmetric convex body} \}.$$
Note that any $G \in \mathcal{F}_d$ is in isotropic position. 
\end{Defn}
Natural examples of 1-symmetric convex bodies are $\ell^q$ unit balls for $q \ge 1$:
\[
B^{q,(d)}:= \Big\{x \in \R^d: |x|_q:= \Big( \sum_{i=1}^d |x_i|^q \Big)^{1/q} \le 1 \Big\}, \quad
B^{\infty,(d)}=[-1,1]^d.
\]
For any symmetric convex body $G \subseteq \R^d$ we define two constants
\[
\kappa(G):= \max_{x \in G} \frac{|x|_1}{d}, \quad \text{and} \quad\widetilde{\kappa}(G):=\max_{x \in G \cap \Z^d} \frac{|x|_1}{d},
\]
where $|x|_1= \sum_{i=1}^d |x_i|$.
For $G \in \mathcal{F}_d$ these constants will be crucial in the analysis of lattice points inside $G$, especially the second one. The constant $\kappa(G)$ is comparable to $L(G)$, as will be seen in later sections.
For $\widetilde{\kappa}(G) \gtrsim 1$ (large-scale regime), lattice points in $G$ behave similarly to the continuous mass of the body, whereas the case $\widetilde{\kappa}(G) \lesssim 1$ (small-scale regime) presents a distinct discrete nature. Thus, $\widetilde{\kappa}(G)$ effectively parametrizes the transition from continuous to discrete behavior.
We establish the following results.
\begin{Thm} \label{iso const}
For all $d \in \N$, $G \in \mathcal{F}_d$ and $p \in [1, \infty)$ we have 
\[
\frac{1}{|G \cap \Z^d|} \sum_{x \in G \cap \Z^d} |x_1|^p=\frac{1}{|G \cap \Z^d|} \sum_{x \in G \cap \Z^d} \frac{|x|_p^p}{d} \approx_p \max\Big(\widetilde{\kappa}(G),\widetilde{\kappa}(G)^p \Big),
\]
where $|x|_p=\Big(\sum_{i=1}^d |x_i|^p\Big)^{1/p}$ and $\approx_p$ means comparability from above and below with implied constants depending only on $p$.
In particular for $p=2$ we obtain a discrete analogue of the isotropic constant.
\end{Thm}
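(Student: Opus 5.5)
By permutation invariance of $G$, each coordinate $|x_i|^p$ has the same average over $A:=G\cap\Z^d$. This gives the first equality, so it remains to estimate $M_p:=|A|^{-1}\sum_{x\in A}|x_1|^p$. Write $k:=\widetilde{\kappa}(G)$. Throughout I will use two structural facts.
\begin{enumerate}
\item[(a)] $A$ is \emph{downward closed}: if $x\in A$ and $y\in\Z^d$ with $|y_i|\le |x_i|$ for all $i$, then $y\in A$. This follows from $1$-unconditionality and convexity.
\item[(b)] $A$ is closed under ``balancing'': if $x\in A$ and $\tau$ is a transposition of coordinates, then any lattice point lying below $\frac{1}{2}(x+\tau x)$ coordinatewise in absolute value is in $A$.
\end{enumerate}
Since $|x_1|^p\ge |x_1|$ on integers, and $M_p\ge M_1^p$ by Jensen, the lower bound $M_p\gtrsim\max(k,k^p)$ reduces to the single estimate $M_1\gtrsim k$.

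\emph{Lower bound $M_1\gtrsim k$.} Take $z\in A$ with $|z|_1=kd$, and without loss of generality $z_i\ge0$. I would prove $|A|^{-1}\sum_{x\in A}|x|_1\gtrsim kd$ by an injection or averaging argument comparing points of small $\ell^1$ norm with points of large $\ell^1$ norm. For $x\in A$ with $|x|_1\le kd/4$, consider $y=$ the lattice point below $\frac12(x+\sigma z)$ for a suitable permutation $\sigma$. Such a $y$ lies in $A$ by convexity and (a), and has $|y|_1\gtrsim kd$ provided the rounding loss is controlled. Choosing $\sigma$ to align $z$ with the zero or small coordinates of $x$ should make the map $x\mapsto y$ at most boundedly-many-to-one on a large set. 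In the regime $k\lesssim1$, where $z$ has about $kd$ entries equal to $\pm1$, I would instead add a single unit vector $e_j$ to $x$ at a coordinate where $x_j=0$ and $z$ permits it. This shows that a positive proportion of points have $x_1\neq0$, up to a fraction of order $k$.

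\emph{Upper bound.} Markov's inequality together with symmetry gives
\[
\Pro_{x\in A}\big(|x_1|\ge t\big)=\frac{1}{d}\E_{x\in A}\#\{i:|x_i|\ge t\}\le \frac{k}{t},
\]
since $|x|_1\le kd$. This is enough for $p=1$ but not for $p>1$. The main obstacle is to upgrade it to a geometric tail of the form
\[
\Pro\big(|x_1|\ge t\big)\lesssim \min(1,k)\,e^{-ct/\max(k,1)},
\]
which gives $M_p\lesssim_p \min(1,k)\max(k,1)^p=\max(k,k^p)$. To prove the tail bound I would use a two-coordinate fibering. Condition on $(x_3,\dots,x_d)$. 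The fiber in $(x_1,x_2)$ is then the lattice set of a planar convex body that is symmetric under sign changes and under swapping $x_1\leftrightarrow x_2$. Averaging over the choice of the partner coordinate $2$ among the $d-1$ others, most partners $j$ have $|x_j|\lesssim k$. In the planar fiber, (b) maps points with $x_1=s$ and $x_2$ small injectively to points with $x_1=s-m$ and $x_2$ about $m$. Combined with (a), this should give a ratio estimate
\[
N(s+m)\le \tfrac12\, N(s) \qquad\text{for } m\approx C\max(k,1),
\]
where $N(s)=\#\{x\in A:|x_1|=s\}$. This is a discrete substitute for Brunn log-concavity. Iterating the ratio estimate produces the exponential tail. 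The factor $\min(1,k)$ at $t=1$ comes from the Markov bound above.

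The delicate part is making this injection work uniformly despite rounding losses at small $k$. I expect that to be where most of the work lies.
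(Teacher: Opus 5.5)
Your first equality and the reduction of the lower bound to $M_1\gtrsim k$, with $k=\widetilde{\kappa}(G)$, via $M_p\ge\max(M_1,M_1^p)$ are fine. The genuine gap is that $M_1\gtrsim k$ is never proved, and neither proposed mechanism works.

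\textbf{The midpoint map.} The map sending $x$ to the lattice point below $\frac12(x+\sigma z)$ is not boundedly-many-to-one. Already for fixed $\sigma$, every coordinate where $x_i+(\sigma z)_i$ is odd loses a bit, so a single $y$ can have exponentially many preimages. (In the continuum a midpoint map shrinks volume by $2^{-d}$.) Moreover, for $k<1$, where $z$ can be taken in $\{0,\pm1\}^d$, rounding at a zero coordinate of $x$ turns $\frac12$ into $0$. This erases exactly the mass you wanted to add.

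\textbf{Adding a unit vector.} The small-scale substitute, adding $e_j$ at a zero coordinate, relies on a closure property that is false. Take $G=\conv$ of all signed permutations of $2e_1$ and of $(1,\ldots,1,0,\ldots,0)$ with $n$ ones. Then $2e_1\in G$, but $2e_1+e_2\notin G$: comparing the first two coordinates with the total weight forces zero weight on the $\{0,\pm1\}$-points and on $\pm 2e_2$, so the second coordinate would be $0$. Showing that ``most'' points admit such an extension is the entire content of the estimate.

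\textbf{What is missing.} You need a global counting comparison, not an injection. The points with $|x|_1\le\varepsilon kd$ number at most $|\varepsilon kd\,B^{1,(d)}\cap\Z^d|$. On the other side, $|G\cap\Z^d|$ is bounded below by an explicit large subset. At large scales this is $\{-\lfloor\kappa(G)\rfloor,\ldots,\lfloor\kappa(G)\rfloor\}^d$ (Lemma \ref{lem:kappa_def}). At small scales it is $dk\,B^{1,(d)}\cap\{-1,0,1\}^d$, of size at least $2^{n}\binom{d}{n}$ with $n=dk$, by \eqref{disc_incl}. This is what the paper does in Lemma \ref{pos_prop_conc}, in Lemma \ref{BMSW a lot of 1}, and in the middle range of the proof of Theorem \ref{iso const}. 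There it uses \cite[Theorem 1.4]{KNW}; an elementary bound such as $|rB^{1,(d)}\cap\Z^d|\le(3ed/r)^r$ for integers $1\le r\le d$ would also suffice.

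\textbf{The upper bound.} Your upper-bound route, by contrast, is genuinely different from the paper's and can be completed in a few lines, with two corrections. Let $\tau$ swap coordinates $1$ and $j$, and take $x\in A$ with $x_1=s+m$ and $|x_j|<m/2$. Then $x-me_1$ lies in $A$ by (a). Also $x-me_1\pm me_j$ (sign of $x_j$, either sign if $x_j=0$) lies in $A$, because it sits on the segment $[x,\tau x]$. The first correction: this requires $s\ge|x_j|$, so run the iteration only for $s\ge m$. The two images are injective and disjoint, since $|y_j|<m/2$ for one and $|y_j|\ge m$ for the other. At most $2kd/m$ partners $j$ are bad, so averaging over $j$ gives, for $d\ge2$ and integer $m\ge16\max(k,1)$,
$$N(s+m)\le\frac{d-1}{2(d-1-2kd/m)}\,N(s)\le\frac23\,N(s).$$
The second correction: this constant is below $1$, but it is not $\frac12$. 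The transferred amount is an integer, so no rounding occurs and the difficulty you anticipate at small $k$ never arises. With your Markov seed $\Pr(|x_1|\ge m)\le k/m$, this yields $\Pr(|x_1|\ge t)\lesssim\min(1,k)e^{-ct/\max(1,k)}$, and hence the upper bound.

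\textbf{Comparison with the paper.} The paper obtains its tails through the extension operator $E$, the projection count of Lemma \ref{dim_comp}, and Lemmas \ref{big_coord} and \ref{big_coord small scales}. Your argument is shorter and treats all scales uniformly. What it does not give is the stronger information from those lemmas: joint tails for several coordinates, and the bound $\Pr(|x_1|\ge j)\le 4k^j$ at small scales. The paper needs these later, for Theorem \ref{Bobkov-Nazarov main thm} and Lemma \ref{fun fact for sup sym conv body}.
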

Theorem below is a discrete analogue of \cite[Theorem 1.2]{bob-naz} of Bobkov, Nazarov and \cite[Theorem 1.1]{Pa} of Paouris, but with an additional assumption of 1-symmetry.
\begin{Thm} \label{Bobkov-Nazarov main thm}
    There exists $t_0>0$, $c>0$ such that for all $d \in \N$ and $G \in \mathcal{F}_d$ the following holds.
    \begin{itemize}
        \item If $\widetilde{\kappa}(G) \ge 1/2$ and $t \ge t_0$ we have 
    \[
    \Big|\Big\{ x \in G \cap \Z^d: \frac{|x|_2}{\sqrt{d}} \ge t \widetilde{\kappa}(G) \Big\} \Big|  \le e^{-ct \sqrt{d}} |G \cap \Z^d|,
    \]
    \item If $\widetilde{\kappa}(G) \in (0, 1/2]$ and $t \ge t_0$ we have 
    \[
    \Big|\Big\{ x \in G \cap \Z^d: \frac{|x|_2}{\sqrt{d}} \ge t \sqrt{\widetilde{\kappa}(G) } \Big\} \Big|  \le  \widetilde{\kappa}(G)^{ ct\sqrt{d\widetilde{\kappa}(G)}} |G \cap \Z^d|,
    \]
    \end{itemize}
    where $|x|_2=\Big(\sum_{i=1}^d x_i^2\Big)^{1/2}.$
\end{Thm}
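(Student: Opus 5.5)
The approach is a "shift/compression" argument that uses the 1-symmetry of $G$ to compare lattice points of large Euclidean norm with many lattice points of comparable but smaller norm. We then combine this with the $\ell^1$ constraint $|x|_1 \le d\widetilde{\kappa}(G)$, which holds for every $x\in G\cap\Z^d$. The key consequence of permutation and sign invariance together with convexity is the following: for $x\in G\cap\Z^d$ and any coordinate with $|x_i|\ge 1$, the point obtained by decreasing $|x_i|$ by one (keeping the sign) again lies in $G\cap\Z^d$. This holds because unconditionality and convexity make $G$ coordinatewise monotone: $G$ contains the box $\prod_i[-|x_i|,|x_i|]$. So $G\cap\Z^d$ is a down-closed set in the coordinatewise order on $|x|$. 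Permutation invariance then lets us average over coordinates. The fact that $|x|_2^2 \le |x|_\infty |x|_1$, together with the bound $|x|_1\le d\widetilde{\kappa}$, suggests that large $|x|_2$ forces either large entries or many nonzero entries. Both are penalized combinatorially relative to $|G\cap\Z^d|$.

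Concretely, I would set up an injection-with-multiplicity (a double counting) argument. Let $A_t=\{x\in G\cap\Z^d: |x|_2\ge t\sqrt d\,\lambda\}$, where $\lambda=\widetilde{\kappa}$ in the first case and $\lambda=\sqrt{\widetilde{\kappa}}$ in the second. To each $x\in A_t$ associate the family of points obtained by replacing $x_i$ by $\lfloor x_i/2\rfloor$ (or by $0$) on a chosen subset $S$ of the coordinates where $x$ is large. Take $|S|\approx t\sqrt d$ in case one, and $|S|\approx t\sqrt{d\widetilde{\kappa}}$ in case two. By down-closedness all these images lie in $G\cap\Z^d$. Each image $y$ has few preimages: $x$ is recovered from $y$, from $S$, and from the discarded information. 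A point in $A_t$ has $|x|_2^2\ge t^2 d\lambda^2$ while $|x|_1\le d\widetilde{\kappa}$, so there are many coordinates of size at least about $t\lambda^2/\widetilde{\kappa}$. In case one, the coordinates of size at least $t\widetilde\kappa$ carry most of the $\ell^2$ mass; the sets $S$ and the halving lose a factor $2^{|S|}$-type weight against the number of choices. This yields $|A_t|\le e^{-ct\sqrt d}|G\cap\Z^d|$ once $t\ge t_0$ absorbs the binomial entropy. In case two the large coordinates are simply nonzero coordinates beyond the typical count $d\widetilde{\kappa}$, and zeroing them gives a gain of order $\widetilde{\kappa}$ per coordinate. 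This explains the factor $\widetilde{\kappa}^{ct\sqrt{d\widetilde\kappa}}$: a typical point has only about $d\widetilde\kappa$ nonzero coordinates, and the chance that a prescribed extra coordinate is nonzero is about $\widetilde\kappa$.

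To make the multiplicity counting rigorous, I would use Theorem~\ref{iso const} with a large $p$, or equivalently a moment/Laplace-transform version of the same comparison. Bounding $\frac{1}{|G\cap\Z^d|}\sum_x |x|_2^{2k}$ for $k\approx t\sqrt d$ by $(Ck\lambda^2 d)^{k}$-type quantities via permutation symmetry (expanding into products of distinct coordinates), Markov's inequality then gives the exponential tail. Theorem~\ref{iso const} controls the single-coordinate moments. For products over distinct coordinates I would prove a negative-correlation inequality $\E\prod_{i\in S}|x_i|^{p_i}\le\prod_{i\in S}C_{p_i}\E|x_1|^{p_i}$, again by the down-closed compression structure (a Harris/FKG-type argument fails directly, so I would use the explicit shifting injection).

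The main obstacle is this negative-correlation step for discrete points, uniformly in $d$ and for $|S|$ as large as $\sqrt d$. The constants must not compound beyond $C^{|S|}$, and in the small-scale regime each factor must genuinely gain $\widetilde\kappa$. I would try first to prove it by induction on $|S|$, conditioning on the remaining coordinates and applying the one-coordinate estimate of Theorem~\ref{iso const} to the fiber body. Each fiber is again 1-symmetric in the remaining variables with $\widetilde\kappa$ not increased, up to a rescaling of $d$, which should be harmless while $|S|\le\sqrt d\ll d$.
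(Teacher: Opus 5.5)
Your proposal has a genuine gap, but it is not where you locate it. The negative-correlation step you flag is the easy part. Fix the values $y$ of $x$ on $S'\subseteq[d]$. The slice $\{u:(u,y)\in G\}$ is then either $\{0\}$ or a member of $\mathcal{F}_{d-|S'|}$, and its lattice points satisfy $|u|_1\le d\widetilde{\kappa}(G)$. So conditioning one coordinate at a time turns any one-coordinate bound that is uniform over $\mathcal{F}_{d'}$ into a product bound, with $\widetilde{\kappa}(G)$ replaced by $\frac{d}{d-|S|}\widetilde{\kappa}(G)$.

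The real gap is the one-coordinate input itself. Theorem \ref{iso const} only gives $\E|x_1|^p\le C_p\max(\widetilde{\kappa},\widetilde{\kappa}^p)$ with unspecified $C_p$, and this is insufficient in both regimes.
\begin{itemize}
\item For $\widetilde{\kappa}\ge 1/2$, a moment method with $k\approx t\sqrt d$ needs $\E|x_1|^{2k}\le (Ck\widetilde{\kappa})^{2k}$, i.e. an exponential tail $\Pro(|x_1|\ge s)\lesssim e^{-cs/\widetilde{\kappa}}$. This is unavoidable, since $\{|x_1|\ge t\sqrt d\,\widetilde{\kappa}\}$ lies inside the set you want to bound.
\item For $\widetilde{\kappa}\le 1/2$ it is worse. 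Bounds $\E|x_1|^p\le C_p\widetilde{\kappa}$ with $C_p$ independent of $\widetilde{\kappa}$ are compatible with a coordinate law $\Pro(|x_1|\ge s)\approx\widetilde{\kappa}e^{-s}$. Yet the statement forces $\Pro(|x_1|\ge s)\le\widetilde{\kappa}^{cs}$ at $s=t\sqrt{d\widetilde{\kappa}}$. The factor $\log(1/\widetilde{\kappa})$ in the exponent is invisible to Theorem \ref{iso const}.
\end{itemize}
In the continuous setting such tails come free from log-concavity of marginals. Here $j\mapsto|\{x\in G\cap\Z^d:x_1=j\}|$ need not be log-concave. Down-closedness only makes it nonincreasing, which together with a moment bound gives at best polynomial decay.

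Your compression maps do not supply this input either.
\begin{itemize}
\item Points of $A_t$ need not have many large coordinates. The extremal configuration (cf.\ Remark \ref{optimality of small scale Bobkov-Nazarov}) has a single entry of size $\approx t\sqrt d\,\lambda$, so a set $S$ of $\approx t\sqrt d$ large coordinates may not exist.
\item Halving or zeroing entries yields no entropy gain. If each $x$ has $m$ large coordinates, double counting gives only $|A_t|\binom{m}{s}\le 2^s\binom{d}{s}|G\cap\Z^d|$ (worse for zeroing), which is useless unless $m$ is comparable to $d$.
\item Your picture of the second regime is wrong. Every $x\in G\cap\Z^d$ has $|\supp(x)|\le|x|_1\le d\widetilde{\kappa}(G)$, so there are no nonzero coordinates beyond the typical count. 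Large $|x|_2$ comes only from large entries, and the cost is a factor $\widetilde{\kappa}$ per unit of $\ell^1$-mass in them, not per coordinate.
\end{itemize}

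What is needed are \emph{spreading} injections, which send a point with a large entry to many points. The paper uses two.
\begin{itemize}
\item For $\widetilde{\kappa}\le 1/2$, it removes entries $j_1,\dots,j_r$ and places $j=\sum_l j_l$ new entries $\pm1$ on zero coordinates. This gives at least $2^j\binom{d-d\widetilde{\kappa}+j}{j}$ images per point, each of multiplicity at most $\binom{d\widetilde{\kappa}}{j}$. Hence $\Pro(|x_{i_1}|=j_1,\ldots,|x_{i_r}|=j_r)\le 2^r\widetilde{\kappa}^{\sum_l j_l}$ (Lemma \ref{big_coord small scales}).
\item For $\widetilde{\kappa}\ge 1/2$, it replaces an entry $j_l$ by an arbitrary lattice point of $j_lB^{1,(k_l+1)}$, landing in the extension $G^{(d+k)}$. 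It then compares $|G^{(d+k)}\cap\Z^{d+k}|$ with $|G\cap\Z^d|$ via Lemma \ref{dim_comp}, at a cost of about $20\widetilde{\kappa}$ per added dimension. This yields the factor $e^{-\sum_l j_l/(63\widetilde{\kappa})}$ (Lemma \ref{big_coord}).
\end{itemize}
These joint bounds already contain the product structure. The paper then finishes with a Bobkov--Nazarov union bound over the order statistics $X_k$; your moment method would also work once these lemmas are in hand.
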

If $\widetilde{\kappa}(G)$ is small, we expect a typical point in $G \cap \Z^d$ to have close to $d \widetilde{\kappa}(G)$ coordinates $\pm 1$ and that its euclidean norm should not be too influenced by larger coordinates. Hence we predict that $|x|_2/ \sqrt{d} \approx \sqrt{\widetilde{\kappa}(G) }$ for most $x \in G \cap \Z^d$. The second part of Theorem \ref{Bobkov-Nazarov main thm}, up to the value of the constant $c>0$, is optimal in "highly degenerate range", that is when $\widetilde{\kappa}(G) \le d^{-1/2}$. This is explained in Remark \ref{optimality of small scale Bobkov-Nazarov}.
Note that by replacing $G$ by $N\cdot G$ and taking $N \to \infty$ one can recover \cite[Theorem 1.2]{bob-naz} and \cite[Theorem 1.1]{Pa} but under extra assumption $G \in \mathcal{F}_d$. In this fashion one can recover various known continuous results from the discrete ones. 
\par Our main application of these concentration estimates lies in the study of discrete maximal functions.
For a symmetric convex body $G \subseteq \R^d$, $f: \Z^d \to \C$, $x \in \Z^d$, $t>0$ we define the discrete averaging operator over $G$ by the formula
\[
\mathcal{M}_t^G f(x)= \frac{1}{|tG \cap \Z^d|} \sum_{y \in tG \cap \Z^d} f(x-y).
\]
We will establish three results regarding dimension-free estimates for discrete maximal operators over 1-symmetric convex bodies.
\begin{Thm} \label{dyadic dim-free}
 We have 
 \[
 \sup_{p \in [2,\infty]} \sup_{d \in \N} \sup_{G \in \mathcal{F}_d} \Big\| \sup_{n \in \Z} |\mathcal{M}_{2^n}^G| \Big\|_{\ell^p(\Z^d) \to \ell^p(\Z^d)}< \infty.
 \]
\end{Thm}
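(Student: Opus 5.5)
We follow the Bourgain--Mirek--Stein--Wróbel strategy for discrete dyadic maximal functions. Since $p=\infty$ is trivial, Marcinkiewicz interpolation reduces everything to the $\ell^2(\Z^d)$ bound, uniform in $d$ and $G\in\mathcal{F}_d$. Let $m_t(\xi)=\frac{1}{|tG\cap\Z^d|}\sum_{y\in tG\cap\Z^d} e^{2\pi i y\cdot\xi}$, $\xi\in\T^d$, be the multiplier of $\mathcal{M}^G_t$. By 1-symmetry $m_t$ is real, even in each coordinate, and permutation invariant. Dyadic scales split into two regimes according to $\widetilde\kappa(2^nG)$, which is roughly proportional to $2^n$ once it is $\gtrsim1$. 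At the threshold $\widetilde\kappa\approx1/2$ the two parts of Theorem \ref{Bobkov-Nazarov main thm} meet. Scales with $\widetilde\kappa(2^nG)\le 1/2$ form the small-scale regime and are handled separately, while the scales with $\widetilde\kappa(2^nG)\ge1/2$ form a lacunary family indexed by $\widetilde\kappa\approx 2^{n}\kappa(G)$.

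\emph{Large scales.} We compare $\mathcal{M}_{2^n}^G$ with a dimension-free smooth semigroup, namely the discrete Poisson or heat semigroup $P_s$ with multiplier $e^{-s\sum_i\sin^2(\pi\xi_i)}$ at $s=\widetilde\kappa(2^nG)^2$. Its maximal function is bounded on $\ell^p$ uniformly in $d$ by Stein's semigroup theorem. The square function $\big(\sum_n|(\mathcal{M}_{2^n}-P_{s_n})f|^2\big)^{1/2}$ is then controlled on $\ell^2$ via Plancherel, once we have the multiplier estimates
\[
|m_{2^n}(\xi)-1|\lesssim \widetilde\kappa(2^nG)\,\|\xi\|,\qquad |m_{2^n}(\xi)|\lesssim (\widetilde\kappa(2^nG)\,\|\xi\|)^{-1},
\]
or at least similar bounds with $\|\xi\|^2=\sum_i\sin^2(\pi\xi_i)$ replaced by a suitable dimension-free proxy; any power decay suffices for summing a lacunary family. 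The first estimate comes from Taylor expansion together with Theorem \ref{iso const} for $p=2$: the second moment of $y\cdot\xi$ is $\approx\widetilde\kappa^2|\xi|^2$, and by 1-symmetry the cross terms vanish. The decay estimate is the main obstacle. We plan to use Theorem \ref{Bobkov-Nazarov main thm} to restrict to the bulk where $|y|_2\lesssim\sqrt d\,\widetilde\kappa$. On the bulk we exploit symmetry by averaging $m$ over permutations, which lets us rewrite $m$ as an average of products of one-dimensional sums over coordinate blocks. Following Bourgain's argument for cubes and the Mirek--Stein--Wróbel treatment of $\ell^q$ balls, we split $\xi$ according to how many coordinates satisfy $\|\xi_i\|\gtrsim \widetilde\kappa^{-1}$. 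If there are many such coordinates, we average along one-dimensional fibres of $G\cap\Z^d$ in those directions, using the fact that for most points the fibre length is $\gtrsim\widetilde\kappa$; this is a concentration statement to be derived from Theorem \ref{Bobkov-Nazarov main thm}. If there are few, we use the comparison with the continuous multiplier on the remaining coordinates.

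\emph{Small scales.} When $\widetilde\kappa(2^nG)\le1/2$, typical points have about $d\widetilde\kappa$ coordinates equal to $\pm1$ and all others zero, by the second part of Theorem \ref{Bobkov-Nazarov main thm} and the case $p=1$ of Theorem \ref{iso const}. There are only $O(\log)$ such scales, but we cannot afford that loss. Instead we compare $m_{2^n}$ with the multiplier of a random walk that chooses $\approx d\widetilde\kappa$ coordinates and random signs, namely $\big(\frac1d\sum_i\cos 2\pi\xi_i\big)^{k}$-type expressions. This in turn is dominated by a Markov semigroup $e^{-k\,(1-\frac1d\sum\cos2\pi\xi_i)}$, which is again dimension-free by Stein. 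The error is summed over scales using the exponential tail $\widetilde\kappa^{ct\sqrt{d\widetilde\kappa}}$ from Theorem \ref{Bobkov-Nazarov main thm}. Combining the two regimes gives the $\ell^2$ bound, and interpolation with $\ell^\infty$ then gives all $p\in[2,\infty]$.
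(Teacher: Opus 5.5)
\textbf{Verdict: genuine gaps.} Your architecture matches the paper: reduction to $p=2$, comparison with dimension-free semigroups, a square function over lacunary scales, the split at $\widetilde\kappa\approx 1$, and the bound on $|m-1|$ via Theorem \ref{iso const}. However, both decay estimates, which carry the real content, are missing, and your small-scale argument fails as stated.

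\textbf{Small scales.} You claim that $\big(\frac1d\sum_i\cos 2\pi\xi_i\big)^k$ is dominated by $e^{-k(1-\frac1d\sum_i\cos 2\pi\xi_i)}$. This is false wherever $\frac1d\sum_i\cos 2\pi\xi_i<0$; at $\xi=(\frac12,\ldots,\frac12)$ the two sides are $(-1)^k$ and $e^{-2k}$. This is not a technicality. We have $m_{tG}(\frac12,\ldots,\frac12)=\mathbb{E}_{x\in tG\cap\Z^d}(-1)^{\sum_i x_i}$, and for $G=B^{1,(d)}$ this has modulus $1-O(2^n/d)$ at every dyadic scale $2^n\le d/1000$. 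So comparing with any semigroup $e^{-s\|\xi\|^2}$ leaves errors of size $\approx 1$ at $\approx\log d$ scales, and the square function loses $\sqrt{\log d}$. The missing idea is to split $\widehat f$ according to whether $\|\xi\|\le\|\xi+1/2\|$. On the second piece one compares with $\big(\mathbb{E}_{x}(-1)^{\sum_i x_i}\big)e^{-\widetilde\kappa(G)\|\xi+1/2\|^2}$, as in Proposition \ref{prop:4.3}.

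You also give no mechanism for the decay of $m_{2^nG}$ itself:
\begin{itemize}
\item Theorem \ref{Bobkov-Nazarov main thm} controls only the upper tail of $|x|_2$. It does not show that most points have $\gtrsim d\widetilde\kappa$ coordinates equal to $\pm1$; that is Lemma \ref{BMSW a lot of 1}.
\item The remaining coordinates are not typically zero once $\widetilde\kappa\gg d^{-1/2}$: about $d\widetilde\kappa^2$ of them have modulus $\ge 2$.
\item The $\pm1$ part is sampled without replacement, so the relevant multipliers are the Hamming-sphere averages $\beta^J_k$, not $\big(\frac1d\sum_i\cos 2\pi\xi_i\big)^k$.
\end{itemize}
The paper conditions on $z(x)$, the coordinates with $|x_i|\ge 2$. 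It uses 1-symmetry to see that the rest is uniform on sets $D^J_k$, applies the Krawtchouk bound of Proposition \ref{prop:3.4}, and averages over permutations with Lemma \ref{lem:2.6}. This yields Proposition \ref{prop:3.6}.

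\textbf{Large scales.} You correctly identify the decay bound as the main obstacle, but your sketch does not produce it.
\begin{itemize}
\item Averaging along fibres in one direction $e_i$ gives only $|m|\lesssim(\widetilde\kappa\|\xi_i\|)^{-1}$. This follows from the Dirichlet kernel and $|\pi_i(G)\cap\Z^{d-1}|\lesssim\widetilde\kappa^{-1}|G\cap\Z^d|$ (Lemma \ref{dim_comp}, not Bobkov--Nazarov). It is useless when many coordinates have $\|\xi_i\|\approx\widetilde\kappa^{-1}$.
\item You cannot compare with the continuous multiplier on a block of dimension comparable to $d$ when $\kappa(G)\le d$. Lattice-point counts and volumes then differ by factors as large as $e^{d/(2\kappa(G))}$.
\end{itemize}
The paper uses two ideas absent from your plan:
\begin{itemize}
\item Dimension reduction (Lemmas \ref{decomp} and \ref{m bound by lower dim m}) to sections in $r\approx\kappa(G)^{2/7}$ coordinates that retain $\kappa\ge\varepsilon^2\kappa(G)\ge r^2$. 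There the discrete multiplier is close to the continuous one and Bourgain's bound applies (Lemma \ref{m_K bound}). This is used when the $r$ largest $\|\xi_i\|$ carry a quarter of $\|\xi\|^2$, or all exceed $(100\kappa(G))^{-1}$.
\item When $\|\xi\|^2$ is spread over many coordinates each at most $(100\kappa(G))^{-1}$, the bound $|m|^2\le\mathbb{E}\exp\big(-\sum_{j}\sin^2(2\pi x_j\xi_j)\big)$, combined with Lemma \ref{pos_prop_conc} and Lemma \ref{lem:2.6}.
\end{itemize}
The scales $2^n\kappa(G)\ge d$ are handled separately, by the comparison principle with the continuous maximal function.
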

The above result vastly generalizes the works of Bourgain, Mirek, Stein, Wróbel \cite{balls} and Kosz, Mirek, Plewa, Wróbel \cite[Theorem 3]{KMPW} and Niksiński \cite{Ni2}.
\begin{Thm} \label{small scale dim-free}
 For every $\varepsilon>0$, we have 
 \[
 \sup_{p \in [2,\infty]} \sup_{d \in \N} \sup_{G \in \mathcal{F}_d} \Big\| \sup_{\substack{t>0, \\
 \widetilde{\kappa}(tG) \le d^{-\varepsilon}}} |\mathcal{M}_{t}^G| \Big\|_{\ell^p(\Z^d) \to \ell^p(\Z^d)}< \infty.
 \]
\end{Thm}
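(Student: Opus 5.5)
The plan is to follow the Bourgain--Mirek--Stein--Wróbel scheme for small scales: dominate the maximal function by a square function built from multiplier estimates. Write $m_t(\xi)=|tG\cap\Z^d|^{-1}\sum_{y\in tG\cap\Z^d} e^{2\pi i y\cdot\xi}$ for the multiplier of $\mathcal{M}_t^G$ on $\T^d$. Since $p=\infty$ is trivial, it suffices to treat $p=2$, provided the $\ell^p$ results for $p>2$ follow by interpolation with $\ell^\infty$. At $\ell^2$ the aim is the two standard estimates, with constants independent of $d$ and $G$:
\[
|m_t(\xi)-1|\lesssim \widetilde{\kappa}(tG)\,\|\xi\|^2 \cdot d\, \quad\text{and}\quad |m_t(\xi)|\lesssim \big(\widetilde{\kappa}(tG)\, d\,\|\xi\|^2\big)^{-\delta}.
\]
Here $\|\xi\|^2=\sum_i\|\xi_i\|^2$ and $\|\cdot\|$ denotes the distance to $\Z$; the natural scale in the second estimate is the isotropic one from Theorem \ref{iso const}.

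The first estimate follows from symmetry, which gives $m_t(\xi)-1=|tG\cap\Z^d|^{-1}\sum_{y}(\cos(2\pi y\cdot\xi)-1)$. One bounds this by $\sum_y (y\cdot\xi)^2$ and expands; the off-diagonal terms vanish by sign-invariance, which leaves $\sum_i \xi_i^2\,\E|y_i|^2 \approx \widetilde{\kappa}\,|\xi|^2$ by Theorem \ref{iso const} with $p=2$, since $\widetilde\kappa\le1$.

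For the decay estimate, I would use permutation invariance to average over $\sigma\in\Sym(d)$ and condition on the multiset $\{|y_i|\}$. Theorem \ref{Bobkov-Nazarov main thm} (second part) shows that outside an exponentially small set, a typical $y$ has about $d\widetilde\kappa$ nonzero coordinates, mostly equal to $\pm1$, with controlled $\ell^2$ mass. Conditioning on the support and sign-averaging then gives a product $\prod_{i\in S}\cos(2\pi y_i\xi_i)$, where $S$ is a random subset of size about $d\widetilde\kappa$. Averaging over random $S$, in the style of Kosz--Mirek--Plewa--Wróbel and Niksiński, yields decay in $\widetilde\kappa\sum_i\sin^2(\pi\xi_i)$, at least on the part where the coordinates are $\pm1$. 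The regime $\widetilde\kappa\le d^{-\varepsilon}$ is used here so that the exceptional sets from Theorem \ref{Bobkov-Nazarov main thm} have measure $\widetilde\kappa^{ct\sqrt{d\widetilde\kappa}}$. This is small enough to sum over the relevant range of scales, of which there are at most about $\log d$ dyadic ones, since $\widetilde\kappa(tG)\ge 1/d$ once $tG\cap\Z^d\ne\{0\}$.

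With the two estimates in hand, I would run the standard argument: compare $\mathcal M_t$ with the dimension-free semigroup $P_s$, with multiplier $e^{-s\sum\sin^2(\pi\xi_i)}$, at $s=d\widetilde\kappa(tG)$. Stein's maximal theorem controls $\sup_s|P_s f|$ dimension-freely on $\ell^p$. The difference $\sup_t|(\mathcal M_t-P_{s(t)})f|$ is handled by a square function over dyadic pieces of $\widetilde\kappa$, using the two estimates, together with a variation (Sobolev-in-$t$) argument within each dyadic block; this requires monotonicity of $t\mapsto|tG\cap\Z^d|$ and of $\widetilde\kappa(tG)$.

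The main obstacle is the decay estimate: coordinates with $|y_i|\ge2$ and the dependence between support and values do not factor. I would first try to show that the contribution from the large coordinates is negligible via Theorem \ref{Bobkov-Nazarov main thm}, and then apply the permutation-averaging lemma to the $\pm1$ coordinates only. The interpolation to $p>2$ uniformly in $d$ is also delicate; for that I would use the fact that the maximal operator is positive, so Riesz--Thorin interpolation between $\ell^2$ and $\ell^\infty$ applies to the linearized operator.
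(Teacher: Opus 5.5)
\textbf{The main gap: arbitrary $t$ versus lacunary $t$.} Your per-scale estimates, once corrected (see below), are exactly what the paper uses for the small-scale half of Theorem \ref{dyadic dim-free}. They control a supremum over a lacunary set of scales and nothing more. Your ``variation (Sobolev-in-$t$) argument within each dyadic block'' would need bounds on $|\mathfrak{m}_{t_1G}(\xi)-\mathfrak{m}_{t_2G}(\xi)|$ for $t_1\neq t_2$ in the same block, and your plan supplies none. Inside a block, $t\mapsto tG\cap\Z^d$ changes by many uncontrolled jumps. Monotonicity of $|tG\cap\Z^d|$ and of $\widetilde{\kappa}(tG)$ says nothing about how the multiplier moves. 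This is precisely the obstruction the paper identifies in its introduction.

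A symptom is that the hypothesis $\widetilde{\kappa}(tG)\le d^{-\varepsilon}$ plays no real role in your argument: the tail in Theorem \ref{Bobkov-Nazarov main thm} is already negligible for every $\widetilde{\kappa}\le 1/2$ once $d\widetilde{\kappa}$ is large. Your argument would therefore give the full maximal bound for all $\widetilde{\kappa}(tG)\le 1/2$, far beyond what the paper proves (cf.\ Conjecture \ref{dim free conjecture}).

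The paper never compares different values of $t$. Its route is as follows.
\begin{enumerate}
\item Lemma \ref{discretized supremum} handles $\widetilde{\kappa}(tG)\le d^{-1/2}$ by comparison with cross-polytopes and \cite[Theorem 1.15]{KNW}.
\item For $d^{-1/2}\le\widetilde{\kappa}(tG)\le d^{-\varepsilon}$, the same lemma uses $|tG\cap\Z^d|\le 2\cdot 12^d$ and nestedness to get $|\mathcal{M}^G_t f|\le 12\,\mathcal{M}^G_{t_i}|f|$ for one of at most $d+1$ parameters $t_i$.
\item The parameter $\varepsilon$ is used to fix $K$ with $d^{-\varepsilon}\le d^{-(1+\varepsilon')/(K+1)}$. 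Then, by Lemma \ref{fun fact for sup sym conv body}, all but a proportion $4/d$ of the lattice points have their coordinates of modulus $\ge K+1$ carrying total mass at most $a=O_{K,\varepsilon}(1)$.
\item This lets Proposition \ref{prop MG bounded by D} replace $\mathfrak{m}_{tG}$, with error $O(d^{-1/2})$ uniformly on each half $\{\|\xi\|\le\|\xi+1/2\|\}$ and $\{\|\xi\|>\|\xi+1/2\|\}$. The replacement is a combination, with coefficients of total absolute mass at most $1$, of the multipliers $\beta_{\overline{i}}$ of the operators $\mathcal{D}_{\overline{i}}$.
\item The errors are summed in a square function over only $d+1$ scales, giving $\sum_{j=0}^{d}O(1/d)=O(1)$.
\item All the genuinely non-lacunary content is imported from \cite[Theorem 1.5]{NW} (Theorem \ref{thm:1.5 from NW}).
\end{enumerate}
None of these ingredients appears in your plan.

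\textbf{The per-scale estimates also need repair.}
\begin{enumerate}
\item Your decay bound is false near $\xi=(1/2,\dots,1/2)$. There $\mathfrak{m}_{G}(\xi)=\E_{x\in G\cap\Z^d}(-1)^{\sum_i x_i}$, which has modulus close to $1$ for $G=nB^{1,(d)}$ with $n\le\sqrt d$. Random-support averaging (Krawtchouk bounds, Proposition \ref{prop:3.4}) only yields decay in $\min(\|\xi\|^2,\|\xi+1/2\|^2)$. You therefore need a second comparison multiplier $\lambda^2_G$ carrying the factor $\E(-1)^{\sum_i x_i}$, together with a Fourier splitting of $f$ (Propositions \ref{prop:3.6} and \ref{prop:4.3}).
\item The factor $d$ is wrong. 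The correct scale is $\widetilde{\kappa}(tG)\|\xi\|^2$; your own computation gives $\widetilde{\kappa}|\xi|^2$. So the semigroup must be taken at time $\widetilde{\kappa}(tG)$, not $d\widetilde{\kappa}(tG)$. For instance, at $\xi=(1/4,0,\dots,0)$ one has $|\mathfrak{m}_{tG}(\xi)|\ge 1-O(\widetilde{\kappa}(tG))$, whereas $(\widetilde{\kappa}(tG)\,d\,\|\xi\|^2)^{-\delta}\to0$ as $d\widetilde{\kappa}(tG)\to\infty$.
\item Coordinates of modulus $2,\dots,K$ cannot be discarded via Theorem \ref{Bobkov-Nazarov main thm}. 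For $G=nB^{1,(d)}$ with $\sqrt d\ll n$, a typical lattice point has of order $n^2/d$ coordinates equal to $\pm2$. For a single scale one conditions on these coordinates and bounds their cosine factor by $1$, as in the proof of Proposition \ref{prop:3.6}. For the full supremum they must be built into the operators $\mathcal{D}_{\overline{n}}$, which is why $K$ has to depend on $\varepsilon$.
\end{enumerate}
Your reduction to $p=2$ by positivity and interpolation with $p=\infty$ is fine and matches the paper. But even after all the repairs above, your plan proves the small-scale part of Theorem \ref{dyadic dim-free}, not Theorem \ref{small scale dim-free}.
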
 
For instance if $G=B^{q,(d)}$, then the innermost supremum is taken over $t$'s such that $t \lesssim d^{1/q-\varepsilon/q}$, which shows that the above inequality generalizes \cite[Theorem 1.1]{NW} and part of \cite[Theorem 1.15]{KNW}.
By interpolation with $p=\infty$ it suffices to only consider $p=2$ in Theorems \ref{dyadic dim-free}, \ref{small scale dim-free}. Taking $p=2$ allows us to make great use of Fourier methods.
By modifying the counterexample from \cite[Theorem 2]{cubes} we will establish that Theorems \ref{dyadic dim-free} and \ref{small scale dim-free} fail simultaneously when we consider supremum over closed 1-unconditional convex bodies in isotropic position instead of 1-symmetric convex bodies.
\begin{Prop} \label{counter-example}
    For any $K >0$ there exists $d \in \N$ and $G \subseteq \R^d$ a closed 1-unconditional convex body in isotropic position and $f: \Z^d \to \C$ such that for all $p \in (1, \infty)$ we have
    \[
    \Big\| \sup_{\substack{n \in \Z, \\ \widetilde{\kappa}(2^n G) \le d^{-1/2}}} |\mathcal{M}_{2^n}^G f| \Big\|_{\ell^p(\Z^d)} \ge \frac{1}{6} K^{1/p} \|f \|_{\ell^p(\Z^d)}.
    \]
\end{Prop}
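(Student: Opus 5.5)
The plan is to adapt the lacunary construction of \cite[Theorem 2]{cubes}. We build a 1-unconditional body whose \emph{small dilates} meet $\Z^d$ in sets that live in pairwise different coordinate blocks. The continuous shape is then adjusted at large scales only, so that the body is in isotropic position.

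\textbf{Step 1 (lattice structure at small scales).} Fix $K$, a block size $m$ (large, to be chosen), and $d=Km+r$. The extra $r$ coordinates are reserved for restoring isotropy in Step 2. Split the first $Km$ coordinates into blocks $I_1,\dots,I_K$ of size $m$, and choose lacunary radii $a_1 \gg a_2 \gg \dots \gg a_K$ (ratios at least $4$). Then take $G_0=\{x:\ \sum_{k}\max_{i\in I_k}|x_i|/a_k \le 1\}\cap(\dots)$, a sign-invariant body with the following property. For the scales $2^{n_k}\approx 1/a_k$, the set $2^{n_k}G_0\cap\Z^d$ consists essentially of the discrete cube $\{-1,0,1\}^{I_k}\times\{0\}$: coordinates in blocks $I_j$ with $j>k$ cannot be nonzero, because $2^{n_k}a_j<1$. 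Coordinates in blocks $j<k$ are killed by the $\ell^1$-type coupling combined with the size restriction, which needs a cap of the form $\max_{i \in I_j}|x_i| \le 1$ at these scales. We verify $\widetilde{\kappa}(2^{n_k}G)\le m/d \le d^{-1/2}$ by choosing $m\le \sqrt d$, which is compatible with $K$ fixed and $d$ large.

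\textbf{Step 2 (isotropy).} The body $G_0$ is not isotropic, since its coordinate second moments differ between blocks. We correct this by intersecting or convex-hulling with a large piece whose volume dominates and whose second moments are equal across coordinates. The candidate is $G=\conv(G_0\cup R\,Q)$, where $Q$ is a 1-unconditional body chosen so that the second moments of $G$ are equal in every coordinate. Such a $Q$ exists: we tune per-coordinate widths, using that all moments are continuous in the widths and that the contribution of $G_0$ becomes negligible once $R$ is large. Since $G$ is closed and convex, only its continuous moments matter for isotropy. The lattice sections at the tiny scales $2^{n_k}$ are unchanged provided $2^{n_k}R\,Q$ contains no new lattice points. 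This should be arranged by making $Q$ thin in the first $Km$ directions and placing its bulk mass in the $r$ auxiliary coordinates. Those coordinates must be forced to vanish at scales $2^{n_k}$, e.g.\ by $2^{n_k}R\,\mathrm{width}<1$.

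\textbf{Step 3 (lower bound).} At scale $2^{n_k}$ the operator $\mathcal{M}_{2^{n_k}}^G$ is the average over the cube $C_k=\{-1,0,1\}^{I_k}$. Take $f=\mathds{1}_{A}$ for $A=\{x: x_i\in\{0,1\}\ \forall i\le Km,\ x_i=0 \text{ otherwise}\}$. For $x$ in the enlarged set $A'=\{-1,0,1,2\}$-valued on the first $Km$ coordinates, we get $\mathcal{M}_{2^{n_k}}f(x)\ge 3^{-\#\{i\in I_k: x_i\notin\{0,1\}\}}$, up to constant factors. A counting argument then shows that for a proportion of $x\in A'$ there is some block $k$ on which $x$ deviates only boundedly often. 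The intended effect is that $\|\sup_k\mathcal{M}_{2^{n_k}}f\|_p^p \gtrsim K\,|A|$, which yields the factor $\frac16 K^{1/p}$.

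\textbf{Main obstacle.} The hardest point is making Step 3 quantitatively correct: I would first try $m=1$, i.e.\ one coordinate per block, where the count is explicit with $3^{-1}$ losses. Step 2 is more routine but fiddly, since it requires isotropy simultaneously with exact control of the small-scale lattice sections.
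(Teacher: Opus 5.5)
\textbf{Step 1 fails, and this is the crux of the construction.} Your overall architecture is the paper's in spirit: lacunary blocks glued by an $\ell^1$-type sum, filler coordinates for isotropy, and a test function whose mass escapes to $K$ disjoint sets. The problem is that $2^nG$ is a dilate of a single body, so the blocks $I_j$ with $j<k$, which have the larger radii $a_j$, do not disappear at scale $2^{n_k}$. The $\ell^1$-coupling removes them only from lattice points whose $I_k$-part is nonzero; lattice points supported on $I_1\cup\dots\cup I_{k-1}$ remain.

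Since $\{-1,0,1\}^{I_k}\subseteq 2^{n_k}G_0$ forces $2^{n_k}a_k\ge 1$, we get $2^{n_k}a_{k-1}\ge 4$. So the points supported on $I_{k-1}$ alone number at least $9^m$, against the $3^m$ points of $\{-1,0,1\}^{I_k}$. For your fallback $m=1$, the earlier blocks contribute at least $2\cdot 4^{k-1}$ points against $3$. Hence $\mathcal{M}^G_{2^{n_k}}$ is not comparable to the average over $\{-1,0,1\}^{I_k}$, which is exactly what Step~3 uses.

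The cap ``at these scales'' cannot be imposed. A constraint $\max_{i\in I_j}|x_i|\le c_j$ on $G$ reads $\max_{i\in I_j}|x_i|\le 2^{n}c_j$ on $2^nG$. You cannot have both $2^{n_K}c_j<2$ (the cap at scale $2^{n_K}$) and $2^{n_j}c_j\ge 1$ (block $j$ visible at its own scale) when $2^{n_K-n_j}\ge 4$.

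The missing idea is to let block $k$ win by counting, using dimension as the resource. The paper takes $G=\bigoplus_{i=1}^{K+1}A_i$ with $A_i=2^{-i}B^{p_i,(d_i)}$ and block dimensions $d_k=\prod_{i<k}2^{(k+3)d_i+1}$. At scale $2^k$ this gives:
\begin{itemize}
\item the lattice points with nonzero block-$k$ part are exactly the $2d_k$ vectors $\pm e_i$;
\item later blocks and the filler carry no lattice points;
\item by Lemma~\ref{Cor 2.2 from my lq paper}, the points supported on earlier blocks number at most $\prod_{i<k}2^{(k+3)d_i+1}=d_k$, a quantity depending only on $d_1,\dots,d_{k-1}$.
\end{itemize}
Hence $2d_k\le|2^kG\cap\Z^d|\le 3d_k$. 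Your $\widetilde{\kappa}$ check also ignores the earlier-block points; the correct bound has the form $d^{-1}2^k\sum_{i\le k}d_i$.

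\textbf{Step 2 is only a heuristic and, as phrased, inconsistent.} If $G_0$ contributes negligibly to the second moments, isotropy forces $RQ$ itself to be nearly isotropic, so $Q$ cannot be thin in the block directions. By your own constraint the filler extent must also stay below $2^{-n_K}$, so $R$ cannot serve as the large parameter. You would further need to show that $\conv(G_0\cup RQ)$ creates no lattice points outside $2^{n_k}G_0\cup 2^{n_k}RQ$, and to get exact isotropy from a multi-parameter continuity argument.

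The paper avoids perturbation entirely. For an $\ell^1$-direct sum of 1-unconditional isotropic bodies, isotropy holds iff $L(A_i)^2(d_i+2)(d_i+1)$ is independent of $i$ (Lemma~\ref{isotropy of direct sum}, an exact polar-coordinates computation). Continuity of $p\mapsto c(p,d_i)$ then lets one choose $p_i$ so that the normalized block is exactly $2^{-i}B^{p_i,(d_i)}$. The filler is a cube of half-side $\approx d_{K+1}^{-1}$, invisible at all scales $2^j$ with $j\le K$.

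\textbf{Step 3 is wrong for $m>1$.} Take $x$ that agrees off $I_k$ with a point of $A$ and has $x_i\in\{-1,0,1,2\}$ on $I_k$. The average of $\mathds{1}_A$ over $x-\{-1,0,1\}^{I_k}$ equals $(2/3)^{\#\{i\in I_k:\,x_i\in\{0,1\}\}}\,3^{-\#\{i\in I_k:\,x_i\in\{-1,2\}\}}$, which is exponentially small in $m$ even on $A$. Your escape-set count does work for $m=1$.

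The paper instead uses a parity test function. It sets $f=\mathds{1}_{E_1\times\dots\times E_K\times\{0\}}$, where each $E_s$ has odd coordinate sum and lies in a large box. For $x$ in $B_j$ (block $j$ switched to even parity), every step $\pm e_i$ in block $j$ lands back in the support of $f$. This gives $\mathcal{M}^G_{2^j}f\ge\frac23$ on the pairwise disjoint sets $B_j$, with $|B_j|\ge\frac14\|f\|_p^p$. A cube-based version of your construction might be repairable, but it would need rapidly growing block sizes $m_k$ together with such a parity-type test function.
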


\par The assumption that $G$ is a 1-symmetric convex body might seem incredibly restrictive, nevertheless it seems that in the discrete setting available tools are heavily limited. Examples constructed in Proposition \ref{counter-example} highlight that one needs to be careful when trying to establish discrete analogues of known continuous results.

\subsection{Known results}
\subsubsection{Asymptotic convex geometry}
Convex sets are fundamental objects in mathematics. Asymptotic convex geometry is concerned with properties of high-dimensional convex bodies with an emphasis on dependence on the dimension. In 1980 Hensley in his work \cite{He} already considered convex bodies in isotropic position, the relation of isotropic position to slicing problems and potential applications to Diophantine equations. A few years later in an influential work \cite{B1} of Bourgain on dimension-free estimates, the concept of isotropic position was introduced and rediscovered.
This notion has had a profound impact on the field of convex geometry of finite-dimensional Banach spaces. Various concentration of continuous mass properties can be established for convex bodies in isotropic position. 
\par In mid-1980s it was conjectured by Bourgain that if $|G|=1$ and $G$ is in isotropic position, then $L(G) \approx 1$ uniformly, which is equivalent to saying that there is a $c>0$ such that for all $G \subseteq \R^d$ with $|G|=1$ in isotropic position there is a hyperplane $H \subseteq \R^d$ such that 
\[
\Vol_{d-1}(H \cap G) \ge c.
\]
Around that time it was observed that the conjecture holds in the case of 1-unconditional convex bodies, see the work of Milman and Pajor \cite[b. Proposition, p. 85]{MP}.
This conjecture was fully resolved not so long ago in the breakthrough work of Klartag and Lehec \cite{iso conj}, building upon years of significant advancements in the field. See for instance \cite{He}, \cite[Section 3]{MP}, \cite[Theorem 3.1.2]{conv book} for relation between slicing by hyperplanes and isotropic constant. See for instance \cite[Lemma 4.1]{MP}, \cite[Proposition 3.3.1]{conv book} for a simple lower bound for $L(G)$.  
One of the main inspirations for this paper is the work of Bobkov and Nazarov \cite{bob-naz}, where it was shown that there are universal constants $t_0,c>0$ such that for all $d \in \N$, $G \subseteq\R^d$ a 1-unconditional convex body in isotropic position with $\Vol_d(G)=1$ and $t \ge t_0$ we have
\begin{equation} \label{eq:conc of mes bob-naz}
    \Vol_d\Big( \Big\{ x \in G: |x|_2 \ge t \sqrt{d} \Big\} \Big) \le e^{-ct\sqrt{d}}.
\end{equation}
Later this result was generalized by Paouris \cite{Pa}, who established that for any $G \subseteq \R^d$ in isotropic position with $\Vol_d(G)=1$ and any $t \ge 1$ one has
\[
\Vol_d\Big( \Big\{x \in G: |x|_2 \ge ct L(G) \sqrt{d} \Big\}\Big) \le e^{-t\sqrt{d}},
\]
where $c>0$ is a universal constant.
Prior to the above works, Schechtman and Zinn \cite{SZ} established that for $1 \le p<q< \infty$ there exists $c=c(p,q)>0$ and $T=T(p,q)>0$ such that for all $d\in \N$ and $t \ge T$ we have 
\[
 \Vol_d\Big(\Big\{ u \in B^{p,(d)}: \frac{|u|_q}{d^{1/q}} \ge t \frac{|u|_p}{d^{1/p}}\Big\} \Big) \le  e^{-ct^pd^{p/q}}\Vol_d(B^{p,(d)}),
\]
moreover they obtained inequality in the opposite direction for $t \in [2, d^{1/p-1/q}/2]$ but with different constant $c>0$. In \cite{SZ2} the same authors generalized this type of deviation inequality to Lipschitz functions instead of $\ell^q$-norms.
We refer readers interested in asymptotic convex geometry to the monograph \cite{conv book} of Brazitikos, Giannopoulos, Valettas and Vritsiou. The term \textit{1-symmetric} comes from the theory of Banach spaces.
\par Lattice points in convex sets have been extensively studied; see, for instance, \cite{matusek}. Related applications of isotropic position to Diophantine equations can be found in \cite{He}, while discrete variants of the Brunn--Minkowski inequality are studied in \cite{INZ,HKS}, in \cite{HSX} relation between packing minima and lattice points in convex bodies was considered.
Nevertheless the author is unaware of works that study mass concentration properties of lattice points in convex bodies and establish results analogous to the continuous ones that arise for convex bodies in isotropic position.
\par 
Apart from motivation, our diverge from problems considered in asymptotic geometry of convex isotropic bodies. Assumption that $G$ is 1-symmetric vastly simplifies many problems considered in the aforementioned part of mathematics. On the other hand considering lattice points seems to heavily restrict the applicability methods of analysis and probability, which are fundamental in the study of convex bodies.

\subsubsection{Dimension-free estimates for maximal functions}
Let $G \subseteq \R^d$ be a symmetric convex body. For such set we define the corresponding averaging operator
\begin{equation} \label{cont avr op defn}
   M_t^Gf(x)= \frac{1}{\Vol_d(tG)} \int_{tG} f(x-y)dy, \quad f \in L^1_{loc}(\R^d) 
\end{equation}
where $tG=\{tx: x \in G\}$. Let $C(p,G)>0$ be the best constant such that the following inequality holds 
\[
\Big\| \sup_{t>0} |M_t^Gf| \Big\|_{L^p(\R^d)} \le C(p,G) \|f \|_{L^p(\R^d)}.
\]
Classical arguments show that for every $p \in (1,\infty]$ and symmetric convex body $G$ we have $C(p,G)<\infty$. In 1982 Stein \cite{SteinMax} (see also \cite{StStr}) discovered that for all $p>1$ the constant $C(p,B^{2,(d)})$ can be bounded uniformly in $d \in \N$. Later Bourgain in \cite{B1} showed that one can bound $C(2,G)$ uniformly with respect to all symmetric convex bodies. Consequently Bourgain \cite{B2} and independently Carbery \cite{Car1} extended this result to $C(p,G) \le C_p$ for all $p>3/2$. Later Müller \cite{Mul1} for all $p>1$ and $q \in [1, \infty)$ established existence of $C_{p,q}>0$ such that $C(p,B^{q,(d)}) \le C_{p,q}$ holds, this result was completed by Bourgain \cite{B3} who proved the same conclusion for $q=\infty$.
For a broader perspective on dimension-free estimates for
continuous maximal functions, we refer the reader to the survey articles \cite{BMSW4} and \cite{HL-cont}.
\par The study of dimension-free estimates for discrete maximal functions was initiated by Bourgain, Mirek, Stein, Wróbel in \cite{cubes} and \cite{balls}.
Results in the discrete setting are not as fruitful.
For a symmetric convex body $G \subseteq \R^d$ and $f: \Z^d \to \C$ let 
\[
\mathcal{M}_t^G f(x)= \frac{1}{|tG \cap \Z^d|} \sum_{y \in tG \cap \Z^d} f(x-y).
\]
Let us give a brief overview of almost everything that is currently known in this area. Let $\mathbb{D}= \{2^n:n \in \N_0\}$.
\begin{itemize}
    \item In \cite{cubes} it was proved that for every $p \in ( \frac{3}{2}, \infty ]$ there exists a constant $C_p>0$ such that for every $d \in \N$ we have
    \[
    \Big\| \sup_{t>0} \big|\mathcal{M}_t^{B^{\infty,(d)}}\big| \Big\|_{\ell^p(\Z ^d) \to \ell^p(\Z ^d)} \leq C_p .
    \]
    This result is almost as strong as those in the continuous case. In the case of sets $B^q$ for $q \neq \infty$ authors of papers \cite{cubes}, \cite{balls}, \cite{KMPW}  could only obtain weaker conclusions. Moreover in the same work authors constructed a family of ellipsoids $E(d) \subseteq \R^d$, such that for some $f_d : \Z^d \to \C$ and all $p \in (1, \infty)$ one has 
    \[
     \Big\| \sup_{t>0} \big| \mathcal{M}_t^{E(d)} f_d \big| \Big\|_{\ell^p(\Z^d)} \ge C_p \log(d)^{1/p} \|f_d \|_{\ell^p(\Z^d)}.
    \]
    Moreover for any symmetric convex body $G \subseteq \R^d$ the following comparison principle was established
    \[
    \Big\| \sup_{t \ge c(G) d} \big| \mathcal{M}_t^G  \big| \Big\|_{\ell^p(\Z^d) \to \ell^p(\Z^d)} \le e^6 \Big\| \sup_{t>0} \big|M_t^G\big| \Big\|_{L^p(\R^d) \to L^p(\R^d) } \le C,
    \]
    where $c(G)= \inf\{t>0: [-1/2,1/2]^d \subseteq tG \}$.
    \item When it comes to Euclidean balls, in \cite{balls} it was proved that there exists $C>0$ such that for every $p \in [2, \infty)$, $d \in \N$ we have
    \[
    \Big\| \sup_{t \in \D} \big|\mathcal{M}_t^{B^{2,(d)}} \big| \Big\|_{\ell^p(\Z ^d) \to \ell^p(\Z ^d)} \leq C.
    \]
    \item In \cite{KMPW} by extending methods of \cite{balls}  it was proved that for any $q \in (2, \infty)$ there a exists constant $C(q)>0$ 
    such that for every $p \in [2, \infty)$, $d \in \N$ we have
    \[
    \Big\| \sup_{t \in \D, t \geq d^{1/q}} \big|\mathcal{M}_t^{B^{q,(d)}} \big| \Big\|_{\ell^p(\Z ^d) \to \ell^p(\Z ^d)} \leq C(q).
    \]
    Moreover it was shown that for all $q \in (2, \infty)$ and $p \in (1, \infty)$ one has 
    \[
    \Big\| \sup_{t \ge d} \big|\mathcal{M}_t^{B^{q,(d)}} \big| \Big\|_{\ell^p(\Z ^d) \to \ell^p(\Z ^d)} \leq C(p,q).
    \]
    The paper \cite{KMPW} did not cover the range $t<d^{1/q}$ nor $q<2$.
    It was also shown that for every symmetric convex body $G \subseteq \R^d$ one has
    \[
    \Big\| \sup_{t>0}\big|M_t^G\big| \Big\|_{L^p(\R^d) \to L^p(\R^d)} \le \Big\| \sup_{t>0}\big|\mathcal{M}_t^G\big| \Big\|_{\ell^p(\Z^d) \to \ell^p(\Z^d) }
    \]
    for all $p \in (1, \infty)$.
    \item  In \cite{NW}, the author in collaboration with B. Wróbel has shown that for any $\varepsilon>0$ there is a constant $C_{\varepsilon}>0$ such that for all $p \in [2,\infty)$, $d \in \N$ we have
    \[
    \Big\| \sup_{0 \le t \le d^{\frac{1}{2}-\varepsilon}} \big|\mathcal{M}_t^{B^{2,(d)}}\big| \Big\|_{\ell^p(\Z^d) \to \ell^p(\Z ^d)} \le C_{\varepsilon}.
    \]
    \item In \cite{Ni2} the author has established that there is $C>0$ such that for all $q \in [1, \infty)$, $p \in [2, \infty)$, $d \in \N$ one has
    \[
    \Big\| \sup_{t \in \D, t \le d^{1/q}} \big| \mathcal{M}_t^{B^{q,(d)}} \big| \Big\|_{\ell^p(\Z^d) \to \ell^p(\Z ^d)} \le C
    \]
    \item In \cite{KNW} the author in collaboration with D. Kosz and B. Wróbel has shown that for all $d \in \N$ we have 
    \[
    \Big\| \sup_{t \in \D} \big| \mathcal{M}_t^{B^{1,(d)}} \big| \Big\|_{\ell^p(\Z^d) \to \ell^p(\Z ^d)} \le C ,
    \]
    \[
    \Big\| \sup_{t \ge d^{3/2}} \big| \mathcal{M}_t^{B^{1,(d)}} \big| \Big\|_{\ell^p(\Z^d) \to \ell^p(\Z ^d)} \le C 
    \]
    and
    \[
    \Big\| \sup_{t \le d^{1- \varepsilon}} \big| \mathcal{M}_t^{B^{1,(d)}} \big| \Big\|_{\ell^p(\Z^d) \to \ell^p(\Z ^d)} \le C_\varepsilon .
    \]
\end{itemize}
\subsection{Questions and conjectures}
We state some questions and conjectures, which are relevant to the present work.
From the perspective of Theorems \ref{dyadic dim-free}, \ref{small scale dim-free} it seems natural to conjecture the following.
\begin{Conj}\label{dim free conjecture}
    We have 
 \[
 \sup_{p \in [2,\infty]} \sup_{d \in \N} \sup_{G \in \mathcal{F}_d} \Big\| \sup_{t>0} |\mathcal{M}_{t}^G| \Big\|_{\ell^p(\Z^d) \to \ell^p(\Z^d)}< \infty.
 \] 
\end{Conj}
In the mid 1990s Stein asked the following question.
\begin{Que}
    Is it true that 
    \[
    \sup_{d \in \N}  \Big\| \sup_{t>0} |\mathcal{M}_{t}^{B^{2,(d)}}| \Big\|_{\ell^2(\Z^d) \to \ell^2(\Z^d)}< \infty ?
    \]
\end{Que}
The problems above should require much deeper understanding of the sets $tG \cap \Z^d$ as $t$ varies. When obtaining dimension-free estimates for dyadic maximal function it suffices to establish certain bounds for $|\mathfrak{m}_{tG}(\xi)|$ and $|\mathfrak{m}_{tG}(\xi)-1|$, where 
\[
\mathfrak{m}_{tG}(\xi)= \frac{1}{|tG \cap \Z^d|} \sum_{x \in tG \cap \Z^d} e(x \cdot \xi)
\]
and $\xi \in \T^d$, $x \cdot \xi= \sum_{i=1}^d x_i \xi_i$. If one wants to apply the same strategy to establish dimension-free estimates for the full maximal function, then one needs to properly bound $|\mathfrak{m}_{t_1G}(\xi)-\mathfrak{m}_{t_2G}(\xi)|$ for $t_1 \neq t_2$. This seems difficult, because the structure of set $t G \cap \Z^d$ is unclear, let alone its size. In \cite{NW} this problem was bypassed by establishing technical bound $|m_{t_1}(\xi)-m_{t_2}(\xi)|$ for multiplier $\mathfrak{m}$ corresponding to different operator, which very loosely speaking could be thought as an average of various low-dimensional averaging operators on Hamming cube-like structure. Such a reduction was only possible in small-scale regime, where one can exploit the fact that lattice points should have almost all coordinates in some uniformly bounded set, see Lemma \ref{fun fact for sup sym conv body}.
\par We finish this part with the following question.
\begin{Que}
    Is it true that for all $K>0$ there exists $d \in \N$ and $G \subseteq \R^d$ a closed symmetric convex body, which is permutation invariant such that 
    \[
    \Big\| \sup_{t>0} |\mathcal{M}_t^G| \Big\|_{\ell^2(\Z^d) \to \ell^2(\Z^d)} \ge K.
    \]
\end{Que}
We conjecture, although we currently have no supporting evidence, that the question above has affirmative answer. Obtaining such $G$ for large $K$ would be interesting because it would highlight different degenerate behavior, which arises in discrete setting, than the one that is exploited in Proposition \ref{counter-example} and \cite[Theorem 2]{cubes}.
\subsection{Notation}
 We denote by $\N= \{1,2,... \}$ the set of positive integers and by $\N_0=\N \cup \{ 0\}$ the set of non-negative integers.
 For $N\in \N$ we abbreviate $[N]=\{1,\dots,N\}.$
$\Sym(d)$ will denote the set of permutations of $[d]$. For $x \in \R^d, \ \sigma \in \Sym(d), \epsilon \in \{-1,1\}^d$ we define 
\[
\sigma \cdot x= (x_{\sigma(1)}, \ldots x_{\sigma(d)}), \quad \epsilon \cdot x= (\epsilon_1 x_1, \ldots, \epsilon_d x_d).
\]
For $t \in \R$ let $e(t)=e^{2\pi i t}$.
For $x \in \R^d$ and $p \in [1,\infty)$ we define
\[
|x|_p= \Big(\sum_{i=1}^d |x_i|^p \Big)^{1/p}, \quad |x|_{\infty}=\max_{i \in [d]} |x_i|.
\]
 For a symmetric convex body $G \subseteq \R^d$, $x \in \R^d$ and $\theta \in S^{d-1}$ we define 
\[
\| x\|_G= \inf\Big\{ \lambda>0 : x \in \lambda G \Big\},
\]
\[
\rho_G(\theta)= \sup\Big\{t>0: t \theta \in G \Big\}=\|\theta\|_G^{-1}.
\]
Note for that any $f: \R^d \to \C$, the polar coordinates formula for integral over $G$ has the following form
\[
\int_{G} f(x) dx= \int_{S^{d-1}} \int_0^{\rho_G(\theta)} f(r \theta) r^{d-1} dr d\sigma(\theta),
\]
where $\sigma$ is the ordinary, unnormalized surface measure of $(d-1)$-dimensional sphere.
 For two sets $X,Y$, by $X^Y$ we will denote the set of functions from $Y$ to $X$.
 For $f\in \ell^1(\Z^d)$ we let 
\begin{equation*}
 \hat{f}(\xi) := \sum_{n\in\Z^d} f(n) e(n\cdot \xi),\qquad \xi\in\T^d,
\end{equation*}
be its Fourier series. $\mathcal{F}$ will denote Fourier transform on $\R^d$. $\mathcal{F}^{-1}$ will denote inverse Fourier transform on $\R^d$ or $\T^d$, depending on the context. 
 For two nonnegative quantities $X, Y$
we write $X \lesssim_{\delta} Y$ if there is an absolute constant
$C_{\delta}>0$ depending only on $\delta>0$ such that $X\le C_{\delta}Y$ .
We  write $X \approx_{\delta} Y$ when
$X \lesssim_{\delta} Y\lesssim_{\delta} X$. We will omit the subscript
$\delta$ if the implicit constants are universal.
For $x \in \mathbb{R}^d$ we define
\[
\supp(x)=\{i \in [d]: x_i \neq 0\}.
\]
For $\xi\in \R^d$ or $\xi \in \T^d$ we let $\| \xi \|^2:=\sum_{j=1}^d \sin^2(\pi \xi_j).$ Furthermore we abbreviate \[\xi+1/2=(\xi_1+1/2,\ldots,\xi_d+1/2),\qquad \|\xi +1/2 \|:=\|(\xi_1+1/2,\ldots,\xi_d+1/2)\|.\] We will frequently use the fact that
$\|\xi +1/2 \|^2= \sum_{j=1}^d \cos^2(\pi \xi_j)$. Note that for $\xi \in \R$ we have $\sin^2(\pi \xi) \approx \min_{k \in \Z} |\xi-k|^2$.
 Let $A$ be a finite set of $|A|$ elements and let $h\colon A\to \mathbb{C}.$ We abbreviate \[\se{x \in A}h(x):=\frac{1}{|A|}\sum_{x\in A} h(x).\]
 Moreover, if $h=\mathds{1}_B$ is indicator function of some set $B$ we define
 \[
 \spr{x \in A}(x \in B)= \se{x \in A}\mathds{1}_B(x),
 \]
 For a symmetric convex body $G \subseteq \R^d$ we define
\[
\mathfrak{m}_G(\xi)= \se{x \in G \cap \Z^d} e(x \cdot \xi),
\]
where $\xi \in \T^d$.


\section{Basic definitions and facts}
In this section we introduce basic definitions and properties of 1-symmetric convex bodies.
\begin{Lem} \label{lem:kappa_def}
    For $G \in \mathcal{F}_d$ we have
    \[
    \kappa(G)= \max\Big\{a \in \R : (a,a,...,a) \in G \Big\}=\max\Big\{l \in \R : [-l,l]^d \subseteq G \Big\}.
    \]
\end{Lem}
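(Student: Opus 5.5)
Write $a^* = \max\{a \in \R : (a,\dots,a) \in G\}$ and $l^* = \max\{l \in \R : [-l,l]^d \subseteq G\}$. The plan is to prove the chain $\kappa(G) \le a^* \le l^* \le a^*$ and then close it with the trivial inequality $a^* \le \kappa(G)$. Since $G$ is compact, all three maxima are attained. Moreover $a^* \ge 0$: the set $G$ is centrally symmetric and convex, so $0 \in G$.

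\textbf{Step 1: $\kappa(G) \le a^*$.} Take $x \in G$ with $|x|_1 = d\,\kappa(G)$. By sign-invariance, $y = (|x_1|,\dots,|x_d|) \in G$. Average $y$ over all cyclic shifts, or over all of $\Sym(d)$:
\[
\frac{1}{d!}\sum_{\sigma \in \Sym(d)} \sigma \cdot y .
\]
Each $\sigma \cdot y$ lies in $G$ by permutation invariance, so this convex combination lies in $G$ by convexity. Its every coordinate equals $\frac{1}{d}\sum_i |x_i| = \kappa(G)$. Hence $(\kappa(G),\dots,\kappa(G)) \in G$, which gives $a^* \ge \kappa(G)$. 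Conversely, the point $(a^*,\dots,a^*)$ has $\ell^1$-norm $d a^*$, so $\kappa(G) \ge a^*$. Together, $\kappa(G) = a^*$.

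\textbf{Step 2: $a^* = l^*$.} If $[-l,l]^d \subseteq G$, then $(l,\dots,l) \in G$, so $l^* \le a^*$.

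For the reverse inequality, set $a = a^*$. The vertices of $[-a,a]^d$ are exactly the points $\epsilon \cdot (a,\dots,a)$ with $\epsilon \in \{-1,1\}^d$. Each of these lies in $G$ by 1-unconditionality. The cube is the convex hull of its vertices, so by convexity $[-a,a]^d \subseteq G$. Thus $l^* \ge a^*$.

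No step is a real obstacle. The only point needing care is that the symmetrization in Step 1 uses both invariances: sign-invariance to pass to absolute values, and permutation invariance together with convexity to average the coordinates. The maxima are attained because $G$ is closed and bounded.
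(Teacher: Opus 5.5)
Your proof is correct and follows the paper's argument. Sign invariance plus the convex hull of the cube's vertices gives $a^*\le l^*$, the reverse inequality and $a^*\le\kappa(G)$ are trivial, and symmetrizing a maximizer of $|x|_1$ (first taking absolute values, then averaging over $\Sym(d)$) puts $(\kappa(G),\dots,\kappa(G))$ in $G$.
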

\begin{proof}
It is easy to see that 
\[
\max\Big\{a \in \R : (a,a,...,a) \in G \Big\} \le \max\Big\{l \in \R : [-l,l]^d \subseteq G \Big\} \le \kappa(G).
\]
On the other hand if $\overrightarrow{a}:=(a, \ldots ,a) \in G$, then for any $\epsilon \in \{-1,1\}^d$ we have $\epsilon \cdot \overrightarrow{a} \in G $, hence 
\[
[-a,a]^d \subseteq G,
\]
since vectors $\epsilon \cdot \overrightarrow{a}$ are the extremal points of the cube $[-a,a]^d$. Thus 
\[
\max\Big\{a \in \R : (a,a,...,a) \in G \Big\} = \max\Big\{l \in \R : [-l,l]^d \subseteq G \Big\}.
\]
Take $x \in G$ such that $\kappa(G)= |x|_1/d$. By 1-symmetry, replacing $x$ with $(|x_1|, \ldots, |x_d|)$, we may assume that $x_i \ge 0$ for all $i \in [d]$. Hence, by permutation invariance and convexity, we have
\[
\overrightarrow{\kappa(G)}= \se{\sigma \in \Sym(d)} \sigma \cdot x \in G.
\]
Thus $\kappa(G)= |x|_1/d \le \max\Big\{a \in \R : (a,a,...,a) \in G \Big\}$.
\end{proof}
\begin{Rem}
 Note that $\kappa(tG)=t\kappa(G)$ for $t>0$.
We have for example $\kappa(B_N^{q,(d)})= \frac{N}{d^{1/q}}.$ Moreover
\begin{equation} \label{cont_incl}
   [-\kappa(G),\kappa(G)]^d \subseteq G \subseteq (d \kappa(G) ) \cdot B^{1,(d)},
\end{equation}
from the above we see that $|G| = c_G^d \kappa(G)^d$ for some $c_G \in [2,2e]$. As inaccurate as it might be, it is actually sufficient for some purposes.
A version of \eqref{cont_incl} under less restrictive conditions on $G$ follows from \cite[Propositions 2.4, 2.5 and (2.2)]{bob-naz}, mainly we have 
\[
\Big[- \frac{1}{\sqrt{2}}L(G),\frac{1}{\sqrt{2}}L(G)\Big]^d \subseteq G \subseteq \big(\sqrt{3 \pi e}   L(G) d\big) B^{1,(d)}.
\]
Above inclusions combined with Lemma \ref{lem:kappa_def} show that
\begin{equation} \label{comparability of LG and kappaG}
    \frac{L(G)}{\sqrt{2}} \le \kappa(G) \le \sqrt{3 \pi e} L(G).
\end{equation}
Note that $L(G) \approx \kappa(G)$ can be recovered from Theorem \ref{iso const} by replacing $G$ with $NG$ for $N \in \N$ and taking $N \to \infty$.
In the future we shall use the fact that for $A,B \in \mathcal{F}_d$ we have $A \cap B \in \mathcal{F}_d$ and
\[
\kappa(A \cap B) = \min\big( \kappa(A), \kappa(B) \big).
\]
\end{Rem}
Now let us investigate the discrete counterpart of the constant $\kappa(G)$, that is $\widetilde{\kappa}(G)=\max_{x \in G \cap \Z^d} \frac{|x|_1}{d}$. One has the following simple, yet crucial observation, which explains why $\widetilde{\kappa}(G)$ properly captures degenerate behavior in small-scale regime.
\begin{Lem} \label{kappa tilde property}
    For $G \in \mathcal{F}_d$ we have
    \[
    \min\Big(d,d \cdot\widetilde{\kappa}(G) \Big)= \max_{n \in \N_0} \Big\{ (\underbrace{1,1,\ldots,1}_{n\text{-times}},0,...,0) \in G \Big\}.
    \]
\end{Lem}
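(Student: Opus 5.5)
The plan is to prove the two inequalities separately. Write $n^*$ for the right-hand side, i.e. the largest $n\in\{0,\ldots,d\}$ with $\mathbf 1_n:=(1,\ldots,1,0,\ldots,0)\in G$ ($n$ ones). Note that $n^*$ exists, since $0\in G$ by central symmetry.

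\emph{Lower bound.} Since $\mathbf 1_{n^*}\in G\cap\Z^d$, the definition of $\widetilde\kappa(G)$ gives $d\,\widetilde\kappa(G)\ge |\mathbf 1_{n^*}|_1=n^*$. Trivially $n^*\le d$. Hence $n^*\le \min(d,d\widetilde\kappa(G))$.

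\emph{Upper bound.} Pick $x\in G\cap\Z^d$ with $|x|_1=d\widetilde\kappa(G)=:s$; note $s\in\N_0$. By sign invariance we may replace $x$ by $y=(|x_1|,\ldots,|x_d|)\in G$, which has nonnegative integer entries and $|y|_1=s$.

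The key step is a discrete ``Robin Hood'' smoothing, which replaces the continuous averaging over $\Sym(d)$ used in Lemma~\ref{lem:kappa_def}. Suppose $y_i\ge y_j+2$ for some $i,j$. Let $y'$ be $y$ with $(y_i,y_j)$ replaced by $(y_i-1,y_j+1)$. Then
\[
y'=\lambda y+(1-\lambda)\,\tau\cdot y,\qquad \lambda=1-\frac{1}{y_i-y_j}\in[1/2,1),
\]
where $\tau$ is the transposition of $i$ and $j$. By permutation invariance and convexity, $y'\in G$. The vector $y'$ is still integral and nonnegative, with the same $\ell^1$ norm. Moreover $|y'|_2^2<|y|_2^2$, so iterating this step terminates. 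At the end all entries of $y$ differ pairwise by at most $1$, i.e.\ they take only the values $k$ and $k+1$ for some $k\in\N_0$.

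Now split into two cases.

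If $s\le d$, then the final vector must have $k=0$: otherwise every entry is at least $1$ and $|y|_1\ge d\ge s$, with equality only if all entries equal $1$, which is again a $0/1$ vector. So the final vector is a permutation of $\mathbf 1_s$, and permutation invariance gives $\mathbf 1_s\in G$. Thus $n^*\ge s$.

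If $s>d$, then every entry of the final vector is at least $1$, so $\mathbf 1_d$ lies coordinatewise below it. By 1-unconditionality and convexity, $G$ contains the box $\prod_i[-y_i,y_i]$, since its vertices are the sign flips $\epsilon\cdot y$, exactly as in Lemma~\ref{lem:kappa_def}. Hence $\mathbf 1_d\in G$ and $n^*=d$.

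In both cases $n^*\ge\min(d,d\widetilde\kappa(G))$, which completes the proof. The only step needing care is the smoothing: checking that the convex-combination coefficient $\lambda$ lies in $[0,1]$ and that the iteration terminates. Both are handled by the explicit formula for $\lambda$ and the strictly decreasing $\ell^2$ norm.
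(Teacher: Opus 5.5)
Your proof is correct. The key step, however, differs from the paper's.

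The paper does not smooth iteratively. When $a=d\widetilde{\kappa}(G)<d$, it uses integrality only to get $|\supp(x)|\le a$. It then moves the support into $[a]$ by 1-symmetry and averages $x$ over $\Sym(a)$ acting on the first $a$ coordinates. Because $|x|_1=a$, this single average is exactly $(1,\ldots,1,0,\ldots,0)$ with $a$ ones. When $a\ge d$, it averages over all of $\Sym(d)$ to get $\overrightarrow{a/d}$ and takes a convex combination with $\overrightarrow{0}$; you use the box $\prod_i[-y_i,y_i]$ at that point instead.

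So your premise that the averaging of Lemma~\ref{lem:kappa_def} must be replaced by a discrete procedure is not quite right. Restricting the averaging to the permutations of an $a$-element set containing the support already lands on a lattice point, and no termination argument is needed.

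In exchange, your transposition smoothing proves a stronger, majorization-type statement. For every $y\in G\cap\Z^d$ with nonnegative entries, the most balanced integer vector with the same $\ell^1$-norm also lies in $G$, in any arrangement of its entries. That vector has entries $\lfloor |y|_1/d\rfloor$ and $\lceil |y|_1/d\rceil$. Both of your cases follow from this, but for the lemma as stated the paper's one-step averaging is shorter.
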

\begin{proof}
    It is easy to see that the right-hand side is bounded by the left-hand side, so we will consider inequality in the opposite direction only. Let $a=d \widetilde{\kappa}(G)= |x|_1$ for some $x \in G \cap \Z^d$. \\
    \textbf{1) If $a<d$.} \\
    Since $x \in \Z^d$ we have that $| \supp(x)| \le a$, by 1-symmetry of $G$ without loss of generality we can assume that $\supp(x) \subseteq [a]$ and that $x_i \ge 0$ for all $i \in [d]$. Then consider $v \in \R^d$ given by 
    \[
    v= \se{\sigma \in \Sym(a)} \sigma \cdot x \in G.
    \]
    (Note every $\sigma \in \Sym(a)$ can be extended, say by identity to $\widetilde{\sigma} \in \Sym(d)$, this way the above notation makes sense).
    Then we have $v_i=0$ for $i>a$ and $v_i=1$ for $i \le a$, hence
    \[
    a=d\widetilde{\kappa}(G) \le  \max_{n \in \N_0} \Big\{ (\underbrace{1,1,\ldots,1}_{n\text{-times}},0,...,0) \in G \Big\}.
    \]
    \textbf{2) If $a \ge d$.}
    We define 
    \[
    v=\se{\sigma \in \Sym(d)} \sigma \cdot x \in G,
    \]
    then $v= \overrightarrow{\frac{a}{d}}$ by taking convex combination with $\overrightarrow{0}$, we get $\overrightarrow{1} \in G$.
    Hence
    \[
    d=\max_{n \in \N_0} \Big\{ (\underbrace{1,1,\ldots,1}_{n\text{-times}},0,...,0) \in G \Big\}.
    \]
\end{proof}
Moving forward, we shall say that $G$ is in the small-scale regime (or simply small) if $\widetilde{\kappa}(G) \lesssim 1$. Conversely, we refer to the case $\widetilde{\kappa}(G) \gtrsim 1$ as the large-scale regime. 
\begin{Rem}
    In the proof we used $x \in \Z^d$ only to make deduction $|x|_1 \le a \implies |\supp(x)| \le a$. \\
    Note that for Lemma \ref{kappa tilde property} we have 
    \begin{equation} \label{disc_incl}
        d \widetilde{\kappa}(G) B^{1,(d)} \cap \{-1,0,1 \}^d \subseteq G \cap \Z^d \subseteq d \widetilde{\kappa}(G) B^{1,(d)} \cap \Z^d,
    \end{equation}
    the above serves as a substitute of \eqref{cont_incl} in small-scale regime. Inclusions \eqref{disc_incl} combined with \cite[Corollary 1.8]{KNW} show that if $\widetilde{\kappa}(G) \le d^{-1/2}$, then 
    \[
    |G \cap \Z^d| \approx |d \widetilde{\kappa}(G) B^{1,(d)} \cap \{-1,0,1 \}^d|= 2^{d \widetilde{\kappa}(G)} \binom{d}{d \widetilde{\kappa}(G)}.
    \]
    The constant $\widetilde{\kappa}(G)$ does not behave well under arbitrary scaling, for instance if $G=[-0.99,0.99]^d$, then
    \[
    \widetilde{\kappa}(G)=0, \quad \text{but} \quad \widetilde{\kappa}\Big(\frac{100}{99}G \Big)=1.
    \]
    However we have the following substitute 
    \[
    \widetilde{\kappa}(tG) \ge \lfloor t \rfloor \widetilde{\kappa}(G),
    \]
    where $t>0$. This implies for instance that sequence $\widetilde{\kappa}(2^nG)$ for $n \in \Z$ is a lacunary sequence. Note that if $N \in \N$, $N\le d^{1/q}$, then
    \[
    \widetilde{\kappa}(B_N^{q,(d)})= \frac{\lfloor N^q \rfloor}{d}.
    \]
\end{Rem}
    In the large-scale regime we have $\widetilde{\kappa}(G) \approx\kappa(G)$ as is shown in the following lemma. 
\begin{Lem} \label{tilde kappa comparable to kappa in large scale}
 For any $G \in \mathcal{F}_d$ we have
 \[
 \widetilde{\kappa}(G) \le \kappa(G) \le \widetilde{\kappa}(G)+1.
 \]
\end{Lem}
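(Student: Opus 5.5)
The first inequality $\widetilde{\kappa}(G) \le \kappa(G)$ is immediate from the definitions: $\widetilde{\kappa}(G)$ is a maximum of $|x|_1/d$ over the subset $G \cap \Z^d \subseteq G$, while $\kappa(G)$ is the maximum of the same quantity over all of $G$.

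For the second inequality we use Lemma \ref{lem:kappa_def}, which gives $[-\kappa(G),\kappa(G)]^d \subseteq G$. Put $k = \lfloor \kappa(G) \rfloor \in \N_0$. Then $k \le \kappa(G)$, so the integer point $(k,k,\ldots,k)$ lies in $[-\kappa(G),\kappa(G)]^d \subseteq G$. Hence
\[
\widetilde{\kappa}(G) \ge \frac{|(k,\ldots,k)|_1}{d} = k = \lfloor \kappa(G) \rfloor > \kappa(G) - 1,
\]
which gives $\kappa(G) \le \widetilde{\kappa}(G) + 1$.

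There is no real obstacle. The only input beyond the definitions is Lemma \ref{lem:kappa_def}, namely that the diagonal point $\overrightarrow{\kappa(G)}$ belongs to $G$. Together with sign-invariance and convexity, this puts the whole cube $[-\kappa(G),\kappa(G)]^d$ inside $G$, and so the integer diagonal point $(k,\ldots,k)$ is in $G$.
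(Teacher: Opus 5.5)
Your proof is correct and is essentially the paper's argument: the paper takes an arbitrary $x \in G$ with $|x|_1 = d\kappa(G)$ and rounds down coordinatewise, $y_i = \lfloor |x_i| \rfloor$, which stays in $G$ by sign-invariance and convexity, while you round down the particular diagonal maximizer $\overrightarrow{\kappa(G)}$ supplied by Lemma \ref{lem:kappa_def}. The only trade-off is that the paper's version uses only 1-unconditionality, whereas yours uses permutation invariance (through Lemma \ref{lem:kappa_def}) and in return gives directly the slightly sharper bound $\widetilde{\kappa}(G) \ge \lfloor \kappa(G) \rfloor$.
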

\begin{proof}
First inequality is obvious, for the second one, let $x \in G$ be s.t. $\kappa(G)=|x|_1/d$ and let $y \in G \cap \Z^d$ be defined by 
\[
y_i= \lfloor |x_i| \rfloor.
\]
Then one obtains
\[
d\kappa(G)=|x|_1 = \sum_{i=1}^d |x_i| \le  \sum_{i=1}^d (y_i+1) \le d \big( \widetilde{\kappa}(G)+1 \big).
\]
\end{proof}
Let us recall a simple upper bound for number of lattice points in $\ell^q$ balls, which will be frequently used.
\begin{Lem}[{\cite[Corollary 2.2]{Ni2}}] \label{Cor 2.2 from my lq paper}
    For any $q \ge 1$, $d \in \N$, $N>0$ we have the following bound:
    \[
    |NB^{q,(d)} \cap \Z^d| \le 2 \Big( \frac{N}{d^{1/q}}+ \frac{1}{2} \Big)^d \cdot 8^d.
    \]
    In particular combining inequality above with \eqref{disc_incl}, we obtain that for any $G \in \mathcal{F}_d$ we have
    \[
    |G \cap \Z^d| \le 2 \Big( \widetilde{\kappa}(G)+ \frac{1}{2} \Big)^d \cdot 8^d.
    \]
\end{Lem}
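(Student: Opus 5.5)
The plan is a standard volume-packing argument. Around each lattice point $x \in NB^{q,(d)} \cap \Z^d$ we place the unit cube $x+[-1/2,1/2]^d$. These cubes have pairwise disjoint interiors and unit volume, so $|NB^{q,(d)}\cap\Z^d|$ is at most the volume of their union. We bound that union by a dilate of $B^{q,(d)}$. Every $y\in[-1/2,1/2]^d$ satisfies $|y|_q\le \tfrac12 d^{1/q}$, so the triangle inequality for $|\cdot|_q$ gives
\[
NB^{q,(d)}+[-1/2,1/2]^d \subseteq \Big(N+\tfrac12 d^{1/q}\Big)B^{q,(d)}.
\]
Using the classical formula $\Vol_d(B^{q,(d)})=\frac{(2\Gamma(1+1/q))^d}{\Gamma(1+d/q)}$, we obtain
\[
|NB^{q,(d)}\cap\Z^d| \le \Big(\frac{N}{d^{1/q}}+\frac12\Big)^d \cdot \frac{d^{d/q}\,2^d\,\Gamma(1+1/q)^d}{\Gamma(1+d/q)}.
\]

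It remains to show the last factor is at most $2\cdot 8^d$; this elementary Gamma-function estimate is the only real step. We use three facts.
\begin{enumerate}
\item Since $1+1/q\in(1,2]$, we have $\Gamma(1+1/q)\le 1$.
\item The lower bound $\Gamma(1+x)\ge (x/e)^x$ holds for all real $x>0$; for instance, integrate $t^xe^{-t}$ over $t\ge x$, where $t^x\ge x^x$. Taking $x=d/q$ gives $d^{d/q}/\Gamma(1+d/q)\le (qe)^{d/q}$.
\item The function $q\mapsto (qe)^{1/q}$ is decreasing on $[1,\infty)$, because the derivative of $\log(qe)/q$ is $-\log q/q^2\le 0$. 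Hence $(qe)^{1/q}\le e$ for $q\ge1$.
\end{enumerate}
Combining these, the factor is at most $(2e)^d\le 8^d$, which gives the claim, even without the leading $2$.

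For the second assertion, we use the right inclusion in \eqref{disc_incl}, namely $G\cap\Z^d\subseteq d\widetilde{\kappa}(G)B^{1,(d)}\cap\Z^d$. Applying the first part with $q=1$ and $N=d\widetilde{\kappa}(G)$ yields $|G\cap \Z^d|\le 2(\widetilde{\kappa}(G)+\tfrac12)^d 8^d$. If $\widetilde{\kappa}(G)=0$ the bound is trivially true, since then $G\cap\Z^d=\{0\}$. The main, though mild, obstacle is making the Gamma estimate uniform in $q\ge1$ and valid for non-integer $d/q$. Both are handled by the real-variable Stirling-type lower bound and the monotonicity of $(qe)^{1/q}$.
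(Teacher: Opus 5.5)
Your proof is correct. Your second assertion is obtained exactly as in the paper: apply the first part with $q=1$ and $N=d\widetilde{\kappa}(G)$ to the right-hand inclusion $G\cap\Z^d\subseteq d\widetilde{\kappa}(G)B^{1,(d)}\cap\Z^d$ of \eqref{disc_incl}. Your remark on $\widetilde{\kappa}(G)=0$ correctly covers the excluded case $N=0$.

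For the first inequality the routes differ, because the paper gives no argument and simply imports it from \cite[Corollary 2.2]{Ni2}. You instead give a self-contained proof. Packing unit cubes around the lattice points, together with the inclusion $NB^{q,(d)}+[-1/2,1/2]^d\subseteq \big(N+\tfrac12 d^{1/q}\big)B^{q,(d)}$, reduces the count to the volume $\frac{(2\Gamma(1+1/q))^d}{\Gamma(1+d/q)}$ of $B^{q,(d)}$. You then control that volume with $\Gamma(1+1/q)\le 1$, $\Gamma(1+x)\ge (x/e)^x$ for real $x>0$, and $(qe)^{1/q}\le e$ for $q\ge 1$. Each step checks out; the first follows from convexity of $\Gamma$ on $[1,2]$ with $\Gamma(1)=\Gamma(2)=1$. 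Uniformity in $q$ and non-integer $d/q$ are handled properly.

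What your route buys is independence from the external reference and a slightly stronger bound, $|NB^{q,(d)}\cap\Z^d|\le (2e)^d\big(\frac{N}{d^{1/q}}+\frac12\big)^d$, with no leading factor $2$. Since this is pointwise stronger than the stated bound, it can be substituted wherever the paper later uses the explicit constant $2\cdot 8^d$. The paper's citation buys only brevity.
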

\subsection{Projections and extensions of 1-symmetric convex bodies}
For $G \in \mathcal{F}_d$, $i \in [d]$ we define $\pi_i: \R^d \to \R^{d-1}$ to be the projection onto hyperplane orthogonal to $e_i$. Moreover for $i \in [d+1]$ we define $E_i: \R^{d} \to \R^{d+1}$ to be it's "adjoint" by the formula
\[
E_i(y_1,...,y_{d})=(y_1,...,y_{i-1},0,y_i,y_{i+1},...,y_{d}) \in \R^{d+1}.
\]
\begin{Defn}
For $G \in \mathcal{F}_d$ we define $E(G) \subseteq \R^{d+1}$ by
\[
E(G)= \conv\Big( \Big\lbrace E_ix : x \in G, i \in [d+1] \Big\} \Big).
\]
Moreover for $k \in \N_0$ we define 
\[
G^{(d+k)}=\underbrace{E \circ E \circ \ldots \circ E}_{k\text{-times}} (G).
\]
\end{Defn}
The extension operator $E$ creates higher dimensional object exactly the same way that $B^{1,(d+1)}$ is constructed from $B^{1,(d)}$, that is $E(B^{1,(d)})=B^{1,(d+1)}$. This is not the case for other $\ell^q$ balls. We shall use both extension and projections of convex bodies later, hence we present basic facts regarding them.
\begin{Fact} \label{fact about proj and ext}
 Let $G \in \mathcal{F}_d$, $i \in [d]$. We have
 \begin{itemize}
     \item 
     \[
     \pi_i(G) \in \mathcal{F}_{d-1}, \quad \text{and} \quad E(G) \in \mathcal{F}_{d+1},
     \]
     \item 
     \[
     E\big(\pi_i(G) \big) \subseteq G,
     \]
     \item 
     \[
     \kappa(G) \le \kappa\big(\pi_i(G)\big) \le \frac{d}{d-1} \kappa(G), \quad \kappa\big(E(G)\big)= \frac{d}{d+1} \kappa(G).
     \]
     \item \[
     \widetilde{\kappa}(G) \le \widetilde{\kappa}\big(\pi_i(G)\big) \le \frac{d}{d-1} \widetilde{\kappa}(G), \quad  \frac{d}{d+1} \widetilde{\kappa}(G) \le \widetilde{\kappa}\big(E(G)\big) \le \widetilde{\kappa}(G).
     \]
 \end{itemize}
\end{Fact}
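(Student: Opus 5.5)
We verify the four bullets of Fact \ref{fact about proj and ext} in order, using only convexity, the symmetries, and the characterizations of $\kappa$ and $\widetilde{\kappa}$ in Lemmas \ref{lem:kappa_def} and \ref{kappa tilde property}. By permutation invariance, $\pi_i(G)$ does not depend on $i$ up to relabelling, so we may take $i=d$.

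\textbf{First bullet.} The image $\pi_d(G)$ is a linear image of a convex body onto $\R^{d-1}$. It is compact and convex. It has nonempty interior because it contains $\pi_d([-\kappa(G),\kappa(G)]^d)$. It inherits sign invariance and invariance under permutations of the first $d-1$ coordinates, since these commute with $\pi_d$. For $E(G)$, the generating set $\bigcup_i E_i(G)$ is compact, so its convex hull is compact. It contains $[-\kappa(G),\kappa(G)]^{d+1}$ up to scaling, so it has interior. A signed permutation of $\R^{d+1}$ maps each $E_i(G)$ onto some $E_j(G)$, because $E_i(G)$ is the copy of $G$ sitting in the coordinate hyperplane $\{x_i=0\}$ and $G$ is 1-symmetric. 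Hence the convex hull is invariant as well.

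\textbf{Second bullet.} Here we must show $E(\pi_d(G))\subseteq G$, where both live in $\R^d$. It suffices to show $E_j(\pi_d(G))\subseteq G$ for each $j$, since $G$ is convex. By permutation invariance we may take $j=d$. So for $x\in G$ we need $(x_1,\dots,x_{d-1},0)\in G$. This holds because this vector is the average of $x$ and its sign flip in the last coordinate.

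\textbf{Third bullet: $\kappa$ of the projection.} We use $\kappa(\cdot)=\max\{a:\overrightarrow{a}\in\cdot\}$ from Lemma \ref{lem:kappa_def}. If $\overrightarrow{a}\in G$, then $\pi_d(\overrightarrow{a})=\overrightarrow{a}\in\R^{d-1}$, which gives $\kappa(G)\le\kappa(\pi_d G)$. Conversely, if $\overrightarrow{b}\in\pi_d(G)$, then by the second bullet $(b,\dots,b,0)\in G$. Averaging over all permutations in $\Sym(d)$ puts $\overrightarrow{b(d-1)/d}$ in $G$, so $\kappa(\pi_dG)\le\frac{d}{d-1}\kappa(G)$.

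\textbf{Third bullet: $\kappa$ of the extension.} This is the main computation. The maximizer of $|x|_1$ over $E(G)$ is attained at a generator $E_iy$ with $y\in G$, since $|\cdot|_1$ is convex. Then $|E_iy|_1=|y|_1\le d\kappa(G)$, so
\[
\kappa(E(G))\le \frac{d\,\kappa(G)}{d+1}.
\]
For the reverse bound, the point $E_i\overrightarrow{\kappa(G)}$ lies in $E(G)$ and has $\ell^1$ norm $d\kappa(G)$, which gives equality.

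\textbf{Fourth bullet.} We run the same argument on lattice points, using the formula $\widetilde{\kappa}=\max_{x\in G\cap\Z^d}|x|_1/d$ directly.
\begin{itemize}
\item Lower bound for the projection. Take $x\in G\cap\Z^d$ with $|x|_1=d\widetilde{\kappa}(G)$. If $\widetilde{\kappa}(G)\le1$, Lemma \ref{kappa tilde property} lets us choose $x$ as a $0/1$ vector with a zero coordinate, which we place last. Then $\pi_d x\in\pi_d(G)\cap\Z^{d-1}$ has the same $\ell^1$ norm, so $\widetilde{\kappa}(\pi_dG)\ge\frac{d}{d-1}\widetilde{\kappa}(G)\ge\widetilde{\kappa}(G)$.
\item If instead $\widetilde{\kappa}(G)>1$, we choose $i$ with $|x_i|$ minimal. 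Then $|x|_1-|x_i|\ge\frac{d-1}{d}|x|_1$, which gives the lower bound $\widetilde{\kappa}(\pi_dG)\ge\widetilde{\kappa}(G)$.
\item Upper bound for the projection. If $z\in\pi_d(G)\cap\Z^{d-1}$, then $(z,0)\in G\cap\Z^d$ by the second bullet, so $|z|_1\le d\widetilde{\kappa}(G)$.
\item Lower bound for the extension. $E_i$ maps $G\cap\Z^d$ into $E(G)\cap\Z^{d+1}$ and preserves $\ell^1$ norms, so $\widetilde{\kappa}(E(G))\ge\frac{d}{d+1}\widetilde{\kappa}(G)$.
\end{itemize}

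\textbf{Main obstacle: upper bound for the extension.} The remaining inequality is $\widetilde{\kappa}(E(G))\le\widetilde{\kappa}(G)$. It is the delicate step, because a lattice point of $E(G)$ need not lie in any single $E_i(G)$. The plan is to first show $\pi_{d+1}(E(G))=G$. The inclusion $\supseteq$ is immediate. For $\subseteq$, each generator projects into $G$: $\pi_{d+1}E_iy$ equals $y$ with one coordinate deleted and a zero inserted, which lies in $G$ by the second bullet applied to $G$. Then for $w\in E(G)\cap\Z^{d+1}$, the first bullet of this argument shows the minimal-coordinate deletion is harmless:
\[
\frac{|w|_1}{d+1}\le\frac{|w|_1-\min_j|w_j|}{d}\le\widetilde{\kappa}\big(\pi_{d+1}E(G)\big)=\widetilde{\kappa}(G).
\]

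The second inequality uses the fact that the projection of $w$ along a minimal coordinate lies in $\pi_{d+1}(E(G))\cap\Z^d=G\cap\Z^d$, relabelling coordinates by permutation invariance. This completes the proof.
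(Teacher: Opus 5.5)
The paper states this as a Fact without proof, so there is no argument of the paper's to compare against. Your verification is correct and complete.

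There is one small slip, in the lower bound for $\widetilde{\kappa}$ of the projection. When $\widetilde{\kappa}(G)=1$, Lemma~\ref{kappa tilde property} gives $\overrightarrow{1}\in G$, which has no zero coordinate. The intermediate claim $\widetilde{\kappa}(\pi_d G)\ge \frac{d}{d-1}\widetilde{\kappa}(G)$ then fails; for example, $G=[-1,1]^d$ has $\widetilde{\kappa}(\pi_dG)=1$. This is harmless. Your minimal-coordinate bound $|x|_1-\min_j|x_j|\ge\frac{d-1}{d}|x|_1$ holds for every value of $\widetilde{\kappa}(G)$, so you can drop the case split entirely.

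Your treatment of the delicate inequality $\widetilde{\kappa}(E(G))\le\widetilde{\kappa}(G)$ works. You establish $\pi_{d+1}(E(G))=G$, using the second bullet to see that each generator $E_iy$ projects into $G$. That reduces the extension upper bound to the projection lower bound applied to $E(G)\in\mathcal{F}_{d+1}$.
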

Thee fact that $\kappa(G)$ is stable under projections/extensions will be used in later sections.

\section{Lattice points in large-scale regime}
In this section we will prove the first part of Theorem \ref{Bobkov-Nazarov main thm} and part of Theorem \ref{iso const}.
We start with partial result towards concentration in $G \cap \Z^d$, which is just a generalization of \cite[Lemma 2.4]{balls} and says that we see the expected behavior on large portion of coordinates.
\begin{Lem} \label{pos_prop_conc}
For every $d \in \N, G \in \mathcal{F}_d$ such that $\kappa(G) \ge 10$ we have
\[
\spr{x \in G \cap \Z^d} \Big(\big| \{ i \in [d]: |x_i| \ge  \kappa(G)/10 \}\big| \le  d/10\Big) \le 2 e^{-d/10} 
\]
\end{Lem}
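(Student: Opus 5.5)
We argue by counting. Write $\kappa=\kappa(G)$ and let $B\subseteq G\cap\Z^d$ be the set of lattice points for which at most $d/10$ coordinates satisfy $|x_i|\ge \kappa/10$. We bound $|B|$ from above and $|G\cap\Z^d|$ from below, and show the ratio is at most $2e^{-d/10}$.

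\textbf{Lower bound.} By Lemma \ref{lem:kappa_def} we have $[-\kappa,\kappa]^d\subseteq G$. Hence
\[
|G\cap\Z^d|\ge (2\lfloor\kappa\rfloor+1)^d\ge (2\kappa-1)^d .
\]
Since $\kappa\ge 10$, this is at least $(1.9\kappa)^d$.

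\textbf{Upper bound.} A point $x\in B$ is determined by three pieces of data.
\begin{enumerate}
\item The set $S$ of its large coordinates, with $|S|\le d/10$. There are at most $\sum_{k\le d/10}\binom dk\le (10e)^{d/10}$ choices.
\item The coordinates outside $S$. Each is an integer in $(-\kappa/10,\kappa/10)$, so there are at most $(\kappa/5+1)^{d}$ choices in total.
\item The coordinates inside $S$. Since $G\subseteq d\kappa B^{1,(d)}$ by \eqref{cont_incl}, we have $|x|_1\le d\kappa$. So $x|_S$ is a lattice point in $d\kappa B^{1,(|S|)}$. For $k=|S|$, Lemma \ref{Cor 2.2 from my lq paper} with $q=1$ gives at most
\[
2\Big(\frac{d\kappa}{k}+\frac12\Big)^k 8^k
\]
choices. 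The function $k\mapsto (Cd\kappa/k)^k$ is increasing for $k\le d/10$, because $d\kappa/k\ge 10\kappa>e$. So we may take $k=d/10$, which gives at most $2(10\kappa+1)^{d/10}8^{d/10}\le 2(88\kappa)^{d/10}$.
\end{enumerate}

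\textbf{Ratio.} Combining,
\[
\frac{|B|}{|G\cap\Z^d|}\le 2\,(10e)^{d/10}(88\kappa)^{d/10}\,\frac{(\kappa/5+1)^{d}}{(1.9\kappa)^d}.
\]
Using $\kappa\ge10$ we have $\kappa/5+1\le 0.3\kappa$. The powers of $\kappa$ then give $\kappa^{d/10+d-d}=\kappa^{d/10}$. This is a problem: the bound is not uniform in $\kappa$.

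\textbf{Main obstacle.} The crude estimate on the $S$-coordinates, using only $|x|_1\le d\kappa$, loses a factor $\kappa^{d/10}$. To fix this I would use the box $[-\kappa,\kappa]^d$ more efficiently. The natural idea is to compare $B$ with the points of $G$ that have many coordinates of size $\approx\kappa$, via an injection or averaging.

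Concretely, $G$ contains $[-\kappa,\kappa]^d$. Restrict the lower count to lattice points whose coordinates outside some set $T$ of size $d/10$ lie in $[-\kappa,\kappa]$, and consider the map
\[
x\mapsto x \text{ with } x|_{[d]\setminus S} \text{ replaced by an arbitrary point of } [-\kappa,\kappa]^{[d]\setminus S}\cap\Z^{[d]\setminus S}.
\]
We need this to stay in $G$. This holds by unconditionality and convexity: $G$ is 1-unconditional, so for each coordinate the section of $G$ is an interval symmetric about $0$. The useful domination fact for 1-unconditional convex bodies is that $y\in G$ and $|z_i|\le|y_i|$ for all $i$ imply $z\in G$. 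Since $\mathrm{conv}$ of points of $G$ is in $G$, $(x|_S,0)\in G$ and $(0,\text{box})\in G$.

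The cleaner route is a fibre argument. For each admissible $S$ and each $u=x|_S$, compare the fibre of $B$ over $u$ with the fibre of $G\cap\Z^d$ over $u/2$ (rounded). By convexity, $\tfrac12(u,0)+\tfrac12(0,w)\in G$ for every $w\in[-\kappa,\kappa]^{[d]\setminus S}$. So the fibre over $\lfloor u/2\rfloor$ contains roughly $(\kappa+1)^{d-|S|}$ points, whereas the fibre of $B$ over $u$ has at most $(\kappa/5+1)^{d-|S|}$ points.

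Summing over $u$, the $S$-fibres of $G$ over $\lfloor u/2\rfloor$ are hit at most $2^{|S|}$ times each. Summing over $S$ then gives
\[
|B|\le (10e)^{d/10}\,2^{d/10}\,(0.3/0.95)^{9d/10}\,|G\cap\Z^d|.
\]
It remains to verify that this is at most $2e^{-d/10}$. The numerical constants here are the delicate point, and I would tune them (or the thresholds) if necessary.
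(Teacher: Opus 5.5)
Your halving/fibre argument is correct in substance and differs from the paper's proof, but the obstacle that led you to it does not exist. Your first count is exactly the paper's proof, and the stray factor $\kappa^{d/10}$ comes from your own bookkeeping. In item 2 you bounded the coordinates outside $S$ by $(\kappa/5+1)^{d}$ instead of $(\kappa/5+1)^{d-|S|}$. In item 3 you then maximized the $S$-factor over $k$ separately. Keep $k=|S|\ge1$ fixed instead. The points of $B$ with large-coordinate set exactly $S$ then number at most
\[
\Big(\frac{\kappa}{5}+1\Big)^{d-k}\cdot 2\Big(\frac{d\kappa}{k}+\frac12\Big)^k8^k\le 2\kappa^d\,(0.3)^{d-k}\Big(\frac{16d}{k}\Big)^k .
\]
The powers of $\kappa$ total exactly $\kappa^d$ and cancel against $|G\cap\Z^d|\ge\kappa^d$. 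Multiplying by $\binom dk\le (ed/k)^k$, summing over $k\le d/10$, and adding the $k=0$ term gives $2(1+d/10)e^{-0.2457d}\le 2e^{-d/10}$. This is precisely the paper's argument, which uses both inclusions in \eqref{cont_incl} together with Lemma \ref{Cor 2.2 from my lq paper}.

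Your fallback argument works after two small repairs.
\begin{itemize}
\item Since $\tfrac12(u,0)+\tfrac12(0,w)=(u/2,w/2)$, you must round toward zero: set $\tilde u_i=\sgn(u_i)\lfloor |u_i|/2\rfloor$. Only then does unconditional domination give $(\tilde u,w')\in G$. With $\lfloor u/2\rfloor$, a coordinate $u_i=-1$ maps to $-1$ and the convex-combination step fails.
\item The fibre of $G\cap\Z^d$ over $\tilde u$ contains the integer points of $[-\kappa/2,\kappa/2]^{[d]\setminus S}$. That is at least $(2\lfloor\kappa/2\rfloor+1)^{d-|S|}\ge(\kappa-1)^{d-|S|}$ points, not roughly $(\kappa+1)^{d-|S|}$; your $0.95\kappa$ is also not valid for all $\kappa\ge10$.
\end{itemize}
The map $u\mapsto\tilde u$ is at most $2^{|S|}$-to-one because $|u_i|\ge\kappa/10\ge1$. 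For $\kappa\ge10$ one has $(\kappa/5+1)/(\kappa-1)\le 1/3$, so
\[
\frac{|B|}{|G\cap\Z^d|}\le\sum_{k\le d/10}\binom dk 2^k3^{-(d-k)}\le (60e)^{d/10}\,3^{-d}\le e^{-0.58d},
\]
and the numerics are not delicate.

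Compared with the paper, this route never counts the large coordinates. It uses only $[-\kappa,\kappa]^d\subseteq G$ and $1$-unconditionality with convexity. It needs neither the upper inclusion $G\subseteq d\kappa B^{1,(d)}$ nor the lattice-point bound for $\ell^1$-balls, so it applies to any $1$-unconditional body containing that cube. The paper's direct count is shorter given the tools already set up in Section 2.
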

\begin{proof}
Let $\varepsilon=1/10$, define
\[
E=\Big\{ x \in G \cap \Z^d: | \{ i \in [d]: |x_i| \ge \varepsilon \kappa(G) \}| \le \varepsilon d \Big\}
\]
and for $x \in G \cap \Z^d$ let
\[
I_x= \big\{ i \in [d]: |x_i| \ge \varepsilon \kappa(G) \big\}.
\]
Note that if $x \in E$, then $|I_x| \le \varepsilon d$ and hence we have 
\[
E \subseteq \bigcup_{\substack{I \subseteq [d], \\ |I| \le \varepsilon d}} \Big\{ x\in G \cap \Z^d: I_x=I \Big\}.
\]
Moreover by the definition of $\kappa(G)$ we have
\[
\Big|  \Big\{ x\in G \cap \Z^d: I_x=I \Big\} \Big| \le (2 \varepsilon \kappa(G)+1)^{d-|I|} |d \kappa(G) B^{1,(|I|)} \cap \Z^{|I|}|.
\]
Using the above and Lemma \ref{Cor 2.2 from my lq paper} for $q=1$ we get
\begin{equation}
    \begin{split}
    &|E| \le \sum_{\substack{I \subseteq [d], \\ |I| \le \varepsilon d}}  (2 \varepsilon \kappa(G)+1)^{d-|I|} |d \kappa(G) B^{1,(|I|)} \cap \Z^{|I|}|\\
    &= \sum_{m \le \varepsilon d} \binom{d}{m}(2 \varepsilon \kappa(G)+1)^{d-m} |d \kappa(G) B^{1,(m)} \cap \Z^{m}| \\
    &\le (2\varepsilon \kappa(G)+1)^d+ 2 \sum_{1 \le m \le \varepsilon d} \binom{d}{m}(2 \varepsilon \kappa(G)+1)^{d-m} \Big(\frac{d\kappa(G)}{m}+\frac{1}{2}\Big)^m 8^m \\
    &\le (3\varepsilon \kappa(G))^d+2 \sum_{1 \le m \le \varepsilon d}\frac{d^m}{m!}(2 \varepsilon \kappa(G)+1)^{d-m} \Big(\frac{d\kappa(G)}{m}+\frac{1}{2}\Big)^m 8^m \\
    &\le (3\varepsilon \kappa(G))^d+2 \sum_{1 \le m \le \varepsilon d}\frac{d^m}{m!}(3 \varepsilon \kappa(G))^{d-m} \Big(2\frac{d\kappa(G)}{m}\Big)^m 8^m \\
    &\le (3\varepsilon \kappa(G))^d+2 \cdot (3\varepsilon)^{d-\varepsilon d} \kappa(G)^d \sum_{1 \le m \le \varepsilon d}  \Big( \frac{4 e^{1/2}d}{m} \Big)^{2m} \le 2 \kappa(G)^d \Big( 3 \varepsilon \cdot \Big(\frac{16e}{3\varepsilon^3} \Big)^{\varepsilon} \Big)^d (1+ \varepsilon d) \\
    &\le 2 \kappa(G)^d (1+ \frac{d}{10}) e^{-d \cdot 0.2457...} \le 2 e^{-d/10}  \kappa(G)^d \le 2 e^{-d/10} |G \cap \Z^d|
    \end{split}
\end{equation}
We've used the fact that $\kappa(G) \ge 10$ and that function $(0,a/e] \ni t \mapsto (a/t)^t$ is increasing and calculation
\[
3 \varepsilon \cdot \Big(\frac{16e}{3\varepsilon^3} \Big)^{\varepsilon}=e^{- 0.2457...},
\]
and $[-\kappa(G), \kappa(G)]^d \subseteq G$.
\end{proof}
\begin{Lem}[Number of lattice points in projection of convex body] \label{dim_comp}
    Consider $d \in \N$ and $G \in \mathcal{F}_d$. Let $\pi_1 : \R^d \to \R^{d-1}$ be a projection onto hyperplane orthogonal to the vector $e_1=(1,0,0 \ldots, 0)$.
    \begin{itemize}
        \item If $d \ge 20$ and $\widetilde{\kappa}(G) \ge 1/6$, then we have
    \[
   \frac{1}{20} \widetilde{\kappa}(G)^{-1} |G \cap \Z^d| \le | \pi_1(G) \cap \Z^{d-1} | \le 144 \widetilde{\kappa}(G)^{-1} |G \cap \Z^d|.
    \]
    \item If $\widetilde{\kappa}(G) \le 1/6$, then we have
    \[
   \frac{1}{4} |G \cap \Z^d| \le | \pi_1(G) \cap \Z^{d-1} | \le  |G \cap \Z^d|.
    \]
    \end{itemize}
\end{Lem}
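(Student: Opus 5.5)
The plan is to use that for a $1$-unconditional body the section $\{x_1=0\}$ coincides with the projection. Indeed, Fact \ref{fact about proj and ext} gives $E_1(\pi_1(G))\subseteq G$, and the converse inclusion is trivial. Hence $G\cap\Z^d$ fibres over $\pi_1(G)\cap\Z^{d-1}$:
\[
|G\cap\Z^d|=\sum_{y\in \pi_1(G)\cap \Z^{d-1}}\big(2r(y)+1\big),\qquad r(y)=\max\{s\in\N_0:(s,y)\in G\}.
\]
Each fibre has at least one point, which immediately gives $|\pi_1(G)\cap\Z^{d-1}|\le |G\cap\Z^d|$. This proves the upper bound in the second bullet, and also the upper bound in the first bullet whenever $\widetilde{\kappa}(G)\le 144$.

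\textbf{Lower bounds on $|\pi_1(G)\cap\Z^{d-1}|$.} These come from a Markov argument. By permutation invariance,
\[
\se{x\in G\cap\Z^d}|x_1|=\se{x\in G\cap\Z^d}\frac{|x|_1}{d}\le\widetilde{\kappa}(G).
\]
If $\widetilde{\kappa}(G)\le 1/6$, then $\Pro(x_1\neq 0)\le\Pro(|x_1|\ge1)\le 1/6$. The points with $x_1=0$ are in bijection with $\pi_1(G)\cap\Z^{d-1}$, so $|\pi_1(G)\cap\Z^{d-1}|\ge \tfrac56|G\cap\Z^d|\ge\tfrac14|G\cap\Z^d|$.

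If $\widetilde{\kappa}(G)\ge 1/6$, Markov gives that at least half of $G\cap\Z^d$ satisfies $|x_1|\le 2\widetilde{\kappa}(G)$. The projection $\pi_1$ restricted to this set is at most $(4\widetilde{\kappa}(G)+1)$-to-one, and $4\widetilde{\kappa}+1\le 10\widetilde{\kappa}$ in this range. Therefore $|G\cap\Z^d|\le 20\,\widetilde{\kappa}(G)\,|\pi_1(G)\cap\Z^{d-1}|$, which is exactly the constant $1/20$ in the statement.

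\textbf{Upper bound in the large-scale case, $\widetilde{\kappa}(G)\ge 144$.} Here I would show that a typical $y\in\pi_1(G)\cap\Z^{d-1}$ can be sent to many lattice points of $G$ with first coordinate of size $\gtrsim\widetilde{\kappa}(G)$, with controlled multiplicity. Note that $\kappa(\pi_1 G)\ge\kappa(G)\ge\widetilde{\kappa}(G)\ge 144$, so Lemma \ref{pos_prop_conc} applies to $\pi_1(G)$. It shows that all but a $2e^{-(d-1)/10}$ fraction of $y$ have a set $J_y$ of at least $(d-1)/10$ coordinates with $|y_j|\ge\kappa(G)/10$; the assumption $d\ge 20$ makes this exceptional fraction at most a constant less than $1$.

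The key geometric input is a Robin Hood transfer. If $(0,a)\in G$ with $a\ge s\ge0$, then $(s,a-s)$ is a convex combination of $(0,a)$ and $(a,0)$, so it lies in $G$ by symmetry and convexity; applied coordinatewise, this lets mass move from the coordinates $y_j$, $j\in J_y$, into the first coordinate.

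I would then fix $k=\lfloor 10\widetilde{\kappa}(G)/d\rfloor$ (or $1$ if this is $0$), which is at most about $\kappa(G)/20$. For each $s\lesssim\widetilde{\kappa}(G)$, write $s=qk+r$ and reduce the first $q$ indices of $J_y$ by $k$ each (toward $0$) and one further index by $r$; place $s$ in the first coordinate. This produces $\gtrsim\widetilde{\kappa}(G)$ points of $G\cap\Z^d$ per good $y$.

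\textbf{Main obstacle.} The hardest step is bounding the multiplicity of this map by an absolute constant, i.e.\ recovering $y$ from $(s,y')$. Since the rule is deterministic ("first indices of $J_y$") and the reduced coordinates stay $\ge\kappa(G)/20$, the modified indices should be determinable from $y'$ and $s$ up to $O(1)$ ambiguity: for instance, track which indices of $y'$ lie in the band $[\kappa/20,\kappa/10)$ versus above it. I expect this bookkeeping, in particular making the set $J_y$ recoverable after the reduction, to be the delicate part. If it fails, a fallback is to compare with the continuous inclusions \eqref{cont_incl} via Lemma \ref{Cor 2.2 from my lq paper}, which costs only constants to the power $1$ because one is comparing dimensions $d$ and $d-1$. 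Either route should give the constant $144\,\widetilde{\kappa}(G)^{-1}$ up to the numerical value.
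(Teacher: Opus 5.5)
Your lower bounds and the bound $|\pi_1(G)\cap\Z^{d-1}|\le|G\cap\Z^d|$ are correct and essentially coincide with the paper's argument. The paper counts via $S_x=\{i:|x_i|\le 2\widetilde{\kappa}(G)\}$ with $|S_x|>d/2$, which is your Markov step.

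The gap is in the only nontrivial part, $|\pi_1(G)\cap\Z^{d-1}|\le 144\,\widetilde{\kappa}(G)^{-1}|G\cap\Z^d|$ for $\widetilde{\kappa}(G)\ge 144$.

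\textbf{The multiplicity bound fails.} The $O(1)$ multiplicity you expect does not hold for your rule. Take $G=[-N,N]^d$ with $N\ge 144$ an integer and $d>10N$, so $k=1$. For $s=1$ you lower the first index of $J_y$ by one. Now let $y'$ have its first $m$ coordinates equal to $\lceil N/10\rceil-1$ and its remaining coordinates equal to $N$. Each of these $m$ indices could have been the donor, so $(1,y')$ has $m$ preimages, and $m$ can be about $0.9d$. Controlling how often such configurations occur would require point-mass bounds for the coordinates of lattice points in $G$, which is essentially what the lemma asserts.

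\textbf{The fallback also fails.} The inclusions $[-\kappa(G),\kappa(G)]^d\subseteq G\subseteq d\kappa(G)B^{1,(d)}$ determine $|G\cap\Z^d|$ only up to a factor $C^d$; already for volumes the ratio is $d^d/d!\approx e^d$. Bounding $|\pi_1(G)\cap\Z^{d-1}|$ from above by a cross-polytope and $|G\cap\Z^d|$ from below by a cube leaves this exponential factor uncancelled.

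\textbf{A smaller issue.} For $d=20$ your choice $k=\lfloor 10\widetilde{\kappa}(G)/d\rfloor\approx\widetilde{\kappa}(G)/2$ exceeds $\kappa(G)/10$, so the transfers can overshoot the donors.

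\textbf{The missing idea.} Give up bounded multiplicity and let the donor vary instead. A single coordinate with $|y_j|\ge\kappa(G)/10$ can donate any $k\in[0,\kappa(G)/10]$, so there is no need to spread mass over many coordinates.

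Let $Z_y=\{j:|y_j|\ge\kappa(G)/10\}$ and call $y$ good if $|Z_y|\ge(d-1)/10$. By Lemma \ref{pos_prop_conc} applied to $\pi_1(G)$, and using $d\ge 20$, good $y$ form at least a $0.7$ fraction of $\pi_1(G)\cap\Z^{d-1}$. For good $y$, the paper takes all points
\[
f_{k,j}(y)=(y_1,\ldots,y_j-k\sgn(y_j),\ldots,y_{d-1},k\sgn(y_j)),\qquad j\in Z_y,\ 0\le k\le\kappa(G)/10.
\]
These lie in $G$ as convex combinations of $(y,0)$ and its image under swapping coordinates $j$ and $d$; both points are in $G$ because $E(\pi_1(G))\subseteq G$ and $G$ is permutation invariant.

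The pair $(f_{k,j}(y),j)$ determines $(y,k)$, so every point of $G\cap\Z^d$ is hit at most $d-1$ times. Meanwhile each good $y$ yields at least $\frac{d-1}{10}\cdot\frac{\kappa(G)}{10}$ pairs $(k,j)$. Double counting gives
\[
(d-1)|G\cap\Z^d|\ge \tfrac{d-1}{100}\,\kappa(G)\cdot 0.7\,|\pi_1(G)\cap\Z^{d-1}|,
\]
and $\kappa(G)\ge\widetilde{\kappa}(G)$ finishes. The factor $d$ lost in the multiplicity is exactly recovered by the roughly $d/10$ choices of donor.
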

Lemma above is a discrete analogue of result that for any $K \subseteq \R^d$ symmetric convex body in isotropic position and $\theta \in S^{d-1}$ we have
    \[
    \Vol_{d-1}(K \cap \theta^\perp) \approx L(K)^{-1} \Vol_d(K),
    \]
    where $\theta^\perp$ is the hyperplane orthogonal to $\theta$.
Second part of the theorem should not be surprising, if $\widetilde{\kappa}(G) \le 1/6$, then most of the coordinates of any $x \in G \cap \Z^d$ are equal to $0$.
\begin{proof}[Proof of Lemma \ref{dim_comp}]
We will prove two inequalities separately.
\\
\textbf{1) $ |G \cap \Z^d| \le (1+4 \widetilde{\kappa}(G)) | \pi_1(G) \cap \Z^{d-1} |$.} \\
For $x \in G \cap \Z^d$ let 
\[
S_x=\Big\{ i \in [d]: |x_i| \le 2\widetilde{\kappa}(G) \Big\}, 
\]
then for every $x \in G \cap \Z^d$ we have
\[
d \widetilde{\kappa}(G) \ge |x|_1 > 2 \widetilde{\kappa}(G) \big(d-|S_x|\big) \implies |S_x| >d/2.
\]
By an invariance under permutations argument we get
\[
|G \cap \Z^d| \le 2 \big|\big\{ x \in G \cap \Z^d : |x_1| \le 2 \widetilde{\kappa}(G)\big\} \big| \le 2(1+4 \widetilde{\kappa}(G)) |\pi_1(G) \cap \Z^{d-1}|.
\]
This shows that if $\widetilde{\kappa}(G) \ge 1/6$, then we have 
\[
|G \cap \Z^d| \le 20\widetilde{\kappa}(G) | \pi_1(G) \cap \Z^{d-1} |
\]
and if $\widetilde{\kappa}(G) \le 1/6$, then we have
\[
|G \cap \Z^d| \le 4 | \pi_1(G) \cap \Z^{d-1} |.
\]

\textbf{2) $ \min\big(1,\frac{1}{144}\kappa(G)\big) |\pi_1(G) \cap \Z^{d-1}| \le  |G \cap \Z^d|$}. \\
\\
First of all we always trivially have
\[
|\pi_1(G) \cap \Z^{d-1}| \le  |G \cap \Z^d|,
\]
this proves inequality $\min\big(1,\frac{1}{144}\widetilde{\kappa}(G)\big)|\pi_1(G) \cap \Z^{d-1}| \le  |G \cap \Z^d|$ in the case $\widetilde{\kappa}(G) \le 144 $. \par
Let us now assume that $\widetilde{\kappa}(G) \ge 144$ and $d \ge 20$.
For $x \in \pi_1(G) \cap \Z^{d-1} $ let 
\[
Z_x= \big\{ i \in [d-1]: |x_i| \ge \kappa(G)/10 \big\}.
\]
By Lemma \ref{pos_prop_conc} and $\kappa(\pi_1(G)) \ge \kappa(G) \ge 10$ we have
\[
0.7007|\pi_1(G) \cap \Z^{d-1}| \le(1-2e^{-(d-1)/10})|\pi_1(G) \cap \Z^{d-1}| \le  \Big| \Big\{ x \in \pi_1(G) \cap \Z^{d-1}: |Z_x| \ge (d-1)/10\Big\} \Big|,
\]
let $E$ denote the set on the right-hand side. For $x \in E$, $k \in \N_0$, $0 \le k \le \kappa(G)/10$, $j \in Z_x$ we define $f_{k,j}(x) \in G$ by the formula
\[
f_{k,j}(x)=\Big(x_1,...,x_{j-1},x_j-k\sgn(x_j), x_{j+1},...,x_{d-1}, k\sgn(x_j) \Big),
\]
The fact that $f_{k,j}(x) \in G$ follows from convexity and  
\[
(x_1,...,x_{j-1},0,x_{j+1},...,x_{d-1},x_j), (x_1,...,x_{j-1},x_j,x_{j+1},...,x_{d-1},0) \in E(\pi_1(G)) \subseteq G.
\]
We will count size including multiplicity of the following multiset 
\[
A=\Big\{ f_{k,j}(x): x \in E, j \in Z_x, k \in \Z, 0 \le k \le \kappa(G)/10\Big\}.
\]
First note that for every $y  \in G \cap \Z^d$ we have
\[
\Big| \Big\{ (x,k,j): f_{k,j}(x)=y \Big\} \Big| \le d-1,
\]
this follows from the fact that data $y,j$ uniquely determines $x$ and $k$. Once in the above set $y$ is fixed, then for $j$ there are at most $d-1$ options. For $y \in A$, $\mult_A(y)$ will denote multiplicity of element $y$ in multiset $A$. Computing the size of $A$ we get 
\begin{align*}
    &(d-1)|G \cap \Z^d| \ge \sum_{y \in G \cap \Z^d} \mult_A(y) = |A|= \Big|\Big\{ (x,k,j): x \in E, j \in Z_x, k \in \Z, 0 \le k \le \kappa(G)/10 \Big\} \Big| \\
    &= \sum_{x \in E} |Z_x| (1+ \lfloor \kappa(G)/10 \rfloor) \ge\frac{d-1}{100} \kappa(G) |E| \ge \frac{d-1}{100}\cdot0.7007 \cdot \frac{143}{144}\widetilde{\kappa}(G) |\pi_1(G) \cap \Z^{d-1}|,
\end{align*}
above we have used $\kappa(G) \ge  \widetilde{\kappa}(G)-1$, comparing both sides finishes the proof.
\end{proof}
\begin{Lem} \label{big_coord}
    Let $d \in \N, G \in \mathcal{F}_d$ satisfy $d \ge 20$, $\widetilde{\kappa}(G) \ge 1/2$. Take $r \in \N$, $r \le d$ and distinct natural numbers $1 \le i_1,...,i_r \le d$. Then for all $j_1,...,j_r \in \N_0$ with $d\widetilde{\kappa}(G) \ge j_l \ge 200 \widetilde{\kappa}(G)$ we have
    \[
    \spr{x \in G \cap \Z^d} \Big( |x_{i_1}|=j_1, \ldots,|x_{i_r}|=j_r \Big) \le  10^{-r}\widetilde{\kappa}(G)^{-r}\exp\Big(- \frac{\sum_{l=1}^rj_l}{63\widetilde{\kappa}(G)} \Big) .
    \]
\end{Lem}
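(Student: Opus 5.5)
Write $\widetilde\kappa=\widetilde{\kappa}(G)$ and use 1-symmetry to take $(i_1,\dots,i_r)=(1,\dots,r)$. The plan is an injection/shifting argument that compares the points with prescribed large coordinates to the lattice points of a projected body. We then peel off coordinates one at a time, using Lemma \ref{dim_comp} to control the cost of each projection.

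\textbf{Step 1 (one coordinate).} Fix $j=j_1\ge 200\widetilde\kappa$ and set
\[
S_j=\{x\in G\cap\Z^d: |x_1|=j\}.
\]
For $x\in S_j$ and each $m\in\Z$ with $0\le m\le j$, consider the point $y=(\sgn(x_1)(j-m),x_2,\dots,x_d)$. By convexity and 1-unconditionality, $y\in G\cap\Z^d$. This alone only gives that $k\mapsto|S_k|$ is nonincreasing for $k\ge0$, which is not enough for exponential decay.

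To get decay, we instead move mass from $x_1$ onto small coordinates. For $x\in S_j$, at least $d/2$ indices have $|x_i|\le 2\widetilde\kappa$; this follows as in part 1) of the proof of Lemma \ref{dim_comp}. Lower $|x_1|$ by $\ell$ and raise one such small coordinate by $\ell$. The resulting point stays in $G$ by the same convex-combination trick used to define $f_{k,j}$ in Lemma \ref{dim_comp}: it is a convex combination of $x$ and of $x$ with coordinates $1$ and $i$ transposed, which lies in $G$ by permutation invariance, provided $\ell\le j-2\widetilde\kappa$.

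Iterating this with many coordinates, a point of $S_j$ generates many distinct points of $S_{j-\ell}$. Here $\ell\approx 60\widetilde\kappa$ can be spread over roughly $\ell$ small coordinates with one unit each, choosing those among the $\ge d/2$ coordinates with $|x_i|\le 2\widetilde\kappa$. This gives about $\binom{d/2}{\ell}$ images, while each preimage is counted with multiplicity at most about $\binom{d}{\ell}(4\widetilde\kappa+1)^{\ell}$-type factors. I am unsure these ratios balance, so the fallback is the cleaner recursion
\[
|S_j|\le e^{-1}\,|S_{j-c\widetilde\kappa}|\qquad (c\approx 63).
\]
This recursion should come from the observation that $\{x: |x_1|\ge j\}$ injects, via the map reducing $x_1$ by $\lfloor c\widetilde\kappa\rfloor$ and adding $1$ to $\lfloor c\widetilde\kappa\rfloor$ chosen zero-or-small coordinates, with enough multiplicity gain. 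Iterating the recursion down to the threshold $200\widetilde\kappa$ yields the factor $\exp(-j/(63\widetilde\kappa))$, up to a constant absorbed by the threshold.

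\textbf{Step 2 (normalization).} At the threshold, bound
\[
|S_k|\le 2\,|\pi_1(G)\cap\Z^{d-1}|\le 288\,\widetilde\kappa^{-1}|G\cap\Z^d|
\]
using Lemma \ref{dim_comp}; this requires $d\ge20$ and $\widetilde\kappa\ge1/2$. The factor $1/10$ in the statement comes from the slack in the exponential: since $j\ge200\widetilde\kappa$, we have $e^{-j/(63\widetilde\kappa)}\cdot e^{\text{extra}}$, so part of the exponent absorbs $288\cdot10$. The exponent $1/63$ is chosen weaker than the Step 1 rate precisely to allow this.

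\textbf{Step 3 (induction on $r$).} Condition on $|x_{1}|=j_1$ and apply the one-coordinate argument to the remaining coordinates. The shifting in Step 1 only touches coordinate $1$ and small coordinates outside $\{1,\dots,r\}$, and there are still at least $d/2-r$ of these. For large $r$ this fails, so instead use $\sum_l j_l\le d\widetilde\kappa$ together with $j_l\ge200\widetilde\kappa$, which forces $r\le d/200$ and leaves plenty of small coordinates. Then the fibered counts multiply to give $10^{-r}\widetilde\kappa^{-r}\exp(-\sum_l j_l/(63\widetilde\kappa))$.

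The main obstacle is Step 1: getting a genuine exponential rate from a multiplicity-counting injection, that is, making the ratio of images to preimages at least $e$ per shift of size $O(\widetilde\kappa)$, uniformly in $d$.
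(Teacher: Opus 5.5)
There is a genuine gap, and it is the one you flag yourself: Step 1 never establishes exponential decay, and the mechanism you propose cannot supply it. Any map that stays in $G\cap\Z^d$ and moves mass from $x_1$ onto other, already existing coordinates is paid for on the preimage side, because to invert it you must identify which coordinates received the mass.

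For your map (lower $|x_1|$ by $\ell=\lfloor c\widetilde\kappa\rfloor$ and raise $\ell$ small coordinates by one), write $T_j=\{x\in G\cap\Z^d:|x_1|\ge j\}$. Each $x\in T_j$ has at least $\binom{\lceil d/2\rceil}{\ell}$ images. However, an image can have up to $\binom{d-1}{\ell}$ preimages. Double counting therefore only gives $|T_j|\le\binom{d-1}{\ell}\binom{\lceil d/2\rceil}{\ell}^{-1}|T_{j-\ell}|$, and this ratio is at least $1$ (about $2^{\ell}$), so it points the wrong way.

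Moving $m\in[\ell,2\ell]$ units onto a single small coordinate does no better: it only shows that $|S_j|$ is at most $(2+o(1))$ times the average of $|S_k|$ over $k\in[j-2\ell,j-\ell]$. Together with the monotonicity you noted, this gives polynomial decay at best. So the recursion $|S_j|\le e^{-1}|S_{j-c\widetilde\kappa}|$ is only asserted, and Steps 2 and 3 merely normalize and iterate it.

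Step 3 has a further loose end. After conditioning on $|x_1|=j_1$ you are in a slice whose $\widetilde\kappa$ can drop below $1/2$, so the one-coordinate bound cannot simply be reapplied.

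The missing idea is to increase the dimension instead of shuffling mass within it. Put the prescribed coordinates last, and set $k_l=\lfloor j_l/(100\widetilde\kappa)\rfloor$ and $k=\sum_l k_l$.

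\textbf{Injection.} For $x\in G\cap\Z^d$ with $x_{d-r+l}=j_l$, replace each entry $j_l$ by an arbitrary $\vec y^{(l)}\in j_lB^{1,(k_l+1)}\cap\Z^{k_l+1}$. The result lies in the iterated extension $G^{(d+k)}$. This follows from 1-symmetry and convexity, since $j_lB^{1,(k_l+1)}$ is the convex hull of the signed permutations of $j_le_1$. Because the new coordinates sit in fixed positions, this is an injection with no multiplicity loss.

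\textbf{Cost of each new dimension.} Part 1) of Lemma \ref{dim_comp} gives $|H\cap\Z^m|\le 20\widetilde\kappa(H)|\pi_1(H)\cap\Z^{m-1}|$. It applies because $\pi(E(H))=H$ and $\widetilde\kappa(G^{(d+k)})\ge\widetilde\kappa/3\ge 1/6$, where $k\le d$ follows from $\sum_l j_l\le d\widetilde\kappa$. Iterating, $|G^{(d+k)}\cap\Z^{d+k}|\le(20\widetilde\kappa)^k|G\cap\Z^d|$.

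\textbf{Conclusion.} Using $|j_lB^{1,(k_l+1)}\cap\Z^{k_l+1}|\ge(j_l/(k_l+1))^{k_l+1}$ and a factor $2^r$ for signs, the probability is at most $10^{-r}\widetilde\kappa^{-r}\prod_l\big(20(k_l+1)\widetilde\kappa/j_l\big)^{k_l+1}$. The choice of $k_l$ makes each factor at most $5^{-j_l/(100\widetilde\kappa)}\le e^{-j_l/(63\widetilde\kappa)}$.

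The entropy gain comes from spreading $j_l$ over about $j_l/(100\widetilde\kappa)$ fresh coordinates, each carrying roughly $100\widetilde\kappa$ choices. This beats the price of $20\widetilde\kappa$ per added dimension. The prefactor $10^{-r}\widetilde\kappa^{-r}$ comes out as $2^r(20\widetilde\kappa)^{-r}$, not from slack in the exponent as in your Step 2, which uses the other ($144$) direction of Lemma \ref{dim_comp}. All $r$ coordinates are handled at once, so no conditioning is needed.
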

\begin{proof}
By invariance under permutation of coordinates, we can assume without loss of generality, that $i_r=d, i_{r-1}=d-1,\ldots, i_1=d-r+1$. We may also assume that $\sum_{l=1}^r j_l \le d \widetilde{\kappa}(G)$, since otherwise by definition of $\widetilde{\kappa}(G)$ the left hand-side of the lemma would be $0$.
For $l \in [r]$ define $k_l= \lfloor \frac{j_l}{100\widetilde{\kappa}(G)} \rfloor \ge 1$ . We will prove that for any $x=(x_1,...,x_d)  \in G$ with $x_d=j_r, x_{d-1}=j_{r-1},...,x_{d-r+1}=j_1$ and any $\overrightarrow{y}^{(l)}=(y_1^{(l)},...,y_{k_l+1}^{(l)}) \in j_l B^{1,(k_l+1)} \cap \Z^{k_l+1}$ we have
\begin{equation} \label{el in d+k}
(x_1,...,x_{d-r},\overrightarrow{y}^{(1)}, \ldots, \overrightarrow{y}^{(r)}) \in G^{(d+\sum_{l=1}^r k_l)} \cap \Z^{(d+\sum_{l=1}^r k_l)}.
\end{equation}
The above would show that there is an injection 
\[
\Big\{ x \in G \cap \Z^d: (\forall l \in [r]) x_{d-r+l}=j_l
\Big\} \times \Big(j_1 B^{1,(k_1+1)} \cap \Z^{k_1+1} \Big) \times \ldots \times \Big( j_r B^{1,(k_r+1)} \cap \Z^{k_r+1} \Big) \longrightarrow G^{(d+\sum_{l=1}^r k_l)} \cap \Z^{(d+\sum_{l=1}^r k_l)}
\]
It turns out that this suffices to prove Lemma \ref{big_coord}. Indeed, let $k=\sum_{l=1}^rk_l$ assuming \eqref{el in d+k} by Lemma \ref{dim_comp} we get
\begin{equation} \label{eq:dim_incr}
\begin{split}
&\Big| \Big\{ x \in G \cap \Z^d: (\forall l \in [r]) |x_{d-r+l}|=j_l
\Big\} \Big|=2^r \Big|\Big\{ x \in G \cap \Z^d: (\forall l \in [r]) x_{d-r+l}=j_l
\Big\} \Big| \\
&\le 2^r |G^{(d+k)}\cap \Z^{d+k}| \cdot \prod_{l=1}^r| j_l B^{1,(k_l+1)}|^{-1}  \le 2^r |G^{(d+k)}\cap \Z^{d+k}| \prod_{l=1}^r \binom{j_l+k_l+1}{k_l+1}^{-1} \\
&\le 2^r \prod_{l=1}^r\Big( \frac{k_l+1}{j_l} \Big)^{k_l+1} |G^{(d+k-1)}\cap \Z^{d+k-1}| \cdot 20 \widetilde{\kappa}(G^{(d+k)}) \\
&\le 2^r \prod_{l=1}^r\Big( \frac{k_l+1}{j_l} \Big)^{k_l+1} |G^{(d+k-1)}\cap \Z^{d+k-1}| \cdot 20 \widetilde{\kappa}(G) \le \ldots \\
&\le 2^r \prod_{l=1}^r\Big( \frac{k_l+1}{j_l} \Big)^{k_l+1}  20^k \widetilde{\kappa}(G)^k |G \cap \Z^d| = 10^{-r}\widetilde{\kappa}(G)^{-r} \prod_{l=1}^r\Big( \frac{20(k_l+1) \widetilde{\kappa}(G)}{j_l} \Big)^{k_l+1}  |G \cap \Z^d|.
\end{split}
\end{equation}
We were able to apply Lemma \ref{dim_comp}, since by Fact \ref{fact about proj and ext} we have
\begin{align*}
&\widetilde{\kappa}(G) \ge \widetilde{\kappa}(G^{(d+1)}) \ldots \ge\widetilde{\kappa}(G^{(d+k)}) \ge \frac{d+k-1}{d+k} \widetilde{\kappa}\big(G^{(d+k-1)}\big) \ge \ldots \\
& \ge \widetilde{\kappa}(G) \prod_{j=1}^k(1-\frac{1}{d+j}) 
\ge \widetilde{\kappa}(G) \Big(1- \frac{1}{d} \Big)^k \ge \widetilde{\kappa}(G) \Big(1- \frac{1}{d} \Big)^d \ge \frac{1}{3} \widetilde{\kappa}(G) \ge 1/6,
\end{align*}
above we have used the assumption $\sum_{l=1}^r j_l \le d \widetilde{\kappa}(G)$.
Now we investigate every individual term from the last expression in \eqref{eq:dim_incr}, fix $l \in [r]$
since $j_l \ge 200 \widetilde{\kappa}(G)$ we have
\[
k_l+1 \le \frac{j_l}{20e \widetilde{\kappa}(G)}.
\]
Using the decreasingness of the function $[0, a/e) \ni t \mapsto (t/a)^t$ we get
\begin{align*}
    \Big( \frac{20(k_l+1) \widetilde{\kappa}(G)}{j_l} \Big)^{k_l+1} \le \Big( \frac{1}{5} \Big)^{\frac{1}{100} \frac{j_l}{\widetilde{\kappa}(G)}}= \exp \Big( - \frac{1}{100} \log(5) \cdot \frac{j_l}{\widetilde{\kappa}(G)} \Big) \le \exp \Big( -  \frac{j_l}{63\widetilde{\kappa}(G)} \Big),
\end{align*}
plugging the above into \eqref{eq:dim_incr} we get the desired conclusion. 
\par Let us go back to the proof of \eqref{el in d+k}.
Let $\overrightarrow{z}^{(l)} \in \Z^{k_l+1}$ be given by $z_1=j_l$, $z_i=0$ for $i \neq 1$. Notice that by invariance under permutations, for any $x \in G$ with $x_{d-r+l}=j_l$ for all $l \in [r]$, we have
\[
(x_1,...,x_{d-r},\overrightarrow{z}^{(1)}, \ldots, \overrightarrow{z}^{(r)}) \in G^{(d+\sum_{l=1}^r k_l)} \cap \Z^{(d+\sum_{l=1}^r k_l)}.
\]
Since 
\[
j_lB^{1,(k_l+1)}= \conv \Big( \Big\{ \sigma \cdot \big(\epsilon \cdot\overrightarrow{z}^{(l)} \big):\sigma \in \Sym(k_l+1), \\ \epsilon \in \{-1,1\}^{k_l+1}\Big\}\Big),
\]
due to convexity we get the desired conclusion.
\end{proof}
Now we give two simple corollaries of Lemma \ref{big_coord}.
\begin{Cor} \label{big_coord tail bound}
   Let $d \in \N, G \in \mathcal{F}_d$ satisfy $d \ge 20$, $\widetilde{\kappa}(G) \ge 1/2$. Take $r \in \N$, $r \le d$ and distinct natural numbers $1 \le i_1,...,i_r \le d$. Then for all $\alpha_1,...,\alpha_r \in \N_0$ with $d\widetilde{\kappa}(G) \ge \alpha_l \ge 200 \widetilde{\kappa}(G)$ we have
    \[
    \spr{x \in G \cap \Z^d} \Big( |x_{i_1}|\ge \alpha_1, \ldots,|x_{i_r}| \ge \alpha_r \Big) \le  7^{r}\exp\Big(- \frac{\sum_{l=1}^r\alpha_l}{63\widetilde{\kappa}(G)} \Big) .
    \]  
\end{Cor}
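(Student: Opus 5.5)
Corollary \ref{big_coord tail bound} follows by summing Lemma \ref{big_coord} over all admissible values of $(|x_{i_1}|,\ldots,|x_{i_r}|)$. By a union bound over exact values,
\[
\spr{x \in G \cap \Z^d} \Big( |x_{i_1}|\ge \alpha_1, \ldots,|x_{i_r}| \ge \alpha_r \Big) \le \sum_{j_1 \ge \alpha_1} \cdots \sum_{j_r \ge \alpha_r} \spr{x \in G \cap \Z^d} \Big( |x_{i_1}|=j_1, \ldots,|x_{i_r}|=j_r \Big).
\]
Only tuples with $j_l \le d\widetilde{\kappa}(G)$ contribute, since $|x_{i_l}| \le |x|_1 \le d\widetilde{\kappa}(G)$ for $x \in G\cap\Z^d$. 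Every contributing tuple also satisfies $j_l \ge \alpha_l \ge 200\widetilde{\kappa}(G)$, so Lemma \ref{big_coord} applies to each term.

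Write $\kappa=\widetilde{\kappa}(G)$. Lemma \ref{big_coord} bounds each term by $10^{-r}\kappa^{-r}\prod_l e^{-j_l/(63\kappa)}$. The sum therefore factorizes into $r$ one-dimensional geometric series:
\[
\sum_{j \ge \alpha_l} e^{-j/(63\kappa)} = \frac{e^{-\alpha_l/(63\kappa)}}{1-e^{-1/(63\kappa)}}.
\]
It remains to show $10^{-1}\kappa^{-1}\big(1-e^{-1/(63\kappa)}\big)^{-1} \le 7$.

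This is the only real estimate. Put $u = 1/(63\kappa)$; since $\kappa \ge 1/2$, we have $u \in (0, 2/63]$. On this range $1-e^{-u} \ge u - u^2/2 \ge u(1-1/63)$. Hence
\[
\frac{1}{10\kappa(1-e^{-u})} \le \frac{63\kappa}{10\kappa}\cdot\frac{63}{62} = 6.3\cdot\frac{63}{62} \approx 6.40 < 7.
\]
Taking the product over $l\in[r]$ gives the factor $7^r\exp\big(-\sum_l \alpha_l/(63\kappa)\big)$, which is the claimed bound. Nothing in the argument is delicate; the only point needing care is that the constant $6.3\cdot 63/62$ stays below $7$ uniformly in $\kappa \ge 1/2$, which the convexity estimate for $1-e^{-u}$ above provides.
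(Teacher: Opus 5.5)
Your proof is correct and is essentially the paper's argument: split the event by exact values $j_l\in[\alpha_l,d\widetilde{\kappa}(G)]$, apply Lemma~\ref{big_coord} to each term, and sum the resulting geometric series. The only difference is the elementary bound on the constant. The paper uses $(1-e^{-u})^{-1}\le 1+1/u$ to get $\frac{1+63\widetilde{\kappa}(G)}{10\widetilde{\kappa}(G)}\le 6.5$, while your bound $1-e^{-u}\ge u-u^2/2$ gives $6.3\cdot\frac{63}{62}<7$, which works equally well.
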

\begin{Cor} \label{big_coord tail bound no ass on alpha}
   Let $d \in \N, G \in \mathcal{F}_d$ satisfy $d \ge 20$, $\widetilde{\kappa}(G) \ge 1/2$. Take $r \in \N$, $r \le d$ and distinct natural numbers $1 \le i_1,...,i_r \le d$. Then for all $\alpha_1,...,\alpha_r \in \N_0$  we have
    \[
    \spr{x \in G \cap \Z^d} \Big( |x_{i_1}|\ge \alpha_1, \ldots,|x_{i_r}| \ge \alpha_r \Big) \le 24^r\exp\Big(- \frac{\sum_{l=1}^r\alpha_l}{63\widetilde{\kappa}(G)} \Big) .
    \]  
\end{Cor}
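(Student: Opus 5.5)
We prove Corollary \ref{big_coord tail bound no ass on alpha} by reducing it to Corollary \ref{big_coord tail bound} and handling separately the coordinates whose thresholds are below $200\widetilde{\kappa}(G)$ or above $d\widetilde{\kappa}(G)$.

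\textbf{Large thresholds.} If some $\alpha_l > d\widetilde{\kappa}(G)$, the event is empty. Indeed $|x_{i_l}| \le |x|_1 \le d\widetilde{\kappa}(G)$ for every $x \in G \cap \Z^d$, so the probability is $0$ and there is nothing to prove.

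\textbf{Splitting the indices.} Assume now $\alpha_l \le d\widetilde{\kappa}(G)$ for all $l$. Let
\[
S=\{l \in [r]: \alpha_l \ge 200\widetilde{\kappa}(G)\}, \qquad T=[r]\setminus S .
\]
Drop the constraints indexed by $T$; this only enlarges the event. Apply Corollary \ref{big_coord tail bound} to the $|S|$ distinct indices $\{i_l\}_{l\in S}$ (if $S=\emptyset$ the probability is trivially at most $1$). This gives
\[
\spr{x \in G \cap \Z^d}\Big(|x_{i_l}|\ge \alpha_l \ \forall l\in[r]\Big) \le 7^{|S|}\exp\Big(-\frac{\sum_{l\in S}\alpha_l}{63\widetilde{\kappa}(G)}\Big).
\]

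\textbf{Reinserting the small thresholds.} For $l \in T$ we have $\alpha_l < 200\widetilde{\kappa}(G)$, hence
\[
\exp\Big(\frac{\alpha_l}{63\widetilde{\kappa}(G)}\Big) \le e^{200/63} \approx 23.9 \le 24 .
\]
Multiplying the bound by these factors for each $l\in T$ restores the full sum in the exponent, with total constant
\[
7^{|S|}\cdot 24^{|T|} \le 24^r .
\]

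The argument uses only monotonicity of the event and the numerical fact $e^{200/63}<24$, which is presumably why the constant $24$ appears. There is no genuine obstacle; the only point needing care is the degenerate case $S=\emptyset$, which the trivial bound $1$ covers.
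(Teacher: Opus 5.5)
Your proof is correct and is the paper's argument: you discard the empty case $\alpha_l > d\widetilde{\kappa}(G)$, apply Corollary \ref{big_coord tail bound} to the indices with $\alpha_l \ge 200\widetilde{\kappa}(G)$, and absorb each remaining constraint using $e^{200/63}<24$. This yields the constant $7^{|S|}24^{|T|}\le 24^r$.
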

\begin{proof}[Proof of Corollary \ref{big_coord tail bound}]
    By Lemma \ref{big_coord} we have 
    \begin{align*}
        &\spr{x \in G \cap \Z^d} \Big( x \in G \cap \Z^d: |x_{i_1}|\ge \alpha_1, \ldots,|x_{i_r}| \ge \alpha_r \Big)= \sum_{\substack{j_1,\ldots, j_r \\
        \alpha_l \le j_l \le d \widetilde{\kappa}(G)}} \spr{x \in G \cap \Z^d} \Big( |x_{i_1}|=j_1, \ldots,|x_{i_r}|=j_r \Big) \\
        &\le 10^{-r} \widetilde{\kappa}(G)^{-r}\prod_{l=1}^r \sum_{j_r= \alpha_r}^\infty e^{- \frac{j_r}{63 \widetilde{\kappa}(G)}} \le  10^{-r} \widetilde{\kappa}(G)^{-r}\prod_{l=1}^r \frac{\exp\Big(- \alpha_r/(63 \widetilde{\kappa}(G))\Big)}{1-\exp\Big(- 1/(63 \widetilde{\kappa}(G))\Big)}\\
        &\le \Big(\frac{1+63 \widetilde{\kappa}(G)}{10 \widetilde{\kappa}(G)} \Big)^r \exp\Big(- \frac{\sum_{l=1}^r\alpha_l}{63\widetilde{\kappa}(G)} \Big)  \le 7^r \exp\Big(- \frac{\sum_{l=1}^r\alpha_l}{63\widetilde{\kappa}(G)} \Big) .
    \end{align*}
\end{proof}
From Corollary \ref{big_coord tail bound} we can easily deduce Corollary \ref{big_coord tail bound no ass on alpha}.
\begin{proof}[Proof of Corollary \ref{big_coord tail bound no ass on alpha}]
We can assume that for all $l \in [r]$ we have $\alpha_l \le d \widetilde{\kappa}(G)$, since otherwise the left hand-side of the statement would equal $0$. Let 
\[
k= \Big| \Big\{ l \in [r]: \alpha_l \ge 200 \widetilde{\kappa}(G) \Big\}\Big|.
\]
Without loss of generality assume that $\alpha_1,..., \alpha_k \ge 200 \widetilde{\kappa}(G)$. Then by Corollary \ref{big_coord tail bound} we have 
\begin{align*}
    &\spr{x \in G \cap \Z^d} \Big( |x_{i_1}|\ge \alpha_1, \ldots,|x_{i_r}| \ge \alpha_r \Big)| \le \spr{x \in G \cap \Z^d} \Big( |x_{i_1}|\ge \alpha_1, \ldots,|x_{i_k}| \ge \alpha_k \Big) \\
    &\le 7^k \exp\Big(- \frac{\sum_{l=1}^k\alpha_l}{63\widetilde{\kappa}(G)} \Big)  \le 7^k e^{\frac{200}{63}(r-k)} \exp\Big(- \frac{\sum_{l=1}^r\alpha_l}{63\widetilde{\kappa}(G)} \Big)  \le 24^r \exp\Big(- \frac{\sum_{l=1}^r\alpha_l}{63\widetilde{\kappa}(G)} \Big).
\end{align*}
\end{proof}
Having proved tail bound for coordinates of lattice points in $G$ in order to prove the first part of Theorem \ref{Bobkov-Nazarov main thm} we can proceed the exact same way as in \cite[Sections 4,5]{bob-naz}.
For $x \in \R^d$ we write absolute values of its coordinates in decreasing order
\[
X_1 \ge X_2 \ge \ldots \ge X_d.
\]
\begin{Prop}[Version of {\cite[Proposition 4.1]{bob-naz}}] \label{ordered tail bound}
    For any $d \ge 20$, $G \in \mathcal{F}_d$ with $\widetilde{\kappa}(G) \ge 1/2$, any $\alpha \ge 0$ and $k \le d$ we have
    \[
    \spr{x \in G \cap \Z^d} \Big( X_k \ge \alpha \Big)\le 24^k \binom{d}{k} e^{-k \alpha/(63\widetilde{\kappa}(G))}.
    \]
\end{Prop}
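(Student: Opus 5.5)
The event $X_k \ge \alpha$ means that at least $k$ coordinates of $x$ have absolute value at least $\alpha$. So there is a set $I=\{i_1,\ldots,i_k\}\subseteq[d]$ of exactly $k$ indices with $|x_{i_l}|\ge\alpha$ for every $l\in[k]$. The plan is to take a union bound over all such $k$-element sets $I$ and to bound each term with Corollary \ref{big_coord tail bound no ass on alpha}.

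\textbf{Step 1: union bound.} Writing $\Pro$ for $\spr{x \in G \cap \Z^d}$, we have
\[
\Pro\big(X_k\ge\alpha\big)\le\sum_{\substack{I\subseteq[d],\\ |I|=k}}\Pro\big(|x_{i}|\ge\alpha \text{ for all } i\in I\big),
\]
and the sum has $\binom{d}{k}$ terms.

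\textbf{Step 2: bound each term.} For a fixed $I$, apply Corollary \ref{big_coord tail bound no ass on alpha} with $r=k$ and $\alpha_1=\cdots=\alpha_k=\alpha$. Its hypotheses $d\ge20$ and $\widetilde{\kappa}(G)\ge1/2$ are exactly the assumptions of the proposition. This gives the bound $24^k e^{-k\alpha/(63\widetilde{\kappa}(G))}$ for each term, and summing over $I$ yields the claim.

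\textbf{Technical points.} The corollary is stated for $\alpha_l\in\N_0$, while here $\alpha\ge0$ is real. Because the coordinates are integers, $|x_i|\ge\alpha$ is equivalent to $|x_i|\ge\lceil\alpha\rceil$. Applying the corollary with $\lceil\alpha\rceil$ gives $e^{-k\lceil\alpha\rceil/(63\widetilde{\kappa}(G))}\le e^{-k\alpha/(63\widetilde{\kappa}(G))}$, so nothing is lost. The case $k=0$ is trivial, since the bound is then at least $1$. No step here is a genuine obstacle; the only care needed is this integrality reduction and checking that the corollary covers every value of $\alpha$, which it does because it places no restriction on the $\alpha_l$.
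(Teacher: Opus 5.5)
Your proposal is correct and follows the paper's own proof: the paper also writes $\{X_k\ge\alpha\}$ as a union over the $\binom{d}{k}$ index sets of the events $|x_{i_l}|\ge\lceil\alpha\rceil$ and bounds each term with Corollary \ref{big_coord tail bound no ass on alpha}. Your remarks on the integrality reduction and the trivial case $k=0$ are fine.
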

\begin{proof}
    Since 
    \[
    \Big\{ x \in G \cap \Z^d: X_k \ge \alpha \Big\}= \bigcup_{d \ge i_1 > \ldots>i_k \ge 1} \Big\{x \in G \cap \Z^d:|x_{i_1}| \ge \lceil \alpha \rceil,\ldots |x_{i_k}| \ge \lceil \alpha \rceil \Big\},
    \]
by Corollary \ref{big_coord tail bound no ass on alpha} we get 
\begin{align*}
   \spr{x \in G \cap \Z^d} \Big(x \in G \cap \Z^d: X_k \ge \alpha \Big) &\le \sum_{d \ge i_1 > \ldots>i_k \ge 1} \spr{x \in G \cap \Z^d} \Big(|x_{i_1}| \ge \lceil \alpha \rceil,\ldots |x_{i_k}| \ge \lceil \alpha \rceil \Big) \\
    &\le 24^k \binom{d}{k} e^{-k\alpha/(63 \widetilde{\kappa}(G))}.
\end{align*}
\end{proof}
As a consequence of preceding estimates we can establish the first part of Theorem \ref{Bobkov-Nazarov main thm}.
\begin{proof}[Proof of the first part of Theorem \ref{Bobkov-Nazarov main thm}]
First consider the case $d \ge 20$.
Take $d \ge 20$ and $G \in \mathcal{F}_d$ with $\widetilde{\kappa}(G) \ge 1/2$. Pick $\alpha_1,\alpha_2,\ldots, \alpha_d >0$, which will be determined later. By Proposition \ref{ordered tail bound} we have 
\begin{align*}
    &\spr{x \in G \cap \Z^d} \Big( |x|_2^2 \ge \sum_{k=1}^d \alpha_k^2 \Big)=\spr{x \in G \cap \Z^d} \Big(\sum_{k=1}^d X_k^2 \ge \sum_{k=1}^d \alpha_k^2 \Big) \\
    &\le \sum_{k=1}^d \spr{x \in G \cap \Z^d} \Big(  X_k \ge  \alpha_k \Big) \le \sum_{k=1}^d 24^k \binom{d}{k} e^{-k \alpha_k/(63\widetilde{\kappa}(G))} \\
    &\le \sum_{k=1}^d  \exp\Big(-k \Big(\frac{\alpha_k}{63\widetilde{\kappa}(G)}-\log\big( \frac{ed}{k}\big)- \log(24) \Big)\Big).
\end{align*}
Now take $\alpha_k=63\widetilde{\kappa}(G) \cdot \Big(\log(48)+ \log( \frac{ed}{k}\big)+t \frac{\sqrt{d}}{k}\Big)$. Then for $t \ge 10^{9}$ we have
\begin{align*}
    &\sum_{k=1}^d \alpha_k^2 \le 63^2\widetilde{\kappa}(G)^2 \bigg(3 t^2 d\sum_{k=1}^d \frac{1}{k^2}+3 \sum_{k=1}^d \log^2\big( \frac{ed}{k} \big)+ 3d\Big(  \log(48)\Big)^2 \bigg) \le 4 \cdot 10^4 \cdot t^2d \widetilde{\kappa}(G)^2
\end{align*}
and
\begin{align*}
   \spr{x \in G \cap \Z^d} \Big( |x|_2^2 \ge \sum_{k=1}^d \alpha_k^2 \Big) &\le \sum_{k=1}^d  \exp\Big(-k \Big(\frac{\alpha_k}{63\widetilde{\kappa}(G)}-\log\big( \frac{ed}{k}\big)- \log(24) \Big)\Big) \\
    &\le \sum_{k=1}^d 2^{-k} e^{-t \sqrt{d}} \le e^{-t \sqrt{d}}.
\end{align*}
 In conclusion for $t \ge 10^9$ we get
\begin{align*}
    \spr{x \in G \cap \Z^d} \Big(\frac{|x|_2}{\sqrt{d}} \ge 200 t \widetilde{\kappa}(G) \Big) \le  e^{-t \sqrt{d}} .
\end{align*}
Hence for any $t \ge 2\cdot10^{11}$ one obtains
\begin{align*}
    \spr{x \in G \cap \Z^d} \Big( \frac{|x|_2}{\sqrt{d}} \ge t \widetilde{\kappa}(G) \Big) \le e^{-t \sqrt{d}/200}.
\end{align*}
Now let us go back to the case $1 \le d <20$. 
In this situation for any $x \in G \cap \Z^d$ with $\widetilde{\kappa}(G) \ge 1/2$ we have
\[
\frac{|x|_2}{\sqrt{d}} \le \frac{|x|_1}{\sqrt{d}} \le \sqrt{d} \widetilde{\kappa}(G)<t \widetilde{\kappa}(G),
\]
for $t \ge 2 \cdot 10^{11}$. Hence the inequality
\begin{align*}
    \spr{x \in G \cap \Z^d} \Big( \frac{|x|_2}{\sqrt{d}} \ge t \widetilde{\kappa}(G) \Big) \le e^{-t \sqrt{d}/200}
\end{align*}
trivially holds in this case.
\end{proof}
We end this section with partial result toward Theorem \ref{iso const}.
\begin{Prop} \label{concen medium}
     For all $p \in [1,\infty)$ and $d \in \N, G \in \mathcal{F}_d$, such that $\widetilde{\kappa}(G) \ge 1/2$ we have
    \[
    \se{x \in G \cap \Z^d} |x_1|^p= \se{x \in G \cap \Z^d} \frac{|x|_p^p}{d} \lesssim_{p} \widetilde{\kappa}(G)^p.
    \]
    Moreover if $d \ge 20$ and $\widetilde{\kappa}(G) \ge 10$, then
    \[
   \se{x \in G \cap \Z^d} |x_d|^p= \se{x \in G \cap \Z^d} \frac{|x|_p^p}{d} \gtrsim_{p} \widetilde{\kappa}(G)^p.
    \]
\end{Prop}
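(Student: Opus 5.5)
The identity $\se{x} |x_1|^p = \se{x} |x|_p^p/d$ follows at once from permutation invariance of $G\cap\Z^d$: every coordinate has the same distribution, so averaging $|x_i|^p$ over $i\in[d]$ gives the claim. The same argument handles $x_d$ in place of $x_1$. It therefore suffices to bound $\se{x}|x_1|^p$ from above and from below.

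\textbf{Upper bound.} First take $d\ge 20$ and $\widetilde{\kappa}(G)\ge 1/2$. Apply Corollary \ref{big_coord tail bound no ass on alpha} with $r=1$: for every $\alpha\in\N_0$,
\[
\spr{x\in G\cap\Z^d}(|x_1|\ge\alpha)\le 24\, e^{-\alpha/(63\widetilde{\kappa}(G))}.
\]
Use the layer-cake formula $\se{x}|x_1|^p=\sum_{\alpha\ge1}(\alpha^p-(\alpha-1)^p)\,\Pro(|x_1|\ge\alpha)\le p\sum_{\alpha\ge 1}\alpha^{p-1}\,\Pro(|x_1|\ge \alpha)$. Compare the resulting sum with $\int_0^\infty s^{p-1}e^{-s/(63\widetilde{\kappa}(G))}\,ds = \Gamma(p)\,(63\widetilde{\kappa}(G))^p$. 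Because $\widetilde{\kappa}(G)\ge1/2$, the discretization error is at most a constant times $\widetilde{\kappa}(G)^p$. This gives $\se{x}|x_1|^p\lesssim_p\widetilde{\kappa}(G)^p$.

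For $d<20$ use the trivial bound $|x_1|\le|x|_1= d\,\widetilde{\kappa}(G)$ pointwise (by definition of $\widetilde{\kappa}$, $|x|_1\le d\,\widetilde{\kappa}(G)$). Then $|x_1|^p\le 20^p\widetilde{\kappa}(G)^p$, which is the claim.

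\textbf{Lower bound.} Assume $d\ge20$ and $\widetilde{\kappa}(G)\ge10$. By Lemma \ref{tilde kappa comparable to kappa in large scale}, $\kappa(G)\ge\widetilde{\kappa}(G)\ge10$, so Lemma \ref{pos_prop_conc} applies. It says that with probability at least $1-2e^{-d/10}\ge 1-2e^{-2}>0.7$, at least $d/10$ coordinates satisfy $|x_i|\ge\kappa(G)/10$. For such $x$,
\[
|x|_p^p/d\ge \tfrac1{10}\big(\kappa(G)/10\big)^p .
\]
Hence
\[
\se{x}|x|_p^p/d\ge 0.7\cdot 10^{-1-p}\kappa(G)^p\ge 0.7\cdot10^{-1-p}\,\widetilde{\kappa}(G)^p .
\]
This completes the lower bound.

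The main technical input is already contained in Lemma \ref{big_coord} and its corollaries, so I expect no serious obstacle. The only care needed is the summation in the upper bound: the ratio $\alpha/\widetilde{\kappa}(G)$ must be bounded uniformly, which is exactly where the hypothesis $\widetilde{\kappa}(G)\ge1/2$ is used.
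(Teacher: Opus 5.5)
Your proof is correct and follows the paper's argument. The case $d<20$ is handled trivially; the upper bound comes from the exponential single-coordinate tail; and the lower bound is read off from Lemma \ref{pos_prop_conc} via $\kappa(G)\ge\widetilde{\kappa}(G)\ge 10$, exactly as you do. The only difference is cosmetic: the paper sums the point probabilities of Lemma \ref{big_coord} over $j\ge 200\widetilde{\kappa}(G)$ and bounds the remaining range trivially, whereas you use Corollary \ref{big_coord tail bound no ass on alpha} with the layer-cake formula. One small typo: in the $d<20$ case you mean $|x_1|\le|x|_1\le d\,\widetilde{\kappa}(G)$, not equality.
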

\begin{proof}
    In the first part we may assume that $d \ge 20$, since otherwise for $x \in G \cap \Z^d$ we trivially have
    \[
    |x|_p^p \le |x|_1^p \le d^p \widetilde{\kappa}(G)^p <20^p\widetilde{\kappa}(G)^p.
    \]
    By Lemma \ref{big_coord} we have 
    \begin{align*}
        &\se{x \in G \cap \Z^d} |x_1|^p \lesssim \widetilde{\kappa}(G)^p+  \frac{1}{|G \cap \Z^d|}\sum_{j \ge 200 \widetilde{\kappa}(G)} j^p \Big|\Big\{ x \in G \cap \Z^d: |x_1|=j \Big\}\Big| \\
        &\lesssim\widetilde{\kappa}(G)^p+\sum_{j \ge 200 \widetilde{\kappa}(G)} j^p \widetilde{\kappa}(G)^{-1} e^{-\frac{j}{63 \widetilde{\kappa}(G)}} \lesssim_p \widetilde{\kappa}(G)^p
    \end{align*}
    Second part of the proposition directly follows from Lemma \ref{pos_prop_conc}.
\end{proof}

\section{Lattice points in small-scale regime}

The next lemma is an adapted version of \cite[Lemma 3.2]{balls}, proof goes along the same lines.
\begin{Lem} \label{BMSW a lot of 1}
For all $d \in \mathbb{N}, G \in \mathcal{F}_d$, if $\widetilde{\kappa}(G) \leq  400^{-1}$ and $d \widetilde{\kappa}(G) \geq k \geq 400 d \widetilde{\kappa}(G)^2$, then we have
\begin{equation*} 
\spr{x \in G \cap \Z^d} \Big(|\lbrace i \in [d]:x_i= \pm 1 \rbrace | \leq d \widetilde{\kappa}(G)-k \Big) \leq 2^{2-k} .  
\end{equation*} 
\end{Lem}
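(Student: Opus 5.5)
Write $n:=d\widetilde{\kappa}(G)$, which is an integer because it equals $|x|_1$ for some lattice point $x$. Since $\widetilde{\kappa}(G)\le 1/400<1$, Lemma \ref{kappa tilde property} gives $(1,\ldots,1,0,\ldots,0)\in G$ with $n$ ones. By \eqref{disc_incl}, the cross-polytope layer $nB^{1,(d)}\cap\{-1,0,1\}^d$ lies in $G\cap\Z^d$, and $G\cap\Z^d\subseteq nB^{1,(d)}\cap\Z^d$. The plan is a counting argument in the spirit of \cite[Lemma 3.2]{balls}: we compare the bad set with a large set of good points that we already know lie in $G$.

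\textbf{Lower bound for $|G\cap\Z^d|$.} By \eqref{disc_incl}, $|G\cap\Z^d|\ge 2^n\binom{d}{n}$.

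\textbf{Upper bound for the bad set.} Fix $m\le n-k$ and count the $x\in G\cap\Z^d$ with exactly $m$ coordinates equal to $\pm1$.
\begin{itemize}
\item Choose the positions and signs of the $\pm1$ coordinates: $2^m\binom{d}{m}$ ways.
\item The remaining nonzero coordinates satisfy $|x_i|\ge 2$ and have $\ell^1$-mass at most $n-m$, so their support has size $s\le (n-m)/2$.
\item Choose that support: $\binom{d}{s}$ ways. Choose the absolute values $|x_i|\ge2$ with total at most $n-m$: this is at most $\binom{n-m}{s}$ compositions up to a constant factor. Choose signs: $2^s$ ways.
\end{itemize}
This gives
\[
|\{x: \#\{i:x_i=\pm1\}=m\}|\le 2^m\binom dm\sum_{s\le (n-m)/2}\binom ds\binom{n-m}{s}2^s.
\]

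\textbf{Comparing the two counts.} Use $\binom dm/\binom dn\le (n/(d-n))^{n-m}$, i.e. roughly $(2\widetilde{\kappa}(G))^{n-m}$ for $m<n$, and $\binom ds\le (ed/s)^s$. Setting $j=n-m\ge k$, the ratio for fixed $m$ is bounded by
\[
2^{-j}(2\widetilde{\kappa}(G))^{j}\sum_{s\le j/2}\Big(\frac{2e\,d\,j}{s^2}\Big)^s .
\]
The inner sum is dominated by $s\approx j/2$, giving roughly $(8e\,d/j)^{j/2}$. So the ratio is about
\[
\Big(\widetilde{\kappa}(G)\sqrt{8e\,d/j}\Big)^{j}=\Big(\frac{8e\,d\,\widetilde{\kappa}(G)^2}{j}\Big)^{j/2}.
\]
The hypothesis $j\ge k\ge 400\,d\widetilde{\kappa}(G)^2$ makes the base at most $8e/400<1/4$. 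Hence each term is $\le 2^{-j}$, and summing over $j\ge k$ gives $2^{1-k}$. Slack constants, such as the possibility that the $s$-sum has about $j$ terms, are absorbed into the $2^{2-k}$. The condition $\widetilde{\kappa}(G)\le 1/400$ is used to ensure $d-n\ge d/2$ and similar routine simplifications.

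\textbf{Main obstacle.} The hardest step is the bookkeeping of the binomial estimates, in particular bounding the inner sum over $s$ uniformly and handling small $s$ and the boundary cases cleanly. I would first try crude bounds, replacing the sum by $(j/2+1)$ times its maximal term and writing $\binom{n-m}{s}\le 2^{j}$. If those lose too much, I would split the range of $s$. The structural idea is a reduction: lattice points in $G$ with few $\pm1$'s are comparable in number to points of the $\ell^1$ ball with the same property. This holds because $G\cap\Z^d$ is sandwiched between the two sets in \eqref{disc_incl}, which share the same $n$. The computation is then essentially the one from \cite[Lemma 3.2]{balls} with $N^2$ replaced by $n$.
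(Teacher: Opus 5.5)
Your proposal is correct and follows essentially the paper's argument: you split the bad set according to the number of $\pm1$ coordinates, count each piece by choosing the $\pm1$ positions and then the at most $j/2$ larger coordinates, and compare with the lower bound $2^n\binom{d}{n}\le|G\cap\Z^d|$ from \eqref{disc_incl}. The only real difference is that the paper avoids your sum over $s$ by fixing a set of $\lfloor j/2\rfloor$ coordinates that contains the large ones and bounding the fillings by $|jB^{1,(\lfloor j/2\rfloor)}\cap\Z^{\lfloor j/2\rfloor}|\le 2\cdot 6^j$ via Lemma \ref{Cor 2.2 from my lq paper}; there is also a small slip in your estimate, since in fact $\binom{d}{s}\binom{j}{s}2^s\le(2e^2dj/s^2)^s$ rather than $(2e\,dj/s^2)^s$, so the base is $8e^2/400\approx0.15$ and not $8e/400$, but this is still below $1/4$ and, together with the factor $\lfloor j/2\rfloor+1$, still gives a total of at most about $2.4\cdot2^{-k}\le2^{2-k}$.
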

\begin{proof}
 Let $n=d \widetilde{\kappa}(G)$. We have that
\begin{equation} \label{eq:3.1}
 \Big| \Big\lbrace x \in G \cap \Z^d: |\lbrace i \in [d] :x_i= \pm 1 \rbrace | \leq n-k \Big\rbrace\Big|= \sum_{m=k}^n |E_m|, 
 \end{equation}
where 
\[ E_m= \Big\lbrace x \in G \cap \mathbb{Z}^d : |\lbrace i \in [d] :x_i= \pm 1 \rbrace | = n-m \Big\rbrace. \]
To prove the lemma it is sufficient to show that $|E_m| \leq 2^{1-m} |G \cap \mathbb{Z}^d|$ holds for every $m \in \lbrace k,k+1,...,n \rbrace$. Notice that if $x \in E_m$ then
\[ | \lbrace i \in [d]: |x_i| \geq 2 \rbrace | \leq \Big\lfloor \frac{m}{2} \Big\rfloor. \]
In order to bound $|E_m|$ we will consider two cases separately.
\par \textbf{Case 1) $m< 2$.} \\
In this situation every $x \in E_m$ consists only of coordinates $-1,0,1$, hence we have
\begin{equation*}
    |E_m| = 2^{n-m} \binom{d}{n-m}.
\end{equation*}
Moreover 
\begin{equation*} 
    \begin{split}
        &\binom{d}{n-m}  \binom{d}{n}^{-1} \hspace{-0.3cm}= \frac{n!(d-n)!}{(n-m)!(d-n+m)!} \le \Big( \frac{n}{d-n} \Big)^m\hspace{-0.3cm} = \Big( \frac{\widetilde{\kappa}(G)}{1-\widetilde{\kappa}(G)}  \Big)^m \hspace{-0.3cm}\le \Big( 2\widetilde{\kappa}(G)\Big)^m,
    \end{split}
\end{equation*} 
above we have used the fact that $\widetilde{\kappa}(G)\le \frac{1}{2}$.
By \eqref{disc_incl} we also have 
\begin{equation} \label{eq:3.2}
 2^n \binom{d}{n} \leq |G \cap \mathbb{Z}^d |.
\end{equation}
Combining these three facts we obtain
\begin{align*}
    &|E_m| \le 2^{n-m} \binom{d}{n-m}= 2^{-m} \cdot 2^{n} \binom{d}{n} \binom{d}{n-m}  \binom{d}{n}^{-1} \\
    &\le 2^{-m} |G \cap \Z^d| \Big( 2\widetilde{\kappa}(G)\Big)^m \le 2^{-m} |G \cap \Z^d|.
\end{align*}
\par \textbf{Case 2) $m \ge 2$.} \\
Notice that we have the following upper bound for $|E_m|$ . 
\begin{equation} \label{eq:3.3}
|E_m| \leq 2^{n-m} \binom{d}{n-m} \binom{d-n+m}{\lfloor \frac{m}{2} \rfloor} |mB^{(1,\lfloor m/2 \rfloor)} \cap \mathbb{Z}^{\lfloor m/2\rfloor} |. 
\end{equation}
Indeed, in $\binom{d}{n-m}$ options we choose coordinates on which $x \in E_m$ will have values $\pm 1$, then we choose each sign in 2 ways, this explains the factor $2^{n-m} \binom{d}{n-m}$. Next we choose $\lfloor \frac{m}{2} \rfloor$ coordinates  in which the set $\lbrace i \in [d]: |x_i| \geq 2 \rbrace$ will be contained, for that we have $\binom{d-n+m}{\lfloor \frac{m}{2} \rfloor}$ options. Lastly we bound the number of ways of putting numbers on these coordinates such that the condition $| x |_{1} \leq n$ holds, this is bounded by $|mB^{(1,\lfloor m/2 \rfloor)} \cap \mathbb{Z}^{\lfloor m/2 \rfloor}|$. \\ \\
Using Lemma \ref{Cor 2.2 from my lq paper} we obtain 
\begin{equation} \label{eq:3.4}
    \begin{split}
        |mB^{(1,\lfloor m/2 \rfloor)} \cap \mathbb{Z}^{\lfloor m/2 \rfloor}| &\leq 2 \cdot \Big( \frac{m}{\lfloor m/2\rfloor}+\frac{1}{2} \Big)^{\lfloor m/2 \rfloor} 8^{\lfloor m/2 \rfloor} \leq 2 \cdot \Big( \Big( \frac{m}{m/4} \Big)+ \frac{1}{2} \Big)^{m/2} 8^{\lfloor m/2 \rfloor}  \\
 &= 2 \cdot\Big( 4+ \frac{1}{2} \Big)^{m/2} 8^{\lfloor m/2 \rfloor} \leq  2 \cdot\Big( \frac{9}{2} \Big)^{m/2} 8^{m/2}= 2 \cdot 6^{m},
    \end{split}
\end{equation}
above we have used the fact that $m  \geq 2$.
We also have that
\begin{equation} \label{eq:3.5}
    \begin{split}
        &\binom{d}{n-m} \binom{d-n+m}{\lfloor \frac{m}{2} \rfloor} \binom{d}{n}^{-1} = \frac{n! \cdot (d-n)!}{(n-m)! \lfloor m/2 \rfloor! \cdot (d-n+m-\lfloor m/2 \rfloor)!}  \\
        &\leq \frac{n^{\lfloor m/2 \rfloor}}{\lfloor m/2 \rfloor!} \Big( \frac{n}{d-n} \Big)^{m-\lfloor m/2 \rfloor} \stackrel{(*)}{\leq} \Big( \frac{en}{\lfloor m/2 \rfloor} \Big)^{\lfloor m/2 \rfloor} \Big( \frac{n}{d-n} \Big)^{m-\lfloor m/2 \rfloor}  \\ 
        &\stackrel{(**)}{\leq} \Big( \frac{2en}{m} \Big)^{ m/2 } \Big( \frac{n}{d-n} \Big)^{m-\lfloor m/2 \rfloor}= \Big( \frac{2en}{m} \Big)^{ m/2 } \Big( \frac{\widetilde{\kappa}(G)}{1-\widetilde{\kappa}(G)} \Big)^{m-\lfloor m/2 \rfloor}  \\
    &\leq \Big( \frac{2en}{m} \Big)^{ m/2 } \Big( 2\widetilde{\kappa}(G) \Big)^{m-\lfloor m/2 \rfloor} \leq \Big( \frac{2 en}{m} \Big)^{ m/2 } \Big( 2\widetilde{\kappa}(G) \Big)^{m/2}.
    \end{split}
\end{equation} 
$(*)$ holds, since $\lfloor m/2 \rfloor! \geq (\lfloor m/2 \rfloor/e)^{\lfloor m/2 \rfloor}$. In $(**)$ we used the fact that for any $a>0$ the function $(0,a/e] \ni t \mapsto (a/t)^t$ is increasing. Two last inequalities hold, since $m-\lfloor m/2 \rfloor \geq m/2$ and $\widetilde{\kappa}(G) \le \frac{1}{2}$. \\
Using inequalities \eqref{eq:3.2}, \eqref{eq:3.3}, \eqref{eq:3.4}, \eqref{eq:3.5} and our assumptions on $\widetilde{\kappa}(G)$ we finally obtain
\begin{equation*} 
\begin{split}    
|E_m| &\leq 2 \cdot2^n \binom{d}{n} 2^{-m}  \Big( \frac{2en}{m} \Big)^{ m/2 } \Big( 2\widetilde{\kappa}(G) \Big)^{m/2} 6^m \\
 &\leq  2^{1-m}  \cdot |G \cap \mathbb{Z}^d| \Big(\frac{n\widetilde{\kappa}(G)}{m}\Big)^{m/2} e^{m/2} \cdot 2^{m} \cdot  6^m\\
 &\leq  2^{1-m} |G \cap \mathbb{Z}^d| \Big(\frac{n\widetilde{\kappa}(G)}{m}\Big)^{m/2} 20^m \leq  2^{1-m} |G \cap \mathbb{Z}^d|.
 \end{split}
 \end{equation*}
Last inequality holds by the assumption $m \geq k \geq 400n \widetilde{\kappa}(G) $. Due to \eqref{eq:3.1} this concludes the proof of Lemma \ref{BMSW a lot of 1}
\end{proof}
Note that if $0<\widetilde{\kappa}(G) \le 10^{-3}$, then we can apply Lemma \ref{BMSW a lot of 1} with $k=\lfloor \frac{d \widetilde{\kappa}(G)}{2} \rfloor$.

Now we will establish analogue of Lemma \ref{big_coord} and Corollaries \ref{big_coord tail bound}, \ref{big_coord tail bound no ass on alpha} in small-scale regime.
\begin{Lem} \label{big_coord small scales}
    Let $d \in \N, G \in \mathcal{F}_d$ satisfy  $\frac{1}{d} \le \widetilde{\kappa}(G) \le 1/2 $. Take $r \in \N$, $r \le d$ and distinct natural numbers $1 \le i_1,...,i_r \le d$. Then for all $j_1,...,j_r \in \N$ we have
    \begin{equation} \label{eq:big cord small scale eq}
        \spr{x \in G \cap \Z^d} \Big( |x_{i_1}|=j_1, \ldots,|x_{i_r}|=j_r \Big) \le  2^r \widetilde{\kappa}(G)^{\sum_{l=1}^r j_l} ,
    \end{equation}
    and
    \begin{equation} \label{eq:big cord small scale geq}
       \spr{x \in G \cap \Z^d} \Big( |x_{i_1}| \ge j_1, \ldots,|x_{i_r}| \ge j_r \Big) \le  4^r \widetilde{\kappa}(G)^{\sum_{l=1}^r j_l} .
    \end{equation}
\end{Lem}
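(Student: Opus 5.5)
Fix $r$, distinct indices and $j_1,\dots,j_r\in\N$. By permutation invariance we may assume the indices are $d-r+1,\dots,d$, and by sign invariance the event $|x_{d-r+l}|=j_l$ for all $l$ has exactly $2^r$ times as many points as the event $x_{d-r+l}=j_l$ for all $l$. This reduces \eqref{eq:big cord small scale eq} to showing
\[
\big|\{x\in G\cap\Z^d:\ x_{d-r+l}=j_l,\ l\in[r]\}\big|\le \widetilde\kappa(G)^{J}|G\cap\Z^d|,\qquad J=\sum_{l}j_l .
\]
We may also assume $J\le d\widetilde\kappa(G)$, since otherwise the set is empty.

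\emph{Injection step.} Write $n=d\widetilde\kappa(G)$. Let $A$ be the set above; its points have $|\supp(x)|\le n$, and so the first $d-r$ coordinates of $x\in A$ carry $\ell^1$-mass at most $n-J$. For each $x\in A$, replace the block $(j_1,\dots,j_r)$ by $0$'s. Then choose $J$ coordinates among those outside $\supp(x)$ and put $\pm1$ on each of them. This spreads each $j_l$ into $j_l$ unit coordinates.

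Membership in $G$ follows from convexity as in Lemma \ref{big_coord}. The vector with value $j_l$ at one position is a convex combination of vectors with $j_l$ ones placed on $j_l$ different free positions, and permutation invariance moves the large coordinate to any chosen free slot. More precisely, the point with the $j_l$-block spread out is obtained from $x$ by averaging over permutations within a set of positions. The result lies in $G\cap\Z^d$ and, by Lemma \ref{kappa tilde property}, has $\ell^1$ norm unchanged.

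\emph{Counting.} We count pairs (original point, choice of positions and signs). The number of choices is at least $2^J\binom{d-n}{J}$. Each image point $y$ has at most $\binom{n}{J}$ preimages, since $y$ has at most $n$ nonzero entries and we must select which $J$ of the $\pm1$'s came from the spreading. Knowing that subset, the values $j_l$ and the fixed positions determine $x$. Hence
\[
|A|\,2^J\binom{d-n}{J}\le \binom{n}{J}|G\cap\Z^d| .
\]

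\emph{Main obstacle: the ratio estimate.} The delicate step is to bound $\binom{n}{J}\big/\big(2^J\binom{d-n}{J}\big)$. It is at most $\big(\frac{n}{d-n}\big)^J2^{-J}\le (2\widetilde\kappa)^J2^{-J}=\widetilde\kappa^J$. Here we use $\widetilde\kappa\le1/2$, so $d-n\ge d/2$, and termwise $\frac{n-i}{d-n-i}\le\frac{n}{d-n}$.

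One subtlety is that the positions of the fixed block are themselves outside the spread region. They must be excluded from the free slots, which gives $\binom{d-n}{J}$ with $d-n$ replaced by something like $d-n-r$. This costs a factor that should be absorbed into the $2^r$ of the statement, possibly by placing the spread ones partly on the vacated block coordinates. This accounting is the point where I expect to need care.

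\emph{Tail bound.} For \eqref{eq:big cord small scale geq}, sum \eqref{eq:big cord small scale eq} over $j_l'\ge j_l$:
\[
2^r\prod_l\sum_{j'\ge j_l}\widetilde\kappa^{j'}\le 2^r\prod_l\frac{\widetilde\kappa^{j_l}}{1-\widetilde\kappa}\le 4^r\widetilde\kappa^{J},
\]
using $\widetilde\kappa\le1/2$.
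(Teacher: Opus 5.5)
Your argument is correct and is essentially the paper's proof. It uses the same map $(x,S,\epsilon)\mapsto x-\sum_{l} j_l e_{i_l}+\sum_{i\in S}\epsilon_i e_i$, whose image lies in $G$ as an average of signed permutations of $x$. It also uses the same preimage bound $\binom{n}{J}$, essentially the same binomial-ratio estimate giving $\widetilde{\kappa}(G)^J$, and the same geometric summation for the tail bound.

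The subtlety you flag is a false alarm. Since every $j_l\ge1$, the block positions already lie in $\supp(x)$, and $|\supp(x)|\le r+(n-J)\le n$. So there are always at least $d-n$ free slots outside $\supp(x)$, and nothing needs to be absorbed into $2^r$. The paper also lets $S$ use the vacated block coordinates, which gives the slightly larger count $\binom{d-n+J}{J}$.

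One small correction: your first convexity sentence is stated backwards. The spread-out vector is an average of the concentrated ones, exactly as your ``more precisely'' sentence says.
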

Up to $C^r$ we expect the above inequalities to be optimal for $G=nB^{1,(d)}$, when $n \le d/1000$.

\begin{proof}
    Note that it suffices to prove \eqref{eq:big cord small scale eq}. Without loss of generality we can assume that $i_l=l$ for all $l \in [r]$. We define $j=\sum_{l=1}^rj_l$, we can assume that $j \le d \widetilde{\kappa}(G)$, since otherwise the conclusion trivially holds. Let 
    \[
    A= \Big\{ x \in G \cap \Z^d: x_{1}=j_1, \ldots,x_{r}=j_r \Big\}.
    \]
    For any $x \in A$, $S \subseteq \big([d] \setminus \supp(x)\big) \cup [r]$ with $|S|=j$ and $\epsilon \in \{-1,1\}^S$ we define $f(x,S,\epsilon) \in G \cap \Z^d$ by the formula
    \[
    f(x,S,\epsilon)=x-\sum_{i=1}^r j_i e_i+\sum_{i \in S}\epsilon_i e_i,
    \]
    where $e_i \in \R^d$ is a vector with $1$ on $i$-th coordinate and $0$ on other coordinates.
    It is not difficult to see that $f(x,S,\epsilon)$ is a convex combination of points obtained by permutations and changing signs of coordinates of $x$, hence it lies in $G \cap \Z^d$.
    Pick $y \in G \cap \Z^d$, note that if for
     $x \in A$, $S \subseteq \big([d] \setminus \supp(x)\big) \cup [r]$ with $|S|=j$ and $\epsilon \in \{-1,1\}^S$ we have
     \[
     y=f(x,S,\epsilon),
     \]
     then the data $y,S$ uniquely determines the triple $(x,S,\epsilon)$. Indeed if $y=(y_1,...,y_d)$ and $S \subseteq \supp(y)$, we have
     \[
     x= \sum_{i \in \supp(y) \setminus S} y_ie_i + \sum_{l=1}^r j_l e_l, \quad \epsilon_i=y_i \text{ for } i \in S.
     \]
     In particular for fixed $y$ we get that 
     \[
     \Big|\Big\{(x,S,\epsilon): f(x,S,\epsilon)=y \Big\}\Big| \le \binom{|\supp(y)|}{j}\le \binom{d\widetilde{\kappa}(G)}{j}.
     \]
     Also note that for $x \in A$ we have
     \[
     |\supp(x)| \le r+d\widetilde{\kappa}(G)-j.
     \]
    This way we obtain
    \begin{align*}
        &\binom{d\widetilde{\kappa}(G)}{j} |G \cap \Z^d| \ge \Big|\Big\{(x,S,\epsilon): x \in A,S \subseteq [d] \setminus \supp(x) \cup [r],|S|=j,\epsilon \in \{-1,1\}^S\Big\}\Big|\\
        &= \sum_{x \in A} 2^j \binom{d+r-|\supp(x)|}{j} \ge 2^j \binom{d-d \widetilde{\kappa}(G)+j}{j} |A|.
    \end{align*}
    Thus we get
    \begin{align*}
       &|A| \le 2^{-j}  \binom{d\widetilde{\kappa}(G)}{j} \binom{d-d \widetilde{\kappa}(G)+j}{j}^{-1}|G \cap \Z^d| \le 2^{-j} \frac{(d \widetilde{\kappa}(G))^{j}}{j!} \cdot \frac{j!}{(d-d \widetilde{\kappa}(G))^{j}}|G \cap \Z^d| \\
       &\le 2^{-j} \Big(\frac{\widetilde{\kappa}(G)}{1-\widetilde{\kappa}(G)}\Big)^j |G \cap \Z^d| \le \widetilde{\kappa}(G)^j|G \cap \Z^d|.
    \end{align*}
    This way we obtain
    \[
     \Big| \Big\{ x \in G \cap \Z^d: |x_{1}|=j_1, \ldots,|x_{r}|=j_r \Big\}\Big| =2^r \Big| \Big\{ x \in G \cap \Z^d: x_{1}=j_1, \ldots,x_{r}=j_r \Big\}\Big| \le 2^r \widetilde{\kappa}(G)^j |G \cap \Z^d|.
    \]
\end{proof}
Let us give a simple Corollary of the previous lemma.
\begin{Cor} \label{small scales no big coord}
    For $d,K \in \N$, $G \in \mathcal{F}_d$, if $\widetilde{\kappa}(G) \le (8d)^{- \frac{1}{K+1}}$, then 
    \[
    \spr{x \in G \cap \Z^d} \Big(x \in \{-K, \ldots, K \}^d \Big) \ge \frac{1}{2} .
    \]
\end{Cor}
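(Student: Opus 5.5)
The plan is a union bound over coordinates combined with the tail estimate \eqref{eq:big cord small scale geq} of Lemma \ref{big_coord small scales} for a single coordinate ($r=1$). The complement of the event in question is
\[
\bigcup_{i=1}^d \big\{x \in G \cap \Z^d : |x_i| \ge K+1\big\}.
\]
Its probability is therefore at most $d \cdot 4\,\widetilde{\kappa}(G)^{K+1}$. By permutation invariance this equals $d$ times the probability for the first coordinate, though the union bound alone is enough.

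Using the hypothesis $\widetilde{\kappa}(G)^{K+1} \le (8d)^{-1}$, this bound becomes $4d/(8d) = 1/2$. Hence the event $x \in \{-K,\ldots,K\}^d$ has probability at least $1/2$, as claimed.

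The main obstacle is checking that Lemma \ref{big_coord small scales} actually applies, since it assumes $\frac{1}{d} \le \widetilde{\kappa}(G) \le 1/2$. I would dispose of the remaining cases directly.

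\textbf{The case $\widetilde{\kappa}(G) < 1/d$.} Every $x \in G \cap \Z^d$ has $|x|_1 \le d\widetilde{\kappa}(G) < 1$, so $x = 0$. The event then holds with probability $1$.

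\textbf{The upper constraint.} The hypothesis gives $\widetilde{\kappa}(G) \le (8d)^{-1/(K+1)} \le 8^{-1/(K+1)}$. This is less than $1$, but it is not automatically at most $1/2$ when $K$ is large. If $\widetilde{\kappa}(G) > 1/2$, I would avoid the lemma altogether: since $|x|_1 \le d\widetilde{\kappa}(G) < d$, we have $|x_i| \le d\widetilde{\kappa}(G)$ for every coordinate. So if $K \ge d\widetilde{\kappa}(G)$ the event holds trivially.

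Otherwise $K < d\widetilde{\kappa}(G) < d$, and then
\[
\widetilde{\kappa}(G)^{K+1} > 2^{-(K+1)} \ge 2^{-d}.
\]
One would want this to contradict $\widetilde{\kappa}(G)^{K+1} \le 1/(8d)$. It does not do so in general, so this range needs care. Here I would instead rerun the injection argument from the proof of Lemma \ref{big_coord small scales}, which only used $\widetilde{\kappa}(G) < 1$. For one coordinate it gives the bound
\[
2^{-j}\Big(\frac{\widetilde{\kappa}(G)}{1-\widetilde{\kappa}(G)}\Big)^{j},
\]
and I would check whether summing over $j \ge K+1$ still yields $O(\widetilde{\kappa}(G)^{K+1})$. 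If that fails, the honest reading is that the corollary is intended with $\widetilde{\kappa}(G) \le 1/2$ implicit, and the union bound above finishes the proof.
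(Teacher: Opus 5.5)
Your core argument, a union bound over the $d$ coordinates together with the $r=1$ case of \eqref{eq:big cord small scale geq} giving $4d\,\widetilde{\kappa}(G)^{K+1}\le 1/2$, is exactly the paper's one-line proof. You are also right that Lemma \ref{big_coord small scales} assumes $\widetilde{\kappa}(G)\le 1/2$. This is not implied by $\widetilde{\kappa}(G)\le (8d)^{-1/(K+1)}$ as soon as $K+1>\log_2(8d)$. The paper applies the lemma without checking this, so the hole you found is in its proof too. The lower restriction $\widetilde{\kappa}(G)\ge 1/d$ is harmless, as you say. The problematic range is nonempty and nontrivial. For $d=64$, $K=9$ and $G=33B^{1,(64)}$ one has $\widetilde{\kappa}(G)=33/64<2^{-9/10}=(8d)^{-1/(K+1)}$, while $\widetilde{\kappa}(G)>1/2$ and $K<d\widetilde{\kappa}(G)$.

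Your proposed repair does not close this case. Rerunning the injection shows that the proportion of $x\in G\cap\Z^d$ with $|x_1|=j$ is at most $2\rho^j$, where $\rho=\frac{\widetilde{\kappa}(G)}{2(1-\widetilde{\kappa}(G))}$. For $\widetilde{\kappa}(G)\in(1/2,2/3)$ one has $\widetilde{\kappa}(G)<\rho<1$. At the smallest admissible $K$, namely $K+1\approx \log(8d)/\log(1/\widetilde{\kappa}(G))$, the union bound is then roughly $d\rho^{K+1}\approx d\,(8d)^{-\log\rho/\log\widetilde{\kappa}(G)}$, which tends to infinity; for $\widetilde{\kappa}(G)=0.6$ the exponent is about $0.56$. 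For $\widetilde{\kappa}(G)\ge 2/3$ one has $\rho\ge 1$, so there is no decay at all. The large-scale bound of Corollary \ref{big_coord tail bound no ass on alpha} does not help either. Its rate $e^{-j/(63\widetilde{\kappa}(G))}$ suffices only in a thin range where $1-\widetilde{\kappa}(G)$ is below roughly $1/63$.

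So for $\widetilde{\kappa}(G)>1/2$ and $K<d\widetilde{\kappa}(G)$, the statement is proved neither by your argument nor by the paper's. It is plausibly still true there, but that would need a genuinely new estimate. The clean fix is the one you fall back on: add the hypothesis $\widetilde{\kappa}(G)\le 1/2$. This costs nothing in the paper. The corollary is only invoked in the remark following Lemma \ref{fun fact for sup sym conv body}, for cross-polytopes of radius about $\frac{1}{100}d^{K/(K+1)}$, where $\widetilde{\kappa}$ is far below $1/2$.
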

\begin{proof}
    By Lemma \ref{big_coord small scales} we have
    \begin{align*}
        &\spr{x \in G \cap \Z^d} \Big( (\exists i \in [d]) \ |x_i| \ge K+1 \Big) \le d\spr{x \in G \cap \Z^d} \Big( |x_1| \ge K+1 \Big)\le 4d \widetilde{\kappa}(G)^{K+1}  \le \frac{1}{2} .
    \end{align*}
\end{proof}
Having tail bound for coordinates of lattice points in $G$ in order to prove the second part of Theorem \ref{Bobkov-Nazarov main thm} we can proceed the exact same way as in \cite[Sections 4,5]{bob-naz}.
For $x \in \R^d$ we write absolute values of its coordinates in decreasing order
\[
X_1 \ge X_2 \ge \ldots \ge X_d.
\]
\begin{Prop}[Version of {\cite[Proposition 4.1]{bob-naz}} in small-scale regime] \label{small scale ordered tail bound}
    For any $d \in \N$, $G \in \mathcal{F}_d$ with $\widetilde{\kappa}(G) \le 1/2$, any $\alpha \ge 0$ and $k \le d$ we have
    \[
   \spr{x \in G \cap \Z^d} \Big( X_k \ge \alpha \Big)\le 4^k \binom{d}{k} \widetilde{\kappa}(G)^{k\alpha} .
    \]
\end{Prop}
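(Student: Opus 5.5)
Following Proposition \ref{ordered tail bound}, I will take a union bound over the possible positions of the $k$ largest coordinates and apply the tail estimate \eqref{eq:big cord small scale geq} from Lemma \ref{big_coord small scales} to each. The event $X_k \ge \alpha$ means that at least $k$ coordinates of $x$ have absolute value at least $\alpha$. Since $x \in \Z^d$, this is the same as at least $k$ coordinates satisfying $|x_i| \ge \lceil \alpha \rceil$. Hence
\[
\Big\{ x \in G \cap \Z^d: X_k \ge \alpha \Big\}= \bigcup_{d \ge i_1 > \ldots>i_k \ge 1} \Big\{x \in G \cap \Z^d:|x_{i_1}| \ge \lceil \alpha \rceil,\ldots, |x_{i_k}| \ge \lceil \alpha \rceil \Big\},
\]
and this union has $\binom{d}{k}$ terms.

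The main case is $\alpha>0$ and $\frac1d \le \widetilde{\kappa}(G) \le 1/2$. Here $j_l=\lceil\alpha\rceil \in \N$, so \eqref{eq:big cord small scale geq} bounds each term by $4^k \widetilde{\kappa}(G)^{k\lceil \alpha\rceil}$. Because $\widetilde{\kappa}(G)\le 1/2<1$ and $\lceil\alpha\rceil \ge \alpha$, we have $\widetilde{\kappa}(G)^{k\lceil\alpha\rceil} \le \widetilde{\kappa}(G)^{k\alpha}$. Summing over the $\binom{d}{k}$ index sets gives the claimed bound $4^k\binom{d}{k}\widetilde{\kappa}(G)^{k\alpha}$.

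The remaining work is the edge cases excluded by the hypotheses of Lemma \ref{big_coord small scales}.
\begin{itemize}
\item If $\alpha=0$, the right-hand side is $4^k\binom{d}{k}\ge 1$, so the inequality is trivial; this also covers the convention $0^0=1$ when $\widetilde{\kappa}(G)=0$.
\item If $\widetilde{\kappa}(G)<1/d$, then $d\widetilde{\kappa}(G)<1$, so every $x\in G\cap\Z^d$ has $|x|_1<1$ and hence $x=0$. Then $X_k=0$, and for $\alpha>0$ the left-hand side is $0$.
\end{itemize}
I expect no real obstacle: the substantive estimate is already contained in \eqref{eq:big cord small scale geq}. The only points needing care are the rounding $\alpha \mapsto \lceil\alpha\rceil$, which works in our favor only because $\widetilde{\kappa}(G)<1$, and the degenerate cases listed above.
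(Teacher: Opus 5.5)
Your proposal is correct and follows the paper's proof: the same union bound over the $\binom{d}{k}$ index sets with the threshold rounded up to $\lceil \alpha \rceil$, then \eqref{eq:big cord small scale geq} on each set, then the monotonicity $\widetilde{\kappa}(G)^{k\lceil\alpha\rceil}\le\widetilde{\kappa}(G)^{k\alpha}$. Your separate treatment of $\alpha=0$ and $\widetilde{\kappa}(G)<1/d$, where the lemma's hypotheses $j_l\in\N$ and $\widetilde{\kappa}(G)\ge 1/d$ fail, covers degenerate cases that the paper leaves implicit.
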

\begin{proof}
    Since 
    \[
    \Big\{ x \in G \cap \Z^d: X_k \ge \alpha \Big\}= \bigcup_{d \ge i_1 > \ldots>i_k \ge 1} \Big\{x \in G \cap \Z^d:|x_{i_1}| \ge \lceil \alpha \rceil,\ldots |x_{i_k}| \ge \lceil \alpha \rceil \Big\},
    \]
by Lemma \ref{big_coord small scales} we get 
\begin{align*}
    \spr{x \in G \cap \Z^d} \Big( X_k \ge \alpha \Big) &\le \sum_{d \ge i_1 > \ldots>i_k \ge 1} \spr{x \in G \cap \Z^d} \Big(|x_{i_1}| \ge \lceil \alpha \rceil,\ldots |x_{i_k}| \ge \lceil \alpha \rceil \Big) \le 4^k \binom{d}{k} \widetilde{\kappa}(G)^{k\alpha}.
\end{align*}
\end{proof}
Equipped with these estimates we can establish the second part of Theorem \ref{Bobkov-Nazarov main thm} 
\begin{proof}[Proof of the second part of Theorem \ref{Bobkov-Nazarov main thm}] Take $d \in \N$, $G \in \mathcal{F}_d$ with $\widetilde{\kappa}(G) \le 1/2$. Pick $\alpha_1,\alpha_2,\ldots, \alpha_{d\widetilde{\kappa}(G)} \in \R_{>0}$, which will be determined later. Note that $X_l=0$ for all $l>d\widetilde{\kappa}(G)$. By Proposition \ref{small scale ordered tail bound} we have 
\begin{align*}
    &\spr{ x \in G \cap \Z^d}\Big(|x|_2^2 \ge \sum_{k=1}^{d\widetilde{\kappa}(G)} \alpha_k^2 \Big)=\spr{ x \in G \cap \Z^d}\Big(\sum_{k=1}^{d\widetilde{\kappa}(G)} X_k^2 \ge \sum_{k=1}^{d\widetilde{\kappa}(G)} \alpha_k^2 \Big) \\
    &\le \sum_{k=1}^{d\widetilde{\kappa}(G)} \spr{ x \in G \cap \Z^d}\Big(  X_k \ge  \alpha_k \Big) \le \sum_{k=1}^{d\widetilde{\kappa}(G)} 4^k \binom{d}{k} \widetilde{\kappa}(G)^{k\alpha_k} \\
    &\le \sum_{k=1}^{d\widetilde{\kappa}(G)} \exp \Big( k \Big( \alpha_k \log (\widetilde{\kappa}(G))+  \log \big( \frac{4ed}{k} \big) \Big) \Big).
\end{align*}
Now take 
\[
\alpha_k= \frac{t\sqrt{d\widetilde{\kappa}(G)}}{k}+\frac{\log \big( \frac{8ed}{k} \big)}{|\log(\widetilde{\kappa}(G))|}.
\]
Then for $t \ge 10^5$ we have
\begin{align*}
    &\sum_{k=1}^{d\widetilde{\kappa}(G)} \alpha_k^2 \le 2 t^2d\widetilde{\kappa}(G)\sum_{k=1}^d \frac{1}{k^2}+\frac{2}{|\log(\widetilde{\kappa}(G))|^2} \sum_{k=1}^{d \widetilde{\kappa}(G)} \log^2\Big(\frac{8ed}{k}\Big) \\
    &\le \frac{\pi^2}{3} t^2 d \widetilde{\kappa}(G)+ \frac{4}{|\log(\widetilde{\kappa}(G))|^2} \sum_{k=1}^{d \widetilde{\kappa}(G)}\Big(\log^2\Big( \frac{1}{\widetilde{\kappa}(G)}\Big)+ \log^2\Big( \frac{8e d \widetilde{\kappa}(G)}{k}\Big) \Big) \\
    &\le 4 t^2 d\widetilde{\kappa}(G),
\end{align*}
and
\begin{align*}
    \spr{x \in G \cap \Z^d} \Big(|x|_2^2 \ge \sum_{k=1}^{d\widetilde{\kappa}(G)} \alpha_k^2 \Big) &\le \sum_{k=1}^{d\widetilde{\kappa}(G)} \exp \Big( k \Big( \alpha_k \log (\widetilde{\kappa}(G))+  \log \big( \frac{4ed}{k} \big) \Big) \Big) \\
    & = \sum_{k=1}^{d \widetilde{\kappa}(G)} 2^{-k} \widetilde{\kappa}(G)^{t \sqrt{d\widetilde{\kappa}(G)}}  \le\widetilde{\kappa}(G)^{t \sqrt{d\widetilde{\kappa}(G)}}.
\end{align*}
 In conclusion for $t \ge 10^5$ we get
\begin{align*}
   \spr{x \in G \cap \Z^d} \Big( \frac{|x|_2}{\sqrt{d}} \ge 2t \sqrt{\widetilde{\kappa}(G) }\Big) \le  \widetilde{\kappa}(G)^{t \sqrt{d \widetilde{\kappa}(G)}}.
\end{align*}
Hence for any $t \ge 2 \cdot 10^5$ we obtain
\begin{align*}
    \spr{x \in G \cap \Z^d} \Big( \frac{|x|_2}{\sqrt{d}} \ge t  \sqrt{\widetilde{\kappa}(G) } \Big) \le  \widetilde{\kappa}(G)^{ t\sqrt{d\widetilde{\kappa}(G)}/2}.
\end{align*}
\end{proof}
\begin{Rem}[Optimality of the second part of Theorem \ref{Bobkov-Nazarov main thm}] \label{optimality of small scale Bobkov-Nazarov} We will establish optimality up to value of the constant $c>0$ of the second part of Theorem \ref{Bobkov-Nazarov main thm} in the range $\widetilde{\kappa}(G) \le d^{-1/2}$.  For any $d \in \N$ large and $k \in \N$ with $16 \le k \le \sqrt{d}$. Consider $G=kB^{1,(d)}$ and any $t \in \N$ with $2 \le t \le  \frac{1}{2} k^{1/2}$, let $t= \varepsilon k^{1/2} \ge 2$. Consider points $x \in G \cap \Z^d$, which have one coordinate equal to $\lceil \varepsilon k \rceil$ and the rest equal to $\pm 1$ or $0$, there are at least 
\[
d \cdot \binom{d-1}{k-\lceil \varepsilon k \rceil} 2^{k-\lceil \varepsilon k \rceil}
\]
such points in $G \cap \Z^d$. By \cite[Corollary 1.8]{KNW} and \eqref{disc_incl} we have $|G \cap \Z^d| \approx 2^k \binom{d}{k}$. This way we get 
\begin{align*}
    &\frac{\Big|\Big\{ x \in G \cap \Z^d: \frac{|x|_2}{\sqrt{d}} \ge t \sqrt{\widetilde{\kappa}(G) } \Big\} \Big|}{|G \cap \Z^d|} = \frac{\Big|\Big\{ x \in G \cap \Z^d: |x|_2 \ge \varepsilon k \Big\} \Big|}{|G \cap \Z^d|} \\
    & \gtrsim \frac{d \cdot \binom{d-1}{k-\lceil \varepsilon k \rceil} 2^{k-\lceil \varepsilon k \rceil}}{2^k \binom{d}{k}}= 2^{- \lceil \varepsilon k \rceil} \frac{ k!(d-k)!}{(k-\lceil \varepsilon k \rceil)! (d-1+ \lceil \varepsilon k \rceil-k)!} \\
    &\gtrsim 2^{-\varepsilon k}\Big( \frac{k-\lceil \varepsilon k \rceil}{d} \Big)^{\lceil \varepsilon k \rceil} d \ge \Big( \frac{k}{d} \Big)^{\varepsilon k} 2^{-\varepsilon k } \cdot \Big(1- \frac{\lceil \varepsilon k \rceil}{k} \Big)^{\lceil \varepsilon k \rceil} \\
    &\gtrsim  \Big(\frac{k}{d} \Big)^{\varepsilon k} e^{-10 \varepsilon^2 k} \ge \Big( \frac{k}{d} \Big)^{ 2\varepsilon k}= \widetilde{\kappa}(G)^{2t \sqrt{d \widetilde{\kappa}(G)}},
\end{align*}
    last inequality holds for sufficiently large $d$, since then $k/d \le d^{-1/2} \le e^{-10}$. This shows that in this range inequality from Theorem \ref{Bobkov-Nazarov main thm} is essentially the best possible. 
\end{Rem}
The following analogue of \cite[Theorem 3.1]{NW} extended to $G \in \mathcal{F}_d$ will be used later to deduce
Theorem \ref{small scale dim-free}. 
\begin{Lem} \label{fun fact for sup sym conv body}
For any $K \in \N$, $\varepsilon>0$ there exists $a \in \N$ such that 
for all $d \in \N$, $G \in \mathcal{F}_d$ satisfying $\widetilde{\kappa}(G) \le d^{-\frac{1+\varepsilon}{K+1}}$ we have 
\[
\spr{x \in G \cap \Z^d} \Big(\sum_{\substack{i=1, \\ |x_i| \ge K+1}}^d
|x_i| > a \Big) \le \frac{4}{d}.
\]
\end{Lem}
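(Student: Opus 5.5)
Write $\kappa=\widetilde{\kappa}(G)$ and let $B=\{i\in[d]:|x_i|\ge K+1\}$ be the set of big coordinates of $x$. Since each big coordinate contributes at least $K+1$ to the sum, the event $\sum_{i\in B}|x_i|>a$ forces one of two things: either $|B|\ge r$, or $|B|<r$ and $\sum_{i\in B}|x_i|>a$. Here $r$ is a threshold chosen depending on $K,\varepsilon$. We may assume $\kappa\ge 1/d$, since otherwise $G\cap\Z^d=\{0\}$ and the statement is trivial. For $d$ large we have $\kappa\le d^{-(1+\varepsilon)/(K+1)}\le 1/2$, so Lemma \ref{big_coord small scales} applies. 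For small $d$, bounded in terms of $K,\varepsilon$, one takes $a\ge d\kappa$, which is bounded; then the event is empty and the bound holds trivially.

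\textbf{Step 1 (many big coordinates).} By a union bound over $r$-subsets of indices and \eqref{eq:big cord small scale geq} with all $j_l=K+1$,
\[
\spr{x\in G\cap\Z^d}\big(|B|\ge r\big)\le \binom{d}{r}4^r\kappa^{r(K+1)}\le (4d)^r d^{-r(1+\varepsilon)}=4^r d^{-r\varepsilon}.
\]
Choose $r=r(\varepsilon)$ with $r\varepsilon\ge 2$. Then this is at most $4^r d^{-2}\le 2/d$ once $d\ge 2\cdot 4^r$. Smaller $d$ are again covered by taking $a$ larger than $d\kappa$.

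\textbf{Step 2 (few big coordinates but large total).} Now fix the set $B=\{i_1,\dots,i_s\}$ with $s<r$, and fix the values $j_l\ge K+1$ with $\sum_l j_l=J>a$. By \eqref{eq:big cord small scale eq} the probability of this configuration is at most $2^s\kappa^{J}$. The number of $(j_l)$ with a given sum $J$ is at most $\binom{J+s}{s}\le (J+r)^r$, and the number of choices of $B$ is at most $d^s\le d^{r}$. Summing over $J>a$ gives a bound of
\[
2^r d^r\sum_{J>a}(J+r)^r\kappa^J.
\]
Using $\kappa\le d^{-(1+\varepsilon)/(K+1)}$, we have $\kappa^{J}\le d^{-J(1+\varepsilon)/(K+1)}$. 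Choose $a$ so large that $a(1+\varepsilon)/(K+1)\ge r+3$. The series is then geometric-dominated: its ratio is at most roughly $d^{-1/(K+1)}$ times a polynomial factor in $J$. This yields a bound $C_{K,\varepsilon}d^{-2}\le 2/d$ for $d$ large, and again small $d$ is absorbed into the choice of $a$.

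The main obstacle is the bookkeeping in Step 2: making sure the polynomial factors $(J+r)^r$ and $d^r$ are beaten uniformly in $d$ by $\kappa^J$. This works precisely because each extra unit of $J$ gains a factor $d^{-(1+\varepsilon)/(K+1)}$ while the combinatorial growth in $J$ is only polynomial with degree $r$ independent of $d$. Adding the two steps gives the bound $4/d$.
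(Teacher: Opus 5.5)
Your argument is correct and rests on the same idea as the paper's proof. Both take a union bound over the positions and values of the coordinates with $|x_i|\ge K+1$ via Lemma \ref{big_coord small scales}, let $\widetilde{\kappa}(G)^{K+1}\le d^{-1-\varepsilon}$ beat the $d$ choices per big coordinate, and absorb small $d$ by taking $a\ge d\widetilde{\kappa}(G)$. The only difference is organizational: the paper avoids your split into $|B|\ge r$ and $|B|<r$ by noting that big coordinates summing to $m$ number at most $m/(K+1)$, so $d^{|B|}\widetilde{\kappa}(G)^m\le d^{-m\varepsilon/(K+1)}$, and a single geometric series in $m$ (with compositions counted by $2^m$) finishes the proof.
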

\begin{proof}
For $d < \max(2^{K+1},4^{\frac{K+2}{\varepsilon}})$ the result trivially holds if $a  \ge \max(2^{K+1},4^{\frac{K+2}{\varepsilon}})$. 
Let us consider $d \ge \max(2^{K+1},4^{\frac{K+2}{\varepsilon}})$, fix $a$ to be determined later then by Lemma \ref{big_coord small scales} we get  
\begin{align*}
&\spr{x \in G \cap \Z^d} \Big(\sum_{\substack{i=1, \\ |x_i| \ge K+1}}^d 
|x_i| \ge a \Big)  \le \sum_{r \le d } \sum_{S \subseteq [d], |S|=r} \sum_{\substack{j_i \ge K+1,\\i \in S, \\
\sum_{i\in S} j_i \ge a}}\spr{x \in G \cap \Z^d} \Big( (\forall i \in S) |x_i| =j_i\Big) \\
&\le \sum_{r \le d } \sum_{S \subseteq [d], |S|=r} \sum_{\substack{j_i \ge K+1,\\i \in S, \\
\sum_{i\in S} j_i \ge a}} 2^r \widetilde{\kappa}(G)^{\sum_{i \in S}j_i} = \sum_{m \ge a}\sum_{r \le \frac{m}{K+1} } \sum_{S \subseteq [d], |S|=r} \sum_{\substack{j_i \ge K+1,\\i \in S, \\
\sum_{i\in S} j_i =m}} 2^r \widetilde{\kappa}(G)^{\sum_{i \in S}j_i} \\
&\le \sum_{m \ge a}\sum_{r \le \frac{m}{K+1} } \sum_{S \subseteq [d], |S|=r}  2^r 2^{m} \widetilde{\kappa}(G)^m \le 2 \sum_{m \ge a} (2^{K+2}d)^{ \frac{m}{K+1}} \widetilde{\kappa}(G)^m \le 2 \sum_{m \ge a} \Big(\frac{2^{K+2}}{d^\varepsilon} \Big)^{\frac{m}{K+1}} \le 4d ^{-\frac{a\varepsilon}{2(K+1)}} \le \frac{4}{d},
\end{align*}
last inequality holds for $a \ge \frac{2(K+1)}{\varepsilon}$.
In conclusion the lemma holds with $a=\max(2^{K+1},4^{\frac{K+2}{\varepsilon}}, \frac{2(K+1)}{\varepsilon})$.
\end{proof}
\begin{Rem}
    It is not possible to remove $\varepsilon$ from the statement of Lemma \ref{fun fact for sup sym conv body}. Fix $a,K \in \N$, we briefly sketch without details, that for arbitrarily large $d$ and $G= (\frac{1}{100}d^{K/(K+1)}) B^{1,(d)}$ we have
    \[
    \spr{x \in G \cap \Z^d} \Big(\sum_{\substack{i=1, \\ |x_i| \ge K+1}}^d
|x_i| > a_0 \Big) \approx_{a_0,K} 1.
    \]
    Replacing $a_0$ by a bigger number $a$ we can assume that $b:=\frac{a}{K+1} \in \Z$ and consider weak inequality instead $\ge$, for simplicity we will also assume that $d^{K/(K+1)} \in \Z$ and $100 \mid d^{K/(K+1)}$. Using Corollary \ref{small scales no big coord} we get 
    \begin{align*}
        &\spr{x \in G \cap \Z^d} \Big(\sum_{\substack{i=1, \\ |x_i| \ge K+1}}^d
|x_i| \ge a \Big) \ge \binom{d}{b} \frac{ \bigg|(\frac{1}{100}d^{K/(K+1)}- a) B^{1,(d-b)} \cap \Big\{-K,\ldots, K\Big\}^{d-b}\bigg|}{|\frac{1}{100}d^{K/(K+1)}B^{1,(d)} \cap \Z^d|} \\
    &\gtrsim_{a,K} d^{b} \frac{ \bigg|(\frac{1}{100}d^{K/(K+1)}- a) B^{1,(d-b)} \cap \Z^{d-b}\bigg|}{|\frac{1}{100}d^{K/(K+1)}B^{1,(d)} \cap \Z^d|}. 
    \end{align*}
    Now using \cite[Theorem 1.4]{KNW} and performing some straightforward computations one can show that 
    \[
    \frac{ \bigg|(\frac{1}{100}d^{K/(K+1)}- a) B^{1,(d-b)} \cap \Z^{d-b}\bigg|}{|\frac{1}{100}d^{K/(K+1)}B^{1,(d)} \cap \Z^d|} \approx_{a,K} d^{-b}
    \]
    for all sufficiently large $d$.
\end{Rem}
We finish this section with the proof of Theorem \ref{iso const}. We start with the following proposition.
\begin{Prop} \label{concen small}
     For all $p \in [1,\infty)$ and $d \in \N, G \in \mathcal{F}_d$, such that $\frac{1}{d} \le \widetilde{\kappa}(G) \le 1/2$ we have
    \[
    \se{x \in G \cap \Z^d} |x_1|^p= \se{x \in G \cap \Z^d} \frac{|x|_p^p}{d} \lesssim_{p} \widetilde{\kappa}(G).
    \]
    Moreover if $\widetilde{\kappa}(G) \le 10^{-3}$, then
    \[
    \se{x \in G \cap \Z^d} |x_d|^p= \se{x \in G \cap \Z^d} \frac{|x|_p^p}{d} \gtrsim_{p} \widetilde{\kappa}(G).
    \]
\end{Prop}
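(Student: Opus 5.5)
The equality of the two averages follows from permutation invariance of $G \cap \Z^d$: every coordinate $x_i$ has the same distribution under the uniform measure, so $\E |x_1|^p = \frac1d \E |x|_p^p$, and likewise with $x_d$ in place of $x_1$. For the two inequalities I will work only with $|x_1|$ (resp. $|x_d|$).

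\textbf{Upper bound.} Write
\[
\se{x \in G \cap \Z^d} |x_1|^p = \sum_{j \ge 1} j^p \spr{x \in G\cap\Z^d}\big(|x_1| = j\big).
\]
The hypothesis $\frac1d \le \widetilde{\kappa}(G) \le 1/2$ is exactly what Lemma \ref{big_coord small scales} requires. Applying \eqref{eq:big cord small scale eq} with $r=1$ gives $\Pro(|x_1|=j) \le 2\widetilde{\kappa}(G)^j$. Hence the sum is at most
\[
2\sum_{j\ge1} j^p \widetilde{\kappa}(G)^j = 2\widetilde{\kappa}(G)\sum_{j\ge1} j^p \widetilde{\kappa}(G)^{j-1} \le 2\widetilde{\kappa}(G)\sum_{j \ge 1} j^p 2^{1-j} \lesssim_p \widetilde{\kappa}(G),
\]
where the middle step uses $\widetilde{\kappa}(G)\le 1/2$.

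\textbf{Lower bound.} Assume $\widetilde{\kappa}(G)\le 10^{-3}$. Since every nonzero integer coordinate satisfies $|x_i|^p \ge 1$, we have
\[
|x|_p^p \ge \big|\{ i\in[d] : x_i=\pm1\}\big|.
\]
I apply Lemma \ref{BMSW a lot of 1} with $k=\lfloor d\widetilde{\kappa}(G)/2\rfloor$, as in the remark after that lemma. The condition $k\ge 400 d\widetilde{\kappa}(G)^2$ holds because $\widetilde{\kappa}(G)\le 10^{-3}$, at least once $d\widetilde{\kappa}(G)$ is not tiny; the integrality and flooring need a small check. With probability at least $1-2^{2-k}$, a point has more than $d\widetilde{\kappa}(G)/2$ coordinates equal to $\pm1$, so
\[
\E\frac{|x|_p^p}{d} \ge \frac{\widetilde{\kappa}(G)}{2}\big(1-2^{2-k}\big).
\]
This is $\gtrsim \widetilde{\kappa}(G)$ once $k\ge 3$, i.e. once $d\widetilde{\kappa}(G)$ exceeds a fixed constant.

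\textbf{Small case.} The remaining situation, $1\le n:=d\widetilde{\kappa}(G)\le C$, is the only delicate step. Here I argue directly. By Lemma \ref{kappa tilde property}, $(1,0,\dots,0)\in G$, so all $2d$ points $\pm e_i$ lie in $G\cap\Z^d$. On the other side, every point of $G\cap\Z^d$ has $|x|_1\le n$, so
\[
|G\cap\Z^d| \le |nB^{1,(d)}\cap\Z^d| \lesssim_C d^{n}.
\]
This crude count alone is too weak when $n\ge2$, so instead I compare level sets. The number of points with at least one nonzero coordinate is at least a constant fraction of $|G\cap\Z^d|$: the origin is a single point, while the points $\pm e_i$ already number $2d$. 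Every such point has $|x|_p^p\ge 1$, so
\[
\E\frac{|x|_p^p}{d} \ge \frac{1}{d}\cdot\frac{|G\cap\Z^d|-1}{|G\cap\Z^d|} \ge \frac{1}{2d}.
\]
Since $\widetilde{\kappa}(G)=n/d\le C/d$, this gives $\E\frac{|x|_p^p}{d} \gtrsim \widetilde{\kappa}(G)$ with a constant depending only on $C$, which closes the small case.
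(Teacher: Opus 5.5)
Your proof is correct and follows the paper's route. The upper bound is exactly the paper's application of Lemma \ref{big_coord small scales} with $r=1$, and the lower bound uses Lemma \ref{BMSW a lot of 1} with $k=\lfloor d\widetilde{\kappa}(G)/2\rfloor$; your separate treatment of bounded $d\widetilde{\kappa}(G)$ (via the $2d$ points $\pm e_i$ and $|x|_p^p\ge 1$ for $x\neq 0$) validly covers an edge case that the paper's one-line deduction passes over, where that lemma's hypothesis and its bound $2^{2-k}$ are unusable, and $d\widetilde{\kappa}(G)\ge 6$ is an explicit threshold that suffices for the main case.
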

\begin{proof}
    From Lemma \ref{big_coord small scales} we have
    \begin{align*}
        &\se{x \in G \cap \Z^d} |x_1|^p= \frac{1}{|G \cap \Z^d|} \sum_{j \ge 1} j^p \Big|\Big\{ x \in G \cap \Z^d: |x_1|=j \Big\}\Big| \le 2 \sum_{j \ge 1} j^p \widetilde{\kappa}(G)^j \lesssim_p \widetilde{\kappa}(G).
    \end{align*}
    Second part of the proposition follows from Lemma \ref{BMSW a lot of 1}.
\end{proof}
Combining previous results we can finally obtain Theorem \ref{iso const}.
\begin{proof}[Proof of Theorem \ref{iso const}]
    Due to Propositions \ref{concen medium} and \ref{concen small}. It suffices to show the following.
    \begin{itemize}
        \item If $d \ge 20$ and $\widetilde{\kappa}(G) \in [10^{-3},10]$, then
        \[
        \se{x \in G \cap \Z^d} |x_1|^p  \gtrsim_p 1.
        \]
        \item If $d<20$ and $\widetilde{\kappa}(G) \ge 10^{-3}$, then 
        \[
        \se{x \in G \cap \Z^d} |x_1|^p \gtrsim_p \widetilde{\kappa}(G)^p.
        \]
    \end{itemize}
    Let us focus first on the case $\widetilde{\kappa}(G) \in [10^{-3},10]$.
    We have the trivial inclusion
    \[
    d \widetilde{\kappa}(G) B^{1,(d)} \cap \{-1,0,1 \}^d \subseteq G \cap \Z^d.
    \]
    From the above we get 
    \begin{equation} \label{abc}
        |G \cap \Z^d| \ge 2^{ \min(d,d\widetilde{\kappa}(G))} \binom{d}{\min(d,d \widetilde{\kappa}(G))} \ge \max\Big(2^{d \widetilde{\kappa}(G)/10}, \widetilde{\kappa}(G)^{-d \widetilde{\kappa}(G)}\Big).
    \end{equation}
    In \cite[Theorem 1.4]{KNW} it was proved for all $n \le d/2$ we have
    \[
    |nB^{1,(d)} \cap \Z^d| \approx (2e)^n \Big( \frac{n}{d}\Big)^{-n}  \frac{1}{\sqrt{n}}\exp \big( nb(\frac{n}{d}) \big),
    \]
    where $b(z)=\sum_{k=1}^\infty b_k z^{2k}$ is certain power series converging uniformly in $|z| \le 1/2$.
    Take $\varepsilon>0$ small, which will be determined later, from the result above we get 
    \[
    |\big(\varepsilon d \widetilde{\kappa}(G)\big)B^{1,(d)} \cap \Z^d| \lesssim C^{\varepsilon d \widetilde{\kappa}(G)} \Big(\varepsilon \widetilde{\kappa}(G) \Big)^{-\varepsilon d \widetilde{\kappa}(G)}
    \]
    for some absolute constant $C>0$. Hence for $\varepsilon$ fixed, sufficiently small we get 
    \begin{equation} \label{xyz}
        |\big(\varepsilon d \widetilde{\kappa}(G)\big)B^{1,(d)} \cap \Z^d| \le C' 2^{d \widetilde{\kappa}(G)/1000} \cdot \widetilde{\kappa}(G)^{-d \widetilde{\kappa}(G)/100}.
    \end{equation}
    This way for $\widetilde{\kappa}(G) \in [10^{-3},10]$ from \eqref{abc} and \eqref{xyz} we obtain
    \[
    \spr{x \in G \cap \Z^d}\Big(|x|_1 \le \varepsilon d \widetilde{\kappa}(G)\Big) \le C 2^{-d/10^9} \le \frac{1}{2},
    \]
    where the last inequality holds for $d>C_2$, where $C_2$ is some absolute constant, thus we have
    \[
    \se{x \in G \cap \Z^d} \frac{|x|_p^p}{d} \ge \se{x \in G \cap \Z^d}  \Big( \frac{|x|_1}{d} \Big)^p \ge \frac{1}{2} \varepsilon^p \widetilde{\kappa}(G)^p \approx_p \widetilde{\kappa}(G)^p .
    \]
    It remains to treat $d \le C_2$ and $G \in \mathcal{F}_d$ with any value of $\widetilde{\kappa}(G)$. Note that if $\frac{1}{d} \le \widetilde{\kappa}(G) \le 100$, then by Lemma \ref{Cor 2.2 from my lq paper} and \eqref{disc_incl} we obtain
    \[
    |G \cap \Z^d| \le |d \kappa(G) B^{1,(d)} \cap \Z^d| \le 2 \cdot 10^{3d} \lesssim 1.
    \]
    Hence in this case we get 
    \[
    \se{x \in G \cap \Z^d} |x_1|^p \ge \frac{1}{|G \cap \Z^d|} \gtrsim 1 \approx  \widetilde{\kappa}(G). 
    \]
    If $\widetilde{\kappa}(G) \ge 100$ and $d \le C_2$, then again by Lemma \ref{Cor 2.2 from my lq paper} and \eqref{disc_incl} we have 
    \[
    \se{x \in G \cap \Z^d} |x_1|^p \ge \frac{1}{|G \cap \Z^d|} \sum_{x \in [-\widetilde{\kappa}(G),\widetilde{\kappa}(G)]^d \cap \Z^d} |x_1|^p \ge C^{-d} \widetilde{\kappa}(G)^p \approx_p \widetilde{\kappa}(G)^p.
    \]
    This exhausts all possible cases and thus finishes the proof of Theorem \ref{iso const}.
\end{proof}

\section{Proof of proposition \ref{counter-example}}
\begin{Defn}[Direct sum of convex bodies]
    For convex bodies $A \subseteq \R^{d_1}$, $B \subseteq \R^{d_2}$ we define their direct sum by
    \begin{align*}
        A \oplus B&= \Big\{(x,y) \in \R^{d_1} \times \R^{d_2}: \|x \|_{A}+ \|y \|_{B} \le 1 \Big\} \\
        &= \Big\{ \lambda (x,0)+ (1-\lambda)(0,y) \in  \R^{d_1} \times \R^{d_2}: \lambda \in [0,1], x \in A, y \in B \Big\}.
    \end{align*}
    Note that $A \oplus B$ is also a convex body.
    More generally for convex bodies $A_1 \subseteq \R^{d_1},A_2 \subseteq \R^{d_2}, \ldots,A_k \subseteq \R^{d_k}$ we define recursively
    \[
    \bigoplus_{i=1}^k A_i= \bigg(\bigoplus_{i=1}^{k-1} A_i \bigg) \oplus A_k= \Big\{(x^{(1)}, \ldots, x^{(k)}) \in \R^{d_1} \times \ldots \times \R^{d_k}: \sum_{i=1}^k \|x^{(i)}\|_{A_i} \le 1 \Big\}.
    \]
\end{Defn}

\begin{Lem} \label{isotropy of direct sum}
    Let $A_1 \subseteq \R^{d_1},A_2 \subseteq \R^{d_2}, \ldots,A_k \subseteq \R^{d_k}$ be 1-unconditional convex bodies in isotropic position, let $d= \sum_{i=1}^k d_i$. Then $\bigoplus_{i=1}^k A_i$ is in isotropic position if and only if for all $i,j \in [k]$ we have
    \[
    L(A_i)^2(d_i+2)(d_i+1)=L(A_j)^2(d_j+2)(d_j+1).
    \]
    Moreover if the above holds, then for all $i \in [k]$ we have 
    \[
    L\bigg(\bigoplus_{i=1}^k A_i \bigg)^2 (d+2)(d+1)=L(A_i)^2(d_i+2)(d_i+1).
    \]
\end{Lem}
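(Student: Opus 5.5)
We compute the second moments of $K=\bigoplus_{i=1}^k A_i$ directly, using the gauge description $K=\{(x^{(1)},\ldots,x^{(k)}): \sum_i \|x^{(i)}\|_{A_i}\le 1\}$.

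\textbf{Off-diagonal moments.} Each $A_i$ is 1-unconditional, so each gauge $\|\cdot\|_{A_i}$ is invariant under sign changes of coordinates within block $i$. Hence $K$ is invariant under all coordinate sign changes. Therefore $\int_K x_a x_b\,dx=0$ for $a\ne b$, whether the two coordinates lie in the same block or in different blocks. Isotropy of $K$ thus reduces to requiring that all diagonal moments $\frac{1}{|K|}\int_K x_a^2\,dx$ be equal.

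\textbf{Diagonal moments via a multi-polar decomposition.} Write $x^{(i)}=r_i\theta_i$ with $r_i=\|x^{(i)}\|_{A_i}$ and $\theta_i\in\partial A_i$. The cone-measure form of the polar coordinates formula gives, for a function $h$ of $r_i$ and $\theta_i$,
\[
\int_{\R^{d_i}} h\,dx^{(i)}=d_i|A_i|\int_0^\infty\int h(r\theta)\,r^{d_i-1}\,d\mu_i(\theta)\,dr,
\]
where $\mu_i$ is the normalized cone measure on $\partial A_i$. Applying this in every block gives
\[
|K|=\prod_i d_i|A_i|\int_{\{\sum r_i\le1,\ r_i\ge0\}}\prod_i r_i^{d_i-1}\,dr .
\]
The last integral is a Dirichlet integral and equals $\prod_i\Gamma(d_i)/\Gamma(d+1)$, so $|K|=\prod_i d_i!\,|A_i|/d!$.

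For a coordinate $a$ in block $i$, the integrand $x_a^2$ equals $r_i^2\theta_{i,a}^2$. The angular factor is determined by a single block:
\[
\int \theta_{i,a}^2\,d\mu_i=\frac{d_i+2}{d_i}\,\frac{1}{|A_i|}\int_{A_i} y_a^2\,dy=\frac{d_i+2}{d_i}\,L(A_i)^2 .
\]
This follows from the same polar formula applied to $A_i$ alone, since the radial integral is $\int_0^1 r^{d_i+1}\,dr=1/(d_i+2)$. The radial factor is again a Dirichlet integral, with exponent $d_i+1$ on $r_i$ and $d_j-1$ on $r_j$ for $j\ne i$. Its value relative to the normalizing integral is
\[
\frac{\Gamma(d_i+2)\,\Gamma(d+1)}{\Gamma(d_i)\,\Gamma(d+3)}=\frac{d_i(d_i+1)}{(d+1)(d+2)} .
\]
Multiplying the two factors gives
\[
\frac{1}{|K|}\int_K x_a^2\,dx=\frac{(d_i+2)(d_i+1)\,L(A_i)^2}{(d+1)(d+2)} .
\]

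\textbf{Conclusion.} The diagonal moments are all equal if and only if $L(A_i)^2(d_i+1)(d_i+2)$ does not depend on $i$, which is the stated criterion. When this holds, the common value is $L(K)^2$, which gives the formula $L(K)^2(d+1)(d+2)=L(A_i)^2(d_i+1)(d_i+2)$.

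The main care needed is in the multi-polar change of variables: checking the Jacobian and the constant $d_i|A_i|$ for the cone measure, and evaluating the Dirichlet integrals correctly. The cleanest way is to check each factor against the case $k=1$, where the formula must reduce to an identity. An alternative route is induction on $k$, using the binary structure $\bigl(\bigoplus_{i<k}A_i\bigr)\oplus A_k$ and the $k=2$ case; by the recursive definition this yields the same formula.
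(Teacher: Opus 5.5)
Your proof is correct and uses the same mechanism as the paper: polar coordinates in each block, Beta/Dirichlet integrals, and sign-invariance to kill the off-diagonal moments. Your values $|K|=\prod_i d_i!\,|A_i|/d!$ and $\frac{1}{|K|}\int_K x_a^2\,dx=\frac{(d_i+2)(d_i+1)L(A_i)^2}{(d+1)(d+2)}$ agree with the paper's.

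The only difference is organizational. The paper reduces to $k=2$ by induction and integrates out the second block by Fubini, producing the factor $(1-\|x\|_A)^{d_2}$, before a one-block polar computation. Your simultaneous $k$-block Dirichlet integral avoids the induction altogether. It thereby also sidesteps a minor point: in the ``only if'' direction the intermediate sum $\bigoplus_{i<k}A_i$ need not be isotropic, so the $k=2$ formula has to be read coordinatewise.
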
    
\begin{proof}
        It is not difficult to see, that due to induction it suffices to consider the case $k=2$. Let $A=A_1$, $B=A_2$ and $d=d_1+d_2$. We start with computing volume of $A \oplus B$.
        Using polar coordinates we have 
        \begin{align*}
         &\Vol(A \oplus B)= \int_{A \oplus B} du= \int_{\|x \|_A+ \|y \|_B \le 1} dx dy =   \int_{\|x \|_A\le 1}  \int_{\|y\|_B \le 1- \|x \|_A} dy dx \\
         & =\int_{\|x \|_A\le 1} \Big(1- \| x \|_A\Big)^{d_2}  \int_{\|y\|_B \le 1} dy dx = \Vol(B) \int_{\|x \|_A \le 1} \Big(1- \|x \|_A \Big)^{d_2} dx \\
         &= \Vol(B) \int_{S^{d_1-1}} \int_{0}^{\rho_A(\theta)} \Big(1- r \| \theta \|_A \Big)^{d_2} r^{d_1} \frac{dr}{r} d \sigma(\theta) \\
         &=\Vol(B) \int_{S^{d_1-1}} \| \theta \|_A^{-d_1} \int_0^1 \big(1-r\big)^{d_2} r^{d_1} \frac{dr}{r} d \sigma(\theta) \\
         &= \Vol(B) \frac{\Gamma(d_1)\Gamma(d_2+1)}{\Gamma(d+1)} \int_{S^{d_1-1}} \rho_A(\theta)^{d_1} d \sigma( \theta) \\
         &=\Vol(B) \frac{\Gamma(d_1)\Gamma(d_2+1)}{\Gamma(d+1)} d_1 \int_{S^{d_1-1}} \int_0^{\rho_A(\theta)} r^{d_1-1} dr d \sigma(\theta)= \Vol(B) \frac{\Gamma(d_1+1)\Gamma(d_2+1)}{\Gamma(d+1)} \int_{A} 1 dx \\
         & =\frac{\Gamma(d_1+1)\Gamma(d_2+1)}{\Gamma(d+1)} \Vol(A) \Vol(B).
        \end{align*}
        Now let us compute integral of $u_i^2$ over $A \oplus B$ for $u \in \R^d$. First assume that $i \le d_1$, by using polar coordinates we have 
        \begin{align*}
            &\int_{A \oplus B} u_i^2du= \int_{\|x \|_A+ \|y \|_B \le 1} x_i^2dx dy =   \int_{\|x \|_A\le 1} x_i^2  \int_{\|y\|_B \le 1- \|x \|_A} dy dx \\
         & =\int_{\|x \|_A\le 1} x_i^2\Big(1- \| x \|_A\Big)^{d_2}  \int_{\|y\|_B \le 1} dy dx = \Vol(B) \int_{\|x \|_A \le 1} x_i^2\Big(1- \|x \|_A \Big)^{d_2} dx \\
         &= \Vol(B) \int_{S^{d_1-1}} \int_{0}^{\rho_A(\theta)} \big(r \theta_i\big)^2 \Big(1- r \| \theta \|_A \Big)^{d_2} r^{d_1} \frac{dr}{r} d \sigma(\theta) \\
         &=\Vol(B) \int_{S^{d_1-1}} \theta_i^2\| \theta \|_A^{-d_1-2} \int_0^1 \big(1-r\big)^{d_2} r^{d_1+2} \frac{dr}{r} d \sigma(\theta) \\
         &=\Vol(B) \frac{\Gamma(d_1+2)\Gamma(d_2+1)}{\Gamma(d+3)} \int_{S^{d_1-1}} \theta_i^2 \rho_A(\theta)^{d_1+2} d \sigma(\theta) \\
         &=\Vol(B) \frac{\Gamma(d_1+2)\Gamma(d_2+1)}{\Gamma(d+3)} (d_1+2) \int_{S^{d_1-1}} \int_0^{\rho_A(\theta)} \big(r \theta_i\big)^2 r^{d_1} \frac{dr}{r} d \sigma(\theta) \\
         &=\Vol(B) \frac{\Gamma(d_1+3)\Gamma(d_2+1)}{\Gamma(d+3)} \int_{A} x_i^2 dx \\
         &=L(A)^2\Vol(A)\Vol(B) \frac{\Gamma(d_1+3)\Gamma(d_2+1)}{\Gamma(d+3)}.
        \end{align*}
        This way for $i \le d_1$ we get 
        \begin{equation} \label{eq:L(A)}
            \frac{1}{\Vol(A\oplus B )} \int_{A \oplus B} u_i^2 du= \frac{L(A)^2 (d_1+2)(d_1+1)}{(d+2)(d+1)},
        \end{equation}
        similarly one can show that for $i>d_1$ we have 
        \begin{equation} \label{eq:L(B)}
            \frac{1}{\Vol(A\oplus B )} \int_{A \oplus B} u_i^2 du= \frac{L(B)^2 (d_2+2)(d_2+1)}{(d+2)(d+1)}.
        \end{equation}
        Since $A \oplus B$ is 1-unconditional we get that $A \oplus B$ is in isotropic position if and only if 
        \[
        L(A)^2 (d_1+2)(d_1+1)=L(B)^2 (d_2+2)(d_2+1).
        \]
        Moreover if the above holds, then
        \[
        L(A \oplus B)^2=\frac{L(A)^2 (d_1+2)(d_1+1)}{(d+2)(d+1)} =\frac{L(B)^2 (d_2+2)(d_2+1)}{(d+2)(d+1)}.
        \]
\end{proof}
\begin{Defn}
    For $p \in [1,\infty]$, $d \in \N$ we define 
    \[
    c(p,d)=L(d^{1/p}B^{p,(d)})^{-1}.
    \]
\end{Defn}
Note that by \eqref{comparability of LG and kappaG}
we have $c(p,d) \in [ \frac{1}{\sqrt{2}}, \sqrt{3 \pi e}]$. Moreover for fixed $d \in \N$, $ p \mapsto c(p,d)$ is a continuous function.
\par We now proceed to the proof of Proposition \ref{counter-example}.
\begin{proof}{Proof of Proposition \ref{counter-example}}
It suffices to consider only $K \in \N$,
fix $K \in \N$. We will define recursively $d_i \in \N$, $p_i \in [1, \infty)$ for $i \in [K]$. Let $d_1=11$, note that 
\[
 \frac{c(1,d_1)d_1}{\sqrt{(d_1+2)(d_1+1)}} \ge \frac{11}{\sqrt{2} \cdot \sqrt{13 \cdot 12}}>\frac{1}{2}
 , \quad \text{and} \quad  \frac{ c(\infty, d_1)}{\sqrt{(d_1+2)(d_1+1)}} \le \frac{\sqrt{3 \pi e}}{\sqrt{13 \cdot 12}}<\frac{1}{2}.
\]
Hence there exists $p_1 \in [1, \infty)$ such that 
\[
 \frac{c(p_1,d_1) d_1^{1/p_1}}{\sqrt{(d_1+2)(d_1+1)}}= \frac{1}{2}.
\]
Assume that we have already defined $d_1, \ldots, d_j$ and $p_1, \ldots ,p_j$ for some $j<K$. Then we simply define $d_{j+1} \in \N$ and $p_{j+1}$ in the following way 
\[
 d_{j+1}= \prod_{i=1}^{j} 2^{(j+1)d_i  + 3 d_i+1} 
\]
and $p_{j+1} \in [1, \infty)$ such that 
\[
  \frac{c(p_{j+1},d_{j+1}) d_{j+1}^{1/p_{j+1}}}{\sqrt{(d_{j+1}+2)(d_{j+1}+1)}}= 2^{-j-1},
\]
existence of such $p_{j+1}$ can be justified in the exact same way as existence of $p_1$ was justified.
This way we have defined $d_1,...,d_K \in \N$ and $p_1, \ldots p_K \in (1, \infty]$ so that 
\begin{equation} \label{counter example property 1}
    (\forall l \in [K]) \  d_l =\prod_{i=1}^{l-1} 2^{l d_i  + 3 d_i+1}
\end{equation}
and
\begin{equation} \label{counter example property 2}
    (\forall l \in [K]) \ \frac{ c(p_l,d_l) d_l^{1/p_l}}{\sqrt{(d_l+2)(d_l+1)}}= 2^{-l}.
\end{equation}
Take any $d \ge K^2 4^K d_K^2 >11$ and define $d_{K+1}=d- \sum_{i \in [K]} d_i$.
Now for every $i \in [K]$ we introduce the following sets 
\[
A_i=  \frac{c(p_i,d_i)d_i^{1/p_i}}{\sqrt{(d_i+2)(d_i+1)}}B^{p_i,(d_i)} =2^{-i}B^{p_i,(d_i)} \subseteq \R^{d_i}
\]
and 
\[
A_{K+1}=  \frac{c( \infty, d_{K+1})}{\sqrt{(d_{K+1}+2)(d_{K+1}+1)}} B^{\infty,(d_{K+1})} \subseteq \R^{d_{K+1}}.
\]
Then we define 
\begin{equation} \label{defn of counter example}
    G= \bigoplus_{i=1}^{K+1} A_i \subseteq \R^{d}.
\end{equation}
By definition of $c(p_i,d_i)$ and Lemma \ref{isotropy of direct sum} we have that $G$ is in isotropic position.
Note that 
\begin{align*}
    &G= \Big\{ \Big( \frac{\lambda_1}{2}x^{(1)},\ldots, \frac{\lambda_K}{2^K}x^{(K)}, \lambda_{K+1}y) \in \R^{d_1} \times \ldots \R^{d_K} \times \R^{d_{K+1}}: (\forall i \in [K+1]) \lambda_i \in [0,1],\sum_{i=1}^{K+1} \lambda_i=1, \\
    & (\forall i \in [K]) x^{(i)} \in B^{p_i,(d_i)}, \ y \in  \frac{c( \infty, d_{K+1})}{\sqrt{(d_{K+1}+2)(d_{K+1}+1)}} B^{\infty,(d_{K+1})} \Big\}.
\end{align*}
For $i \in [K]$ let $m_i= \sum_{l<i} d_l$. Notice that for any $j \in [K]$ we have 
\begin{align*}
    &\Big\{\pm e_{m_j+1}, \ldots, \pm e_{m_j+d_j} \Big\} \subseteq  2^jG \cap \Z^d \\
    &\subseteq  \Big\{\pm e_{m_j+1}, \ldots, \pm e_{m_j+d_j} \Big\} \cup \bigg(\Big( 2^{j-1} B^{p_1,(d_1)} \Big) \times \ldots \times \Big( 2B^{p_{j-1},(d_{j-1})}\Big) \times \{ 0 \}^{d-m_j} \bigg).
\end{align*}
This way using Lemma \ref{Cor 2.2 from my lq paper} and assumption on $d_j$ we get
\begin{align*}
    & 2d_j \le \Big|2^j G \cap \Z^d \Big| \le \Big(2d_j+ \prod_{i=1}^{j-1} 2^{j d_i+3d_i+1} \Big) \le 3d_j.
\end{align*}
Moreover note that for any $j \in [K]$ we have
\[
\widetilde{\kappa}\Big(2^j G\Big) \le d^{-1} 2^j \sum_{i=1}^j d_i \le d^{-1} 2^K K d_K \le d^{-1/2}
\]
Now we define the same function $f: \Z^d \to \C$ as in \cite[Proof of Theorem 2]{cubes}. 
For $s \in [K]$ let 
\[
E_s= \Big\{y \in \Z^{d_s}: (\forall i \in [d_s])\  |y_i| \le 2^d, \ \text{and} \ \sum_{i=1}^{d_s} y_i \ \text{is odd} \Big\}
\]
and 
\[
f(x)= \mathds{1}_{E_1 \times \ldots \times E_K \times \{0\}^{d_{K+1}}}(x).
\]
Note that 
\[
\frac{1}{3} \Big(2^{d+1}+1\Big)^{d_s} \le |E_s| \le \Big(2^{d+1}+1\Big)^{d_s}
\]
Moreover for any $j \in [K]$ and $x=(x^{(1)}, \ldots, x^{(K)}, \overrightarrow{0}) \in \R^{d_1} \times \ldots \times \R^{d_K} \times \{0\}^{d_{K+1}}$ we have 
\begin{align*}
    &\mathcal{M}_{2^j}^Gf(x) \ge \frac{1}{|2^j G \cap \Z^d|} \sum_{y \in 2^jG \cap \Z^d} f(x-y) \ge  \frac{1}{3 d_j} \sum_{i=m_j+1}^{m_j+d_j} \big(f(x + e_i)+ f(x-e_i)\big) \\
    & \ge \frac{2}{3} \prod_{i \in [K] \setminus \{j\}} \mathds{1}_{E_i}(x^{(i)}) \cdot \mathds{1}_{E'_j}(x^{(j)}),
\end{align*}
where 
\[
E_j'= \Big\{y \in \Z^{d_j}: (\forall i \in [d_j]) |y_i| \le 2^{d}-1, \ \text{and} \ \sum_{i=1}^{d_j} y_i \ \text{is even} \Big\}.
\]
Note that 
\[
E_j' \cap E_j= \emptyset, \ \text{and} \ |E'_j| \ge \Big(2^{d+1}-1\Big)^{d_j-1} \Big(2^d-1 \Big) \ge \frac{1}{4} \Big(2^{d+1}+1\Big)^{d_j} \ge \frac{1}{4} |E_j|,
\]
above we've used inequalities 
\[
 \frac{2^{d+1}+1}{2^d-1} \le 3, \quad \text{and} \quad \Big(\frac{2^{d+1}+1}{2^{d+1}-1} \Big)^{d_j-1} < \exp \Big( \frac{2d}{2^{d+1}-1} \Big) < \exp \Big( \frac{22}{2^{12}-1}\Big)< \frac{4}{3},
\]
which holds since $d \ge 11$.
This way if we define
\[
B_j:=E_1 \times \ldots E_{j-1} \times E'_j \times E_{j+1} \times \ldots E_{K} \times \{0\}^{d_{K+1}}
\]
then $B_j \cap B_{i} = \emptyset$ for any $i,j \in [K]$ with $i \neq j$, moreover 
\[
|B_j| \ge \frac{1}{4} \prod_{i=1}^K |E_i| \ge \frac{1}{4} \|f \|_{\ell^p(\Z^d)}^p.
\]
Thus for any $p \in (1, \infty)$ we obtain
\begin{align*}
    & \sup_{\substack{n \in \Z, \\ \widetilde{\kappa}(2^n G) \le d^{-1/2}}} |\mathcal{M}_{2^n}^G f(x)|  \ge \max_{1 \le j \le K} \mathcal{M}_{2^j}^G f(x) \ge \frac{2}{3} \max_{1 \le j \le K} \mathds{1}_{B_j}(x) \\
    &\ge \frac{2}{3} \bigg( \sum_{j=1}^K \mathds{1}_{B_j}(x) \bigg)^{1/p},
\end{align*}
where above we have used disjointness of sets $B_j$. Thus we finally obtain
\begin{align*}
    \bigg\|\sup_{\substack{n \in \Z, \\ \widetilde{\kappa}(2^n G) \le d^{-1/2}}} |\mathcal{M}_{2^n}^G f| \bigg\|_{\ell^p(\Z^d)} &\ge \frac{2}{3} \cdot \bigg( \sum_{j=1}^K |B_j| \bigg)^{1/p} \ge \frac{2}{3} 4^{-1/p} K^{1/p} \|f \|_{\ell^p (\Z^d)}\\
    &\ge \frac{1}{6}K^{1/p} \|f \|_{\ell^p (\Z^d)},
\end{align*}
this completes the proof, since $K$ was chosen to be arbitrary.
\end{proof}
\section{Dyadic maximal function in large-scale regime}
With insights gained in previous sections we can easily adapt arguments from \cite[Section 8.2]{balls}.
In this section we will prove the following.
\begin{Thm} \label{large scale}
    We have
    \[
    \sup_{d \in \N} \sup_{G \in \mathcal{F}_d} \Big\| \sup_{\substack{n \in \Z, \\ 1 \le 2^n \kappa(G) \le d}} \big|\mathcal{M}_{2^n}^G\big| \Big\|_{\ell^2(\Z^d) \to \ell^2(\Z^d)}< \infty.
    \]
\end{Thm}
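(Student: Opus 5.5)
We follow the Bourgain–Mirek–Stein–Wróbel scheme of \cite[Section 8.2]{balls}. By Plancherel and the standard square-function comparison with a dimension-free continuous semigroup, it suffices to prove estimates for the multipliers $\mathfrak{m}_{tG}$ with $t=2^n$ and $1\le t\kappa(G)\le d$. We compare $\mathcal{M}_{2^n}^G$ with the heat-type operators $P_{s}$ whose multipliers are $e^{-s\|\xi\|^2}$. The maximal function of $P_s$ over dyadic $s$ is bounded on $\ell^2(\Z^d)$ uniformly in $d$ by Stein's semigroup theorem. The error is then handled by the square function
\[
\sum_n \big|\mathfrak{m}_{2^nG}(\xi)-e^{-\kappa(2^nG)^2\|\xi\|^2}\big|^2 .
\]
This sum is uniformly bounded provided we prove the two multiplier estimates
\[
|\mathfrak{m}_{tG}(\xi)-1|\lesssim \kappa(tG)^2\|\xi\|^2, \qquad |\mathfrak{m}_{tG}(\xi)|\lesssim \big(\kappa(tG)^2\|\xi\|^2\big)^{-\delta}+(\text{acceptable error}),
\]
valid for $1\le\kappa(tG)\le d$, with an additional treatment of $\xi$ near $1/2$ via $\|\xi+1/2\|$ as in \cite{balls}. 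By Lemma \ref{tilde kappa comparable to kappa in large scale}, $\widetilde{\kappa}(tG)\approx\kappa(tG)$ in this range, so the results of Section 3 apply with $\widetilde\kappa\ge 1/2$ (for $\kappa\ge 1$ we have $\widetilde\kappa\ge \kappa-1$; the case $\kappa\in[1,2)$ is absorbed by a scaling adjustment or by using $\widetilde\kappa(tG)\ge\lfloor t\rfloor\widetilde\kappa(G)$).

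The small-frequency bound is the easy step. By the symmetry of $tG\cap\Z^d$ we have $1-\mathfrak{m}_{tG}(\xi)=\E(1-\cos 2\pi x\cdot\xi)$. Using sign invariance, we average over $\epsilon\in\{-1,1\}^d$ and bound $\E_\epsilon\sin^2(\pi\sum_i\epsilon_ix_i\xi_i)\lesssim\sum_i\sin^2(\pi x_i\xi_i)\le\sum_i x_i^2\sin^2(\pi\xi_i)$. Permutation invariance and Theorem \ref{iso const} with $p=2$ then give $\E x_i^2\approx\widetilde\kappa(tG)^2$, so $|1-\mathfrak{m}_{tG}(\xi)|\lesssim\kappa(tG)^2\|\xi\|^2$.

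The large-frequency decay is the main obstacle. Here we imitate \cite[Lemma 2.4 and Section 8]{balls}, using Lemma \ref{pos_prop_conc} in place of their coordinate-concentration statement. With probability $1-2e^{-d/10}$, a point $x$ has at least $d/10$ coordinates of size $\ge\kappa/10$. We condition on the absolute values and on the permutation structure, and use the symmetry under permutations of a random set of coordinates. On the coordinates where $|x_i|\ge\kappa/10$, we replace $x$ by a one-dimensional averaging along each coordinate: conditioning on all other coordinates, the fiber in coordinate $i$ is an interval of length $\gtrsim\kappa$, because $G$ is 1-unconditional and convex. The fiber average yields a Fejér-type factor $\min(1,(\kappa\|\xi_i\|)^{-1})$. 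Running the argument of \cite{balls}, in which one splits $\xi$ according to whether $\sum_i\sin^2(\pi\xi_i)$ is spread or concentrated and randomizes over permutations to decouple, we expect
\[
|\mathfrak{m}_{tG}(\xi)|\lesssim e^{-c d}+(\kappa(tG)^2\|\xi\|^2)^{-1/2}+(\kappa(tG)^2\|\xi+1/2\|^2)^{-1/2}
\]
or a similar bound. The term $e^{-cd}$ is harmless because the number of dyadic scales in $[1,d]$ is $O(\log d)$. The delicate point is controlling the conditional fibers uniformly over all $G\in\mathcal{F}_d$, not just for $\ell^q$ balls. For this we intend to rely on the tail bounds of Corollary \ref{big_coord tail bound no ass on alpha}, which show that the interval length $\approx\kappa$ is typical and that there are no huge coordinates. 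This is the part where the proof departs from the explicit counting available for $B^{q}$, and we expect it to require the most care.
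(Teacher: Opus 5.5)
Your reduction matches the paper's: comparison with the semigroup $e^{-\kappa(2^nG)^2\|\xi\|^2}$, a square function over the lacunary scales, and the low-frequency bound $|\mathfrak{m}_{tG}(\xi)-1|\lesssim\kappa(tG)^2\|\xi\|^2$ from sign invariance and Theorem \ref{iso const} with $p=2$, which is Proposition \ref{large scale m-1 bound}. The gap is the high-frequency estimate. You only conjecture it, and the mechanism you sketch does not give the bound you write.

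First, long fibres on the event $|x_i|\ge\kappa/10$ give only a constant-factor gain. By Lemma \ref{pos_prop_conc} and symmetry this event has probability $\gtrsim 1/10$, not $1-o(1)$, and the remaining fibres still contribute $O(1)$. Since $|\sum_{|k|\le a}e(k\xi_i)|\le\|\xi_i\|^{-1}$ for every $a$, the quantity that matters is $\E_{x}(2a(x')+1)^{-1}=|\pi_i(G)\cap\Z^{d-1}|/|G\cap\Z^d|$. Here $x'$ denotes the other coordinates and $[-a(x'),a(x')]$ the fibre. Bounding this ratio by $\lesssim\widetilde{\kappa}(G)^{-1}$ is exactly Lemma \ref{dim_comp}, and it yields $|\mathfrak{m}_G(\xi)|\lesssim(\widetilde{\kappa}(G)\max_i\|\xi_i\|)^{-1}$.

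Second, this decays only in $\max_i\|\xi_i\|$. Once absolute values are taken, the step cannot be repeated in another coordinate. The cosine-product argument with Lemmas \ref{pos_prop_conc} and \ref{lem:2.6} only covers coordinates with $\|\xi_j\|\le(100\kappa)^{-1}$. The paper gets decay in the full $\|\xi\|$ (Proposition \ref{large scale m bound}) by an idea absent from your sketch. Lemma \ref{decomp} reduces, up to an exceptional set of relative size $4e^{-\varepsilon r/2}$, to $r$-dimensional sections with $r=\lfloor\kappa(G)^{2/7}\rfloor$ that still satisfy $\kappa\ge\varepsilon^2\kappa(G)$. On these sections lattice sums are compared with integrals and Bourgain's continuous bound applies (Lemma \ref{m_K bound}, Theorem \ref{thm:cont mult bounds}).

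Finally, your term $(\kappa^2\|\xi+1/2\|^2)^{-1/2}$ is a misstep. It gives no decay near $\xi=(1/2,\ldots,1/2)$, where $e^{-\kappa^2\|\xi\|^2}$ is negligible, so your square function would not close there. No second approximant is needed in this regime, because the fibre bound already gives $|\mathfrak{m}_G|\lesssim\kappa^{-1}$ at such $\xi$.

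That said, your ingredients do suffice once you use that the square function only needs $\sum_n|\mathfrak{m}_{2^nG}(\xi)-e^{-\kappa_n^2\|\xi\|^2}|^2\lesssim 1$ pointwise in $\xi$, with $\kappa_n=2^n\kappa(G)\in[10,d]$ and $d\ge 20$ (the other cases involve $O(1)$ scales). A uniform bound in $\|\xi\|$ is not required. Put $M=\max_i\|\xi_i\|$.
\begin{itemize}
\item For $n$ with $\kappa_nM>1/100$, the fibre bound gives $|\mathfrak{m}_{2^nG}(\xi)|\lesssim(\kappa_nM)^{-1}$, and $e^{-\kappa_n^2\|\xi\|^2}\le e^{-\kappa_n^2M^2}$. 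Both are geometrically summable because the $\kappa_n$ are lacunary.
\item For $n$ with $\kappa_nM\le 1/100$, every coordinate is below the threshold. The argument following \eqref{eq:55}, run with $r=0$, gives $|\mathfrak{m}_{2^nG}(\xi)|^2\lesssim e^{-c\kappa_n^2\|\xi\|^2}+e^{-d/10}$. Combined with Proposition \ref{large scale m-1 bound} this is square-summable, and the term $e^{-d/10}$ occurs at most $\log_2 d$ times.
\end{itemize}
This route is shorter than the paper's and avoids the dimension reduction and the continuous multiplier altogether. What the paper's route buys is the uniform estimate $|\mathfrak{m}_G(\xi)|\lesssim(\kappa(G)\|\xi\|)^{-1}+\kappa(G)^{-1/7}$, which is more than the dyadic $\ell^2$ bound needs.
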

Let us recall that in \cite[Theorem 1]{cubes} it was showed that for any symmetric convex body $G \subseteq \R^d$ we have
\[
    \Big\| \sup_{t \ge c(G) d} \big| \mathcal{M}_t^G  \big| \Big\|_{\ell^2(\Z^d) \to \ell^2(\Z^d)} \le e^6 \Big\| \sup_{t>0} |M_t^G| \Big\|_{L^2(\R^d) \to L^2(\R^d) },
    \]
    where $c(G)= \inf\{t>0: [-1/2,1/2]^d \subseteq tG \}$. Due to Bourgain's classical result \cite{B1}, we see that the right-hand side of the above inequality is bounded by an absolute constant. In particular for any $G \in \mathcal{F}_d$, because of Lemma \ref{lem:kappa_def}, we have $c(G)= \frac{1}{2 \kappa(G)}$ and get 
\[
\Big\| \sup_{t \ge  \frac{d}{2 \kappa(G)}} \big| \mathcal{M}_t^G  \big| \Big\|_{\ell^2(\Z^d) \to \ell^2(\Z^d)} \lesssim 1. 
\]
Inequality above combined with Theorem \ref{large scale} leads to the following corollary.
\begin{Cor} \label{large scale no upper restr}
    We have
    \[
    \sup_{d \in \N} \sup_{G \in \mathcal{F}_d} \Big\| \sup_{\substack{n \in \Z, \\ 1 \le 2^n \kappa(G) }} \big|\mathcal{M}_{2^n}^G \big| \Big\|_{\ell^2(\Z^d) \to \ell^2(\Z^d)}< \infty.
    \]
\end{Cor}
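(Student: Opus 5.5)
The plan is to split the set of dyadic scales $\{n \in \Z : 1 \le 2^n\kappa(G)\}$ into two ranges and control each separately, using only a sub-additivity argument:
\[
\sup_{n \in I_1 \cup I_2} |\mathcal{M}_{2^n}^G f| \le \sup_{n \in I_1} |\mathcal{M}_{2^n}^G f| + \sup_{n \in I_2} |\mathcal{M}_{2^n}^G f|.
\]
Here
\[
I_1 = \{n : 1 \le 2^n \kappa(G) \le d\}, \qquad I_2 = \{n : 2^n \kappa(G) > d\}.
\]

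On $I_1$ the bound is exactly Theorem \ref{large scale}, which we may assume. It gives an $\ell^2$ bound uniform in $d$ and in $G \in \mathcal{F}_d$.

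On $I_2$ we use the comparison principle from \cite[Theorem 1]{cubes} quoted just above. That result controls $\sup_{t \ge c(G)d} |\mathcal{M}_t^G|$ on $\ell^2(\Z^d)$ by $e^6$ times the continuous maximal norm. By Bourgain \cite{B1}, the continuous maximal norm is bounded by an absolute constant on $L^2$ for every symmetric convex body. By Lemma \ref{lem:kappa_def}, $[-\kappa(G),\kappa(G)]^d \subseteq G$ is the largest such cube, so $c(G) = \frac{1}{2\kappa(G)}$. The only thing to check is that every $n \in I_2$ has $t = 2^n \ge c(G)d$. Indeed, $2^n\kappa(G) > d$ gives
\[
2^n > \frac{d}{\kappa(G)} \ge \frac{d}{2\kappa(G)} = c(G)d,
\]
so the dyadic supremum over $I_2$ is dominated pointwise by the full supremum over $t \ge c(G)d$. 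Its $\ell^2$ norm is therefore $\lesssim 1$.

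Adding the two bounds gives a constant independent of $d$ and $G$, which proves Corollary \ref{large scale no upper restr}. There is no genuine obstacle here: the heavy lifting is in Theorem \ref{large scale}, and the corollary is just gluing. The only care needed is the identification $c(G) = 1/(2\kappa(G))$ and checking that the two ranges of scales cover $\{n : 1 \le 2^n\kappa(G)\}$, which they do since $I_1 \cup I_2$ is exactly that set.
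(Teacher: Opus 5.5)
Your proposal is correct and is essentially the paper's argument: the paper also gets the corollary by combining Theorem \ref{large scale} on the range $1 \le 2^n\kappa(G) \le d$ with the comparison principle of \cite{cubes} and Bourgain's uniform $L^2$ bound. On the remaining range it uses the identification $c(G)=\frac{1}{2\kappa(G)}$ from Lemma \ref{lem:kappa_def}, so that every $n$ with $2^n\kappa(G)>d$ satisfies $2^n \ge c(G)d$.
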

Proof of the Theorem \ref{large scale} relies on two propositions.
\begin{Prop} \label{large scale m-1 bound}
    For every $d \in \N, G \in \mathcal{F}_d$ and every $\xi \in \T^d$ we have
    \[
    |\mathfrak{m}_G(\xi)-1| \lesssim \max\Big(\widetilde{\kappa}(G),\widetilde{\kappa}(G)^2 \Big)  \| \xi\|^2.
    \]
    In particular if $\kappa(G) \ge 10$, then
    \[
    |\mathfrak{m}_G(\xi)-1| \lesssim \min\big(1, \kappa(G)^2 \| \xi\|^2\big) \lesssim \kappa(G) \| \xi \|.
    \]
\end{Prop}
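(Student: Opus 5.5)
By the sign symmetry of $G$, the multiplier is real and factors as a product of cosines on average:
\[
\mathfrak{m}_G(\xi)=\se{x \in G \cap \Z^d} \prod_{i=1}^d \cos(2\pi x_i\xi_i).
\]
Indeed, averaging $e(x\cdot\xi)$ over the sign changes $\epsilon\cdot x$, which preserve $G\cap\Z^d$, gives exactly this product.

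The plan is to bound $1-\prod_i \cos(2\pi x_i\xi_i)$ pointwise. Using $|1-\prod a_i|\le \sum_i|1-a_i|$ for $|a_i|\le 1$ and $1-\cos(2\pi x_i\xi_i)=2\sin^2(\pi x_i\xi_i)\le 2x_i^2\sin^2(\pi\xi_i)$, we get
\[
|1-\mathfrak{m}_G(\xi)|\le 2\sum_{i=1}^d \sin^2(\pi\xi_i)\,\se{x \in G \cap \Z^d} x_i^2 .
\]
The inequality $\sin^2(\pi k\theta)\le k^2\sin^2(\pi\theta)$ for integer $k$ follows by induction from $|\sin((k+1)u)|\le|\sin(ku)|+|\sin u|$. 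By permutation invariance, $\se{x} x_i^2=\se{x} x_1^2$ for every $i$. Theorem \ref{iso const} with $p=2$ (Propositions \ref{concen medium} and \ref{concen small} suffice for the upper bound) gives $\se{x} x_1^2\lesssim\max(\widetilde{\kappa}(G),\widetilde{\kappa}(G)^2)$. Hence $|\mathfrak{m}_G(\xi)-1|\lesssim \max(\widetilde{\kappa}(G),\widetilde{\kappa}(G)^2)\|\xi\|^2$.

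The upper bound in Theorem \ref{iso const} is needed in all regimes. For $\widetilde{\kappa}(G)<1/d$ we have $\widetilde{\kappa}(G)=0$, so $G\cap\Z^d=\{0\}$, $\mathfrak{m}_G=1$, and the bound is trivial. The range $1/d\le\widetilde{\kappa}\le1/2$ is covered by Proposition \ref{concen small}, and $\widetilde{\kappa}\ge 1/2$ by Proposition \ref{concen medium}. These are exactly the cases of the upper half of the theorem, so there is no real obstacle here.

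For the second statement, assume $\kappa(G)\ge10$. Lemma \ref{tilde kappa comparable to kappa in large scale} gives $\widetilde{\kappa}(G)\ge\kappa(G)-1\ge 9$, so the maximum equals $\widetilde{\kappa}(G)^2\le\kappa(G)^2$. Combined with the trivial bound $|\mathfrak{m}_G-1|\le2$, this yields $|\mathfrak{m}_G(\xi)-1|\lesssim\min(1,\kappa(G)^2\|\xi\|^2)$. Finally $\min(1,a^2)\le a$ for $a\ge0$, applied with $a=\kappa(G)\|\xi\|$, gives the last inequality. The only step requiring care is the pointwise expansion in the second paragraph; everything else is bookkeeping.
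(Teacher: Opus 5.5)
Your proof is correct and follows the paper's argument essentially step for step. The shared steps are: averaging over sign changes to get $\se{x \in G \cap \Z^d}\prod_i\cos(2\pi x_i\xi_i)$, the bound $\bigl|\prod_i a_i-\prod_i b_i\bigr|\le\sum_i|a_i-b_i|$, the estimate $\sin^2(\pi k\theta)\le k^2\sin^2(\pi\theta)$, permutation invariance, and the $p=2$ upper bound of Theorem \ref{iso const}; the second claim then follows from Lemma \ref{tilde kappa comparable to kappa in large scale} together with the trivial bound $|\mathfrak{m}_G(\xi)-1|\le 2$.
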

\begin{Prop} \label{large scale m bound}
    For every $d \in \N, G \in \mathcal{F}_d$ with $10 \le \kappa(G) \le d$ and every $\xi \in \T^d$ we have
    \begin{equation} \label{eq:large m bound}
    |\mathfrak{m}_G(\xi)| \lesssim \Big( \kappa(G) \| \xi\| \Big)^{-1} + \kappa(G)^{-1/7}.
    \end{equation}
\end{Prop}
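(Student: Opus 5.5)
The plan is to follow the strategy of \cite[Section 8.2]{balls}. Two structural facts do most of the work. The first is 1-unconditionality, which gives
\[
\mathfrak{m}_G(\xi)=\se{x\in G\cap\Z^d}\prod_{i=1}^d\cos(2\pi x_i\xi_i).
\]
The second is permutation invariance, which lets me replace $x$ by $\sigma\cdot x$ and average over $\sigma\in\Sym(d)$.

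I would split the coordinates of $\xi$ according to the threshold $\kappa(G)^{-1}$, with a parameter $\delta=\kappa(G)^{-\gamma}$ to be optimized (this optimization is what should produce the exponent $1/7$). Write
\[
S=\{i:\|\xi_i\|\le \delta\}, \qquad L=[d]\setminus S,
\]
where $\|\xi_i\|=|\sin(\pi\xi_i)|$. Then either $\sum_{i\in S}\sin^2(\pi\xi_i)\ge\|\xi\|^2/2$ or $\sum_{i\in L}\sin^2(\pi\xi_i)\ge\|\xi\|^2/2$, and I treat the two cases separately.

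\textbf{Small-frequency part.} For $i\in S$ and $|x_i|\lesssim\delta^{-1}$ we have $|\cos(2\pi x_i\xi_i)|\le \exp(-c\,x_i^2\sin^2(\pi\xi_i))$. I bound $|\mathfrak{m}_G(\xi)|$ by $\se{x}\se{\sigma}\prod_{i\in S}|\cos(2\pi x_{\sigma(i)}\xi_i)|$. I then restrict to the good set of $x$ that have at least $d/10$ coordinates with $|x_i|\ge\kappa(G)/10$; by Lemma \ref{pos_prop_conc} its complement has measure $\le 2e^{-d/10}$. I would also discard $x$ with many coordinates above $\kappa(G)^{1+\gamma'}$, using Corollary \ref{big_coord tail bound no ass on alpha} with $\widetilde\kappa(G)\approx\kappa(G)$ (Lemma \ref{tilde kappa comparable to kappa in large scale}); this costs at most a power of $\kappa(G)^{-1}$. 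For a good $x$ and a uniformly random $\sigma$, the number of $i\in S$ that receive a value $|x_{\sigma(i)}|\in[\kappa/10,\kappa^{1+\gamma'}]$ is hypergeometric with mean a constant fraction of $|S|$. A standard concentration argument then gives
\[
\se{\sigma}\exp\Big(-c\sum_{i\in S}x_{\sigma(i)}^2\sin^2(\pi\xi_i)\Big)\lesssim e^{-c'\kappa^2\|\xi\|^2}+(\text{error}).
\]
The main term is $\lesssim(\kappa\|\xi\|)^{-1}$; if the sampling is awkward, I would instead follow \cite{balls} and average over a random splitting of $S$.

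\textbf{Large-frequency part.} Here I use a fiber argument. Fix $i\in L$ and fix all coordinates of $x$ except the $i$-th. By convexity and symmetry, the admissible values of $x_i$ form an interval $[-R_i(x),R_i(x)]\cap\Z$ with $R_i(x)\ge|x_i|$. Summing over the fiber gives a Dirichlet kernel, so
\[
|\mathfrak{m}_G(\xi)|\le \se{x}\min\big(1,\,(R_i(x)\|\xi_i\|)^{-1}\big)\le \se{x}\min\big(1,\,(|x_i|\,\delta)^{-1}\big)
\]
for every $i\in L$. Theorem \ref{iso const} gives $\se{x}|x_i|^p\approx\kappa^p$, but a single coordinate has $|x_i|\ge\kappa/10$ only with probability bounded below, not close to $1$. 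That is the main obstacle. To overcome it I would enlarge the fiber using the moving-mass maps $f_{k,j}$ from the proof of Lemma \ref{dim_comp}: mass can be transferred from a large coordinate $j$ into coordinate $i$. So for any $x$ with $d/10$ large coordinates, the set of reachable $i$-values in a suitable two-coordinate fiber has length $\gtrsim\kappa$. Lemma \ref{dim_comp} then controls the multiplicities, giving $|\mathfrak{m}_G(\xi)|\lesssim(\kappa\delta)^{-1}+e^{-d/10}$. If that two-dimensional fiber fails to separate the variables cleanly, my fallback is to use this bound only when $|L|$ is large, and otherwise to absorb $\sum_{i\in L}\|\xi_i\|^2$ into the small-frequency case with the threshold $\delta$ moved.

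Finally, I would collect the error terms $e^{-d/10}\le e^{-\kappa/10}$, the polynomial losses $\kappa^{-a\gamma'}$ from large coordinates, and $(\kappa\delta)^{-1}=\kappa^{\gamma-1}$, and choose $\gamma,\gamma'$ so that all of them are $\lesssim\kappa^{-1/7}$.
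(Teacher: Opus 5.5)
There is a genuine gap: your two regimes do not meet, and no choice of $\gamma,\gamma'$ makes them meet. The bound $|\cos(2\pi x_i\xi_i)|\le\exp(-c\,x_i^2\sin^2(\pi\xi_i))$ needs $|x_i|\,\|\xi_i\|\lesssim 1$, and you apply it to values $|x_{\sigma(i)}|\in[\kappa/10,\kappa^{1+\gamma'}]$. So your small-frequency part is valid only if $\delta\lesssim\kappa^{-1-\gamma'}$; compare the paper's condition $\|\xi_j\|\le 1/(100\kappa)$ when it applies Lemma \ref{lem:2.6}. With such $\delta$, your large-frequency bound $(\kappa\delta)^{-1}$ is $\gtrsim 1$. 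Conversely, $(\kappa\delta)^{-1}\le\kappa^{-1/7}$ forces $\delta\ge\kappa^{-6/7}$. Then coordinates in $S$ with $\|\xi_i\|$ near $\delta$ see phases $x_i\xi_i$ as large as $\kappa^{1/7}$ for the typical values $|x_i|\approx\kappa$, and the Gaussian bound is unavailable. For instance, if $\|\xi_i\|=\kappa^{-13/14}$ for every $i$, then $\kappa\|\xi\|=\kappa^{1/14}\sqrt d$ and the claimed bound is $\approx\kappa^{-1/7}$. Your argument gives at best $\kappa^{-1/14}$, from a single coordinate. The fallback of moving the threshold does not help, because raising $\delta$ is exactly what destroys the Gaussian estimate. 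What is missing is an estimate that accumulates decay over many coordinates in the intermediate band $\kappa^{-1}\lesssim\|\xi_i\|\lesssim\kappa^{-6/7}$. In the continuous setting this is Bourgain's bound $|m_K(\eta)|\lesssim(\kappa(K)|\eta|_2)^{-1}$, which rests on log-concavity of one-dimensional marginals and has no direct lattice counterpart.

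The paper gets this estimate by dimension reduction. Take $r=\lfloor\kappa^{2/7}\rfloor$. Lemmas \ref{decomp} and \ref{m bound by lower dim m} write $G\cap\Z^d$, up to a set of density $\le 4e^{-\varepsilon r/2}$, as a union of fibres over 1-symmetric bodies $\pi_{[r]}(G;y)\subseteq\R^r$ with $\kappa\ge\varepsilon^2\kappa(G)\ge r^2$. For these low-dimensional bodies, Lemma \ref{m_K bound} either compares the lattice average with the continuous one (error $\lesssim r^{2/3}\kappa^{-1/3}$) or uses a Dirichlet-kernel shift, and then Bourgain's bound applies. The paper's case split is not by a frequency threshold but by whether the $r$ largest $\|\xi_i\|$ carry a quarter of $\|\xi\|^2$. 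If they do not, then either all remaining $\|\xi_j\|$ are $\le 1/(100\kappa)$ (your Gaussian regime, handled by Lemma \ref{lem:2.6}), or the top $r$ all exceed $1/(100\kappa)$, which gives $\|\eta\|\gtrsim\sqrt r/\kappa$. The exponent $1/7$ comes from balancing $r^{2/3}\kappa^{-1/3}$ against $r^{-1/2}$, not from choosing $\delta$.

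Separately, the obstacle you flag in the large-frequency part is not real. Do not replace $R_i(x)$ by $|x_i|$. Each fibre in direction $e_i$ contributes at most $1/\|\xi_i\|$, and the fibres are indexed by $\pi_i(G)\cap\Z^{d-1}$. Lemma \ref{dim_comp} then gives $|\mathfrak{m}_G(\xi)|\lesssim(\kappa(G)\|\xi_i\|)^{-1}$ for every $i$ directly. That settles the case $\max_i\|\xi_i\|\ge\kappa^{-6/7}$, but not the intermediate band above.
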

Let us see one can deduce Theorem \ref{large scale} from Propositions \ref{large scale m-1 bound}, \ref{large scale m bound}.
\begin{proof}[Proof of Theorem \ref{large scale}]
Take any $d \in \N$, $G \in \mathcal{F}_d$, $f \in \ell^2(\Z^d)$. Let
\[
\mathfrak{p}_t(\xi)= e^{-t \sum_{i=1}^d \sin^2(\pi \xi_i)}.
\]
It follows from theory of symmetric diffusion semigroups, that
\[
\Big\|\sup_{t>0} \big| \mathcal{F}^{-1}(\mathfrak{p}_t \widehat{f}) \big| \Big\|_{\ell^2(\Z^d)} \lesssim \|f \|_{\ell^2(\Z^d)},
\]
see for instance \cite[p. 73]{Ste1} and \cite[Theorem 2.11]{NW}.
Fix $n \in \Z$ such that $10 \le 2^n \kappa(G) \le d$. By applying Propositions \ref{large scale m-1 bound} and \ref{large scale m bound} to $2^n G$ and invoking the inequalities $1 - e^{-x} \le \min(1, x) \le x^{1/2}$ and $e^{-x} \le \min(1,x^{-1}) \le x^{-1/2}$ (for $x > 0$), we obtain for every $\xi \in \T^d$
\[
|\mathfrak{m}_{2^nG}(\xi)-\mathfrak{p}_{4^n \kappa(G)^2}(\xi)| \lesssim \min \Big( 2^n \kappa(G) \|\xi \|, \big(2^n \kappa(G) \| \xi \|\big)^{-1} \Big)+ \Big(2^n\kappa(G) \Big)^{-1/7}.
\]
Thus we get
\begin{align*}
    &\Big\| \sup_{\substack{n \in \Z, \\
    1 \le 2^n \kappa(G) \le d}}\big| \mathcal{M}_{2^n}^G f\big| \Big\|_{\ell^2(\Z^d)} \le 
    \Big\| \sup_{\substack{n \in \Z, \\
    10 \le 2^n \kappa(G) \le d}} \big|\mathcal{M}_{2^n}^G f \big|\Big\|_{\ell^2(\Z^d)} +
    \Big\| \sup_{\substack{n \in \Z, \\
    1 \le 2^n \kappa(G) <10}} \big|\mathcal{M}_{2^n}^G f \big|\Big\|_{\ell^2(\Z^d)} \\
    &\lesssim  \bigg\| \sup_{\substack{n \in \Z, \\
    10 \le 2^n \kappa(G) \le d}} \Big|\mathcal{F}^{-1}\Big(\big(\mathfrak{m}_{2^nG}-\mathfrak{p}_{4^n \kappa(G)^2} \big)\widehat{f} \Big) \Big|\bigg\|_{\ell^2(\Z^d)} +\Big\|\sup_{t>0} \big| \mathcal{F}^{-1}(\mathfrak{p}_t \widehat{f}) \big| \Big\|_{\ell^2(\Z^d)}+ \|f \|_{\ell^2(\Z^d)} \\
    &\lesssim
    \bigg\| \sup_{\substack{n \in \Z, \\
    10 \le 2^n \kappa(G) \le d}} \Big|\mathcal{F}^{-1}\Big(\big(\mathfrak{m}_{2^nG}-\mathfrak{p}_{4^n \kappa(G)^2}\big) \widehat{f} \Big) \Big|\bigg\|_{\ell^2(\Z^d)} + \|f \|_{\ell^2(\Z^d)}.
\end{align*}
It remains to show that 
\begin{equation*}
     \bigg\| \sup_{\substack{n \in \Z, \\
    10 \le 2^n \kappa(G) \le d}} \Big|\mathcal{F}^{-1}\Big(\big(\mathfrak{m}_{2^nG}-\mathfrak{p}_{4^n \kappa(G)^2}\big) \widehat{f} \Big) \Big|\bigg\|_{\ell^2(\Z^d)} \lesssim \|f \|_{\ell(\Z^d)}.
\end{equation*}
By a standard square function argument and Parseval's theorem we have
\begin{align*}
    &\bigg\| \sup_{\substack{n \in \Z, \\
    10 \le 2^n \kappa(G) \le d}} \Big|\mathcal{F}^{-1}\Big(\big(\mathfrak{m}_{2^nG}-\mathfrak{p}_{4^n \kappa(G)^2}\big) \widehat{f}  \Big)\Big|\bigg\|_{\ell^2(\Z^d)} \lesssim \Bigg\| \bigg(\sum_{\substack{n \in \Z, \\
    10 \le 2^n \kappa(G) \le d}} \Big|\mathcal{F}^{-1}\Big(\big(\mathfrak{m}_{2^nG}-\mathfrak{p}_{4^n \kappa(G)^2}\big) \widehat{f} \Big) \Big|^2 \bigg)^{1/2}\Bigg\|_{\ell^2(\Z^d)} \\
    &= \bigg(\sum_{\substack{n \in \Z, \\
    10 \le 2^n \kappa(G) \le d}} \Big\|\mathcal{F}^{-1}\Big(\big(\mathfrak{m}_{2^nG}-\mathfrak{p}_{4^n \kappa(G)^2} \big)\widehat{f} \Big) \Big\|_{\ell^2(\Z^d)}^2 \bigg)^{1/2}= \bigg(\sum_{\substack{n \in \Z, \\
    10 \le 2^n \kappa(G) \le d}} \int_{\T^d}|(\mathfrak{m}_{2^nG}(\xi)-\mathfrak{p}_{4^n \kappa(G)^2}(\xi)|^2| \widehat{f}(\xi)|^2 d\xi \bigg)^{1/2} \\
    &\lesssim \Bigg(\int_{\T^d}| \widehat{f}(\xi)|^2\bigg(\sum_{\substack{n \in \Z, \\
    10 \le 2^n \kappa(G) \le d}}  \min\Big( 2^n \kappa(G) \|\xi \|, \big(2^n \kappa(G) \| \xi \|\big)^{-1} \Big)^2+\sum_{\substack{n \in \Z, \\
    10 \le 2^n \kappa(G) \le d}} \Big(2^n\kappa(G) \Big)^{-2/7}\bigg) d\xi \Bigg)^{1/2} \\
    &\lesssim \| \widehat{f} \|_{L^2(\T^d)}= \|f \|_{\ell^2(\Z^d)}.
\end{align*}
\end{proof}
We already have sufficient tools to prove Proposition \ref{large scale m-1 bound}.
\begin{proof}
    Note that by invariance under sign changes of $G$ we have
    \begin{align*}
      &|\mathfrak{m}_G(\xi)-1|= \bigg| \se{x \in G \cap \Z^d} e(x \cdot \xi) -1 \bigg|= \bigg| \se{x \in G \cap \Z^d} \Big( \prod_{i=1}^d \cos(2 \pi x_i \xi_i) -1  \Big)\bigg| \\
      &\le \se{x \in G \cap \Z^d}\Big| \prod_{i=1}^d \cos(2 \pi x_i \xi_i) -1  \Big| \le  \se{x \in G \cap \Z^d} \sum_{i=1}^d | \cos(2 \pi x_i \xi_i) -1|  \\
      &=2 \se{x \in G \cap \Z^d} \sum_{i=1}^d \sin^2(\pi x_i \xi_i) \le 2\se{x \in G \cap \Z^d} \sum_{i=1}^d x_i^2\sin^2(\pi \xi_i) \\
      &=2\sum_{i=1}^d \sin^2(\pi \xi_i)\se{x \in G \cap \Z^d} x_i^2.
    \end{align*}
    Above we've used the fact that for any sequences $(a_n)_{n=1}^d, (b_n)_{n=1}^d$ with $\max_{1\le n \le d} |a_n|, \max_{1\le n \le d} |b_n| \le 1$ we have
    \[
    \Big|\prod_{i=1}^d a_i- \prod_{i=1}^d b_i \Big| \le \sum_{i=1}^d |a_i-b_i|.
    \]
    We have also used the inequality $|\sin(\pi n x)| \le n |\sin(\pi x)| $, which holds for all $n \in \Z$, $\xi \in \T$. By invariance under permutations of $G$ and Theorem \ref{iso const} for $p=2$ we have 
    \begin{align*}
        |\mathfrak{m}_G(\xi)-1| &\le2\sum_{i=1}^d \sin^2(\pi \xi_i)\se{x \in G \cap \Z^d} x_i^2\\
        &= 2\sum_{i=1}^d \sin^2(\pi \xi_i)\se{x \in G \cap \Z^d} x_1^2 \lesssim \max\Big( \widetilde{\kappa}(G), \widetilde{\kappa}(G)^2 \Big)  \|\xi \|^2.
    \end{align*}
    The second part of the proposition directly follows from the first part by appealing to Lemma \ref{tilde kappa comparable to kappa in large scale} and trivial estimate.
\end{proof}
\subsection{Dimension decrease} 
The remainder of this section is devoted to the proof of Proposition \ref{large scale m bound}. Our strategy for bounding $\mathfrak{m}_G(\xi)$ involves projecting $G$ onto a lower-dimensional subspace to obtain a new convex body while preserving the magnitude of the constant $\kappa$ (cf. Lemma \ref{m bound by lower dim m}). This reduction allows the comparison of the discrete multiplier corresponding to a lower-dimensional convex body with its continuous counterpart. To successfully implement this approach and recover the desired bound for $\mathfrak{m}_G$, we rely crucially on the permutation invariance of $G$. We begin by recalling two lemmas from \cite{balls}; hereafter, $\mathbb{P}$ denotes the uniform probability measure on the symmetric group $\Sym(d)$.
\begin{Lem}[{\cite[Lemma 2.5]{balls}}] \label{sym inter bound}
Assume that  $I, J\subseteq [d]$ and 
$|J|=r$ for some $0\le r\le d$. Then
\begin{align*}
\spr{\sigma \in \Sym(d)}\Big(|\sigma(I)\cap J|\le {r|I|}/{(5d)}\Big) \le  e^{-\frac{r|I|}{10d}}.
\end{align*}
 In particular, if $\delta_1, \delta_2\in(0, 1]$ satisfy
$5\delta_2\le\delta_1$ and $\delta_1d\le |I|\le d$, then we have
\begin{align*}
\spr{\sigma \in \Sym(d)}\Big( |\sigma(I)\cap
  J|\le \delta_2 r\Big) \le e^{-\frac{\delta_1r}{10}}.
\end{align*}  
\end{Lem}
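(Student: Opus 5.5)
The plan is to reduce to a hypergeometric tail bound and then apply a Chernoff-type estimate.

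\textbf{Step 1 (distribution).} If $\sigma$ is uniform on $\Sym(d)$, then $\sigma(I)$ is a uniformly random subset of $[d]$ of size $|I|=:m$. Hence $X:=|\sigma(I)\cap J|$ has the hypergeometric law: it counts how many of $m$ draws without replacement from an urn of $d$ balls, $r$ of them marked, are marked. Its mean is $\mu:=\E X = rm/d$. The first claim therefore reads $\Pro(X\le \mu/5)\le e^{-\mu/10}$.

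\textbf{Step 2 (lower tail).} I would use Hoeffding's comparison theorem: for convex $\varphi$, $\E\varphi(X)\le \E\varphi(Y)$, where $Y\sim \mathrm{Bin}(m,r/d)$ has the same mean. Taking $\varphi(x)=e^{-\lambda x}$ with $\lambda>0$ gives
\[
\E e^{-\lambda X}\le (1-p+pe^{-\lambda})^m\le \exp\big(\mu(e^{-\lambda}-1)\big),\qquad p=r/d.
\]
Markov's inequality then yields
\[
\Pro(X\le \mu/5)\le \exp\big(\mu(e^{-\lambda}-1+\lambda/5)\big).
\]
Choosing $\lambda=\log 5$ makes the exponent $\mu\big(\tfrac15-1+\tfrac{\log 5}{5}\big)\approx -0.478\,\mu$, which is below $-\mu/10$. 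This proves the first claim; the constant has plenty of room. Alternatively, one can avoid Hoeffding's theorem by noting that the indicators $\mathds 1_{\sigma(i)\in J}$, $i\in I$, are negatively associated, so the Chernoff bound for independent variables still applies.

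\textbf{Step 3 (consequence).} Suppose $\delta_1 d\le m$ and $5\delta_2\le\delta_1$. Then
\[
\delta_2 r\le \frac{\delta_1 r}{5}\le \frac{rm}{5d},
\]
so the event $\{X\le\delta_2 r\}$ is contained in $\{X\le rm/(5d)\}$. The first part then bounds its probability by $e^{-rm/(10d)}\le e^{-\delta_1 r/10}$.

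The only real subtlety is justifying the comparison between sampling without replacement and the binomial (or the negative dependence) in Step 2; both are standard facts. The degenerate cases $r=0$ or $m=0$ are trivial, since then the probability bound is at least $1$.
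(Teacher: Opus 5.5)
Your proof is correct and complete. The paper gives no argument for this lemma; it imports it from \cite[Lemma 2.5]{balls}, so there is no in-paper proof to compare against.

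Your route is the standard self-contained one. You identify $|\sigma(I)\cap J|$ as hypergeometric with mean $\mu=r|I|/d$ and dominate $\E e^{-\lambda X}$ by the binomial moment, using either Hoeffding's convex-comparison theorem or negative association of permutation indicators. You then apply Markov's inequality with the optimal choice $\lambda=\log 5$. This gives the stronger bound $e^{-0.478\mu}$ in place of $e^{-\mu/10}$.

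The deduction of the second claim is also exactly right. It uses the inclusion $\{X\le \delta_2 r\}\subseteq\{X\le r|I|/(5d)\}$ together with $r|I|/(10d)\ge \delta_1 r/10$.
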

\begin{Lem}[{\cite[Lemma 2.6]{balls}}]
\label{lem:2.6} Assume that we have a sequence $(u_j: j \in [d])$ with $0 \leq u_j \leq \frac{1- \delta_0}{2}$ for some $\delta_0 \in (0,1)$. Suppose that $I \subseteq [d]$ satisfies $\delta_1d \leq |I| \leq d$ for some $ \delta_1 \in (0,1]$. Then for every $J=(d_0,d] \cap \Z$ with $0 \leq d_0 \leq d$ we have
\[
\se{\tau \in \Sym(d)} \exp\Big(- \sum_{j \in \tau(I) \cap J} u_j\Big) \leq 3 \exp\Big(-\frac{\delta_0 \delta_1}{20} \sum_{j \in J} u_j \Big).
\]
\end{Lem}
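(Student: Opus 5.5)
The plan is to treat $\tau(I)$ as a uniformly random subset of $[d]$ of fixed size $k:=|I|\ge\delta_1 d$, and to compare sampling without replacement with independent sampling. Write $\eta_j:=\mathds{1}_{\{j\in\tau(I)\}}$ for $j\in[d]$. Then the quantity to bound is
\[
\se{\tau\in\Sym(d)}\prod_{j\in J}e^{-u_j\eta_j}.
\]
Each $\eta_j$ is Bernoulli with mean $p:=k/d\ge\delta_1$. The set $J=(d_0,d]\cap\Z$ plays no special role here, beyond being a subset of $[d]$.

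\textbf{Step 1 (decoupling).} I would invoke the classical fact that the indicator vector of a uniformly random $k$-subset is negatively associated (Joag-Dev--Proschan; equivalently, it follows from the Hoeffding comparison between sampling with and without replacement). The functions $\eta_j\mapsto e^{-u_j\eta_j}$, $j\in J$, are nonnegative, monotone in the same direction, and depend on disjoint coordinates. Negative association therefore gives
\[
\se{\tau}\prod_{j\in J}e^{-u_j\eta_j}\le\prod_{j\in J}\E e^{-u_j\eta_j}=\prod_{j\in J}\big(1-p(1-e^{-u_j})\big)\le\exp\Big(-p\sum_{j\in J}(1-e^{-u_j})\Big).
\]

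\textbf{Step 2 (elementary estimate).} Since $0\le u_j\le 1/2$, we have $1-e^{-u_j}\ge u_j e^{-1/2}\ge u_j/2$. Combined with $p\ge\delta_1$, Step 1 yields
\[
\se{\tau}\exp\Big(-\sum_{j\in\tau(I)\cap J}u_j\Big)\le\exp\Big(-\frac{\delta_1}{2}\sum_{j\in J}u_j\Big).
\]
Because $\delta_0<1$, the right-hand side is at most $3\exp\big(-\frac{\delta_0\delta_1}{20}\sum_{j\in J}u_j\big)$, which is the claim with room to spare.

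\textbf{Main obstacle.} The only nontrivial input is the negative-association step. If one prefers a self-contained argument in the spirit of \cite{balls}, I would instead use Lemma \ref{sym inter bound} with a weighted version. Split $J$ dyadically by the size of $u_j$, or more simply order $J$ by decreasing $u_j$ and consider the initial segments $J_r$ with $|J_r|=r$. Apply Lemma \ref{sym inter bound} to each $J_r$ with $\delta_2=\delta_1/5$: this shows that, outside an event of probability at most $e^{-\delta_1 r/10}$, at least $\delta_1 r/5$ of the top $r$ weights are selected. An Abel-summation argument then gives $\sum_{j\in\tau(I)\cap J}u_j\gtrsim\delta_1\sum_{j\in J}u_j$ on the good event. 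The exceptional events are summed against the factor $e^{-\sum u_j}$ still available from the small weights, and the bound $u_j\le(1-\delta_0)/2$ keeps this geometric series summable with total $\le 2$. This bookkeeping is the likely source of the constant $3$ and the loss $\delta_0/20$ in the statement. I would present the negative-association proof as the main argument and mention this route only as an alternative.
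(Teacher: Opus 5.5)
Your proof is correct, but it takes a different route from the one the paper relies on. The paper does not prove this lemma itself. It quotes \cite[Lemma 2.6]{balls}, where $(u_j)$ is additionally assumed to be nonincreasing and the bound is derived from the hypergeometric tail estimate of Lemma \ref{sym inter bound}. That argument runs roughly along the lines of the alternative you sketch at the end, and this is where the monotonicity hypothesis, the constant $3$ and the weaker exponent $\delta_0\delta_1/20$ come from. The paper then drops the monotonicity by referring to \cite[Lemma 6.7]{MiSzWr}.

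You instead decouple the indicators $\mathds{1}_{\{j\in\tau(I)\}}$ in one step, via negative association of a uniformly random $|I|$-subset. Hoeffding's comparison would serve equally well. Applying it with the convex function $s\mapsto e^{-s}$ to the population $(u_j\mathds{1}_J(j))_{j\in[d]}$ bounds the left-hand side by $\big(\frac1d\sum_{j=1}^d e^{-u_j\mathds{1}_J(j)}\big)^{|I|}$, which leads to the same final estimate. Either way you obtain
\[
\se{\tau\in\Sym(d)}\exp\Big(-\sum_{j\in\tau(I)\cap J}u_j\Big)\le\exp\Big(-\frac{|I|}{d}\sum_{j\in J}\big(1-e^{-u_j}\big)\Big)\le\exp\Big(-\frac{\delta_1}{2}\sum_{j\in J}u_j\Big).
\]
This is stronger than the statement in several ways. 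The constant is $1$, and there is no $\delta_0$-loss. No monotonicity of $(u_j)$ is needed, so the appeal to \cite{MiSzWr} becomes unnecessary. $J$ may be an arbitrary subset of $[d]$. The upper bound on $u_j$ is used only to linearize $1-e^{-u_j}$.

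What the cited route buys in exchange is self-containedness. It uses only the elementary tail estimate of Lemma \ref{sym inter bound}, which the paper needs anyway (for instance in Lemma \ref{decomp}). Your argument instead imports the Joag-Dev--Proschan (or Hoeffding) theorem from outside.
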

\noindent In \cite[Lemma 2.6]{balls} there is an extra assumption that sequence $(u_j: j \in [d])$ is decreasing, however this assumption can be removed, see \cite[Lemma 6.7]{MiSzWr}.
Let $\pi_{[r]}: \R^d \to \R^r$ be defined by $\pi_{[r]}(x)=(x_1,\ldots,x_r)$.
\begin{Lem}
\label{decomp}
For $d \in N$, $G \in \mathcal{F}_d$, $\varepsilon\in(0, 1/50]$ and an integer $1\le r\le d$  we define
\[
E=\{x\in G\cap\Z^d\colon \sum_{i=1}^r|x_i|<\varepsilon^2\kappa(G)r\}.
\]
If $\kappa(G)\ge10$, then we have
\begin{align}
  \label{eq:pre-decomp}
  |E|\le 4e^{-\frac{\varepsilon r}{2}}|G\cap\Z^d|.
\end{align}
Moreover for some $E' \subseteq E$ we have the following disjoint decomposition
\begin{align}
  \label{eq:decomp}
  \begin{split}
  G\cap\Z^d&=\Bigg(\bigcup_{\substack{y \in \pi_{[d-r]}(G \cap \Z^d), \\ 
  \kappa\big(\pi_{[r]}(G;y)\big) \ge \varepsilon^2 \kappa(G)}}\Big(\pi_{[r]}(G;y) \cap \Z^r \Big) \times  \{y\} \Bigg)\cup E',
  \end{split}
  \end{align}
  where $\pi_{[r]}(G;y) \subseteq \R^r$ is a 1-symmetric convex body defined by
  \[
  \pi_{[r]}(G;y)= \Big\{ (x_1,\ldots, x_r) \in \R^r: (x_1,\ldots,x_r,y_1,\ldots,y_{d-r}) \in G \Big\},
  \]
  where the fact that it has non-empty interior follows from the inequality $ \kappa\big(\pi_{[r]}(G;y)\big)>0$. 
\end{Lem}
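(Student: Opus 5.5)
The plan is to prove \eqref{eq:pre-decomp} by combining Lemma \ref{pos_prop_conc} with the permutation-averaging estimate Lemma \ref{sym inter bound}. The decomposition \eqref{eq:decomp} is then a purely formal fibering argument.

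\textbf{Step 1: the bound \eqref{eq:pre-decomp}.} Since $\kappa(G)\ge 10$, Lemma \ref{pos_prop_conc} gives
\[
\spr{x\in G\cap\Z^d}\big(|I_x|\le d/10\big)\le 2e^{-d/10}, \qquad I_x=\{i\in[d]:|x_i|\ge \kappa(G)/10\}.
\]
By permutation invariance of $G$, the quantity $|E|/|G\cap\Z^d|$ equals the probability, over $x$ uniform in $G\cap\Z^d$ and $\sigma$ uniform in $\Sym(d)$ independent of $x$, that $\sum_{i\in\sigma([r])}|x_i|<\varepsilon^2\kappa(G)r$. We fix $x$ with $|I_x|> d/10$ and apply Lemma \ref{sym inter bound} with $I=I_x$, $J=[r]$ (using that $\sigma$ and $\sigma^{-1}$ are equidistributed). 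This gives
\[
\spr{\sigma}\Big(|\sigma(I_x)\cap[r]|\le r/50\Big)\le e^{-r/100}.
\]
Off this event, the $r$ selected coordinates contain more than $r/50$ entries of size at least $\kappa(G)/10$. Their $\ell^1$ sum is therefore at least $\kappa(G)r/500\ge \varepsilon^2\kappa(G)r$, because $\varepsilon^2\le 1/2500$.

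Hence $|E|\le (2e^{-d/10}+e^{-r/100})|G\cap\Z^d|$. Since $r\le d$ and $\varepsilon\le 1/50$, we have $\varepsilon r/2\le r/100\le d/10$. This yields $|E|\le 3e^{-\varepsilon r/2}|G\cap\Z^d|\le 4e^{-\varepsilon r/2}|G\cap\Z^d|$.

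\textbf{Step 2: the decomposition \eqref{eq:decomp}.} We write each $x\in G\cap\Z^d$ as $(x',y)$ with $y=\pi_{[d-r]}$-part of the last $d-r$ coordinates. Each fibre $\{x':(x',y)\in G\cap\Z^d\}$ equals $\pi_{[r]}(G;y)\cap\Z^r$. The slice $\pi_{[r]}(G;y)$ is convex, bounded and symmetric. It is invariant under permutations and sign changes of the first $r$ coordinates because $G$ is, so whenever $\kappa(\pi_{[r]}(G;y))>0$ it is a 1-symmetric convex body (its interior contains the open cube from Lemma \ref{lem:kappa_def}). The fibres over distinct $y$ are disjoint. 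We let $E'$ be the union of the fibres $(\pi_{[r]}(G;y)\cap\Z^r)\times\{y\}$ over those $y$ with $\kappa(\pi_{[r]}(G;y))<\varepsilon^2\kappa(G)$; this gives \eqref{eq:decomp} as a disjoint union.

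It remains to check $E'\subseteq E$. For such $y$ and $x'$ in the fibre, the definition of $\kappa$ for the slice (in dimension $r$) gives $\sum_{i\le r}|x_i|=|x'|_1\le r\,\kappa(\pi_{[r]}(G;y))<\varepsilon^2\kappa(G)r$. If the slice is degenerate (lower-dimensional), the same inequality holds by the same max-definition applied to the compact symmetric set, or the slice contains only $x'=0$. Either way $(x',y)\in E$.

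\textbf{Main obstacle.} The only delicate point is Step 1, and specifically transferring "many large coordinates somewhere" into "many large coordinates among the first $r$". A direct counting argument comparing with lower-dimensional projections via Lemma \ref{dim_comp} loses constants like $144^r$ that the factor $\varepsilon^r$ cannot absorb. The permutation-averaging trick avoids this loss and relies crucially on permutation invariance.
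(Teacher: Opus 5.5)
Your proposal is correct and follows essentially the paper's route: the points with few large coordinates are removed by Lemma \ref{pos_prop_conc}, the rest are handled by averaging over $\Sym(d)$ and applying Lemma \ref{sym inter bound} with $J=[r]$, and \eqref{eq:decomp} is a trivial fibering. The only differences are cosmetic. You use the fixed threshold $\kappa(G)/10$ and fraction $r/50$, whereas the paper uses $\varepsilon\kappa(G)$ and $\varepsilon r$; both give $e^{-r/100}$. You also check $E'\subseteq E$ explicitly, which the paper leaves implicit. One small point to add: by permutation invariance, the set of vectors $y$ formed from the last $d-r$ coordinates equals $\pi_{[d-r]}(G\cap\Z^d)$, so your union runs over the index set in the statement.
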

\begin{proof}
Define $I_x=\{i\in [d]: |x_i|\ge\varepsilon\kappa(G)\}$. We have
$E\subseteq E_1\cup E_2$, where
\begin{align*}
E_1&=\{x\in G\cap\Z^d\colon \sum_{i\in I_x\cap[r]}|
x_i|<\varepsilon^2\kappa(G)r\ \text{ and }\ |I_x|\ge d/10\},\\
E_2&=\{x\in G\cap\Z^d\colon |I_x|<d/10\}.
\end{align*}
By Lemma \ref{pos_prop_conc} we have $|E_2|\le2e^{-\frac{d}{10}}|G\cap\Z^d|$, provided that $\kappa(G)\ge10$. Observe that
\begin{align*}
|E_1|&=\sum_{x\in G\cap\Z^d}\se{\sigma\in{\rm Sym}(d)}\mathds{1}_{E_1}(\sigma^{-1}\cdot x)\\
&=\sum_{x\in G\cap\Z^d}\spr{\sigma \in \Sym(d)}\Big(
\sum_{i\in \sigma(I_x)\cap [r]}|x_{\sigma^{-1}(i)}|<\varepsilon^2\kappa(G)r\ \text{ and }\ |\sigma(I_x)|\ge d/10\Big),
\end{align*}
since $I_{\sigma^{-1}\cdot x}=\sigma(I_x)$. 
Now by Lemma \ref{sym inter bound} (with $J=[r]$, $\delta_2=\varepsilon$ and $\delta_1=1/10$ ) we obtain, for every $x\in G\cap\Z^d$, that
\begin{align*}
  &\spr{\sigma \in \Sym(d)}\Big(
\sum_{i\in \sigma(I_x)\cap [r]}|x_{\sigma^{-1}(i)}|<\varepsilon^2\kappa(G)r\ \text{ and }\ |\sigma(I_x)|\ge d/10\Big)\\
  &\le
\spr{\sigma \in \Sym(d)}\Big( |\sigma(I_x)\cap [r]|\le \varepsilon r\Big)\le 2e^{-\frac{r}{100}}.
\end{align*}
  Thus $|E_1|\le 2e^{-\frac{\varepsilon r}{2}}|G \cap \Z^d|$, which proves \eqref{eq:pre-decomp}. Now we prove \eqref{eq:decomp}, let 
  \[
  \widetilde{\pi}: \R^d \to \R^{d-r}, \quad \widetilde{\pi}(x)=(x_{r+1},\ldots,x_{d}).
  \]
  Note that by symmetry invariance we have $\widetilde{\pi}(G \cap \Z^d)=\pi_{[d-r]}(G \cap \Z^d)$.
  We write
  \begin{align*}
&G\cap\Z^d=\bigcup_{\substack{y \in \widetilde{\pi}(G \cap \Z^d)}}\Big(\pi_{[r]}(G;y) \cap \Z^r \Big) \times  \{y\} = \bigcup_{\substack{y \in \pi_{[d-r]}(G \cap \Z^d)}}\Big(\pi_{[r]}(G;y) \cap \Z^r \Big) \times  \{y\}   \\
&=\Bigg(\bigcup_{\substack{y \in \pi_{[d-r]}(G \cap \Z^d), \\\kappa\big(\pi_{[r]}(G;y)\big) \ge \varepsilon^2 \kappa(G)}}\Big(\pi_{[r]}(G;y) \cap \Z^r \Big) \times  \{y\} \Bigg) \cup \Bigg(\bigcup_{\substack{y \in \pi_{[d-r]}(G \cap \Z^d), \\\kappa\big(\pi_{[r]}(G;y)\big) < \varepsilon^2 \kappa(G)}}\Big(\pi_{[r]}(G;y) \cap \Z^r \Big) \times  \{y\} \Bigg) \\
&=\Bigg(\bigcup_{\substack{y \in \pi_{[d-r]}(G \cap \Z^d), \\\kappa\big(\pi_{[r]}(G;y)\big) \ge \varepsilon^2 \kappa(G)}}\Big(\pi_{[r]}(G;y) \cap \Z^r \Big) \times  \{y\} \Bigg)   \cup E'.
   \end{align*}                
\end{proof}
The next result is a straightforward consequence of Lemma \ref{decomp}, so we skip its proof.
\begin{Lem}
\label{m bound by lower dim m}
For $d\in\N, G \in \mathcal{F}_d$ and $\varepsilon\in(0, 1/50]$ if $\kappa(G)\ge 10$, then for every $1\le r\le d$ and $\xi\in\T^d$ we have
\begin{align}
  \label{eq:m bound by lower dim m}
  | \mathfrak{m}_G(\xi)|\le\sup_{_{\substack{y \in \pi_{[d-r]}(G \cap \Z^d), \\\kappa\big(\pi_{[r]}(G;y)\big) \ge \varepsilon^2 \kappa(G)}}}|
 \mathfrak{m}_{\pi_{[r]}(G;y)}(\xi_1,\ldots, \xi_r)|+4e^{-\frac{\varepsilon r}{2}},
\end{align}
where $\pi_{[d-r]}: \R^d \to \R^{d-r}$ is defined by $\pi_{[d-r]}(x)=(x_1, \ldots, x_{d-r})$ and for $y \in \pi_{[d-r]}(G \cap \Z^d)$ we denoted 
\[
\pi_{[r]}(G;y)=\Big\{ (x_1,\ldots, x_r) \in \R^r: (x_1,\ldots,x_r,y_1,\ldots,y_{d-r}) \in G \Big\}
\]
\end{Lem}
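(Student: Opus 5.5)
The plan is to derive \eqref{eq:m bound by lower dim m} directly from the disjoint decomposition \eqref{eq:decomp} of Lemma \ref{decomp}, using the same $\varepsilon$ and $r$. One point on indexing: in \eqref{eq:decomp} the tail $y$ ranges over $\pi_{[d-r]}(G\cap\Z^d)$, which by symmetry equals $\widetilde\pi(G\cap\Z^d)$. Each slice $\pi_{[r]}(G;y)$ is 1-symmetric in the first $r$ coordinates, since $G$ is invariant under permutations and sign changes fixing the last $d-r$ coordinates.

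\textbf{Step 1: splitting the exponential sum.} Let $Y$ be the set of admissible $y$, namely those with $\kappa(\pi_{[r]}(G;y))\ge\varepsilon^2\kappa(G)$. For $x=(x',y)$ with $x'\in\pi_{[r]}(G;y)\cap\Z^r$ we have $e(x\cdot\xi)=e(x'\cdot\xi')\,e(y\cdot\xi'')$, where $\xi'=(\xi_1,\dots,\xi_r)$ and $\xi''=(\xi_{r+1},\dots,\xi_d)$. Summing over the decomposition gives
\[
|G\cap\Z^d|\,\mathfrak m_G(\xi)=\sum_{y\in Y}e(y\cdot\xi'')\,|\pi_{[r]}(G;y)\cap\Z^r|\,\mathfrak m_{\pi_{[r]}(G;y)}(\xi')+\sum_{x\in E'}e(x\cdot\xi).
\]

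\textbf{Step 2: triangle inequality.} Each term of the first sum is bounded in absolute value by $|\pi_{[r]}(G;y)\cap\Z^r|\sup_{y\in Y}|\mathfrak m_{\pi_{[r]}(G;y)}(\xi')|$. Because the decomposition is disjoint, the slice sizes sum to at most $|G\cap\Z^d|$. The second sum is bounded by $|E'|\le|E|\le 4e^{-\varepsilon r/2}|G\cap\Z^d|$, using \eqref{eq:pre-decomp}, which applies since $\kappa(G)\ge10$. Dividing by $|G\cap\Z^d|$ gives the claim.

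\textbf{Main obstacle.} The only delicate point is that the slices in $Y$ are genuine convex bodies, i.e.\ have nonempty interior, so that $\mathfrak m$ is defined on them. This follows from $\kappa>0$, as Lemma \ref{decomp} already notes. With that, the argument is pure bookkeeping.
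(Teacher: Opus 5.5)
Your proof is correct, and it is exactly the short deduction the paper has in mind when it says the lemma is a straightforward consequence of Lemma \ref{decomp} and omits the proof. You split the exponential sum along the disjoint decomposition \eqref{eq:decomp}, bound each admissible slice by the supremum (the slice sizes add up to at most $|G\cap\Z^d|$), and control the exceptional part by $E'\subseteq E$ together with \eqref{eq:pre-decomp}.
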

Lemma \ref{m bound by lower dim m} will play an essential role in the proof of Proposition \ref{large scale m bound}. Now our goal will be to bound multipliers appearing on the right-hand side of \eqref{eq:m bound by lower dim m} in terms of $\sum_{i=1}^r \|\xi_i \|^2$.
First we need two simple lemmas regarding behavior of lattice points in a 1-symmetric convex body under translations. 
\begin{Lem}
\label{low dim shift comp}
Let $r\in\N, K \in \mathcal{F}_r $ be  such that   $r\le 2\kappa(K)$ and $\kappa(K) \ge 1/2$ . Then for every $z\in\R^r$ we have
\begin{align}
  \label{eq:low dim shift comp}
  \big||(z+K)\cap\Z^r|-|K|\big|\le  \frac{e r}{2\kappa(K)} |K|.
\end{align}

\end{Lem}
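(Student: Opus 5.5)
The plan is to compare lattice points and volume through unit cubes. Write $Q=[-1/2,1/2)^r$, and for $n\in\Z^r$ consider the translates $n+Q$. These tile $\R^r$, and $n\in z+K$ iff $(n-z)\in K$.

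The key geometric input is the cube inclusion from Lemma \ref{lem:kappa_def}. It gives $[-\kappa(K),\kappa(K)]^r\subseteq K$, hence $\frac{1}{2\kappa(K)}Q\subseteq \frac{1}{4\kappa(K)}$-scaled copy, i.e.\ $Q\subseteq \frac{1}{2\kappa(K)}K$. With $\lambda:=\frac{1}{2\kappa(K)}\le 1$ we have $Q\subseteq \lambda K$. By convexity and symmetry, $K+\lambda K=(1+\lambda)K$, and for $\lambda<1$, $(1-\lambda)K+\lambda K\subseteq K$.

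Upper bound. If $n\in (z+K)\cap\Z^r$, then $n+Q\subseteq z+K+\lambda K=z+(1+\lambda)K$. The cubes are disjoint with unit volume, so
\[
|(z+K)\cap\Z^r|\le (1+\lambda)^r|K|\le e^{r\lambda}|K| .
\]

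Lower bound. Every point $u\in z+(1-\lambda)K$ lies in a unique $n+Q$, so $n=u-q$ with $q\in Q\subseteq\lambda K$. This gives $n\in z+(1-\lambda)K+\lambda K\subseteq z+K$. Hence $z+(1-\lambda)K$ is covered by the cubes $n+Q$ with $n\in(z+K)\cap\Z^r$, and
\[
|(z+K)\cap\Z^r|\ge(1-\lambda)^r|K|\ge (1-r\lambda)|K| .
\]

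It remains to turn these into the stated estimate $\frac{er}{2\kappa(K)}|K|=e\,r\lambda|K|$. The lower deviation is at most $r\lambda|K|$. For the upper deviation, the hypothesis $r\le 2\kappa(K)$ means $x:=r\lambda\le 1$, and then
\[
e^{x}-1\le x e^{x}\le e x ,
\]
which gives exactly $\frac{er}{2\kappa(K)}|K|$. The condition $\kappa(K)\ge 1/2$ ensures $\lambda\le 1$, so $(1-\lambda)K$ makes sense; it is in any case implied by $r\ge1$. The bound holds for every $z$ because only translation invariance of Lebesgue measure is used.

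The only delicate point is the inclusion $Q\subseteq\lambda K$, including the half-open boundary. It follows because $[-\kappa,\kappa]^r\subseteq K$ and $\lambda\kappa=1/2$. Everything else is routine, so I do not expect a genuine obstacle.
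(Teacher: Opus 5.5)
Your proof is correct and follows essentially the same route as the paper: the inclusion $[-\tfrac12,\tfrac12]^r\subseteq\frac{1}{2\kappa(K)}K$ from Lemma \ref{lem:kappa_def}, a unit-cube comparison of the lattice count with $|z+(1\pm\frac{1}{2\kappa(K)})K|$, and then $e^x-1\le xe^x\le ex$ together with Bernoulli's inequality, both using $r\le 2\kappa(K)$. The paper writes the cube comparison as $\sum_{x\in\Z^r}\int_{[-1/2,1/2]^r}$ of indicator functions instead of a half-open tiling. Two small fixes to your write-up: the phrase about a ``$\frac{1}{4\kappa(K)}$-scaled copy'' should simply read $Q\subseteq\frac{1}{2\kappa(K)}[-\kappa(K),\kappa(K)]^r\subseteq\frac{1}{2\kappa(K)}K$, and the lower bound actually needs $-Q\subseteq\lambda K$, which holds by the symmetry of $K$.
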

\begin{proof}
Let $Q^{(r)}=[-1/2,1/2]^r$, observe that 
\[
Q^{(r)}+K= \frac{1}{2\kappa(K)}[-\kappa(K),\kappa(K)]^r+K \subseteq \Big(1+\frac{1}{2\kappa(K)}\Big)K.
\]
From the above we have
\begin{align*}
  &|(z+K)\cap\Z^r|=\sum_{x\in
                  \Z^r} \int_{Q^{(r)}}\mathds{1}_{z+K}(x) dy
                \le\sum_{x\in
                  \Z^r} \int_{Q^{(r)}}\mathds{1}_{z+\big(1+\frac{1}{2\kappa(K)}\big)K}(x+y)dy=\Big|z+\Big(1+\frac{1}{2\kappa(K)}\Big)K\Big|,
\end{align*}
and
\[
\Big|z+\Big(1+\frac{1}{2\kappa(K)}\Big)K\Big|=|K|\bigg(1+\frac{1}{2\kappa(K)}\bigg)^r\le
e^{\frac{r}{2\kappa(K)}}|K|\le |K|\big(1+ \frac{e r}{2\kappa(K)}\big),
\]
since $e^x\le(1+xe^x)$ and $r \le 2\kappa(K)$. Arguing in a similar way we obtain
\begin{align*}
  |(z+K)\cap\Z^r|=\sum_{x\in
                  \Z^r} \int_{Q^{(r)}}\mathds{1}_{z+K}(x){\rm d}y
                \ge\sum_{x\in
                  \Z^r} \int_{Q^{(r)}}\mathds{1}_{z+\Big(1-\frac{1}{2\kappa(K)}\Big)K}(x+y){\rm d}y=\Big|z+\Big(1-\frac{1}{2\kappa(K)}\Big)K\Big|,
\end{align*}
and
\[
\Big|z+\Big(1-\frac{1}{2\kappa(K)}\Big)K\Big|=|K|\bigg(1-\frac{1}{2\kappa(K)}\bigg)^r\ge
|K|\big(1- \frac{r}{2 \kappa(K)}\big).
\]
These inequalities imply \eqref{eq:low dim shift comp}.
\end{proof}

\begin{Lem}
\label{low dim shift sym diff}
Let $r\in\N, K \in \mathcal{F}_r $ be  such that   $r\le 2\kappa(K)$ and $\kappa(K) \ge 1/2$. Then for every $z\in\R^r$ such that $r|z|_{\infty} \le \kappa(K)$ we have
\begin{align}
  \label{eq:low dim shift sym diff}
  \begin{split}
  \big|\big(K\cap\Z^r\big)\triangle\big((z+K)\cap\Z^r\big)\big|&\le |K|\frac{er}{\kappa(K)}\Big(|z|_{\infty}+2\Big).
  \end{split}
\end{align}
\end{Lem}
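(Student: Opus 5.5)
The plan is to reduce the symmetric difference to a difference of lattice-point counts in a slightly larger and a slightly smaller dilate of $K$, using the same cube-averaging trick as in Lemma \ref{low dim shift comp}. The key geometric input is that, since $[-\kappa(K),\kappa(K)]^r\subseteq K$ by Lemma \ref{lem:kappa_def}, the shift $z$ is small relative to $K$. Writing $s=|z|_\infty/\kappa(K)$, we have $z\in s[-\kappa(K),\kappa(K)]^r\subseteq sK$. Convexity then gives
\[
(1-s)K \subseteq K\cap(z+K), \qquad K\cup(z+K)\subseteq (1+s)K .
\]
For the first inclusion, note that if $x\in(1-s)K$ then $x-z\in (1-s)K+sK=K$. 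The second inclusion follows similarly. The hypothesis $r|z|_\infty\le \kappa(K)$ gives $s\le 1/r\le 1$, so these dilates make sense.

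Consequently
\[
\big(K\cap\Z^r\big)\triangle\big((z+K)\cap\Z^r\big)\subseteq \big((1+s)K\setminus(1-s)K\big)\cap\Z^r,
\]
and it suffices to bound $|(1+s)K\cap\Z^r|-|(1-s)K\cap\Z^r|$. For this I would repeat the argument of Lemma \ref{low dim shift comp} with $z=0$. Since $Q^{(r)}+\lambda K\subseteq(\lambda+\frac{1}{2\kappa(K)})K$ and $(\lambda-\frac1{2\kappa(K)})K+Q^{(r)}\subseteq\lambda K$, integrating indicators over unit cubes gives
\[
|(1+s)K\cap\Z^r|\le |K|\Big(1+s+\tfrac{1}{2\kappa(K)}\Big)^r,\qquad |(1-s)K\cap\Z^r|\ge |K|\Big(1-s-\tfrac{1}{2\kappa(K)}\Big)_+^r .
\]
Set $\eta=s+\frac{1}{2\kappa(K)}$. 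Then $r\eta\le r|z|_\infty/\kappa(K)+r/(2\kappa(K))\le 1+1=2$ by both hypotheses. Hence $(1+\eta)^r\le e^{r\eta}\le 1+ C r\eta$ with an explicit constant (using $e^x-1\le x e^x$ for $x\le 2$, so $C\le e^2$). Also $(1-\eta)^r\ge 1-r\eta$. Subtracting gives a bound of order $r\eta|K|$. Finally,
\[
r\eta= \frac{r}{\kappa(K)}\Big(|z|_\infty+\tfrac12\Big),
\]
which has exactly the shape of \eqref{eq:low dim shift sym diff}.

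The main obstacle is purely bookkeeping: matching the stated constant $e(|z|_\infty+2)$. The naive bound $e^{r\eta}-1\le r\eta e^{r\eta}$ with $r\eta\le 2$ gives a factor $e^2$ rather than $e$. To recover the stated constant, I would exploit the slack $+2$ versus $+\frac12$ in the additive term. Alternatively, I would split $|A\triangle B|=2|A\cup B|-|A|-|B|$ and bound each piece separately by Lemma \ref{low dim shift comp} applied to $K$ and $z+K$ (each term at most $\frac{er}{2\kappa(K)}|K|$), plus the annulus estimate above for $|A\cup B|-|K|$. If needed, I would absorb constants using $r|z|_\infty\le\kappa(K)$. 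Up to this constant chasing, the argument is complete.
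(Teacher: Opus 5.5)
Your argument is correct, and it is organized differently from the paper's. The paper treats the two one-sided differences separately. Using $z\in sK$ with $s=|z|_\infty/\kappa(K)$, it embeds $K\cap\Z^r$ into $(z+(1+s)K)\cap\Z^r$. It then applies Lemma \ref{low dim shift comp} to the translated bodies $z+(1+s)K$ and $z+K$ to bound $|(K\cap\Z^r)\setminus((z+K)\cap\Z^r)|$, and asserts the same bound for the other difference.

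You instead sandwich both bodies between centered dilates, $(1-s)K\subseteq K\cap(z+K)$ and $K\cup(z+K)\subseteq(1+s)K$. You then count the single annulus $((1+s)K\setminus(1-s)K)\cap\Z^r$ with the unit-cube averaging trick. This needs no translated bodies, and it handles the whole symmetric difference at once.

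Your route is also sharper. In the paper, the one-sided estimate already uses up the full right-hand side, so adding the two one-sided bounds as written gives twice the stated constant. This is harmless, since the lemma is only used up to $\lesssim$. Your bound gives exactly the stated constant once the bookkeeping is done.

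Your annulus estimate gives $|K|(e^{x}-1+x)$ with $x=a+b$, where $a=r|z|_\infty/\kappa(K)\in[0,1]$ and $b=r/(2\kappa(K))\in[0,1]$. The target right-hand side equals $|K|(ea+4eb)$. The function $g(a,b)=ea+4eb-(e^{a+b}-1+a+b)$ is concave on $[0,1]^2$, so its minimum is attained at a vertex. Its vertex values are $0,\,0,\,3e,\,5e-e^2-1$, all nonnegative. This is exactly the slack between $+2$ and $+\tfrac12$ that you wanted to exploit.

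Your fallback of writing $|A\triangle B|=2|A\cup B|-|A|-|B|$ would not reach the constant $e$: it puts $2(e-1)>e$ in front of $r|z|_\infty/\kappa(K)$. The annulus route is the one to keep.
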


\begin{proof}
Observe that
\[
z \in [-|z|_{\infty},|z|_\infty]^r = \frac{|z|_{\infty}}{\kappa(K)} [-\kappa(K),\kappa(K)]^r \subseteq \frac{|z|_{\infty}}{\kappa(K)}K.
\]
From the above we get
\[
\big(K\cap\Z^r\big)\setminus\big((z+K)\cap\Z^r\big)\subseteq
\bigg(\Big(z+\Big(1+\frac{|z|_{\infty}}{\kappa(K)}\Big)K\Big)\cap\Z^r\bigg)\setminus\Big((z+K)\cap\Z^r\Big). 
\]
Thus by Lemma \ref{low dim shift comp} one has
\begin{align*}
  &\bigg|\bigg(\Big(z+\Big(1+\frac{|z|_{\infty}}{\kappa(K)}\Big)K\Big)\cap\Z^r\bigg)\setminus\Big((z+K)\cap\Z^r\Big)\bigg|=
\bigg|\Big(z+\Big(1+\frac{|z|_{\infty}}{\kappa(K)}\Big)K\Big)\cap\Z^r\bigg|-\big|\big((z+K)\cap\Z^r\big)\big|\\                    &\le|K|\bigg(\Big(1+ \frac{|z|_{\infty}}{\kappa(K)}\Big)^r-1\bigg)+\frac{er}{2(1+|z|_{\infty}/\kappa(K)\big)\kappa(K)}|K|\Big(1+ \frac{|z|_{\infty}}{\kappa(K)}\Big)^r+\frac{er}{2\kappa(K)}|K|\\
&\le|K| \frac{r|z|_{\infty}}{\kappa(K)}\Big(1+ \frac{|z|_{\infty}}{\kappa(K)}\Big)^r+ |K|\frac{er}{2\kappa(K)} \big(1+e^{r|z|_{\infty}/\kappa(K)}\big) \le|K|\frac{er}{\kappa(K)}\Big(|z|_{\infty}+2\Big) .
\end{align*}
One can obtain the same bound for
$\big|\big((z+K)\cap\Z^r\big)\setminus\big(K\cap\Z^r\big)\big|$
and this gives \eqref{eq:low dim shift sym diff}.
\end{proof}

We now recall the dimension-free estimates of the Fourier transform for the multipliers associated with averaging operators \eqref{cont avr op defn} in $\R^d$. For a symmetric convex body $G\subset \R^d$ we define the multiplier 
\[
m_G(\xi)=\frac{1}{\Vol_d(G)}\int_G e(x \cdot \xi) d x\quad\text{for}\quad \xi\in \R^d.
\] 
It is easy to see that $m(t\xi)$ is the multiplier corresponding to the operator $M_t^G$ from \eqref{cont avr op defn}.

Assume that $|G|=1$ and that $G$ is
in the isotropic position, that is there exists a constant $L=L(G)>0$ such that for every unit vector
$\theta \in S^{d-1}$ we have
\begin{align}
\label{eq:25}
\int_G (\theta \cdot x)^2{\rm d}x= L(G)^2.
\end{align}
Then the kernel of the averaging operator \eqref{cont avr op defn} satisfies
\[
|tG|^{-1}\mathds{1}_{tG}(x)=t^{-d}\mathds{1}_{G}(t^{-1} x)
\]
for all $t>0$, and the multiplier satisfies 
\[
m_{tG}(\xi)=\mathcal F (|tG|^{-1}\mathds{1}_{tG})(\xi)=\mathcal F(\mathds{1}_{G})(t\xi)=m_G(t\xi).
\]
The isotropic position of $G$ allows us to provide dimension-free
estimates for the multiplier $m_G$. 
\begin{Thm}[{\cite[eq. (10),(11),(12)]{B1}}]
\label{thm:cont mult bounds}
	Given  a symmetric convex body  $G\subset \R^d$ with volume one,
	which is in the isotropic
	position, there exists a
	constant $C>0$ such that for every $\xi\in\R^d$ we
	have
	\begin{align}
\label{eq:28}
	|m_G(\xi)|\leq C(L |\xi|_2)^{-1},\qquad  |m_G(\xi)-1|\leq CL
	|\xi|_2,\qquad   |\langle\xi,\nabla m_G(\xi)\rangle|\le C.
	\end{align}
	The constant $L=L(G)$ is defined  in \eqref{eq:25}, while $C$ is a
	universal constant which does not depend on $d$.

\end{Thm}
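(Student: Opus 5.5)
We would follow Bourgain's original argument in \cite{B1}. It proves the three estimates in \eqref{eq:28} by reducing to one-dimensional marginals of the uniform measure on $G$. By the rotation invariance of the Euclidean setup we may fix the direction $\theta=\xi/|\xi|_2\in S^{d-1}$ and write $\xi=s\theta$ with $s=|\xi|_2$. Let $\phi_\theta(u)=\Vol_{d-1}(G\cap\{x\cdot\theta=u\})$ be the marginal density. Then
\[
m_G(s\theta)=\int_\R \phi_\theta(u)e(su)\,du .
\]
By Brunn's principle $\phi_\theta^{1/(d-1)}$ is concave on its support. Hence $\phi_\theta$ is an even, log-concave probability density on $\R$, and by \eqref{eq:25} its variance is exactly $L^2$. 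All three bounds then become statements about one-dimensional even log-concave densities with variance $L^2$, which is where the dimension-free character comes from.

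\textbf{Second bound.} Since $\phi_\theta$ is even, $m_G(s\theta)-1=\int\phi_\theta(u)(\cos(2\pi su)-1)\,du$. From $|\cos t-1|\le\min(2,t^2/2)$ we get $|m_G(\xi)-1|\le 2\pi^2s^2L^2$. Together with the trivial bound $|m_G-1|\le2$, this gives $\min(2,Cs^2L^2)\le C'sL$.

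\textbf{Third bound.} Since $\langle\xi,\nabla m_G(\xi)\rangle=\frac{d}{d\lambda}m_G(\lambda\xi)|_{\lambda=1}$, it equals $\int \phi_\theta(u)\,2\pi i su\,e(su)\,du$. Integrating by parts in $u$, using $u\,e(su)\,2\pi i s=u\,\partial_u e(su)$, it becomes $-\int (u\phi_\theta(u))'e(su)\,du$. So it is bounded by $\int|(u\phi_\theta)'|\le 1+\int|u\phi_\theta'(u)|$. Because $\phi_\theta$ is even and unimodal, $u\phi_\theta'(u)\le0$. Therefore $\int|u\phi_\theta'|=-\int u\phi_\theta'=\int\phi_\theta=1$, and the bound is $\le2$.

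\textbf{First bound (main step).} Again integrating by parts,
\[
|m_G(s\theta)|=\Big|\tfrac{1}{2\pi i s}\int\phi_\theta'(u)e(su)\,du\Big|\le \frac{\|\phi_\theta'\|_{L^1}}{2\pi s}=\frac{2\phi_\theta(0)}{2\pi s},
\]
again by evenness and unimodality. The issue is then to show $\phi_\theta(0)\lesssim L^{-1}$. This is the standard fact that an even log-concave density with variance $\sigma^2$ satisfies $\phi(0)\approx\sigma^{-1}$; only the upper bound $\phi(0)\le C/\sigma$ is needed. We would prove it as follows. Log-concavity gives $\phi(u)\ge\phi(0)e^{-cu\cdot(\ldots)}$, but more simply, for unimodal even $\phi$ we have $\phi\le\phi(0)$ everywhere. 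Combined with log-concave tail decay $\phi(u)\le\phi(0)e^{1-|u|\phi(0)/C}$, this yields
\[
\sigma^2=\int u^2\phi\lesssim\phi(0)^{-2}.
\]
Rearranging gives $\phi(0)\lesssim\sigma^{-1}=L^{-1}$, and hence $|m_G(\xi)|\le C(L|\xi|_2)^{-1}$.

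The delicate point is this last log-concave comparison with universal constants; everything else is elementary one-variable calculus. If the $s$-dilation convention ($m_{tG}(\xi)=m_G(t\xi)$) is used later, no further work is needed, since all bounds are scale-correct.
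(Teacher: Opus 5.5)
Your proposal is correct and is essentially Bourgain's original argument from \cite{B1}. The paper itself only cites that source without reproducing a proof. Your steps are the same as Bourgain's: reduce to the even log-concave marginal $\phi_\theta$ of variance $L^2$, integrate by parts using the monotonicity of $\phi_\theta$ on $[0,\infty)$, and close with $\phi_\theta(0)\lesssim L^{-1}$ from log-concave tail decay.

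One point needs to be stated carefully. $\phi_\theta$ may jump at the endpoints of its support (for example the cube with $\theta=e_1$). So in both integrations by parts, $\phi_\theta'$ must be read as a distributional derivative, that is, a measure whose total variation is $2\phi_\theta(0)$ and for which $u\,\phi_\theta'\le 0$ still holds. With that reading, or by keeping the boundary terms explicitly, all your bounds go through unchanged.
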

The following lemma is a simple consequence of  Theorem \ref{thm:cont mult bounds} and \eqref{comparability of LG and kappaG}.
\begin{Lem}
	\label{lem: sup sym cont m bound}
	There exists $C>0$, such that for all $r \in \N, K \in \mathcal{F}_r$, $\xi \in \R^r$ we have
	\begin{equation}
\label{eq:29}
	|m_K(\xi)|\leq C (\kappa(K) |\xi|_2)^{-1},
        \end{equation}
\end{Lem}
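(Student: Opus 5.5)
The plan is to reduce to the isotropic normalization of Theorem \ref{thm:cont mult bounds} by a single dilation. Then we relate the resulting isotropic constant to $\kappa(K)$ via \eqref{comparability of LG and kappaG}.

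Fix $r\in\N$ and $K\in\mathcal{F}_r$. Since $K$ is 1-symmetric, it is in isotropic position in the sense of the Definition, with constant $L(K)$ satisfying
\[
\frac{1}{|K|}\int_K x_ix_j\,dx=\delta^i_jL(K)^2 .
\]
Put $s=|K|^{-1/r}$ and $G=sK$, so that $|G|=1$ and $G$ is again 1-symmetric, hence isotropic. A change of variables gives $L(G)=sL(K)$, where now $\int_G(\theta\cdot x)^2dx=L(G)^2$ for all $\theta\in S^{r-1}$; this is \eqref{eq:25}, obtained by expanding $(\theta\cdot x)^2$ and using that the off-diagonal moments vanish. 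Moreover $m_K(\xi)=m_G(\xi/s)$, which follows directly from the definition $m_{tG}(\xi)=m_G(t\xi)$ with $t=1/s$.

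Applying the first inequality of \eqref{eq:28} to $G$ gives
\[
|m_K(\xi)|=|m_G(\xi/s)|\le C\big(L(G)|\xi|_2/s\big)^{-1}=C\big(L(K)|\xi|_2\big)^{-1}.
\]
Next, \eqref{comparability of LG and kappaG} gives $\kappa(K)\le\sqrt{3\pi e}\,L(K)$, that is, $L(K)^{-1}\le \sqrt{3\pi e}\,\kappa(K)^{-1}$. Substituting this yields \eqref{eq:29} with constant $C\sqrt{3\pi e}$, which is independent of $r$ and $K$. The case $\xi=0$ is trivial, since the right-hand side is then infinite.

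The only point requiring care is the scaling bookkeeping: one has to check that the quantity $L$ appearing in \eqref{eq:25} (unnormalized integral, volume one) agrees with $L(G)$ from the Definition (normalized integral) once $|G|=1$, and that the dilation factor $s$ cancels exactly. Both checks are immediate.
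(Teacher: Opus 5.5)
Your proposal is correct and follows essentially the same route as the paper, which simply states that this lemma is a consequence of Theorem~\ref{thm:cont mult bounds} and \eqref{comparability of LG and kappaG}. You additionally write out the volume-one normalization and the bookkeeping $L(sK)=sL(K)$, $m_K(\xi)=m_{sK}(\xi/s)$, together with the direction of the inequality $L(K)^{-1}\le\sqrt{3\pi e}\,\kappa(K)^{-1}$; the paper leaves these details implicit, and you have them right.
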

We can proceed to establishing satisfactory bound for multipliers appearing on the right-hand side of \eqref{m bound by lower dim m}.
\begin{Lem}

\label{m_K bound}
For all $r \in \N,K \in \mathcal{F}_r$ such that $\kappa(K) \ge \max(r^2,2r)$ and every $\eta \in \T^r$ we have  
\begin{align*}
|\mathfrak m_K(\eta)|\lesssim \big(\kappa(K)\|\eta\|\big)^{-1}+ \frac{r^{2/3}}{\kappa(K)^{1/3}}.
\end{align*}
\end{Lem}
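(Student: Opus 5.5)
The bound will follow from comparing the discrete multiplier $\mathfrak m_K(\eta)$ with the continuous multiplier $m_K$ evaluated at a point $\xi\in\R^r$ representing $\eta$. The comparison uses the translation estimates of Lemmas \ref{low dim shift comp} and \ref{low dim shift sym diff}, in the spirit of \cite[Section 8.2]{balls}.

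\textbf{Setup.} Identify $\eta$ with its representative $\xi\in[-1/2,1/2)^r$, so that $\|\eta\|\approx|\xi|_2$. Then Lemma \ref{lem: sup sym cont m bound} gives
\[
|m_K(\xi)|\lesssim(\kappa(K)\|\eta\|)^{-1}.
\]
It therefore suffices to show
\[
|\mathfrak m_K(\eta)-m_K(\xi)|\lesssim r^{2/3}\kappa(K)^{-1/3}.
\]
Write $Q=[-1/2,1/2)^r$ and $N=|K\cap\Z^r|$. By Lemma \ref{low dim shift comp} with $z=0$ we have $|N-|K||\le\frac{er}{2\kappa(K)}|K|$. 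Since $\kappa(K)\ge r^2$, the normalizations $1/N$ and $1/|K|$ differ by a negligible factor $O(r/\kappa(K))$.

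\textbf{Averaging over translates.} Introduce a parameter $h\in\N$, to be chosen later, with $1\le h\le \kappa(K)/r$. Write $|K|\,m_K(\xi)=\int_K e(x\cdot\xi)\,dx$ as a sum over unit cells $x+Q$ with $x\in\Z^r$. The contribution of cells meeting $\partial K$ is bounded by the symmetric-difference argument of Lemma \ref{low dim shift comp}, namely $O(\frac{r}{\kappa(K)}|K|)$. Inside each cell, $e(y\cdot\xi)-e(x\cdot\xi)$ is not small when $|\xi|_2$ is large. So I would not compare pointwise; instead I would smooth both multipliers by averaging over the lattice shifts $z\in\{0,\dots,h-1\}^r$ in a fixed direction. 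More concretely, I would use the identity
\[
\mathfrak m_K(\eta)=\frac1N\sum_{x\in (K+z)\cap\Z^r}e((x-z)\cdot\eta)
\]
and average over $z$ in a box of side $h$. By Lemma \ref{low dim shift sym diff}, since $r|z|_\infty\le rh\le\kappa(K)$, this changes $\mathfrak m_K$ by at most
\[
\frac{er}{\kappa(K)}(h+2)\lesssim\frac{rh}{\kappa(K)}.
\]
After the averaging, the multiplier factors as $\mathfrak m_K(\eta)$ times the Fejér-type factor $\prod_i D_h(\eta_i)$, where $D_h$ denotes the normalized Dirichlet kernel. That factor is small unless every $\|\eta_i\|\lesssim 1/h$.

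\textbf{Case split and optimization.}
\begin{itemize}
\item If some $\|\eta_i\|\gtrsim 1/h$: I would rather bound $|\mathfrak m_K(\eta)|$ directly in terms of $\kappa(K)\|\eta\|$ by summing along the $i$-th coordinate. By 1-symmetry, the fibres of $K\cap\Z^r$ in direction $e_i$ are symmetric intervals, so the sum of $e(x_i\eta_i)$ over a fibre is a Dirichlet kernel of size $\lesssim\|\eta_i\|^{-1}$. Fibres of length $\lesssim h$ carry at most an $O(rh/\kappa(K))$ proportion of the points, by the same translation lemma.
\item If all $\|\eta_i\|\lesssim 1/h$: the Riemann-sum error of replacing the lattice sum by the integral is controlled by $|\xi|_2\lesssim\sqrt r/h$ through a per-cell oscillation bound. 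The total error is then $\lesssim\frac{r}{\kappa(K)}+\frac{rh}{\kappa(K)}+(\text{oscillation term})$.
\end{itemize}
Balancing $rh/\kappa(K)$ against a term of order $r^{1/2}/h$ gives $h\approx(\kappa(K)/r^{1/2})^{1/2}$. Getting exactly $r^{2/3}\kappa^{-1/3}$ requires balancing a term $rh/\kappa(K)$ against one of the form $(r/h)^{1/2}$, which yields $h^{3/2}\approx \kappa(K) r^{-1/2}$. I would tune the argument to match this, and the hypothesis $\kappa(K)\ge\max(r^2,2r)$ guarantees $1\le h\le\kappa(K)/r$.

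\textbf{Main obstacle.} The hard step is the small-frequency regime: controlling $|\mathfrak m_K(\eta)-m_K(\xi)|$ uniformly when $|\xi|_2\lesssim \sqrt r/h$. Pointwise Riemann-sum bounds lose a factor $|\xi|_1$ rather than $|\xi|_2$. I would try to handle this with a second-order Taylor expansion, using 1-symmetry so that the odd terms cancel. The second-moment bound of Theorem \ref{iso const} would then control the quadratic terms, as in Proposition \ref{large scale m-1 bound}.
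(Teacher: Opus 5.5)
Your ingredients match the paper's, but the case split is miscalibrated, so the large-frequency case is never actually controlled. The ingredients are: a one-coordinate argument giving $|\mathfrak m_K(\eta)|\lesssim (h\|\eta_i\|)^{-1}+rh/\kappa(K)$ via Lemma \ref{low dim shift sym diff}, and in the complementary regime a Riemann-sum comparison with $m_K$ plus Lemma \ref{lem: sup sym cont m bound}. Your fibre version of the first step is valid: fibres are symmetric integer intervals, and points in fibres with fewer than $h$ points lie in $(K\cap\Z^r)\setminus((K-he_i)\cap\Z^r)$.

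\textbf{The gap.} You put the threshold at $\|\eta_i\|\gtrsim 1/h$. In that case the Dirichlet factor only gives $(h\|\eta_i\|)^{-1}\lesssim 1$, which is not small. The threshold $\tau$ must be chosen independently of the shift length $h$, with $h\tau\gg 1$. The three genuine error terms are $(h\tau)^{-1}$, $rh/\kappa(K)$, and the small-frequency Riemann error $\sum_i\|\eta_i\|\le r\tau$. All three equal $r^{2/3}\kappa(K)^{-1/3}$ when $\tau=r^{-1/3}\kappa(K)^{-1/3}$ and $h=\lfloor \kappa(K)^{2/3}r^{-1/3}\rfloor$. The condition $rh\le\kappa(K)$ needed for the translation lemma is exactly where $\kappa(K)\ge r^2$ enters. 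Your balancing of $rh/\kappa(K)$ against a term $(r/h)^{1/2}$ does not correspond to any error term your argument produces; it is tuned to the answer. Your first balancing, against $\sqrt r/h$, would even suggest the stronger bound $r^{3/4}\kappa(K)^{-1/2}$, which should signal that the large-frequency case is not being accounted for. In the small-frequency case no shift averaging is used, so the $rh/\kappa(K)$ term you list there is spurious.

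\textbf{The ``main obstacle'' is not one.} With the threshold above, the crude per-cell bound
$\frac{1}{|K\cap\Z^r|}\sum_{x\in K\cap\Z^r}\int_{Q}|e(x\cdot\eta)-e((x+t)\cdot\eta)|\,dt\lesssim\sum_i\|\eta_i\|\le r\tau=r^{2/3}\kappa(K)^{-1/3}$
already suffices. Combined with Lemma \ref{low dim shift sym diff} for $z=t\in Q$, which costs $O(r/\kappa(K))$, this is exactly the paper's second case. No second-order Taylor expansion is needed, and Theorem \ref{iso const} plays no role. If you wanted $|\xi|_2$ instead of $|\xi|_1$, Cauchy--Schwarz over the unit cell gives $\int_Q|t\cdot\xi|\,dt\le|\xi|_2/\sqrt{12}$; the relevant second moments are the cell's, not $K$'s.

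Your opening reduction to a uniform bound on $|\mathfrak m_K(\eta)-m_K(\xi)|$ is also not how the large-frequency case goes. There one bounds $|\mathfrak m_K(\eta)|$ directly, which is what your fibre argument (and the paper's averaging over $s e_1$, $1\le s\le h$) does.
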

\begin{proof}
We will consider two cases separately.
\\
\textbf{Case 1)} Assume that
$\max\{\|\eta_1\|,\ldots,\|\eta_r\|\}>r^{-1/3} \kappa(K)^{-1/3}$.  Let $M=\big\lfloor
\frac{\kappa(K)^{2/3}}{r^{1/3}}\big\rfloor \ge 1$, due to permutation invariance of $K$ we can assume without loss of generality that $\|\eta_1\|>r^{-1/3} \kappa(K)^{-1/3}$. Then
\begin{align}
  \label{eq:48}
  \begin{split}
  |\mathfrak m_K(\eta)|&\le
  \frac{1}{|K\cap\Z^r|}\sum_{x\in
  K\cap\Z^r}\frac{1}{M}\Big|\sum_{s=1}^Me^{2\pi i
  (x+se_1)\cdot\eta}\Big|\\
&+\frac{1}{M}\sum_{s=1}^M\frac{1}{|K\cap\Z^r|}\Big|\sum_{x\in
  K\cap\Z^r}e^{2\pi i
  x\cdot\eta}-e^{2\pi i
  (x+se_1)\cdot\eta}\Big|.
  \end{split}
\end{align}
By our assumptions we see that
\begin{align}
  \label{eq:49}
\frac{1}{M}\Big|\sum_{s=1}^Me^{2\pi i
  (x+se_1)\cdot\eta}\Big|\le M^{-1}\|\eta_1\|^{-1}\le
2 \frac{r^{2/3}}{\kappa(K)^{1/3}}.  
\end{align}
We have assumed that $\kappa(K) \ge \max(r^2,2r)$, thus by Lemma \ref{low dim shift sym diff}, with $z=se_1$
 and $s\le M\le \kappa(K)/r$, we get
 \begin{align}
   \label{eq:50}
   \begin{split}
   \frac{1}{|K\cap\Z^r|}\Big|\sum_{x\in
  K\cap\Z^r}e^{2\pi i
  x\cdot\eta}-e^{2\pi i
  (x+se_1)\cdot\eta}\Big|&\le\frac{1}{|K\cap\Z^r|}\big|\big(K\cap\Z^r\big)\triangle\big((se_1+K)\cap\Z^r\big)\big|\\
                         &\lesssim  \frac{rM}{\kappa(K)} \cdot \frac{|K|}{|K \cap \Z^r|} \lesssim \frac{rM}{\kappa(K)} \lesssim \frac{r^{2/3}}{\kappa(K)^{1/3}},
   \end{split}
 \end{align}
 above we have use the inequalities below, which follow from the proof of Lemma \ref{low dim shift comp} and our assumptions
 \begin{equation} \label{eq:5.12}
|K\cap\Z^r|\ge \Big|\Big(1-\frac{1}{2\kappa(K)}\Big)K\Big|=|K|\bigg(1-\frac{1}{2\kappa(K)}\bigg)^r\ge
|K|\Big(1-\frac{r}{2\kappa(K)}\Big)\ge|K|/2.
 \end{equation}
Combining \eqref{eq:48} with \eqref{eq:49} and \eqref{eq:50} we obtain
\begin{equation*}
|\mathfrak m_K(\eta)|\lesssim \big(\kappa(K)\|\eta\|\big)^{-1}+ \frac{r^{2/3}}{\kappa(K)^{1/3}}.
\end{equation*}
\textbf{Case 2)} Assume that
$\max\{\|\eta_1\|,\ldots,\|\eta_r\|\} \le r^{-1/3} \kappa(K)^{-1/3}$.  Observe that
by Lemma \ref{low dim shift comp} and \eqref{eq:5.12} we have
\begin{align*}
  \bigg| \frac{1}{|K\cap\Z^r|}-
  \frac{1}{|K|}\bigg|\le\frac{er}{2 \kappa(K)|K\cap\Z^r|}
  \le\frac{er}{\kappa(K)|K|}.
\end{align*}
Fix a $\widetilde{ \eta} \in [-1/2,1/2]^r$ such that 
$\widetilde{\eta}_i \equiv \eta \pmod{1}$, then $|\widetilde{\eta}|_2 \approx \| \eta \|$. By abuse of notation we will denote $\widetilde{\eta}$ by $\eta$, then
\begin{equation}
\label{eq:42}
  \begin{split}
  |\mathfrak m_K(\eta)|&\le\Big|\mathfrak
  m_K(\eta)-m_K(\eta)\Big|+\big|m_K(\eta)\big|\\
  &\le\frac{er}{\kappa(K)}+\frac{1}{|K\cap\Z^r|}\Big|\sum_{x\in
  K\cap\Z^r}e^{2\pi i
  x\cdot\eta}-\int_{K}e^{2\pi i
  y\cdot\eta}d y\Big|+\big|m_K(\eta)\big|.
  \end{split}
\end{equation}
Let $Q^{(r)}=[-1/2, 1/2]^r$ and note that by Lemma \ref{low dim shift sym diff}, with
$z=t\in Q^{(r)}$ and \eqref{eq:5.12} we get
\begin{align}
  \label{eq:51}
  \begin{split}
  &\frac{1}{|K\cap\Z^r|}\Big|\sum_{x\in
  K\cap\Z^r}e^{2\pi i
  x\cdot\eta}-\int_{K}e^{2\pi i
  y\cdot\eta}d y\Big|=\frac{1}{|K\cap\Z^r|}\Big|\sum_{x\in
  \Z^r}\int_{Q^{(r)}}e^{2\pi i
  x\cdot\eta}\mathds{1}_{K}(x)-e^{2\pi i
  (x+t)\cdot\eta}\mathds{1}_{K}(x+t)d t\Big|\\
&\le\frac{1}{|K\cap\Z^r|}\int_{Q^{(r)}}\big|(K\cap\Z^r)\triangle\big((t+K)\cap\Z^r\big)\big|d
t+
  \frac{1}{|K\cap\Z^r|}\sum_{x\in
  \Z^r}\mathds{1}_{K}(x)\int_{Q^{(r)}}|e^{2\pi i
  x\cdot\eta}-e^{2\pi i
  (x+t)\cdot\eta}|d t\\
& \lesssim \frac{r}{\kappa(K)}+\big(\|\eta_1\|+\ldots+\|\eta_r\|\big) \lesssim  \frac{r}{\kappa(K)}+ \frac{r^{2/3}}{\kappa(K)^{1/3}} \lesssim \frac{r^{2/3}}{\kappa(K)^{1/3}}.
  \end{split}
\end{align}
Finally, by Lemma \ref{lem: sup sym cont m bound}  we obtain
\begin{align}
  \label{eq:53}
  \begin{split}
  \big| m_K(\eta)\big|\le  C\big(\kappa(K)\|\eta\|\big)^{-1}.
  \end{split}
\end{align}
Combining \eqref{eq:42} with \eqref{eq:51} and \eqref{eq:53} we obtain
 that
\[
|\mathfrak m_K(\eta)|\lesssim \big(\kappa(K)\|\eta\|\big)^{-1}+ \frac{r^{2/3}}{\kappa(K)^{1/3}},
\]
which completes the proof.
\end{proof}
Next lemma is a simple consequence from Lemmas \ref{m bound by lower dim m}, \ref{m_K bound}.
\begin{Lem}
\label{m bound from m_K bound}
For $d\in\N, G \in \mathcal{F}_d, r \in [d]$, if $\kappa(G)\ge \max(50^2r^2, 2 \cdot 50^2r)$, then for every $1\le r\le d$ and $\xi=(\xi_1,\ldots,\xi_d)\in\T^d$ we have
\begin{align*}
  |\mathfrak m_G(\xi)|\lesssim\big(\kappa(G)\|\eta\|\big)^{-1}+ \frac{r^{2/3}}{\kappa(G)^{1/3}} + e^{-r/100},
\end{align*}
 where $\eta=(\xi_1,\ldots, \xi_r)$.
\end{Lem}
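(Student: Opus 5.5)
The bound follows by combining Lemma \ref{m bound by lower dim m} with Lemma \ref{m_K bound}. Fix $\varepsilon = 1/50$ and apply Lemma \ref{m bound by lower dim m} to $G$ with this $\varepsilon$ and the given $r$; this is allowed because the hypothesis $\kappa(G)\ge \max(50^2r^2, 2\cdot 50^2 r)$ forces $\kappa(G)\ge 10$. We obtain
\[
|\mathfrak m_G(\xi)|\le \sup_{y}|\mathfrak m_{\pi_{[r]}(G;y)}(\eta)| + 4e^{-r/100}.
\]
Here the supremum runs over those $y\in\pi_{[d-r]}(G\cap\Z^d)$ for which $K_y:=\pi_{[r]}(G;y)$ satisfies $\kappa(K_y)\ge \varepsilon^2\kappa(G)=\kappa(G)/2500$. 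The error term $4e^{-\varepsilon r/2}=4e^{-r/100}$ already matches the last term of the claimed bound.

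Next we fix such a $y$ and check the hypotheses of Lemma \ref{m_K bound} for $K=K_y\in\mathcal F_r$. Lemma \ref{decomp} tells us that $K_y$ is a 1-symmetric convex body, and its interior is nonempty since $\kappa(K_y)>0$. We have
\[
\kappa(K_y)\ge \kappa(G)/50^2\ge \max(r^2,2r),
\]
which is exactly the condition needed there. Lemma \ref{m_K bound} then gives
\[
|\mathfrak m_{K_y}(\eta)|\lesssim (\kappa(K_y)\|\eta\|)^{-1}+r^{2/3}\kappa(K_y)^{-1/3}.
\]

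Finally we convert $\kappa(K_y)$ back to $\kappa(G)$ using $\kappa(K_y)\ge \kappa(G)/2500$. This costs only absolute constants: a factor $2500$ in the first term and $2500^{1/3}$ in the second. The resulting bound is uniform in $y$, so taking the supremum and inserting it into the first display proves the claim.

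There is no genuine obstacle. The argument only needs the parameters to be aligned: we must use a lower bound on $\kappa(K_y)$, never an upper bound, since both terms are decreasing in $\kappa$. The constant $50^2$ in the hypothesis is chosen precisely so that $\varepsilon=1/50$ makes Lemma \ref{m_K bound} applicable.
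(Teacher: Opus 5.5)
Your proposal is correct and is exactly the argument the paper intends; the paper gives no separate proof and calls the lemma a simple consequence of Lemmas \ref{m bound by lower dim m} and \ref{m_K bound}. Your steps are the right ones: take $\varepsilon=1/50$ so the error term is $4e^{-r/100}$, check that $\kappa(\pi_{[r]}(G;y))\ge \kappa(G)/50^2\ge\max(r^2,2r)$, and then pass back from $\kappa(\pi_{[r]}(G;y))$ to $\kappa(G)$ at the cost of absolute constants.
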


\subsection{Conclusion} We have prepared all necessary tools to
prove Proposition \ref{large scale m bound}.
\begin{proof}[Proof of Proposition \ref{large scale m bound}]
If
$\kappa(G)< 50^{14/3}$ then the conclusion trivially
holds. Therefore, we can assume that $G\in\mathcal{F}_d$ satisfies
$\kappa(G)\ge50^{14/3}$.  We let 
$r= \lfloor \kappa(G)^{2/7} \rfloor$.
Without loss of generality we will also assume that $\|\xi_1\|\ge\ldots\ge\|\xi_d\|$ and we shall
distinguish two cases.  Suppose first that
\begin{align*}
  \|\xi_1\|^2+\ldots+\|\xi_r\|^2\ge\frac{1}{4}\|\xi\|^2,
\end{align*}
then in view of Lemma \ref{m bound from m_K bound} we obtain 
\begin{align*}
  |\mathfrak m_G(\xi)|\lesssim \kappa(G)^{-\frac{1}{7}}+\big(\kappa(G)\|\xi\|\big)^{-1},
\end{align*}
and we are done. So we can assume that
\begin{align}
  \label{eq:54}
  \|\xi_1\|^2+\ldots+\|\xi_r\|^2\le\frac{1}{4}\|\xi\|^2.  
\end{align}
Assume first that
\begin{align}
  \label{eq:55}
  \|\xi_j\|\le\frac{1}{100\kappa(G)}\quad\text{ for all } \quad r\le
  j\le d.
\end{align}
Using invariance under sign changes of set $G$ and the Cauchy--Schwarz inequality we have
\begin{align}
  \label{eq:56}
  \begin{split}
  |\mathfrak m_G(\xi)|^2&\le \se{x\in
                          G\cap\Z^d}\prod_{j=1}^d \cos^2(2\pi x_j \xi_j)\\
  &\le \se{x\in
    G\cap\Z^d}\prod_{j=1}^d (1-\sin^2(2\pi x_j \xi_j))\\
  &\le \se{x\in
  G\cap\Z^d}\exp\Big(-\sum_{j=r+1}^d\sin^2(2\pi x_j \xi_j)\Big).
  \end{split}
\end{align}
For $x\in G\cap\Z^d$ we define
\begin{align*}
  I_x&=\{i\in [d]\colon \kappa(G)/10\le |x_i|\le 20\kappa(G) \},\\
  I_x'&=\{i\in [d]\colon 20\kappa(G)< |x_i| \},\\
    I_x''&=\{i\in [d]\colon \kappa(G)/10\le |x_i|\}=I_x\cup I_x',
\end{align*}
 and
\[
E=\big\{x\in G\cap\Z^d\colon |I_x|\ge d/20\big\}.
\]
Observe that
\begin{align*}
  E^{\bf c}=&\big\{x\in G\cap\Z^d\colon |I_x|< d/20\big\}
  =\big\{x\in G\cap\Z^d\colon |I_x''|<d/20+|I_x'|\big\}\\
   \subseteq&\big\{x\in G\cap\Z^d\colon |I_x''|<d/20+|I_x'|\text{
     and } |I_x'|\le  d/20\big\}
  \cup
  \big\{x\in G\cap\Z^d\colon |I_x'|> d/20\big\}.
\end{align*}
Then it is not difficult to see that
\[
E^{\bf c}\subseteq \big\{x\in G\cap\Z^d\colon |I_x''|< d/10\big\},
\]
since by definition of $\kappa(G)$ one sees that $\big\{x\in G\cap\Z^d\colon |I_x'|>  d/20\big\}=\emptyset$. By Lemma \ref{pos_prop_conc} we obtain
\begin{align*}
  |E^{\bf c}|\le |\big\{x\in G\cap\Z^d\colon |I_x''|< d/10\big\}|\le 2e^{-\frac{d}{10}}|G\cap\Z^d|.
\end{align*}
Therefore, by \eqref{eq:56} we have
\begin{align}
\label{eq:57}
  \begin{split}
  |\mathfrak m_G(\xi)|^2
  &\le \se{x\in
  G\cap\Z^d}\exp\Big(-\sum_{j\in I_x\cap J_r}\sin^2(2\pi x_j
\xi_j)\Big)\mathds{1}_{E}(x)+2e^{-\frac{d}{10}},
\end{split}
\end{align}
where $J_r=\{r+1,\ldots, d\}$. Using inequality $|\sin(\pi t)| \ge 2|t|$ (for $|t| \le 1/2$) and definition of $I_x$ we have
\[
\sin^2(2\pi x_j \xi_j)\ge 16|x_j|^2\|\xi_j\|^2\ge \frac{1}{100}\kappa(G)^2\|\xi_j\|^2,
\]
since $2|x_j|\|\xi_j\|\le 1/2$ by \eqref{eq:55}, and consequently we estimate \eqref{eq:57} and obtain for some $C, c>0$ that
\begin{align}
  \label{eq:58}
  \begin{split}
  &\se{x\in
  G\cap\Z^d}\exp\Big(-\sum_{j\in I_x\cap J_r}\sin^2(2\pi x_j
  \xi_j)\Big)\mathds{1}_{E}(x)\\
  &\le
  \frac{1}{|G\cap\Z^d|}\sum_{x\in
  G\cap\Z^d\cap E}\exp\Big(-\frac{1}{100}\kappa(G)^2\sum_{j\in I_x\cap
  J_r}\|\xi_j\|^2\Big)\le Ce^{-c\kappa(G)^2\|\xi\|^2}.
  \end{split}
\end{align}
In order to obtain the last inequality in \eqref{eq:58} observe that 
\begin{align*}
\frac{1}{|G\cap\Z^d|}&\sum_{x\in G\cap\Z^d\cap E}
\exp\Big(-\frac{1}{100}\kappa(G)^2\sum_{j\in I_x\cap J_r}\|\xi_j\|^2\Big)\\
&=\se{\sigma \in \Sym(d)}\bigg[\frac{1}{|G\cap\Z^d|}\sum_{\sigma^{-1} \cdot x\in
  G\cap\Z^d\cap E}\exp\Big(-\frac{1}{100}\kappa(G)^2\sum_{j\in I_{\sigma^{-1} \cdot x}\cap
  J_r}\|\xi_j\|^2\Big)\bigg]\\
  &=\frac{1}{|G\cap\Z^d|}\sum_{x\in
  G\cap\Z^d\cap E}\se{\sigma \in \Sym(d)}\bigg[\exp\Big(-\frac{1}{100}\kappa(G)^2\sum_{j\in \sigma(I_x)\cap
  J_r}\|\xi_j\|^2\Big)\bigg],
\end{align*}
since $\sigma\cdot (G\cap\Z^d\cap E)=G\cap\Z^d\cap E$ for every $\sigma\in{\rm Sym}(d)$. 
Appealing now to  Lemma \ref{lem:2.6} with $\delta_1=1/20$, $d_0=r$, $I=I_x$ and $\delta_0=3/5$, we conclude that
\begin{align*}
\se{\sigma \in \Sym(d)}\bigg[\exp\Big(-\frac{1}{100}\kappa(G)^2\sum_{j\in \sigma(I_x)\cap
  J_r}\|\xi_j\|^2\Big)\bigg]\le C\exp\Big(-c'\kappa(G)^2\sum_{j=r+1}^d\|\xi_j\|^2\Big),
  \end{align*}
  for some $c'>0$ and for all $x\in G\cap\Z^d\cap E$.
  This proves \eqref{eq:58} since by
\eqref{eq:54} we obtain
\begin{align*}
\exp\Big(-c'\kappa(G)^2\sum_{j=r+1}^d\|\xi_j\|^2\Big)\le \exp\Big(-\frac{c'\kappa(G)^2}{4}\sum_{j=1}^d\|\xi_j\|^2\Big).
\end{align*}
Therefore
\begin{align*}
&|\mathfrak{m}_G(\xi)|^2 \lesssim \exp\Big(-\frac{c'\kappa(G)^2}{4}\sum_{j=1}^d\|\xi_j\|^2\Big)+ e^{-d/10} \\
&\lesssim \Big( \kappa(G) \|\xi \| \Big)^{-2} + \kappa(G)^{-2/7} \lesssim \bigg( \Big( \kappa(G) \|\xi \| \Big)^{-1} + \kappa(G)^{-1/7} \bigg)^{2},
\end{align*}
above we have used $\kappa(G) \le d$.
Assume now that \eqref{eq:55} does not hold. Then
\begin{align}
\label{eq:44}
  \|\xi_j\|\ge\frac{1}{100\kappa(G)}\quad\text{ for all
  } \quad 1\le j\le r.
\end{align}
Hence \eqref{eq:44} gives that
\begin{align*}
  \|\xi_1\|^2+\ldots+\|\xi_r\|^2\ge\frac{r}{10^4\kappa(G)^2}.
\end{align*}
Therefore, we invoke Lemma \ref{m bound from m_K bound} with $\eta=(\xi_1,\ldots, \xi_r)$ again and obtain 
\begin{align*}
  |\mathfrak {m}_G(\xi)|&\lesssim\kappa(G)^{-\frac{1}{7}}+(\kappa(G)\|\eta\|)^{-1} \lesssim \kappa(G)^{-\frac{1}{7}},
\end{align*}
since $r\simeq\kappa(G)^{\frac{2}{7}}$. This completes the proof of Proposition \ref{large scale m bound}.
\end{proof}
\section{Dyadic maximal function in small-scale regime}
In this section we will finish the proof of Theorem \ref{dyadic dim-free} by considering dyadic maximal function in the small-scale regime.
We start with the following key proposition.
\begin{Prop} \label{prop:3.6}
Let $d \in \N$, $G \in \mathcal{F}_d$ satisfy $\widetilde{\kappa}(G) \le 10^{-3}$. Then for every $\xi \in \T^d$ the following inequalities hold:
    \begin{itemize}
        \item \begin{equation} \label{eq:3.10}
        |\mathfrak{m}_G(\xi)| \lesssim e^{- c\widetilde{\kappa}(G) \min\big(\|\xi\|^2, \| \xi +1/2\|^2 \big)} + 2^{-d\widetilde{\kappa}(G)/2},
    \end{equation}
    \item \begin{equation} \label{eq:3.11}
    |\mathfrak{m}_G(\xi)-1| \lesssim \widetilde{\kappa}(G) \| \xi\|^2,
    \end{equation}
    \item 
    \begin{equation} \label{eq:3.12}
    \bigg|\mathfrak{m}_G(\xi)-\se{x \in G \cap \Z^d} (-1)^{\sum_{i=1}^d x_i}\bigg| \lesssim  \widetilde{\kappa}(G) \|\xi+1/2\|^2,
    \end{equation}
    \end{itemize}
    where $c>0$ is an absolute constant.
\end{Prop}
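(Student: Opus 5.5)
The two easy inequalities will be handled first. For \eqref{eq:3.11} I will repeat the proof of Proposition \ref{large scale m-1 bound}. Sign invariance gives
\[
|\mathfrak m_G(\xi)-1|\le 2\sum_i \sin^2(\pi\xi_i)\,\se{x\in G\cap\Z^d}x_1^2 ,
\]
and Proposition \ref{concen small} with $p=2$ bounds the last average by $\widetilde\kappa(G)$. The degenerate case $\widetilde\kappa(G)<1/d$ means $G\cap\Z^d=\{0\}$, which is trivial.

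For \eqref{eq:3.12} I will write
\[
e(x\cdot\xi)=(-1)^{\sum x_i}e\big(x\cdot(\xi+1/2)\big)=(-1)^{\sum x_i}\prod_i\cos\big(2\pi x_i(\xi_i+1/2)\big)
\]
after averaging over signs; note that $(-1)^{\sum x_i}$ is sign-invariant. The same telescoping bound then gives
\[
\Big|\mathfrak m_G(\xi)-\se{x}(-1)^{\sum x_i}\Big|\le 2\sum_i\sin^2\big(\pi(\xi_i+1/2)\big)\,\se{x}x_1^2\lesssim\widetilde\kappa(G)\|\xi+1/2\|^2 .
\]

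The main work is \eqref{eq:3.10}. I will follow the scheme of \cite[Proposition 3.6]{balls}, where the random lattice point is a uniform point of the Hamming-type set $\{-1,0,1\}$ with about $n=d\widetilde\kappa(G)$ nonzero coordinates. By Lemma \ref{BMSW a lot of 1}, applied with $k=\lfloor n/2\rfloor$ (permitted since $\widetilde\kappa\le10^{-3}$), all but a $2^{2-n/2}$ fraction of $x\in G\cap\Z^d$ have at least $n/2$ coordinates equal to $\pm1$; this produces the $2^{-d\widetilde\kappa(G)/2}$ error. On the good set I apply Cauchy--Schwarz and sign averaging as in \eqref{eq:56}:
\[
|\mathfrak m_G(\xi)|^2\le\se{x}\prod_i\cos^2(2\pi x_i\xi_i)\le \se{x}\exp\Big(-\sum_{i\in I_x}\sin^2(2\pi\xi_i)\Big)\mathds 1_{\text{good}}(x)+O(2^{-n/2}),
\]
where $I_x=\{i: |x_i|=1\}$ satisfies $|I_x|\ge n/2$. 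Since $\sin^2(2\pi\xi_i)=4\sin^2(\pi\xi_i)\cos^2(\pi\xi_i)\ge 2\min(\sin^2\pi\xi_i,\cos^2\pi\xi_i)$, permutation invariance lets me average over $\sigma\in\Sym(d)$ and replace $I_x$ by $\sigma(I_x)$. I will then apply Lemma \ref{lem:2.6} with $\delta_1=\widetilde\kappa(G)/2$, $J=[d]$ and $u_j=\min(\sin^2\pi\xi_j,\cos^2\pi\xi_j)\le1/2$. The hypothesis $u_j\le(1-\delta_0)/2$ fails at $u_j=1/2$, so I will apply the lemma to $u_j/2$ instead. This yields a bound of the form
\[
\exp\Big(-c\,\widetilde\kappa(G)\sum_j\min\big(\sin^2\pi\xi_j,\cos^2\pi\xi_j\big)\Big).
\]

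The main obstacle is converting $\sum_j\min(\sin^2,\cos^2)$ into $\min(\|\xi\|^2,\|\xi+1/2\|^2)$. These are not comparable: if half the coordinates are near $0$ and half near $1/2$, the coordinatewise minimum is small while both norms are large. To handle this I expect to use the two-point trick of \cite{balls}. Instead of using only the cosine product, I will pair each $x$ with neighbours obtained by moving one unit of mass: replacing a $\pm1$ coordinate by a $0$ and a $0$ by $\pm1$ (which stays in $G$ by 1-symmetry) and comparing the resulting phases. These swaps contribute factors like $|\cos\pi(\xi_i\pm\xi_j)|$, which penalize mixed configurations. Concretely, I will average $e(x\cdot\xi)$ over the $\Sym(d)$-orbit of the support pattern, so that the multiplier restricted to $\{-1,0,1\}$-patterns on a random $m$-set becomes
\[
\se{|S|=m}\prod_{i\in S}\cos(2\pi\xi_i),
\]
a Krawtchouk-type average. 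I will then bound this by $e^{-cm\min(\|\xi\|^2,\|\xi+1/2\|^2)/d}$ using the estimates of \cite{balls} for such symmetric averages. If that route is too delicate, I will fall back to conditioning on the large coordinates (there are few, by Lemma \ref{big_coord small scales}) and applying the \cite{balls} Hamming-ball multiplier bound to the remaining $\{-1,0,1\}$ part.
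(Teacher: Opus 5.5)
Your arguments for \eqref{eq:3.11} and \eqref{eq:3.12} are the paper's: sign averaging, the telescoping bound for products, and $\se{x\in G\cap\Z^d}x_1^2\lesssim\widetilde{\kappa}(G)$, with the case $\widetilde{\kappa}(G)=0$ trivial. They are fine.

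\textbf{The gap is in \eqref{eq:3.10}.} The route you actually carry out (Cauchy--Schwarz, sign averaging, Lemma \ref{lem:2.6}) cannot work, as you note yourself. It only sees $\sum_j\min(\sin^2\pi\xi_j,\cos^2\pi\xi_j)$. For $\xi\in\{0,1/2\}^d$ one has $\prod_i\cos^2(2\pi x_i\xi_i)\equiv 1$, so squaring destroys all cancellation.

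What you propose instead is only a sketch, and it does not handle a general lattice point. The orbit formula $\se{|S|=m}\prod_{i\in S}\cos(2\pi\xi_i)$ is valid only for $x\in\{-1,0,1\}^d$. The large coordinates are also not ``few''. Lemma \ref{big_coord small scales} bounds their expected number only by $4d\widetilde{\kappa}(G)^2$. This order is attained, e.g. for $G=d\widetilde{\kappa}(G)B^{1,(d)}$, and it is unbounded in $d$. Simply deleting $\supp(z)$ from the exponent costs up to $\widetilde{\kappa}(G)|\supp(z)|$. That loss is not affordable when $\widetilde{\kappa}(G)^{-1}\ll\min(\|\xi\|^2,\|\xi+1/2\|^2)\lesssim d\widetilde{\kappa}(G)^2$. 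The ``two-point trick'' is not needed, and as described it is not an argument.

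Your fallback (condition on the large coordinates, then use a Hamming multiplier bound) is the paper's proof, but two steps must be supplied. Put $n=d\widetilde{\kappa}(G)$ and $E=\{x\in G\cap\Z^d:|\{i:|x_i|=1\}|>n/2\}$, and write $x=z+y$ with $z$ the part where $|x_i|\ge 2$.

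\emph{First step: fibre structure.} By 1-symmetry, the fibre of $E$ over $z$ is a union of \emph{full} Hamming spheres $z+D_k^{J}$, with $J=[d]\setminus\supp(z)$ and $k$ ranging over admissible values in $(n/2,n-|z|_1]$. Hence $\sum_{x\in E}e(x\cdot\xi)=\sum_z e(z\cdot\xi)\sum_k|D_k^J|\beta_k^J(\xi)$. Proposition \ref{prop:3.4} applies since $k\le n-|z|_1\le|J|/2$, and gives
\[
|\beta_k^J(\xi)|\le 2\exp\Big(-\frac{cn}{4d}\min\Big(\sum_{i\in J}\sin^2\pi\xi_i,\sum_{i\in J}\cos^2\pi\xi_i\Big)\Big).
\]
This is a minimum of sums, not a sum of minima, and that is exactly what defeats the $\{0,1/2\}^d$ example.

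\emph{Second step: removing the restriction to $J$.} Bound $e^{-\min(a,b)}\le e^{-a}+e^{-b}$. Then use that $Z(E)$ and the weights $|D_k^J|=2^k\binom{d-|\supp(z)|}{k}$ are permutation invariant, so $J$ may be replaced by $\sigma(J)$ averaged over $\sigma\in\Sym(d)$. Apply Lemma \ref{lem:2.6} with $\delta_0=\delta_1=1/2$; this is legitimate because $|\supp(z)|\le|z|_1/2<n/4$, so $|J|\ge d/2$. The remaining weights sum to $\mathfrak{s}_G(0)\le 1$.

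Together with Lemma \ref{BMSW a lot of 1} for $(G\cap\Z^d)\setminus E$, this yields \eqref{eq:3.10}.
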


In order to prove Proposition \ref{prop:3.6} we need to introduce crucial multiplier corresponding to certain combinatorial averaging operator. Generalization of this multipliers symbol will be of importance in the next section as well and was heavily studied in \cite{NW}. Ideas utilized here were already present in \cite[Proposition 3.3]{balls} and were later refined in \cite[Section 3]{Ni2}.
\begin{Defn}
    For every $n,d \in \N$, $J\subseteq [d]$ with $n \leq |J|$ we define $\beta_n^J: \T^d \to \mathbb{C}$ by the formula
    \[
    \beta_{n}^J(\xi)=  \se{x \in D_n^J}e( x \cdot \xi),
    \]
    where
    \[
    D_n^J=\Big\{ y \in \{-1,0,1\}^d: |\{i \in [d]: |y_i|=1\}|=n, \supp(y) \subseteq J \Big\}.
    \]
\end{Defn}
We have the following crucial bound, which follows from bounds for Krawtchouk polynomials.
\begin{Prop}[{\cite[Proposition 3.4]{Ni2}}] \label{prop:3.4}
 For every $n,d \in \N$, $J\subseteq [d]$ with $n \leq |J|/2$ and $\xi \in \mathbb{T}^d$ we have
     \[ \tag{2} 
     |\beta_n^J(\xi)| \leq 2 e^{- \frac{cn}{2|J|} \min\big( \sum_{i \in J} \sin^2(\pi \xi_i), \sum_{i \in J} \cos^2(\pi \xi_i)  \big)},
     \]
     where $c \in (0,1)$ is an absolute constant.
\end{Prop}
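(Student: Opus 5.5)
Throughout, write $\beta=\beta_n^J$, $N=|J|$, $s_i=\sin^2(\pi\xi_i)$, $c_i=\cos^2(\pi\xi_i)$, $m_i=\min(s_i,c_i)$, and $\Lambda=\min\big(\sum_{i\in J}s_i,\sum_{i\in J}c_i\big)$. The plan is to write $\beta$ as an average over uniformly random $n$-subsets $S\subseteq J$, separate the size of the product from its sign, and split into two regimes according to $\sum_{i \in J} m_i$.

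\textbf{Reduction.} Averaging first over the signs of $y\in D_n^J$ with the support fixed gives
\[
\beta(\xi)=\se{S\subseteq J,\ |S|=n}\ \prod_{i\in S}\cos(2\pi\xi_i).
\]
Let $B=\{i\in J: c_i<s_i\}$. For $i \in B$ we have $\cos(2\pi\xi_i)=c_i-s_i<0$. So $\cos(2\pi\xi_i)=\varepsilon_i a_i$ with $\varepsilon_i=-1$ exactly on $B$ and $a_i=|c_i-s_i|\in[0,1]$. Since $c_i+s_i=1$, we get $1-a_i=2m_i$, hence $a_i\le e^{-2m_i}$. Therefore
\[
\beta(\xi)=\se{S}(-1)^{|S\cap B|}\prod_{i\in S}a_i .
\]

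\textbf{Case 1: $\frac{n}{N}\sum_{i\in J}m_i\ge \frac{1}{8}\frac{n}{N}\Lambda$.} Here no cancellation is needed, and I would bound
\[
|\beta|\le \se{S}\exp\Big(-2\sum_{i\in S}m_i\Big).
\]
The indicators of $\{i\in S\}$ under uniform $n$-subset sampling are negatively associated; alternatively, compare with sampling with replacement via Hoeffding's classical comparison. Either route gives
\[
\se{S} e^{-2\sum_{i\in S} m_i}\le \prod_{i\in J}\Big(1-\tfrac{n}{N}\big(1-e^{-2m_i}\big)\Big)\le \exp\Big(-c\,\tfrac nN\sum_{i\in J} m_i\Big),
\]
using $1-e^{-2m}\ge c\, m$ for $m\le 1/2$. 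This yields the claimed bound in this case.

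\textbf{Case 2: $\sum_{i \in J} m_i<\Lambda/8$.} The signs must now produce the decay. On $J\setminus B$ we have $s_i=m_i$, and on $B$ we have $s_i\le 1$. Hence $\Lambda\le\sum_{i \in J} s_i\le |B|+\Lambda/8$, so $|B|\ge \tfrac78\Lambda$. Symmetrically, using $c_i$, we get $|J\setminus B|\ge \tfrac78\Lambda$, and therefore
\[
\frac{|B|\,|J\setminus B|}{N}\gtrsim \Lambda .
\]
Next I replace $\prod_{i \in S} a_i$ by $1$. Since $|\prod_{i \in S} a_i-1|\le 2\sum_{i\in S}m_i$, this costs at most $2\frac nN\sum_{i \in J} m_i$ in expectation. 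If that quantity is at least $1$, we are again in a Case~1-type situation and can use the Case~1 estimate. Otherwise, a cleaner route is to condition on $S\cap(J\setminus B)$ and $S\cap B$ sequentially, which keeps the error multiplicative rather than additive. What remains is the hypergeometric parity
\[
\se{S}(-1)^{|S\cap B|}=\frac{K_n(|B|)}{\binom{N}{n}},
\]
a normalized Krawtchouk polynomial.

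\textbf{Main obstacle.} The crux is the Krawtchouk estimate
\[
\Big|\frac{K_n(b)}{\binom Nn}\Big|\le 2\exp\Big(-c\,\frac{n\,b(N-b)}{N^2}\Big),\qquad n\le N/2 .
\]
I would first try to prove it probabilistically. Reveal $S$ as a sequence of $n$ draws without replacement. Since $n\le N/2$, at each step at least $N/2$ elements remain, and the remaining proportions of $B$ and of $J \setminus B$ stay comparable to $b/N$ and $(N-b)/N$ for a constant fraction of the steps, except on an exponentially small event. Each such step multiplies the conditional parity expectation by a factor $|1-2p_t|\le 1-c\,b(N-b)/N^2$, where $p_t$ is the current probability of drawing from $B$. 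The exceptional event has to be controlled by a Chernoff bound, and this bookkeeping, together with making the multiplicative error from the $a_i$ fit into the same scheme, is the step I expect to be delicate. Combining this estimate with $b(N-b)/N\gtrsim\Lambda$ gives the bound $2e^{-cn\Lambda/(2N)}$.
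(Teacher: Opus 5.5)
\textbf{Case~2 has a genuine gap,} at the step where you discard the moduli $a_i$. Your Case~1, via negative association, is fine.

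Replacing $\prod_{i\in S}a_i$ by $1$ costs an \emph{additive} error of order $\frac nN\sum_{i\in J}m_i$. This error is harmless only if $\frac nN\sum_i m_i\lesssim e^{-c\frac nN\Lambda}$. The Case~1 bound $e^{-c\frac nN\sum_i m_i}$ is strong enough only if $\sum_i m_i\gtrsim\Lambda$. Everything in between is uncovered. For example, take roughly half the $\xi_i$ near $0$ and half near $1/2$. Then you can have $\frac nN\Lambda=L$ large and $\frac nN\sum_i m_i=\sqrt L$. Your two estimates give $O(\sqrt L)$ and $e^{-c\sqrt L}$, whereas $e^{-cL}$ with an absolute $c$ is required.

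The ``sequential conditioning'' remedy is not an argument. Conditioning on $k=|S\cap B|$ gives $\sum_k p_k(-1)^kA_kC_{n-k}$, where the weights $A_k,C_{n-k}\in[0,1]$ are averages of $\prod a_i$ over random subsets of $B$ and of $J\setminus B$. They depend on $k$, and controlling the cancellation in such a weighted alternating hypergeometric sum is exactly the problem you started with.

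Your probabilistic sketch of the Krawtchouk estimate also fails as stated. For draws without replacement, the parity expectation does not factor into per-step factors $|1-2p_t|$, because $p_t$ depends on earlier draws. For instance, with $N=4$, $|B|=2$, $n=2$, one has $|1-2p_1|=0$ but $\mathbb{E}(-1)^{|S\cap B|}=-1/3$. However, the estimate itself, $|K_n(x)|\le\binom Nn e^{-cn\min(x,N-x)/N}$ for $n\le N/2$, is known. It is what the paper means by ``follows from bounds for Krawtchouk polynomials'', so you should simply cite it.

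\textbf{The missing idea is to randomize} instead of splitting sign and modulus deterministically. Write $\cos(2\pi\xi_i)=\mathbb{E}\,\epsilon_i$ with independent $\epsilon_i\in\{-1,1\}$ and $\mathbb{P}(\epsilon_i=-1)=s_i$. Then $\beta_n^J(\xi)=\mathbb{E}_\epsilon\, K_n(X)/\binom Nn$ with $X=\#\{i\in J:\epsilon_i=-1\}$, so the Krawtchouk bound is applied at a random argument. Set $\lambda=cn/N\le 1$ and use $e^{-\lambda\min(X,N-X)}\le e^{-\lambda X}+e^{-\lambda(N-X)}$. By independence, $\mathbb{E}e^{-\lambda X}=\prod_{i\in J}\big(1-s_i(1-e^{-\lambda})\big)\le e^{-\frac{\lambda}{2}\sum_i s_i}$, and likewise $N-X$ gives the same bound with $c_i$ in place of $s_i$. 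This yields exactly $2e^{-\frac{cn}{2N}\min(\sum_i s_i,\sum_i c_i)}$, the form of the statement, with no case distinction.

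In your notation this amounts to randomizing only the moduli: $a_i=\mathbb{E}\,\eta_i$ with $\mathbb{P}(\eta_i=-1)=m_i$. This turns the pair $(B,(a_i))$ into a random set and makes the reduction to pure parity exact rather than additive. Case~1 then becomes superfluous.
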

Let us derive Proposition \ref{prop:3.6} as a consequence of previous bound for multipliers $\beta^J_n$'s.
\begin{proof}[Proof of Proposition \ref{prop:3.6}]
Note that \eqref{eq:3.11} is a direct consequence of 
Proposition \ref{large scale m-1 bound}. For \eqref{eq:3.12} we can simply apply the same arguments. Let $\eta=\xi+1/2 \in \T^d$, then we have
\begin{align*}
    &\bigg|\mathfrak{m}_G(\xi)-\se{x \in G \cap \Z^d} (-1)^{\sum_{i=1}^d x_i}\bigg| = \bigg| \se{x \in G \cap \Z^d}\Big( e(x \cdot \xi)- (-1)^{\sum_{i=1}^d x_i} \Big) \bigg| \\
    &=\bigg| \se{x \in G \cap \Z^d}\Big( \prod_{i=1}^d \cos(2 \pi x_i \xi_i)- (-1)^{\sum_{i=1}^d x_i} \Big) \bigg|=\bigg| \se{x \in G \cap \Z^d}(-1)^{\sum_{i=1}^d x_i}\Big( \prod_{i=1}^d \cos(2 \pi x_i \eta_i)- 1 \Big)  \bigg| \\
    &\le \se{x \in G \cap \Z^d} \Big| \prod_{i=1}^d \cos(2 \pi x_i \eta_i)- 1  \Big|.
\end{align*}
To the final expression we can just repeat the rest of the argument from the proof of Proposition \ref{large scale m-1 bound} and get
\[
\bigg|\mathfrak{m}_G(\xi)-\se{x \in G \cap \Z^d} (-1)^{\sum_{i=1}^d x_i}\bigg| \lesssim \widetilde{\kappa}(G) \| \eta \|^2 = \widetilde{\kappa}(G) \| \xi+1/2 \|^2.
\]
It remains to prove \eqref{eq:3.10}. Let $n=d \widetilde{\kappa}(G) \in \N$ and 
\[
E= \Big\{ x \in G \cap \Z^d: |\{ i \in [d]: |x_i|=1 \}| >n/2 \Big\}
\]
and
\[
\mathfrak{s}_{G}(\xi)= \frac{1}{|G \cap \Z^d |} \sum_{x \in E} e(x \cdot \xi).
\]
By Lemma \ref{BMSW a lot of 1} (with $k=\lfloor n/2 \rfloor$) for every $\xi \in \T^d$ we have
\begin{equation} \label{eq: comp with sG}
    |\mathfrak{m}_G(\xi)- \mathfrak{s}_{G}(\xi)| \le \frac{|G \cap \Z^d \setminus E|}{| G \cap \Z^d|} \lesssim 2^{-n/2},
\end{equation}
so it suffices to show that for
\begin{equation} \label{eq: sG bound}
    |\mathfrak{s}_{G}(\xi)| \lesssim e^{- \frac{c\widetilde{\kappa}(G)}{320} \min\big(\|\xi\|^2, \| \xi +1/2\|^2 \big)}.
\end{equation}
We decompose any $x \in \Z^d$ into $x=y(x)+z(x)$, where
    \[
z(x)_j= \begin{cases}
    x_j, \ \ \text{if $|x_j| \geq 2$} \\
    0, \ \ \text{otherwise}
\end{cases}
\]
and denote $Z(E)=\{z(x): x \in E \}$.
For $z \in Z(E)$ we define 
\begin{align*}
    J(z)= &\Big\{ k \in [d] \cup \{0 \}: (\exists y \in \{-1,0,1\}^d) \Big( \supp(y) \cap \supp(z)= \emptyset,  \\
    &y+z \in G,|\{i \in [d]: |y_i|=1 \}|=k \Big)\Big\}.
\end{align*}
Note that since $G$ is 1-symmetric, if $k \in J(z)$, then for all $y \in \{-1,0,1\}^d$ satisfying
\[
|\{i \in [d]: |y_i|=1 \}|=k, \quad \supp(y) \cap \supp(z)= \emptyset
\]
we have $y+z \in G$.
Moreover we have the following disjoint decomposition of the set $E$:
\[
E= \bigcup_{z \in Z(E)} \bigcup_{\substack{n/2<k \leq n-|z|_1, \\ k \in J(z)}} \Big(z+ \Big\{y \in \{-1,0,1\}^d: \supp(y) \cap \supp(z)= \emptyset, |\{i \in [d]: |y_i|=1 \}|=k\Big\}\Big).
\]
Using the above we get that
\begin{equation} \label{eq:3.9}
 \begin{split} 
    \mathfrak{s}_G(\xi)&= \frac{1}{|G\cap \Z^d|} \sum_{x \in E} e(x \cdot \xi)= \frac{1}{|G\cap \Z^d|}\sum_{z \in Z(E)} e(z \cdot \xi) \sum_{\substack{n/2<k \leq n-|z|_1, \\ k \in J(z)}} \sum_{\substack{y \in \{-1,0,1\}^d, \\ |\{i \in [d]: |y_i|=1\}|=k, \\
    \supp(y) \cap \supp(z)= \emptyset}} e(y \cdot \xi)
    \\
    &=\frac{1}{|G\cap \Z^d|}\sum_{z \in Z(E)} \prod_{j=1}^d \cos(2 \pi z_j \xi_j) \sum_{\substack{n/2<k \leq n-|z|_1, \\ k \in J(z)}} \sum_{\substack{y \in \{-1,0,1\}^d, \\ |\{i \in [d]: |y_i|=1\}|=k, \\
    \supp(y) \cap \supp(z)= \emptyset}} e(y \cdot \xi),
 \end{split}   
\end{equation}
where the last equality follows from invariance of the set $Z(E)$ under sign changes. It is helpful to note that $n =d \widetilde{\kappa}(G) \le d/2$ and $|z |_1 \ge |\supp(z)| \ge \frac{|\supp(z)|}{2}$, so that $n- |z |_1\le \frac{d- |\supp(z)|}{2}$ is satisfied.
By Proposition \ref{prop:3.4} for any $z \in Z(E)$, $k \in \N$, such that $ n/2 < k \le n -|z|_1 \le \frac{d-|\supp(z)|}{2}  $, let $J= [d] \setminus \supp(z)$, then we have
\begin{align*}
&\bigg|\sum_{\substack{y \in \{-1,0,1\}^d, \\ |\{i \in [d]: |y_i|=1\}|=k, \\
    \supp(y) \cap \supp(z)= \emptyset}} e(y \cdot \xi) \bigg|= |D_k^{J}| |\beta_k^{ J}(\xi)| \leq 2|D_k^{J}| e^{-\frac{ck}{2|J|} \min\big( \sum_{j \in J} \sin^2(\pi \xi_j), \sum_{j \in J} \cos^2(\pi \xi_j) \big)}  \\
& \le  2 e^{-\frac{cn}{4d} \min\big( \sum_{j \in [d] \setminus \supp(z)} \sin^2(\pi \xi_j), \sum_{j \in [d] \setminus \supp(z)} \cos^2(\pi \xi_j) \big)} \cdot 2^k \binom{d-|\supp(z)|}{k}.
\end{align*}
Plugging the above into \eqref{eq:3.9} we get
\begin{align*}
    |\mathfrak{s}_G(\xi)| &\le \frac{2}{|G\cap \Z^d|}\sum_{z \in Z(E)} \sum_{\substack{n/2<k \leq n-|z|_1, \\ k \in J(z)}} 2^k \binom{d-|\supp(z)|}{k} \exp\Big(-\frac{cn}{4d} \sum_{j \in [d] \setminus \supp(z)} \sin^2(\pi \xi_j) \Big) \\
    &+ \frac{2}{|G\cap \Z^d|}\sum_{z \in Z(E)}  \sum_{\substack{n/2<k \leq n-|z|_1, \\ k \in J(z)}} 2^k \binom{d-|\supp(z)|}{k} \exp\Big(-\frac{cn}{4d} \sum_{j \in [d] \setminus \supp(z)} \cos^2(\pi \xi_j) \Big).
\end{align*}
We will bound only the first term above, for the second summand treatment is analogous. Note that by symmetry invariance of the set $Z(E)$ and Lemma \ref{lem:2.6} (with $\delta_0=\delta_1=1/2$) one has
\begin{align*}
    &\frac{2}{|G\cap \Z^d|}\sum_{z \in Z(E)} \sum_{\substack{n/2<k \leq n-|z|_1, \\ k \in J(z)}} 2^k \binom{d-|\supp(z)|}{k} \exp\Big(-\frac{cn}{4d} \sum_{j \in [d] \setminus \supp(z)} \sin^2(\pi \xi_j) \Big) \\
    &=\frac{2}{|G\cap \Z^d|}\sum_{z \in Z(E)} \sum_{\substack{n/2<k \leq n-|z|_1, \\ k \in J(z)}} 2^k \binom{d-|\supp(z)|}{k} \se{\sigma \in \Sym(d)} \exp\Big(-\frac{cn}{4d} \sum_{j \in \sigma\big( [d] \setminus \supp(z)\big)} \sin^2(\pi \xi_j) \Big) \\
    &\le \frac{8}{|G\cap \Z^d|}\sum_{z \in Z(E)} \sum_{\substack{n/2<k \leq n-|z|_1, \\ k \in J(z)}} 2^k \binom{d-|\supp(z)|}{k}  e^{- \frac{cn}{320d} \| \xi \|^2}= 8 \mathfrak{s}_G(0)e^{- \frac{cn}{320d} \| \xi \|^2} \le 8 e^{- \frac{cn}{320d} \| \xi \|^2}.
\end{align*}
Similarly one can show that 
\[
\frac{2}{|G\cap \Z^d|}\sum_{z \in Z(E)}  \sum_{\substack{n/2<k \leq n-|z|_1, \\ k \in J(z)}} 2^k \binom{d-|\supp(z)|}{k} \exp\Big(-\frac{cn}{4d} \sum_{j \in [d] \setminus \supp(z)} \cos^2(\pi \xi_j) \Big) \le 8 e^{- \frac{cn}{320d} \| \xi +1/2\|^2}.
\]
Since $\frac{n}{d}=\widetilde{\kappa}(G)$, the above combined with \eqref{eq: comp with sG} finishes the proof of \eqref{eq:3.10} after replacing $c/320$ by $c$. 
\end{proof}

Let us introduce new auxiliary multipliers, which approximate $\mathfrak{m}_G$ well enough.


\begin{Defn}
    For any $d \in \N, G\in \mathcal{F}_d$ we define $\lambda_{G}^1,\lambda_{G}^2: \T^d \to \mathbb{C}$ by following formulas:
    \[
\lambda_{G}^1(\xi)=e^ {- \widetilde{\kappa}(G) \| \xi \|^2 },
\]
\[
\lambda_{G}^2= \Big(\se{x \in G \cap \Z^d} (-1)^{\sum_{i=1}^d x_i} \Big) e^ {- \widetilde{\kappa}(G)  \| \xi +1/2\|^2 }. 
\]
\end{Defn}
The following theorem is a consequence of the theory of symmetric diffusion semigroups, see  \cite[p. 73]{Ste1} and \cite[Theorem 2.11]{NW}.
\begin{Thm} \label{thm:4.2}
    There exists a constant $C>0$ such that for all $d \in \N$, $G \in \mathcal{F}_d$  $f \in \ell^2(\Z^d)$ we have
    \[ 
    \Big\| \sup_{t>0} \big|\mathcal{F}^{-1}(\lambda_{tG}^1 \widehat{f})\big|\Big\|_{\ell^2(\Z^d)} \leq C \|f \|_{\ell^2(\Z^d)}.
    \]
\end{Thm}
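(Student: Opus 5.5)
The plan is to recognise $\lambda^1_{tG}$ as a heat (diffusion) semigroup evaluated at a time that depends on $t$, and then to use Stein's maximal theorem for symmetric diffusion semigroups. Concretely, for $s\ge 0$ put
\[
P_s f=\mathcal{F}^{-1}\big(e^{-s\|\xi\|^2}\widehat f\big).
\]
Then $\lambda^1_{tG}\widehat f$ is the multiplier of $P_{s(t)}$ with $s(t)=\widetilde{\kappa}(tG)\ge 0$, so for every $x$
\[
\sup_{t>0}\big|\mathcal{F}^{-1}(\lambda^1_{tG}\widehat f)(x)\big|\le \sup_{s\ge 0}|P_s f(x)|.
\]
The left side depends on $G$ only through the set of times $\{\widetilde\kappa(tG)\}$. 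It is contained in $[0,\infty)$, so the bound is uniform in $G$ and in $d$, and no monotonicity or continuity of $t\mapsto\widetilde\kappa(tG)$ is needed. The case $s=0$ gives the identity, which is harmless.

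The key step is to check that $(P_s)_{s\ge0}$ is a symmetric diffusion semigroup on $\Z^d$. Since $\|\xi\|^2=\sum_{j}\sin^2(\pi\xi_j)$, the semigroup factorises as $P_s=\bigotimes_{j=1}^d P_s^{(j)}$. Each one-dimensional factor has multiplier
\[
e^{-s\sin^2(\pi\eta)}=e^{-s/2}\,e^{(s/2)\cos(2\pi\eta)}.
\]
Expanding the exponential shows that this is the Fourier series of the positive kernel $e^{-s/2}I_{k}(s/2)$ on $\Z$, where $I_k$ are the modified Bessel functions. This kernel is positive, symmetric and has total mass equal to the multiplier at $\eta=0$, namely $1$. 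Hence each $P_s$ is given by convolution with a symmetric probability measure on $\Z^d$.

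The semigroup axioms then follow directly:
\begin{itemize}
\item the semigroup property holds because the multipliers multiply;
\item $P_s$ is a contraction on every $\ell^p$, by Young's inequality, since the kernel is a probability measure;
\item $P_s$ is self-adjoint on $\ell^2$, because the kernel is symmetric;
\item $P_s$ is positivity preserving and $P_s1=1$;
\item $P_s f\to f$ as $s\to0$, by dominated convergence on the Fourier side.
\end{itemize}

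Stein's maximal theorem \cite[p. 73]{Ste1} then gives $\|\sup_{s>0}|P_sf|\|_{\ell^2}\le C\|f\|_{\ell^2}$ with an absolute constant $C$. Combined with the pointwise domination above, this proves the claim; this is the content of \cite[Theorem 2.11]{NW}. The constant in Stein's theorem depends only on $p$, so it does not depend on the dimension. The only point needing care is the positivity of the kernel, which the Bessel expansion supplies.
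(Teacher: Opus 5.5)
Your proposal is correct and follows the paper's route. The paper simply cites Stein's maximal theorem for symmetric diffusion semigroups (\cite[p. 73]{Ste1}, \cite[Theorem 2.11]{NW}); you supply the details it omits, namely the pointwise domination by $\sup_{s\ge 0}|P_s f|$ at times $s=\widetilde{\kappa}(tG)$, and the check, via the positive symmetric Bessel kernel $e^{-s/2}I_k(s/2)$, that $e^{-s\|\xi\|^2}$ generates a symmetric Markov semigroup on $\Z^d$, so Stein's dimension-free constant applies.
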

Notice that $\lambda_{G}^2(\xi)= \Big(\mathbb E_{x \in G \cap \Z^d} (-1)^{\sum_{i=1}^d x_i} \Big)\lambda_{G}^1(\xi+\frac{1}{2})$, where $\xi+\frac{1}{2}=(\xi_1 + \frac{1}{2},..., \xi_d+ \frac{1}{2})$. Because of this we have
\[
\sup_{\|f \|_{\ell^2(\Z ^d)}=1} \Big\| \sup_{t>0} \big|\mathcal{F}^{-1}(\lambda_{tG}^2 \widehat{f})\big|\Big \|_{\ell^2(\Z^d)} \le \sup_{\|f \|_{\ell^2(\Z ^d)}=1} \Big\| \sup_{t>0} \big|\mathcal{F}^{-1}(\lambda_{tG}^1 \widehat{f})\big|\Big \|_{\ell^2(\Z^d)}.
\]
Relation between multipliers $\lambda_{G}^1, \lambda_{G}^2$ and $\mathfrak{m}_{G}$ is stated below.
\begin{Prop} \label{prop:4.3}
Let $d \in \N, G \in \mathcal{F}_d$ be such that $\widetilde{\kappa}(G) \le 10^{-3}$. Then for every $\xi \in \T^d$ we have \\
\begin{enumerate}
    \item If $\|\xi \| \leq \|\xi+1/2 \|$, then
    \begin{equation*}
    |\mathfrak{m}_G(\xi)-\lambda_G^1(\xi)| \lesssim \min \bigg( \widetilde{\kappa}(G) \|\xi \|^2,  \Big(\widetilde{\kappa}(G)\|\xi \|^2 \Big)^{-1} \bigg)+ 2^{-d\widetilde{\kappa}(G)/2}.
    \end{equation*}

    \item If $\|\xi +1/2\| < \|\xi \|$, then
    \begin{equation*}
    |\mathfrak{m}_G(\xi)-\lambda_G^2(\xi)| \lesssim \min \bigg( \widetilde{\kappa}(G) \|\xi +1/2\|^2,  \Big(\widetilde{\kappa}(G)\|\xi +1/2\|^2 \Big)^{-1} \bigg)+ 2^{-d\widetilde{\kappa}(G)/2}.
    \end{equation*}
\end{enumerate}
\end{Prop}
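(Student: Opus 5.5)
We derive both parts from Proposition \ref{prop:3.6} by splitting into a small-frequency and a large-frequency estimate and taking the better one. Write $\kappa=\widetilde{\kappa}(G)$. Case (2) is symmetric to case (1), with $\xi+1/2$ in place of $\xi$, \eqref{eq:3.12} in place of \eqref{eq:3.11}, and $\lambda_G^2$ in place of $\lambda_G^1$. So we mainly treat (1), where $\|\xi\|\le\|\xi+1/2\|$, and hence $\min(\|\xi\|^2,\|\xi+1/2\|^2)=\|\xi\|^2$.

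\textbf{Small-frequency bound.} By the triangle inequality,
\[
|\mathfrak{m}_G(\xi)-\lambda_G^1(\xi)|\le|\mathfrak{m}_G(\xi)-1|+|1-e^{-\kappa\|\xi\|^2}|.
\]
The first term is $\lesssim\kappa\|\xi\|^2$ by \eqref{eq:3.11}. The second is $\le\kappa\|\xi\|^2$ since $1-e^{-x}\le x$. This gives the term $\kappa\|\xi\|^2$.

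\textbf{Large-frequency bound.} By \eqref{eq:3.10},
\[
|\mathfrak{m}_G(\xi)|\lesssim e^{-c\kappa\|\xi\|^2}+2^{-d\kappa/2},
\]
and trivially $|\lambda_G^1(\xi)|=e^{-\kappa\|\xi\|^2}$. Using $e^{-cx}\lesssim_c x^{-1}$, we get
\[
|\mathfrak{m}_G(\xi)-\lambda_G^1(\xi)|\lesssim(\kappa\|\xi\|^2)^{-1}+2^{-d\kappa/2}.
\]
Taking the minimum of the two bounds, with $2^{-d\kappa/2}$ added to both, yields (1).

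\textbf{Case (2).} Here $\min(\|\xi\|^2,\|\xi+1/2\|^2)=\|\xi+1/2\|^2$. Set $a=\E_{x\in G\cap\Z^d}(-1)^{\sum_i x_i}$, so $|a|\le1$. For the small-frequency part,
\[
|\mathfrak{m}_G-\lambda_G^2|\le|\mathfrak{m}_G(\xi)-a|+|a|\,\bigl|1-e^{-\kappa\|\xi+1/2\|^2}\bigr|,
\]
which is $\lesssim\kappa\|\xi+1/2\|^2$ by \eqref{eq:3.12} and the same elementary inequality. The large-frequency part uses \eqref{eq:3.10} together with $|\lambda_G^2|\le e^{-\kappa\|\xi+1/2\|^2}$.

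The only potential subtlety is the degenerate case $\kappa=0$, where $G\cap\Z^d=\{0\}$. Then $\mathfrak{m}_G\equiv1$, $a=1$, and $\lambda_G^1=\lambda_G^2=1$, so the left sides vanish in (1) and (2), and the statement holds trivially. Apart from this, there is no real obstacle: the proposition is essentially a direct repackaging of Proposition \ref{prop:3.6}.
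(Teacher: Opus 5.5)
Your proposal is correct and follows essentially the same route as the paper. The paper also derives Proposition \ref{prop:4.3} directly from Proposition \ref{prop:3.6}, using $1-e^{-x}\le x$ together with \eqref{eq:3.11}/\eqref{eq:3.12} for the small-frequency bound, and \eqref{eq:3.10} together with $e^{-x}\le x^{-1}$ for the large-frequency bound; your separate remark on the degenerate case $\widetilde{\kappa}(G)=0$ is a harmless extra check that the paper leaves implicit.
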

Proof of the proposition above is a simple consequence of Proposition \ref{prop:3.6} together with elementary inequalities $1-e^{-x} \le x$, $e^{-x} \le x^{-1}$ for $x>0$. We can finally prove
Theorem \ref{dyadic dim-free}. 
\begin{proof}[Proof of Theorem \ref{dyadic dim-free}.]
Take $d \in \N, G \in \mathcal{F}_d$, $f \in \ell^2(\Z^d)$, due to Corollary \ref{large scale no upper restr} and inequality the $\widetilde{\kappa}(2^nG) \le \kappa(2^nG)$ it suffices to show that 
    \begin{equation} \label{eq:small-scale dyad dimfree}
        \Big\| \sup_{\substack{n \in \Z, \\ \widetilde{\kappa}(2^nG) \le 1}} \big| \mathcal{M}_{2^n}^Gf \big| \Big\|_{\ell^2(\Z^d) } \lesssim \|f \|_{\ell^2(\Z^d)}.
    \end{equation}
    Note that if $\widetilde{\kappa}(2^nG)< \frac{1}{d}, $ then $2^n G \cap \Z^d=\{ \overrightarrow{0}\}$, moreover the sequence $\big(\widetilde{\kappa}(2^nG)\big)_{n \in \Z}$ is lacunary, hence 
    \begin{align*}
        \Big\| \sup_{\substack{n \in \Z, \\ \widetilde{\kappa}(2^nG) \le 1}} \big|\mathcal{M}_{2^n}^Gf \big| \Big\|_{\ell^2(\Z^d) } &\lesssim \Big\| \sup_{\substack{n \in \Z, \\ \frac{1}{d} \le \widetilde{\kappa}(2^nG) \le 1}} \big|\mathcal{M}_{2^n}^Gf \big|\Big\|_{\ell^2(\Z^d) } + \|f \|_{\ell^2(\Z^d)} \\
    &\lesssim \Big\| \sup_{\substack{n \in \Z, \\ \frac{1}{d} \le \widetilde{\kappa}(2^nG) \le 10^{-3}}} \big|\mathcal{M}_{2^n}^Gf \big| \Big\|_{\ell^2(\Z^d) } + \|f \|_{\ell^2(\Z^d)}.
    \end{align*}

Now we write $f=f_1+f_2$, where
\[
\supp(\widehat{f_1}) \subseteq \{ \xi \in \T^d : \|\xi\| \leq \|\xi +1/2\|) \},
\quad
\supp(\widehat{f_2}) \subseteq \{ \xi \in \T^d : \|\xi\| > \|\xi +1/2\| \}.
\]
Notice that in order to prove \eqref{eq:small-scale dyad dimfree} it suffices to show that for $i \in \{1,2\}$ we have 
\begin{equation} \label{eq: dyadic dim-free for fi}
    \bigg\| \sup_{\substack{n \in \Z, \\ \frac{1}{d} \le \widetilde{\kappa}(2^nG) \le 10^{-3}}} \big|\mathcal{M}_{2^n}^Gf_i \big| \bigg\|_{\ell^2(\Z^d) } \lesssim \|f_i \|_{\ell^2(\Z^d)}.
\end{equation}
We shall prove \eqref{eq: dyadic dim-free for fi} only for $i=1$, proof for $i=2$ is exactly the same but with $f_1$ and $\lambda_G^1$ replaced by $f_2$ and $\lambda_G^2$ respectively and $\|\xi\| $ replaced by $\| \xi+1/2\|$.
By triangle inequality and Theorem \ref{thm:4.2} we have
\begin{align*}
    &\bigg\| \sup_{\substack{n \in \Z, \\ \frac{1}{d} \le \widetilde{\kappa}(2^nG) \le 10^{-3}}} \big|\mathcal{M}_{2^n}^Gf_1 \big| \bigg\|_{\ell^2(\Z^d) } \le
    \bigg\| \sup_{\substack{n \in \Z, \\ \frac{1}{d} \le \widetilde{\kappa}(2^nG) \le 10^{-3}}} \Big|\mathcal{F}^{-1}\Big(\big(\mathfrak{m}_{2^nG}-\lambda_{2^nG}^1\big)\widehat{f_1} \Big) \Big| \bigg\|_{\ell^2(\Z^d) } \\
    &+ \bigg\| \sup_{t>0} \big|\mathcal{F}^{-1}(\lambda_{tG}^1 \widehat{f_1})\big|\bigg\|_{\ell^2(\Z^d)} \lesssim 
    \bigg\| \sup_{\substack{n \in \Z, \\ \frac{1}{d} \le \widetilde{\kappa}(2^nG) \le 10^{-3}}} \Big|\mathcal{F}^{-1}\Big(\big(\mathfrak{m}_{2^nG}-\lambda_{2^nG}^1\big)\widehat{f_1} \Big) \Big| \bigg\|_{\ell^2(\Z^d) } + \|f_1 \|_{\ell^2(\Z^d)}.
\end{align*}
In order to bound the first term we use a standard square function argument and Parseval's theorem combined with Proposition \ref{prop:4.3} and assumption on $\supp(\widehat{f_i})$. This way we obtain
\begin{align*}
     &\bigg\| \sup_{\substack{n \in \Z, \\ \frac{1}{d} \le \widetilde{\kappa}(2^nG) \le 10^{-3}}} \Big|\mathcal{F}^{-1}\Big(\big(\mathfrak{m}_{2^nG}-\lambda_{2^nG}^1\big)\widehat{f_1} \Big) \Big| \bigg\|_{\ell^2(\Z^d) } \lesssim 
     \Bigg\| \bigg( \sum_{\substack{n \in \Z, \\ \frac{1}{d} \le \widetilde{\kappa}(2^nG) \le 10^{-3}}} \Big|\mathcal{F}^{-1}\Big(\big(\mathfrak{m}_{2^nG}-\lambda_{2^nG}^1\big)\widehat{f_1} \Big) \Big|^2 \bigg)^{1/2} \Bigg\|_{\ell^2(\Z^d) } \\
     &= \Bigg( \sum_{\substack{n \in \Z, \\ \frac{1}{d} \le \widetilde{\kappa}(2^nG) \le 10^{-3}}} \bigg\|   \mathcal{F}^{-1}\Big(\big(\mathfrak{m}_{2^nG}-\lambda_{2^nG}^1\big)\widehat{f_1} \Big) \bigg\|_{\ell^2(\Z^d) }^2 \Bigg)^{1/2}= \bigg( \int_{\T^d} |\widehat{f_1}|^2 \sum_{\substack{n \in \Z, \\ \frac{1}{d} \le \widetilde{\kappa}(2^nG) \le 10^{-3}}} |\mathfrak{m}_{2^nG}(\xi)- \lambda_{2^nG}^1(\xi)|^2 d\xi \bigg)^{1/2} \\
     &\lesssim  \bigg( \int_{\T^d} |\widehat{f_1}|^2 \bigg(\sum_{\substack{n \in \Z, \\ \frac{1}{d} \le \widetilde{\kappa}(2^nG) \le 10^{-3}}} \Big(\min\Big( \widetilde{\kappa}(2^nG) \| \xi \|^2, (\widetilde{\kappa}(2^nG) \| \xi \|^2)^{-1} \Big)^2+2^{-d \widetilde{\kappa}(2^nG)}
     \Big)\bigg)
     d\xi \bigg)^{1/2} \\
     &\lesssim \|\widehat{f_1} \|_{L^2(\T^d)}= \|f_1 \|_{\ell^2(\Z^d)}.
\end{align*}
In the last inequality we've used the fact that $\widetilde{\kappa}(2K) \ge 2 \widetilde{\kappa}(K)$ for every $K \in \mathcal{F}_d$. This finishes the proof of \eqref{eq: dyadic dim-free for fi} and hence the proof of Theorem \ref{dyadic dim-free}.
\end{proof}
\section{Full maximal function in small-scale regime}
In this section we will establish Theorem \ref{small scale dim-free}.
We begin with the following lemma, which allows us to discretize the supremum, so that we may employ a square function argument later.
\begin{Lem} \label{discretized supremum}
Fix $\varepsilon \in (0,1/2)$. For every $d \in \N$ and $G \in \mathcal{F}_d$ there exists a possibly empty nondecreasing sequence of parameters 
\[
0<t_0 \le t_1 \le \ldots \le t_d
\]
such that
\[d^{-1/2} \le \widetilde{\kappa}(t_0 G) \le \widetilde{\kappa}(t_d G) \le d^{-\varepsilon}
\]
and for every $f \in \ell^2(\Z^d)$ we have
\[
\Big\|\sup_{\substack{t>0, \\
 \widetilde{\kappa}(tG) \le d^{-\varepsilon}}} \big|\mathcal{M}_{t}^Gf\big| \Big\|_{\ell^2(\Z^d)} \lesssim \Big\|\sup_{0 \le i \le d} \mathcal{M}_{t_i}^G\big|f\big| \Big\|_{\ell^2(\Z^d)} + \|
 f\|_{\ell^2(\Z^d)},
\]
with the convention that the first term on the right-hand side equals $)$ if the sequence is empty.
\end{Lem}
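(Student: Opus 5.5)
The key observation is that $t\mapsto tG\cap\Z^d$ is a nondecreasing, piecewise constant family of finite sets. In the regime $\widetilde{\kappa}(tG)\le d^{-\varepsilon}$, every lattice point $x\in tG\cap\Z^d$ satisfies $|x|_1\le d\widetilde{\kappa}(tG)\le d^{1-\varepsilon}$. The set $tG\cap\Z^d$ changes only at those values of $t$ where a new lattice point enters. By 1-symmetry, a point $x$ enters at the same time as all of its sign changes and permutations. So $tG\cap\Z^d$ is a union of orbits of vectors $|x|$ sorted decreasingly, and entry times depend only on the orbit. I would therefore first split the range into two parts. The first part is $\widetilde{\kappa}(tG)<d^{-1/2}$, including the trivial part $tG\cap\Z^d=\{0\}$, where $\mathcal{M}_t^G$ is the identity. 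The second part is $d^{-1/2}\le\widetilde{\kappa}(tG)\le d^{-\varepsilon}$. Because $\widetilde{\kappa}(tG)$ is nondecreasing in $t$, each part corresponds to an interval of $t$'s.

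For the range $\widetilde{\kappa}(tG)<d^{-1/2}$, I would compare with dyadic scales. Given such $t$, choose $n$ with $2^{n-1}<t\le 2^n$ (or use a lacunary subsequence of $\widetilde{\kappa}$ values). Then $\mathcal{M}_t^G|f|\le \frac{|2^nG\cap\Z^d|}{|tG\cap\Z^d|}\mathcal{M}_{2^n}^G|f|$. This is useful only if the counting ratio is $O(1)$, which is not clear at small scales. Instead I would use \eqref{disc_incl} and the estimate $|G\cap\Z^d|\approx 2^{d\widetilde{\kappa}}\binom{d}{d\widetilde{\kappa}}$ valid for $\widetilde{\kappa}\le d^{-1/2}$ (Remark after Lemma \ref{kappa tilde property}). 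This shows that the count is essentially determined by $k=d\widetilde{\kappa}(tG)$, and that $tG\cap\Z^d\subseteq kB^{1,(d)}\cap\Z^d$ with comparable cardinality. Hence $\mathcal{M}_t^G|f|\lesssim \mathcal{M}_k^{B^{1,(d)}}|f|$ pointwise. The known dimension-free bound for $\sup_{k\le d^{1/2}}\mathcal{M}_k^{B^{1,(d)}}$ from \cite{KNW} (the $t\le d^{1-\varepsilon}$ result) then yields the $\|f\|_{\ell^2}$ term. This would be my first attempt at that range.

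For the remaining range, the values $k=d\widetilde{\kappa}(tG)$ are integers in $[d^{1/2},d^{1-\varepsilon}]$, so there are at most $d$ of them. I would split this range according to the value of $k$. Within one level set $\{t:\ d\widetilde{\kappa}(tG)=k\}$, which is an interval, the sets $tG\cap\Z^d$ increase, but all of them lie between $kB^{1,(d)}\cap\{-1,0,1\}^d$ and $kB^{1,(d)}\cap\Z^d$ by \eqref{disc_incl}. Using Lemma \ref{BMSW a lot of 1} and Lemma \ref{big_coord small scales}, most of the mass of $tG\cap\Z^d$ lies in the $\{-1,0,1\}$-part with exactly about $k$ nonzero entries. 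So $|tG\cap\Z^d|$ varies by at most a constant factor across the level set. I would then take $t_k$ to be the right endpoint (the supremum, attained by closedness) of the level set. Positivity and the monotonicity of the sets then give $\mathcal{M}_t^G|f|\le \frac{|t_kG\cap\Z^d|}{|tG\cap\Z^d|}\mathcal{M}_{t_k}^G|f|\lesssim\mathcal{M}_{t_k}^G|f|$. Relabelling these endpoints as $t_0\le\dots\le t_d$, padding with repeats if needed, gives the claimed sequence with the required bounds on $\widetilde{\kappa}(t_0G)$ and $\widetilde{\kappa}(t_dG)$.

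I expect the main obstacle to be this uniform comparability $|t_kG\cap\Z^d|\lesssim|tG\cap\Z^d|$ within a level set. Counting $|tG\cap\Z^d|$ from below is easy via $2^k\binom dk$. From above, however, one needs $|t_kG\cap\Z^d|\lesssim 2^k\binom dk$ when $k$ is as large as $d^{1-\varepsilon}$, well beyond the range $k\le d^{1/2}$ covered by \cite[Corollary 1.8]{KNW}. To handle this I would bound points with at least $m$ entries of size $\ge2$ using the proof of Lemma \ref{BMSW a lot of 1}: the count of such points is controlled by a factor like $(k\widetilde{\kappa}/m)^{m/2}C^m$ relative to $2^k\binom dk$. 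This is not summable when $k\widetilde{\kappa}=k^2/d$ is large, so it does not close the argument directly. The fallback is to refine the level sets further, for example by the number of entries of size $\ge2$, which still gives only polynomially many pieces (at most $d$ after a coarse grouping). I would aim to show that the count varies by a bounded factor on each refined piece.
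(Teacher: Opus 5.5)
Your treatment of the range $\widetilde{\kappa}(tG)<d^{-1/2}$ is the same as the paper's: the sandwich \eqref{disc_incl}, \cite[Corollary 1.8]{KNW}, and then the bound for $\sup_{k\le d^{1/2}}\mathcal{M}_k^{B^{1,(d)}}$ from \cite{KNW}.

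\textbf{The gap is in the range $d^{-1/2}\le\widetilde{\kappa}(tG)\le d^{-\varepsilon}$.} You group parameters by $k=d\widetilde{\kappa}(tG)$ and keep one parameter per level. This cannot work, because the claim that $|tG\cap\Z^d|$ varies by a bounded factor across a level set of $\widetilde{\kappa}$ is false, not merely hard to prove. In particular, $|t_kG\cap\Z^d|\lesssim 2^k\binom dk$ fails for $k\gg\sqrt d$. The heuristic behind it (most lattice points lie in $\{-1,0,1\}^d$) already fails for $kB^{1,(d)}$ once $k\gg\sqrt d$, where that proportion is exponentially small in $k^2/d$.

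A concrete counterexample: take $G=B^{q,(d)}$ with $q>1$ so close to $1$ that $|x|_q^q\le |x|_1+\tfrac12$ for all $x\in\Z^d$ with $|x|_1\le d$.
\begin{itemize}
\item Then $\widetilde{\kappa}(tG)=\lfloor t^q\rfloor/d$, so $\{t^q\in[k,k+1)\}$ is a single level set.
\item At $t^q=k$, any lattice point with a coordinate of size $\ge 2$ satisfies $|x|_1<|x|_q^q\le k$. Hence $|tG\cap\Z^d|\le \sum_{m\le k}2^m\binom dm+|(k-1)B^{1,(d)}\cap\Z^d|$.
\item At $t^q=k+\tfrac12$, one has exactly $tG\cap\Z^d=kB^{1,(d)}\cap\Z^d$.
\item Comparing the Delannoy sums $\sum_j 2^j\binom dj\binom kj$ termwise gives $|kB^{1,(d)}\cap\Z^d|\ge \frac dk\,|(k-1)B^{1,(d)}\cap\Z^d|$ and $|kB^{1,(d)}\cap\Z^d|\ge 2^{\lfloor k^2/(8d)\rfloor}\,2^k\binom dk$.
\end{itemize}
So inside one level set the count grows by a factor $\gtrsim\min\{d/k,\,2^{\lfloor k^2/(8d)\rfloor}\}\approx d^{\varepsilon}$ for $k\approx d^{1-\varepsilon}$. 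A minor point as well: a level set is a half-open interval $[a,b)$, so its supremum is not attained. You would instead take the largest of the finitely many sets $tG\cap\Z^d$ occurring on it.

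\textbf{Your fallback does not repair this.} ``The number of entries of size $\ge 2$'' is a statistic of lattice points, not of $t$, so it does not define a partition of the parameter range. You also give no bound on the number of pieces, which must stay $\le d+1$ for the statement and its later use. Nor do you bound the variation of the count on each piece.

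\textbf{The missing idea is to pigeonhole on the cardinality itself rather than on $\widetilde{\kappa}$.} The paper sets
\[
I_i=\{t:\ d^{-1/2}\le\widetilde{\kappa}(tG)\le d^{-\varepsilon},\ 12^i\le |tG\cap\Z^d|<12^{i+1}\}.
\]
The crude bound of Lemma~\ref{Cor 2.2 from my lq paper} gives $|tG\cap\Z^d|\le 2\cdot 12^d$ when $\widetilde{\kappa}(tG)\le 1$, so only $i\le d$ occur. On each nonempty $I_i$, pick $t_i$ maximizing $|tG\cap\Z^d|$; this is possible since only finitely many values occur. The sets $sG\cap\Z^d$ are nested in $s$, so $|tG\cap\Z^d|\le|t_iG\cap\Z^d|$ forces $tG\cap\Z^d\subseteq t_iG\cap\Z^d$. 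Pointwise this gives $|\mathcal{M}_t^Gf|\le \frac{|t_iG\cap\Z^d|}{|tG\cap\Z^d|}\mathcal{M}_{t_i}^G|f|\le 12\,\mathcal{M}_{t_i}^G|f|$. Comparability on each piece is then built into the construction, and no lattice-point counting beyond the crude upper bound is needed.
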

\begin{proof}
    By triangle inequality we have 
    \begin{equation} \label{eq: k(tG) tr ineq}
        \Big\|\sup_{\substack{t>0, \\
 \widetilde{\kappa}(tG) \le d^{-\varepsilon}}} |\mathcal{M}_{t}^Gf| \Big\|_{\ell^2(\Z^d)} \le \Big\|\sup_{\substack{t>0, \\
 \widetilde{\kappa}(tG) \le d^{-1/2}}} |\mathcal{M}_{t}^Gf| \Big\|_{\ell^2(\Z^d)}+\Big\|\sup_{\substack{t>0, \\
 d^{-1/2} \le\widetilde{\kappa}(tG) \le d^{-\varepsilon}}} |\mathcal{M}_{t}^Gf| \Big\|_{\ell^2(\Z^d)}
    \end{equation}
we will treat two terms of the above separately.
Note that for any $t>0$ by \eqref{disc_incl} we have
\[
d \widetilde{\kappa}(tG) B^{1,(d)} \cap \{-1,0,1 \}^d \subseteq tG \cap \Z^d \subseteq d \widetilde{\kappa}(tG) B^{1,(d)} \cap \Z^d,
\]
In \cite[Corollary 1.8]{KNW} it was shown that if $n \le d^{1/2},$ then 
\[
|n B^{1,(d)} \cap \Z^d| \approx \Big|n B^{1,(d)} \cap \{-1,0,1 \}^d \Big|.
\]
This way for any $f:\Z^d \to \C$ we obtain 
\[
\sup_{\substack{t>0, \\
 \widetilde{\kappa}(tG) \le d^{-1/2}}} |\mathcal{M}_{t}^Gf| \lesssim \sup_{\substack{n \le d^{1/2}}} \mathcal{M}_{n}^{B^{1,(d)}}|f|.
\]
Moreover by \cite[Theorem 1.15]{KNW} we see that 
\[
\Big\|\sup_{\substack{t>0, \\
 \widetilde{\kappa}(tG) \le d^{-1/2}}} |\mathcal{M}_{t}^Gf| \Big\|_{\ell^2(\Z^d)}\lesssim \Big\|\sup_{\substack{n \le d^{1/2}}} \mathcal{M}_{n}^{B^{1,(d)}}|f| \Big\|_{\ell^2(\Z^d)} \lesssim \|f \|_{\ell^2(\Z^d)}.
\]
Now we will deal with the second term of the right-hand side of \eqref{eq: k(tG) tr ineq}. For $i \in \N_0$ we define
\[
I_i= \{ t \in \R_{>0}: d^{-1/2} \le \widetilde{\kappa}(tG) \le d^{-\varepsilon}, 12^i \le |tG \cap \Z^d| < 12^{i+1} \Big\}.   
\]
If for every $i \in \N_0$ the set $I_i$ is empty, then there is nothing to prove, so we assume that is not the case. Let 
\[
i_0= \min\{ i \in \N_0 : I_i \neq \emptyset \},
\]
\[
m= \max\{i \in \N_0: I_i \neq \emptyset\}.
\]
Note that for any $t>0$ with $\widetilde{\kappa}(tG) \le d^{-\varepsilon} \le 1$ by Lemma \ref{Cor 2.2 from my lq paper} we have 
\begin{equation} \label{eq: 12d bound}
    |tG \cap \Z^d | \le | d \widetilde{\kappa}(tG)B^{1,(d)} \cap \Z^d | \le 2\Big( \widetilde{\kappa}(tG)+ \frac{1}{2} \Big)^d \cdot 8^d \le 2 \cdot 12^d.
\end{equation}
This way we see that $m \le d$. Define $t_{i_0} \in I_{i_0}$ to be any number satisfying
\[
|t_{i_0} G \cap \Z^d|= \max_{t \in I_{i_0}} |t G \cap \Z^d|.
\]
Next for any $j \in (i_0,m] \cap \N$ we define $t_j$ in the following way: 
\begin{itemize}
    \item If $I_{j}= \emptyset$ let $t_j=t_{j-1}$.
    \item If $I_{j} \neq \emptyset$ let $t_j \in I_j$ be any number satisfying
    \[
    |t_{j} G \cap \Z^d|= \max_{t \in I_j} |t G \cap \Z^d|.
    \]
\end{itemize}
It is not difficult to see that $t_j \ge t_{j-1}$.
We claim that for any $f: \Z^d \to \C$ we have
\begin{equation} \label{eq: discretization of sup}
    \sup_{\substack{t>0, \\
d^{-1/2} \le \widetilde{\kappa}(tG) \le d^{-\varepsilon}}} |\mathcal{M}_{t}^Gf| \le 12 \sup_{i_0 \le i \le m} \mathcal{M}_{t_i}^G|f|.
\end{equation}
Take any $t>0$ with $d^{-1/2} \le \widetilde{\kappa}(tG) \le d^{-\varepsilon}$. Then for some $j \in [i_0,m]$ we have $t \in I_j$, in particular we have
\[
\frac{1}{12}|t_jG \cap \Z^d| \le |tG \cap \Z^d| \le |t_jG \cap \Z^d|,
\]
since the sets $sG \cap \Z^d$ are increasing, the second inequality implies that $tG \cap \Z^d \subseteq t_j G \cap \Z^d$. This way for any $f:\Z^d \to \C$ we obtain
\[
\big|\mathcal{M}_t^G f \big| \le 12 \mathcal{M}_{t_j}^G |f| \le  12 \sup_{i_0 \le i \le m} \mathcal{M}_{t_i}^G|f|.
\]
Since $t>0$ was arbitrary this shows that inequality \eqref{eq: discretization of sup} holds. Finally to get exactly the same conclusion as in Lemma \ref{discretized supremum} we can simply reindex sequence $(t_i)_{i=i_0}^m$ so that it starts from $0$ and extend it by constant sequence so that last index is $d$, using the fact that $m \le d$.  
\end{proof}
From now on we will simply follow the strategy from \cite[Section 4]{NW}. Let us recall results and notation from above cited work.  Fix $K \in \N$ and take $\overline{n}=(n_1,n_2,...,n_K) \in (\N_0)^K.$ Let $D_{\overline{n}}$ denote the set of lattice points in $\{-K,\ldots,K\}^d$ such that exactly $n_1$ coordinates are equal to $\pm 1,$ $n_2$ coordinates are equal to $\pm 2,$ and so on; formally
\begin{equation}
\label{eq:def Dn}
D_{\overline{n}}= \bigcap_{j=1}^K\Big\{ x \in \{-K,...,K\}^d:  \#\{i \in [d]: |x_i|=j\}=n_j \Big\}.
\end{equation}
Consider the operator
\[
\mathcal{D}_{\overline{n}}f(x)= \se{y \in D_{\overline{n}}}f(x-y)
\]
and denote by
\begin{equation}
\label{eq:bon}
  \beta_{\overline{n}}(\xi)=\se{x \in D_{\overline{n}}} e(x \cdot \xi),\qquad \xi\in\T^d
\end{equation}
its corresponding multiplier symbol. Moreover we give generalization of $\beta_{\overline{n}}$, which is also of importance in what is about to follow. For a finite set $J \subseteq \N$, $\overline{n}=(n_1,n_2,...,n_K) \in \N_0^K$ satisfying \\ $n_1+...+n_K \le |J| $ we define
    \begin{equation} \label{generalised beta defn}
    \beta_{\overline{n}}^J(\xi)=\se{\substack{I_1,I_2,...,I_K \subseteq J, \\
    (\forall j \in [K]) |I_j|=n_j, \\
    (\forall i,j \in [K], i \neq j) I_i \cap I_j=\emptyset
    }} \prod_{k=1}^K \prod_{i \in I_k} \cos(2k \pi \xi_i),
    \end{equation}
    where $\se{}$ denotes normalized sum. Let us recall a few results from \cite{NW}.
    \begin{Lem}[{\cite[Lemma 2.2]{NW}}]
    \label{lem:2.2 from NW}
    For any $K \in \N$, $\overline{n}=(n_1,n_2,...,n_K) \in \N_0^K$ satisfying \\ $n_1+...+n_K \le d $ and any $\xi \in \T^d$
    we have
    \[
    \beta_{\overline{n}}(\xi)=\beta_{\overline{n}}^{[d]}(\xi)=\se{\substack{I_1,I_2,...,I_K \subseteq [d], \\
    (\forall j \in [K]) |I_j|=n_j, \\
    (\forall i,j \in [K], i \neq j) I_i \cap I_j=\emptyset
    }} \prod_{k=1}^K \prod_{i \in I_k} \cos(2k \pi \xi_i).
    \]
\end{Lem}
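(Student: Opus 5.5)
We need to prove Lemma 2.2 from NW: $\beta_{\overline{n}}(\xi)$ equals the normalized sum over disjoint index sets $I_1,\dots,I_K$ of sizes $n_j$ of $\prod_k\prod_{i\in I_k}\cos(2k\pi\xi_i)$.

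The plan is to parametrize $D_{\overline{n}}$ by the positions of coordinates of each absolute value together with the signs, and then average over the signs first. Every $x\in D_{\overline{n}}$ is determined uniquely by a tuple of pairwise disjoint sets $(I_1,\dots,I_K)$, where $I_k=\{i\in[d]: |x_i|=k\}$ and $|I_k|=n_k$, together with a sign vector $\epsilon\in\{-1,1\}^{I_1\cup\dots\cup I_K}$. Here $x_i=k\epsilon_i$ for $i\in I_k$, and $x_i=0$ off the union. This gives a bijection between $D_{\overline{n}}$ and the set of pairs $((I_k)_k,\epsilon)$. The number of sign choices is always $2^{n_1+\dots+n_K}$, the same for every tuple of sets. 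Hence the uniform measure on $D_{\overline{n}}$ is the product of the uniform measure on admissible set tuples with the uniform measure on signs. Admissible tuples exist precisely because $n_1+\dots+n_K\le d$.

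Next, for a fixed tuple $(I_k)$ we compute the sign average of $e(x\cdot\xi)=\prod_k\prod_{i\in I_k}e(k\epsilon_i\xi_i)$. The signs are independent, so the average factorizes as
\[
\prod_{k=1}^K\prod_{i\in I_k}\tfrac12\big(e(k\xi_i)+e(-k\xi_i)\big)=\prod_{k=1}^K\prod_{i\in I_k}\cos(2k\pi\xi_i).
\]
Averaging this over the admissible tuples gives exactly the right-hand side, which is by definition $\beta^{[d]}_{\overline{n}}(\xi)$.

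There is no real obstacle. The only point needing care is checking that the fibres of the map $x\mapsto(I_1,\dots,I_K)$ all have equal size $2^{\sum n_j}$, so that the normalized averages match.
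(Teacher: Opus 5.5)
Your proof is correct: the bijection $x\leftrightarrow\big((I_k)_{k\in[K]},\epsilon\big)$ has constant fibre size $2^{n_1+\dots+n_K}$, and the sign average $\tfrac12\big(e(k\xi_i)+e(-k\xi_i)\big)=\cos(2k\pi\xi_i)$ then gives the claimed identity, which is the standard argument for this result. The paper gives no proof of its own and simply quotes \cite[Lemma 2.2]{NW}, but it carries out the same sign-averaging computation in \eqref{eq:syebd}.
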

\begin{Lem}[{\cite[Lemma 2.8]{NW}}] \label{lem:2.8 from NW}
For any $K \in \N$, finite set  $J \subseteq \N$, $K$-tuple  $\overline{n} \in \N_0^K$ satisfying \\ $ \sum_{i=1}^K n_i + \max_{1 \leq i \leq K} n_i \leq |J|$ and any $\xi \in \T^d$ we have
    \[
    |\beta^J_{\overline{n}}(\xi)| \leq 6 \prod_{j=1}^K \exp\Big(- \frac{cn_j}{80 K|J|} \min\Big( \sum_{i \in J} \sin^2(j \pi \xi_i), \sum_{i \in J} \cos^2(j \pi \xi_i)\Big) \Big),
    \]
    where $c>0$ is a universal constant.
\end{Lem}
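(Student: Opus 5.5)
The plan is to reduce $\beta^J_{\overline{n}}$ to one-level multipliers $\beta^{J'}_{n}$ and then apply Proposition \ref{prop:3.4}, one index $j\in[K]$ at a time. The exponential factors coming from the different $j$ are combined by a geometric mean; this is where the factor $1/K$ in the exponent comes from.

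\textbf{Step 1 (conditioning on all but one index).} Fix $j\in[K]$. The uniform distribution over disjoint tuples $(I_1,\ldots,I_K)$ can be generated in two stages. First choose $(I_k)_{k\neq j}$. Then, conditionally, $I_j$ is a uniform $n_j$-subset of
\[
J'=J\setminus\bigcup_{k\neq j}I_k, \qquad |J'|=m_j:=|J|-\sum_{k\neq j}n_k .
\]
The average of $\prod_{i\in I_j}\cos(2j\pi\xi_i)$ over such $I_j$ is exactly $\beta^{J'}_{n_j}(j\xi)$. This follows from the sign symmetry of $D^{J'}_{n_j}$, which turns $e(x\cdot j\xi)$ into a product of cosines. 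The remaining factors have modulus at most $1$, so
\[
|\beta^J_{\overline n}(\xi)|\le \mathbb{E}_{(I_k)_{k\ne j}}\big|\beta^{J'}_{n_j}(j\xi)\big| .
\]
The hypothesis $\sum_i n_i+\max_i n_i\le |J|$ gives $m_j\ge n_j+\max_i n_i\ge 2n_j$. Hence Proposition \ref{prop:3.4} applies and bounds the integrand by
\[
2\exp\Big(-\tfrac{cn_j}{2m_j}\min\big(S_j(J'),C_j(J')\big)\Big),
\]
where $S_j(A)=\sum_{i\in A}\sin^2(j\pi\xi_i)$ and $C_j(A)=\sum_{i\in A}\cos^2(j\pi\xi_i)$. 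We then use $e^{-\min(a,b)}\le e^{-a}+e^{-b}$ to treat the two sums separately.

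\textbf{Step 2 (from $J'$ back to $J$).} By permutation symmetry, $J'$ is a uniformly random subset of $J$ of size $m_j$. We apply Lemma \ref{lem:2.6} (in the form from \cite[Lemma 6.7]{MiSzWr}, without monotonicity), after relabelling $J$ as an interval, with:
\begin{itemize}
\item $I$ a fixed set of size $m_j$, and $\delta_1=m_j/|J|$;
\item $u_i=\frac{cn_j}{2m_j}\sin^2(j\pi\xi_i)\le 1/4$, so that $\delta_0=1/2$ is admissible.
\end{itemize}
This yields
\[
\mathbb{E}\exp\Big(-\tfrac{cn_j}{2m_j}S_j(J')\Big)\le 3\exp\Big(-\tfrac{\delta_1}{40}\tfrac{cn_j}{2m_j}S_j(J)\Big)=3\exp\Big(-\tfrac{cn_j}{80|J|}S_j(J)\Big).
\]
The key cancellation is that $\delta_1/m_j=1/|J|$, so the size of $J'$ drops out. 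The cosine sum is handled identically. Together with Step 1, this gives for every $j$
\[
|\beta^J_{\overline n}(\xi)|\lesssim \exp\Big(-\tfrac{cn_j}{80|J|}\min\big(S_j(J),C_j(J)\big)\Big).
\]
Here $\min$ is restored from the sum of two exponentials at the cost of an absolute constant.

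\textbf{Step 3 (combining the indices).} Since the left-hand side does not depend on $j$, we bound it by the geometric mean of the $K$ bounds from Step 2:
\[
|\beta^J_{\overline n}|\le \prod_{j=1}^K\big(\text{bound}_j\big)^{1/K}.
\]
This produces exactly the exponent $\frac{cn_j}{80K|J|}\min(S_j(J),C_j(J))$ for each $j$, and the prefactor stays an absolute constant, which can be arranged to be $6=2\cdot 3$.

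I expect Step 2 to be the main obstacle. One must verify that the randomness of $J'$ is genuinely uniform, so that Lemma \ref{lem:2.6} applies with $\delta_1=m_j/|J|$, which may be small. One must also check that the constants in that lemma really let the factor $n_j/m_j$ convert into $n_j/|J|$ uniformly. Bookkeeping the constants $2,3$ and the splitting of the $\min$ into the stated $6$ and $80$ is routine but fiddly.
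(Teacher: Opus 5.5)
Your argument is correct. The paper does not prove this lemma; it quotes it from \cite{NW}. Your route is: condition on $(I_k)_{k\neq j}$, apply Proposition~\ref{prop:3.4} to $\beta^{J'}_{n_j}(j\xi)$ using $m_j\ge 2n_j$, average over the uniformly random $J'$ via Lemma~\ref{lem:2.6} with $\delta_0=1/2$ and $\delta_1=m_j/|J|$ (so that $\delta_1/m_j=1/|J|$), and take the geometric mean over $j$. This reproduces exactly the $80=2\cdot 40$ and the $1/K$ in the exponent, so it is evidently the intended proof.

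Two small corrections to your bookkeeping.

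First, since $d_0=0$, the quantity in Lemma~\ref{lem:2.6} is invariant under relabelling the $u_i$. You therefore do not need the monotonicity-free version of the lemma.

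Second, your chain actually yields the prefactor $2\cdot(3+3)=12$, not $6=2\cdot 3$. Splitting the minimum via $e^{-\min(a,b)}\le e^{-a}+e^{-b}$ costs a factor $2$. This is harmless: $|\beta^J_{\overline n}|\le 1$, and $\min(1,12e^{-x})\le 6e^{-x/2}$ for all $x\ge 0$. You therefore obtain the stated bound with $c$ replaced by $c/2$, which is allowed since $c$ is only asserted to be some universal constant.
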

\begin{Thm}[{\cite[Theorem 1.5]{NW}}] \label{thm:1.5 from NW} 
For all $K \in \N$ there is a constant $C_K>0$ depending only on $K$ and such that for all dimension $d\in\N$ and all exponents $p\in[2,\infty]$ we have
\[
 \Big\| \sup_{ n_1,n_2,...,n_K \leq \frac{d}{2K}} |\mathcal{D}_{n_1,...,n_K}f| \Big\|_{\ell^p(\Z^d)} \leq C_K \big\| f \big\|_{\ell^p(\Z^d)}, \qquad f\in\ell^p(\Z^d),
\]
where the supremum above is taken only over non-negative integers.
\end{Thm}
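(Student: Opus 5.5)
By interpolation with the trivial bound at $p=\infty$ (each $\mathcal{D}_{\overline{n}}$ is an average), it suffices to treat $p=2$, where everything reduces to estimates for the multipliers $\beta_{\overline{n}}$ via Plancherel. I would first split $f=f_1+f_2$ according to whether $\|\xi\|\le\|\xi+1/2\|$, exactly as in the proof of Theorem \ref{dyadic dim-free}. The reason is that for odd $j$ one has $\cos(2j\pi(\xi_i+1/2))=\cos(2j\pi\xi_i)$ up to the sign $(-1)$ coming from $|x_i|=j$, while even $j$ are unaffected. So the $f_2$ piece is handled by translating the frequency by $1/2$ and multiplying by a harmless global sign. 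I then concentrate on $f_1$.

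\textbf{Dyadic (multi-)scales.} For $\overline{n}$ I would introduce the approximant
\[
\lambda_{\overline{n}}(\xi)=\exp\Big(-\sum_{j=1}^K \frac{n_j}{d}\sum_{i=1}^d \sin^2(j\pi\xi_i)\Big).
\]
This is a product of $K$ commuting symmetric diffusion semigroups, one for each $j$. Iterating the maximal theorem for each semigroup (cf.\ Theorem \ref{thm:4.2}) gives $\|\sup_{\overline{n}}|\mathcal{F}^{-1}(\lambda_{\overline{n}}\widehat f)|\|_{\ell^2}\lesssim_K\|f\|_{\ell^2}$.

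Two multiplier bounds then compare $\beta_{\overline{n}}$ with $\lambda_{\overline{n}}$:
\begin{itemize}
\item Lemma \ref{lem:2.8 from NW} (with $J=[d]$, allowed since $n_j\le d/(2K)$) gives exponential decay of $\beta_{\overline{n}}$ in each $\frac{n_j}{d}\sum_i\sin^2(j\pi\xi_i)$ on the support of $\widehat{f_1}$. This needs the elementary comparison $\sum_i \sin^2(j\pi\xi_i)\gtrsim$ a suitable quantity under $\|\xi\|\le\|\xi+1/2\|$, to be checked for each $j$.
\item The product formula of Lemma \ref{lem:2.2 from NW} together with $|\prod a_i-\prod b_i|\le\sum|a_i-b_i|$ gives $|\beta_{\overline{n}}-1|\lesssim_K\sum_j \frac{n_j}{d}\sum_i\sin^2(j\pi\xi_i)$.
\end{itemize}
Together these control $|\beta_{\overline{n}}-\lambda_{\overline{n}}|$ by $\sum_j\min(u_j,u_j^{-1})$ with $u_j=\frac{n_j}{d}\sum_i\sin^2(j\pi\xi_i)$. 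The square function over dyadic $\overline{n}$ (i.e.\ $n_j\in\{0\}\cup\{2^{k}\}$) is then summable, giving the dyadic maximal bound.

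\textbf{Short variations.} To pass from dyadic to all $\overline{n}$, I would change one parameter at a time: $n_1,\dots,n_K$ move successively inside their dyadic blocks $[2^{k_j},2^{k_j+1})$. For each single-parameter block I would use the Rademacher--Menshov inequality. It bounds the supremum over the block by $\sum_{l\le k}\big(\sum_{\text{dyadic subintervals } [a,b) \text{ of length }2^{l}}\|\mathcal{F}^{-1}((\beta_{b}-\beta_{a})\widehat f)\|_{\ell^2}^2\big)^{1/2}$.

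\textbf{Main obstacle: the difference estimate.} The step I expect to be hardest is a sharp bound for $|\beta_{\overline{n}+me_j}-\beta_{\overline{n}}|$. I would aim for
\[
|\beta_{\overline{n}+me_j}(\xi)-\beta_{\overline{n}}(\xi)|\lesssim_K \frac{m}{d}\sum_i\sin^2(j\pi\xi_i)\cdot\prod_{l}e^{-c_K u_l}.
\]
To get it, I would use Lemma \ref{lem:2.2 from NW} to write the difference as an average over disjoint sets $I_1,\dots,I_K$ in which $m$ extra indices are added to $I_j$. That yields the factor $\frac{m}{d}\sum_i(1-\cos(2j\pi\xi_i))$ times a generalized multiplier $\beta^{J}_{\overline{n}}$ on a slightly smaller index set $J$. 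Lemma \ref{lem:2.8 from NW}, combined with the averaging over $\Sym(d)$ from Lemma \ref{lem:2.6}, restores the exponential decay on the full $\sum_i$.

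Given this estimate, the contribution of level $l$ in the Rademacher--Menshov sum is $\lesssim 2^{(l-k)/2}\min(u_j,u_j^{-1})^{1/2}$. Summing over $l$ and then over the blocks $k$ converges geometrically, which gives the $\ell^2$ bound with a constant $C_K$.
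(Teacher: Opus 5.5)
You should know first that the paper does not prove this statement; it imports it from \cite{NW}. I have therefore judged your argument on its own terms. Your reduction to $p=2$, the treatment of $f_2$ by modulation, and the bound for $\sup_{\overline n}|\mathcal F^{-1}(\lambda_{\overline n}\widehat f)|$ obtained by iterating positive semigroup maximal operators are all fine. The argument nevertheless has two genuine gaps.

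\textbf{Higher resonances.} The split $\|\xi\|\le\|\xi+1/2\|$ only neutralizes the two resonances of the $j=1$ factor. Lemma \ref{lem:2.8 from NW} gives decay of the $j$-th factor only in $\min\big(\sum_i\sin^2(j\pi\xi_i),\sum_i\cos^2(j\pi\xi_i)\big)$. The comparison you postpone ("to be checked for each $j$") is false for $j\ge 2$.

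Take $K=3$ and $\xi=\tfrac16(1,\dots,1)$. This point lies in the $f_1$ region, since $\|\xi\|^2=d/4<3d/4=\|\xi+1/2\|^2$. Because all coordinates of $\xi$ are equal,
\[
\beta_{(1,1,n)}(\xi)=\cos(\pi/3)\cos(2\pi/3)\cos(\pi)^n=\tfrac{(-1)^{n+1}}{4},\qquad \lambda_{(1,1,n)}(\xi)=e^{-1/4-3/4-n}=e^{-1-n}.
\]
Hence $|\beta_{\overline n}(\xi)-\lambda_{\overline n}(\xi)|>1/10$ for every $1\le n\le d/6$, and the dyadic square function has norm $\gtrsim\sqrt{\log d}$.

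The same point refutes your difference estimate: $|\beta_{(1,1,n+1)}(\xi)-\beta_{(1,1,n)}(\xi)|=1/2$, whereas your right-hand side is $O(e^{-cn})$. For $K=2$ the same failure occurs with $n_1=0$ near $\tfrac14(1,\dots,1)$, and $n_1=0$ is allowed in the supremum. Each $j$ has its own pair of resonances, where $j\xi$ is near $0$ or near $(1/2,\dots,1/2)$ modulo $1$, and each pair must be handled separately (at least $2^K$ frequency pieces with modulated semigroups).

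\textbf{Multi-parameter summation.} This gap is more fundamental. Even granting $|\beta_{\overline n}-\lambda_{\overline n}|\lesssim\sum_j\min(u_j,u_j^{-1})$, the $j$-th term is square-summable only in $n_j$ and is constant in the other $K-1$ parameters. Its square summed over the dyadic grid is therefore of order $(\log d)^{K-1}$, not $O(1)$.

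This is not an artefact of the bound. Let $K=2$ and set $\xi_i=1/2$ for $i\in B$ and $\xi_i=0$ otherwise, with $|B|=d/4$. Then $\cos(4\pi\xi_i)=1$ for every $i$. The marginal law of $I_1$ under the uniform law of disjoint pairs $(I_1,I_2)$ is uniform, so $\beta_{(1,n_2)}(\xi)=1-2|B|/d=1/2$ for every $n_2$. On the other hand $u_2=0$ and $\lambda_{(1,n_2)}(\xi)=e^{-1/4}$. The error is thus $\approx 0.28$ for all $\approx\log_2 d$ dyadic values of $n_2$. Varying $|B|/d$ shows that no choice of constant in the exponent of $\lambda_{\overline n}$ repairs this. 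An $O(1)$ one-parameter discrepancy at small $n_1$ is simply replicated along the other axes.

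The same problem affects your short-variation step. Once $n_1$ has been moved to an arbitrary (non-dyadic) value, moving $n_2$ inside its block requires a supremum over all those values of $n_1$. A Rademacher--Menshov bound in $n_2$ alone does not control that supremum.

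What is missing is a genuinely multi-parameter mechanism, together with a treatment of all $2^K$ resonance families. Two possibilities are error terms that decay jointly in all parameters, or an induction on $K$ that uses the fact that $\beta_{\overline n}$ with some $n_j=0$ is a multiplier of the same type with fewer parameters, combined with iteration of positive maximal operators. Without such a mechanism the square-function argument loses powers of $\log d$.
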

We shall prove the following result.
\begin{Prop} \label{prop MG bounded by D}
    Consider any $K \in \N, \varepsilon>0$, $d \ge \max(800,2K)^{K+1}, G \in \mathcal{F}_d$ and $f \in \ell^2(\Z^d)$. Let $0<t_0 \le t_1 \le \ldots \le t_d$ be any sequence of numbers
    satisfying $d^{-1/2} \le \widetilde{\kappa}(t_0G) \le \widetilde{\kappa}(t_dG) \le d^{- \frac{1+\varepsilon}{K+1}}$.
    Then for every $f \in \ell^2(\Z^d)$ we have
   \[
   \Big\|\sup_{0 \le j \le d} |\mathcal{M}_{t_j}^Gf| \Big\|_{\ell^2(\Z^d)} \lesssim_{K,\varepsilon} \sum_{i=0}^1 \Big\| \sup_{n_1,n_2,...,n_K \leq \frac{d}{2K}} |\mathcal{D}_{n_1,...,n_K}f_i| \Big\|_{\ell^2(\Z)} + \|f\|_{\ell^2(\Z)},
   \]
   where $f=f_0+f_1$ and
\[
\supp(\widehat{f_0}) \subseteq \{\xi \in \T^d: \|\xi \| \leq \| \xi + 1/2 \| \},
\]
\[
\supp(\widehat{f_1}) \subseteq \{\xi \in \T^d: \|\xi \| > \| \xi + 1/2 \| \}.
\]
\end{Prop}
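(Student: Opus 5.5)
We will follow the strategy of \cite[Section 4]{NW}: decompose each average $\mathcal{M}_{t_j}^G$ into a convex combination of the operators $\mathcal{D}_{\overline{n}}$ plus a negligible error, and then compare multipliers along the discrete sequence $t_0\le\dots\le t_d$ with a square function.

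\textbf{Step 1 (reduction to bounded coordinates).} Put $G_j=t_jG$. Since $\widetilde{\kappa}(G_j)\le d^{-\frac{1+\varepsilon}{K+1}}$, Lemma \ref{fun fact for sup sym conv body} gives $a=a(K,\varepsilon)$ with
\[
\spr{x\in G_j\cap\Z^d}\Big(\sum_{|x_i|\ge K+1}|x_i|>a\Big)\le 4/d .
\]
Summing over the $d+1$ indices $j$ would give a constant, which is too crude. Instead I will split $G_j\cap\Z^d=A_j\cup B_j$, where $A_j$ consists of the points whose large coordinates have total mass at most $a$. Let $z(x)$ be the part of $x$ carried by coordinates of modulus $\ge K+1$. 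By 1-symmetry, for fixed $z$ the remaining coordinates range over a union of full level sets $D_{\overline{n}}$ restricted to $[d]\setminus\supp(z)$. Consequently $\mathcal{M}_{G_j}$ restricted to $A_j$ is an average over $z$ (with $|z|_1\le a$) of translates by $z$ composed with convex combinations of $\mathcal{D}^{J}_{\overline{n}}$-type averages. The points with large coordinates are controlled by \emph{the shift} $z$ with $|z|_1\le a$. I expect to handle these by averaging over $\Sym(d)$, as in Lemma \ref{lem:2.6}: the multiplier factor $\prod\cos(2\pi z_i\xi_i)$ together with $\beta^{[d]\setminus\supp z}_{\overline n}$ differs from $\beta_{\overline n}$ by $O_{a}(\min(1,\|\xi\|^2/d))$, thanks to Lemma \ref{lem:2.8 from NW}. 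The bad set $B_j$ has relative size $\le 4/d$, so its contribution to each multiplier is $\le 8/d$. Summed in square over $d+1$ indices, this gives $\big(\sum_j (8/d)^2\big)^{1/2}\lesssim d^{-1/2}$, which is harmless.

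\textbf{Step 2 (comparison with $\mathcal{D}$).} We then write $\mathfrak m_{G_j}(\xi)=\sum_{\overline n}w_j(\overline n)\beta_{\overline n}(\xi)+\mathcal{E}_j(\xi)$ with weights $w_j\ge0$ summing to at most $1$, so that the main part is pointwise dominated by $\sup_{\overline n}|\mathcal D_{\overline n}f|$. By Corollary \ref{small scales no big coord}, Lemma \ref{big_coord small scales} and Lemma \ref{BMSW a lot of 1}, the $w_j$ concentrate on $\overline n$ with $n_1\approx d\widetilde{\kappa}(G_j)\le d/(2K)$ and $n_k\ge 2$ tiny. The constraint $n_i\le d/(2K)$ holds because $d\ge \max(800,2K)^{K+1}$. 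The error multipliers $\mathcal{E}_j$ are then bounded by $\min(\widetilde\kappa(G_j)\|\xi\|^2,(\widetilde\kappa(G_j)\|\xi\|^2)^{-1})$ plus $O(1/d)$ on $\supp\widehat{f_0}$, and analogously with $\|\xi+1/2\|$ on $\supp\widehat{f_1}$. These bounds follow from Proposition \ref{prop:3.6} together with the identity $\beta_{\overline n}(\xi)=\beta_{\overline n}^{[d]}(\xi)$ (Lemma \ref{lem:2.2 from NW}) and Lemma \ref{lem:2.8 from NW}.

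\textbf{Step 3 (square function).} The sequence $\widetilde\kappa(G_j)$ is not lacunary, which is the main obstacle. The construction in Lemma \ref{discretized supremum} makes $|G_j\cap\Z^d|$ grow by a factor of about $12$ per step, and I will use this to show that $d\widetilde\kappa(G_j)$ increases by at least a fixed multiplicative factor every $O(1)$ steps. Near $d\widetilde\kappa\ge d^{1/2}$, the count $|G\cap\Z^d|\approx 2^n\binom dn$ grows by a factor of $\gtrsim d/n$ when $n$ increases by $1$, so the indices $j$ sharing a given dyadic value of $\widetilde\kappa$ number $O(1)$. If this fails, I would instead group the $j$'s into dyadic blocks of $\widetilde\kappa$ and bound the within-block oscillation by a single $\sup_{\overline n}|\mathcal D_{\overline n}f_i|$, because all $G_j$ in a block correspond to weights on nearby $\overline n$. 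With either route, $\sum_j|\mathcal E_j(\xi)|^2\lesssim 1$ uniformly, and Parseval gives the $\|f\|_{\ell^2}$ term.
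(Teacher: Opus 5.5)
\textbf{Gap.} Your architecture matches the paper's. You discard the $4/d$-fraction of points with $|z(x)|_1>a$, write the rest as a combination of $\beta_{\overline n}$'s with total mass at most $1$ (so the main parts for \emph{all} $j$ are dominated by a single $\sup_{\overline n}|\mathcal D_{\overline n}f_i|$), and put the errors into a square function. The error analysis in Steps 2--3, however, does not close.

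You bound $\mathcal E_j$ by $\min(\widetilde\kappa(G_j)\|\xi\|^2,(\widetilde\kappa(G_j)\|\xi\|^2)^{-1})+O(1/d)$. This is of order $1$ wherever $\widetilde\kappa(G_j)\|\xi\|^2\approx 1$, so you then need near-lacunarity of $\widetilde\kappa(t_jG)$, and that is false. Take $G=B^{1,(d)}$. The lattice count jumps by a factor $\approx 2d/n\gg 12$ each time $n=d\widetilde\kappa$ increases by one. Hence the distinct $t_j$ have $d\widetilde\kappa(t_jG)$ running through consecutive integers in $[\sqrt d,d^{1-\frac{1+\varepsilon}{K+1}}]$.

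Your inference from ``growth by $d/n$ per unit step'' to ``$O(1)$ indices per dyadic block'' is a non sequitur. It gives at most $O(1)$ indices per \emph{value} of $n$, hence about $n$ indices per dyadic block. At $\|\xi\|^2=d/n_*$ your bound then only yields $\sum_j|\mathcal E_j(\xi)|^2\lesssim n_*$, and $n_*$ can be as large as $\sqrt d$ or more. Grouping into blocks does not help either: the main parts are already dominated globally by one maximal function, and what must be square-summed is exactly $\mathcal E_j$.

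\textbf{Missing idea.} No lacunarity is needed. The error $\mathcal E_j$ is uniformly $O_{K,\varepsilon}(d^{-1/2})$, and Lemma \ref{discretized supremum} caps the number of scales at $d+1$. So the same $\ell^2$ computation you did for $B_j$ gives $\sum_{j=0}^d|\mathcal E_j|^2\lesssim (d+1)(d^{-1}+d^{-2})\lesssim 1$. To prove the uniform bound, proceed as follows.
\begin{itemize}
\item Also discard points with fewer than $n_t/2$ coordinates equal to $\pm1$. By Lemma \ref{BMSW a lot of 1} this costs $2^{-n_t/2}\le 2^{-\sqrt d/2}$.
\item Then every surviving $\beta^{[d]\setminus\supp z}_{\overline i}$ has $i_1\ge n_t/2$. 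Lemma \ref{lem:2.8 from NW} therefore supplies the factor $\exp(-cn_t\|\xi\|^2/(160Kd))$ on $\{\|\xi\|\le\|\xi+1/2\|\}$.
\item By sign symmetry, replacing $e(z\cdot\xi)$ by $1$ costs at most $2\sum_i\sin^2(\pi z_i\xi_i)$. After permutation averaging this is $\lesssim a^2\|\xi\|^2/d$.
\item Multiplied by the exponential factor, this is $\lesssim_{K,\varepsilon}1/n_t\le d^{-1/2}$, uniformly in $\xi$.
\end{itemize}
Your Step 1 bound $O_a(\min(1,\|\xi\|^2/d))$ drops precisely this exponential factor, and it is not small for $\|\xi\|^2\approx d$.

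After the replacement, permutation invariance of $Z(tG)$ makes the average over $\supp z$ an exact multiple $\binom{d-\sum_k i_k}{l}\binom{d}{l}^{-1}|D_{\overline i}|\beta_{\overline i}$, not an approximation. On $\supp\widehat{f_1}$ you replace $e(z\cdot\xi)$ by $(-1)^{\sum_i z_i}$ instead. The weights there are signed rather than nonnegative, with total absolute mass at most $1$.
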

Note that in order to prove Theorem \ref{small scale dim-free} it suffices to consider $p=2$ due to interpolation and establish
\[
\bigg\|\sup_{\substack{t>0, \\
 \widetilde{\kappa}(tG) \le d^{-\varepsilon}}} \big|\mathcal{M}_{t}^G f\big| \bigg\|_{\ell^2(\Z^d)} \lesssim \|f \|_{\ell^2(\Z^d)}
\]
only for non-negative functions.
With these remarks in mind Theorem \ref{small scale dim-free} follows from Proposition \ref{prop MG bounded by D} combined with Theorem \ref{thm:1.5 from NW} and Lemma \ref{discretized supremum}, since in the case $d \lesssim_K 1$ we can simply use trivial bound for the right-hand side of expression in Lemma \ref{discretized supremum}. 
\begin{proof}[Proof of Proposition \ref{prop MG bounded by D}] Fix $K \in \N,$  $\varepsilon>0$, and take $a$ from Lemma \ref{fun fact for sup sym conv body}. In the first part of the proof we take any $t>0$ such that $\widetilde{\kappa}(tG) \in [d^{-1/2}, d^{-\frac{1+\varepsilon}{K+1}}]$ and we denote $n_t=d \widetilde{\kappa}(tG) \in \N \cap [d^{1/2}, d^{1-\frac{1+\varepsilon}{K+1}}]$. We decompose every $x \in \Z^d$ into $x=y(x)+z(x)$, where
\[
z(x)_j= \begin{cases}
    x_j, \ \ \text{if $|x_j| \geq K+1$} \\
    0, \ \ \text{otherwise}
\end{cases}
\]
and denote $Z(tG)=\{z(x) \in \Z^d: x \in tG \cap \Z^d\}.$ Then we split
\[
\mathfrak{m}_{tG}(\xi)=\widetilde{\mathfrak{m}}_{tG}(\xi)+r_{tG}(\xi),
\]
where
\begin{align*}
\widetilde{\mathfrak{m}}_{tG}(\xi)&=\frac{1}{|tG \cap \Z^d|} \sum_{\substack{x \in tG \cap \Z^d, \\ |z(x)|_1 \le a, \\ |\{i: |x_i|=1\} | \geq n_t/2}} e(x \cdot \xi),
\\
r_{tG}(\xi)&=\frac{1}{|tG \cap \Z^d|} \sum_{\substack{x \in tG \cap \Z^d, \\ |z(x)|_1>a, \\ |\{i: |x_i|=1\} | \ge n_t/2}} e(x \cdot \xi) + \frac{1}{|tG \cap \Z^d|} \sum_{\substack{x \in tG \cap \Z^d \\ |\{i: |x_i|=1\} | < n_t/2}} e(x \cdot \xi).
\end{align*}

By Lemmas \ref{BMSW a lot of 1} and \ref{fun fact for sup sym conv body}, since $\widetilde{tG} \le d^{-\frac{1+\varepsilon}{K+1}}$ and $d \geq \max(800,2K)^{K+1}$, we have
\begin{equation}
\label{eq:rsnbound}
|r_{tG}(\xi)| \lesssim \frac{1}{d} + 2^{-n_t/2} \le \frac{1}{d}+2^{-\sqrt{d}/2} \lesssim \frac{1}{d} ,
\end{equation}
uniformly in $\xi \in \T^d$. 
For $z \in Z(tG)$ we define
\begin{align*}
&I(z)=\Big\{ (i_1, \ldots, i_K) \in \big([d]\cup\{0\}\big)^K: (\exists y \in \{-K,\ldots, K \}^d) \big( \supp(y) \cap \supp(z)= \emptyset \\
& (\forall j \in [K]) \ |\{i \in [d]: |y_i|=j \}|=i_j, \ y+z \in tG \ 
\Big\}.
\end{align*}
Note that since $G$ is 1-symmetric, if $(i_1,i_2,...,i_K) \in I(z)$ then for all $y \in \{-K,\ldots, K \}^d$ satisfying
\[
 \supp(y) \cap \supp(z)= \emptyset \quad  \text{and} \quad  (\forall j \in [K]) |\{i \in [d]: |y_i|=j \}|=i_j
\]
we have $y+z \in tG$.
Due to the above, to treat the term $\widetilde{\mathfrak{m}}_{tG}(\xi)$ we rewrite
\begin{align*}
&\widetilde{\mathfrak{m}}_{tG}(\xi)= \frac{1}{|tG \cap \Z^d|} \sum_{\substack{z \in Z(tG), \\ |z|_1 \leq a}} e(z \cdot \xi) \hspace{-0.5cm}\sum_{\substack{y \in \{-K,...,K\}^d, \\ 
\\ \supp(y) \cap \supp(z)= \emptyset, 
\\ |\{i: |y_i|=1 \}| \geq n_t/2,
\\ z+y \in tG
}} \hspace{-0.5cm}e(y \cdot \xi)
\\
&=\frac{1}{|tG \cap \Z^d|} \sum_{l=0}^a \sum_{\substack{z \in Z(tG),
\\ |\supp(z)|=l, \\ |z|_1 \le a
}} e(z \cdot \xi) \hspace{-0.5cm} \sum_{\substack{(i_1,...,i_K) \in I(z), \\
i_1 \geq n_t/2
}} 
\hspace{-0.4cm} \sum_{\substack{y \in \{-K,...,K\}^d, 
\\ \supp(y) \cap \supp(z)= \emptyset, 
\\
(\forall j \in [K]) |\{i \in [d]: |y_i|=j\}|=i_j
}} \hspace{-0.5cm} e(y \cdot \xi).
\end{align*}
Now we define two new multipliers
\begin{align*}
\phi_{tG,0}(\xi)&=\frac{1}{|tG \cap \Z^d|} \sum_{l=0}^a \sum_{\substack{z \in Z(tG),
\\ |\supp(z)|=l, \\ |z|_1 \le a
}}  \sum_{\substack{(i_1,...,i_K) \in I(z), \\
i_1 \geq n_t/2
}} 
\hspace{-0.4cm}\sum_{\substack{y \in \{-K,...,K\}^d, 
\\ \supp(y) \cap \supp(z)= \emptyset, 
\\
(\forall j \in [K]) |\{i \in [d]: |y_i|=j\}|=i_j
}} \hspace{-1cm}e(y \cdot \xi),
\\
\phi_{tG,1}(\xi)&=\frac{1}{|tG \cap \Z^d|} \sum_{l=0}^a \sum_{\substack{z \in Z(tG),
\\ |\supp(z)|=l, \\ |z|_1 \le a
}} (-1)^{\sum_{i=1}^d z_i} \hspace{-0.5cm}\sum_{\substack{(i_1,...,i_K) \in I(z), \\
i_1 \geq n_t/2
}} 
\hspace{-0.4cm}\sum_{\substack{y \in \{-K,...,K\}^d, 
\\ \supp(y) \cap \supp(z)= \emptyset, 
\\
(\forall j \in [K]) |\{i \in [d]: |y_i|=j\}|=i_j
}} \hspace{-1cm} e(y \cdot \xi).
\end{align*}
We claim that two crucial inequalities hold:
\begin{enumerate} \item If $\| \xi \| \leq \| \xi+ 1/2 \|$ then
\begin{equation} \label{eq:4.1}
|\widetilde{\mathfrak{m}}_{tG}(\xi)-\phi_{tG,0}(\xi)| \lesssim_{K,\varepsilon} \frac{1}{n_t}\lesssim \frac{1}{\sqrt{d}}.
\end{equation}
\item  If $\| \xi \| \geq \| \xi+ 1/2 \|$ then
\begin{equation} \label{eq:4.2}
|\widetilde{\mathfrak{m}}_{tG}(\xi)-\phi_{tG,1}(\xi)| \lesssim_{K,\varepsilon} \frac{1}{n_t} \lesssim \frac{1}{\sqrt{d}}.
\end{equation}
\end{enumerate}

We start with the proof of  \eqref{eq:4.1} and assume that $\|\xi\| \leq \| \xi+1/2 \|$. First, notice that for any $z \in Z(tG)$ and $(i_1,...,i_K) \in I(z)$ we have
\begin{equation}
\label{eq:syebd}
\begin{split}
&\sum_{\substack{y \in \{-K,...,K\}^d,  
\\ \supp(y) \cap \supp(z)= \emptyset, 
\\
(\forall j \in [K]) |\{i \in [d]: |y_i|=j\}|=i_j
}}\hspace{-1cm} e(y \cdot \xi)= 2^{\sum_{j=1}^K i_j} \hspace{-1cm}\sum_{\substack{I_1,...,I_K \subseteq [d] \setminus \supp(z), \\
 (\forall j \in [K]) |I_j|=i_j, \\
    (\forall i,j \in [K], i \neq j) I_i \cap I_j=\emptyset}} \prod_{k=1}^K \prod_{i \in I_k} \cos(2k \pi \xi_i)   
\\
&=  2^{\sum_{j=1}^K i_j} \frac{(d-|\supp(z)|)!}{\prod_{j \in [K]}i_j! \cdot (d-|\supp(z)|-\sum_{j \in [K]}i_j)!} \beta_{\overline{i}}^{[d] \setminus \supp(z)} (\xi);
\end{split}
\end{equation}
recall Definition \ref{generalised beta defn} for the meaning of $\beta_{\overline{i}}^{[d] \setminus \supp(z)}.$ 
Let 
\[
F(l,\overline{i})=  2^{\sum_{j=1}^K i_j} \frac{(d-l)!}{\prod_{j \in [K]}i_j! (d-l-\sum_{j \in [K]}i_j)!}.
\]
Exploiting invariance of $Z(tG)$ under sign changes of coordinates we get
\begin{align*}
&\widetilde{\mathfrak{m}}_{tG}(\xi)-\phi_{tG,0}(\xi)\\
&= \frac{1}{|tG \cap \Z^d|} \sum_{l=0}^a \sum_{\substack{z \in Z(tG),
\\ |\supp(z)|=l, \\ |z|_1 \le a
}} \Big( \prod_{j=1}^d \cos(2\pi z_j \xi_j)-1 \Big)  \hspace{-0.2cm}\sum_{\substack{(i_1,...,i_K) \in I(z), 
\\ 
i_1 \geq n_t/2
}} \hspace{-0.4cm} F(l, \overline{i}) \beta_{\overline{i}}^{[d] \setminus  \supp(z)}(\xi).
\end{align*}
From the inequality $|\prod_{j \in [d]} a_j - 1| \leq \sum_{j \in [d]} |a_j-1|$, valid if $\sup_j |a_j| \leq 1$, and Lemma \ref{lem:2.8 from NW} we obtain 
\begin{align*} 
&|\widetilde{\mathfrak{m}}_{tG}(\xi)-\phi_{tG,0}(\xi)| \lesssim 
\frac{1}{|tG \cap \Z^d|} \sum_{l=0}^a \sum_{\substack{z \in Z(tG),
\\ |\supp(z)|=l, \\ |z|_1 \le a
}}  \sum_{j=1}^d \sin^2( \pi z_j \xi_j)  \hspace{-0.5cm}\sum_{\substack{(i_1,...,i_K) \in I(z), 
\\ 
i_1 \geq n_t/2
}} \hspace{-0.4cm} F(l, \overline{i}) 
\\
 &\cdot\exp\Bigg( -\frac{ci_1}{80K(d-|\supp(z)|)} \min\bigg( \hspace{-0cm}\sum_{k \in [d] \setminus \supp(z)} \hspace{-0.5cm} \sin^2( \pi  \xi_k),  \hspace{-0.5cm}\sum_{k \in [d] \setminus \supp(z)} \hspace{-0.5cm}\cos^2( \pi  \xi_k)  \bigg) \Bigg)
 \\
 &\lesssim_{K,\varepsilon}
 \frac{1}{|tG \cap \Z^d|} \sum_{l=0}^a \sum_{\substack{z \in Z(tG),
\\ |\supp(z)|=l, \\ |z|_1 \le a
}}  \sum_{j=1}^d \sin^2( \pi z_j \xi_j)  \hspace{-0.5cm}\sum_{\substack{(i_1,...,i_K) \in I(z), 
\\ 
i_1 \geq n_t/2
}} \hspace{-0.4cm} F(l, \overline{i}) \exp\Big( -\frac{cn_t}{160Kd} \|\xi\|^2 \Big),
\end{align*}
where in the last estimate above we utilized our assumption $\|\xi\|\le \|\xi+1/2\|$ and the inequality $|\supp(z)| \le a \lesssim_{K,\varepsilon} 1.$ Using the inequality $\sin^2(\pi xt) \leq |x|^2 \sin^2(\pi t),$ which holds for all $x \in \Z, t \in \mathbb{R},$ we get
\begin{align*}
&|\widetilde{\mathfrak{m}}_{tG}(\xi)-\phi_{tG,0}(\xi)| 
\\
& \lesssim \exp\Big( -\frac{cn_t}{160Kd} \| \xi \|^2 \Big)  \sum_{j=1}^d \sin^2( \pi \xi_j)\frac{1}{|tG \cap \Z^d|} \sum_{l=0}^a \hspace{-0cm}\sum_{\substack{i_1,...,i_K \in [d] \cup \{0\}, 
\\ 
i_1 \geq n_t/2
}} \hspace{-0.2cm} F(l, \overline{i}) \hspace{-0.5cm}\sum_{\substack{z \in Z(tG),
\\ |\supp(z)|=l, \\ |z|_1 \le a, \\ (i_1,\ldots,i_K) \in I(z)
}}  \hspace{-0.4cm} z_j^2.
\end{align*}
By symmetry invariance the innermost sum above is
\[
\sum_{\substack{z \in Z(tG),
\\ |\supp(z)|=l, \\ |z|_1 \le a, \\ (i_1,\ldots,i_K) \in I(z)
}}  \hspace{-0.4cm} z_j^2=  \frac{1}{d} \hspace{-0.5cm}\sum_{\substack{z \in Z(tG),
\\ |\supp(z)|=l, \\ |z|_1 \le a, \\ (i_1,\ldots,i_K) \in I(z)
}}  \hspace{-0.4cm} |z|_2^2 \le  \frac{a^2}{d} \hspace{-0.5cm}\sum_{\substack{z \in Z(tG),
\\ |\supp(z)|=l, \\ |z|_1 \le a, \\ (i_1,\ldots,i_K) \in I(z)
}}  \hspace{-0.4cm} 1
\]
and thus we obtain 
\begin{align*}
&|\widetilde{\mathfrak{m}}_{tG}(\xi)-\phi_{tG,0}(\xi)| \\
&\lesssim_{K,\varepsilon} 
\exp\Big( -\frac{cn_t}{160Kd} \| \xi \|^2 \Big)  \frac{ \| \xi \|^2}{d} \frac{1}{|tG \cap \Z^d|} \sum_{l=0}^a \hspace{-0cm}\sum_{\substack{i_1,...,i_K \in [d] \cup \{0\}, 
\\ 
i_1 \geq n_t/2
}} \hspace{-0.2cm} F(l, \overline{i}) \hspace{-0.5cm}\sum_{\substack{z \in Z(tG),
\\ |\supp(z)|=l, \\ |z|_1 \le a, \\ (i_1,\ldots,i_K) \in I(z)
}}  \hspace{-0.4cm} 1.
\end{align*}
However, putting $\xi=0$ in \eqref{eq:syebd} we notice that 

\begin{align*}
&\frac{1}{|tG \cap \Z^d|} \sum_{l=0}^a \hspace{-0cm}\sum_{\substack{i_1,...,i_K \in [d] \cup \{0\}, 
\\ 
i_1 \geq n_t/2
}} \hspace{-0.2cm} \sum_{\substack{z \in Z(tG),
\\ |\supp(z)|=l, \\ |z|_1 \le a, \\ (i_1,\ldots,i_K) \in I(z)
}}  \hspace{-0.4cm} F(l, \overline{i})
\\
&=\frac{1}{|tG \cap \Z^d|} \sum_{l=0}^a \hspace{-0cm}\sum_{\substack{i_1,...,i_K \in [d] \cup \{0\}, 
\\ 
i_1 \geq n_t/2
}} \hspace{-0.2cm} \sum_{\substack{z \in Z(tG),
\\ |\supp(z)|=l, \\ |z|_1 \le a, \\ (i_1,\ldots,i_K) \in I(z)
}} 
\sum_{\substack{y \in \{-K,...,K\}^d, 
\\ \supp(y) \cap \supp(z)= \emptyset, 
\\
(\forall j \in [K]) |\{i \in [d]: |y_i|=j\}|=i_j
}} \hspace{-0.5cm} 1 \\
&= \widetilde{\mathfrak{m}}_{tG}(0) \leq \mathfrak{m}_{tG}(0)=1.
\end{align*}
Thus we have
\[
|\widetilde{\mathfrak{m}}_{tG}(\xi)-\phi_{tG,0}(\xi)| \lesssim_{K,\varepsilon} \exp\Big( -\frac{cn_t}{160Kd} \| \xi \|^2 \Big)  \frac{ \| \xi \|^2}{d} \lesssim_{K,\varepsilon} \frac{1}{n_t}
\]
and this finishes the proof of \eqref{eq:4.1}.

\par The proof of  \eqref{eq:4.2} is similar and thus we only sketch it. If $\| \xi+1/2 \| \leq \| \xi \|$ then one has
\begin{align*}
&\widetilde{\mathfrak{m}}_{tG}(\xi)=\frac{1}{|tG \cap \Z^d|} \sum_{l=0}^a \sum_{\substack{z \in Z(tG),
\\ |\supp(z)|=l, \\ |z|_1 \le a
}}(-1)^{\sum_{j=1}^d z_j} e\big(z \cdot (\xi+1/2)\big) 
\\ 
&\cdot \sum_{\substack{(i_1,...,i_K) \in I(z), \\ 
i_1 \geq n_t/2
}} 
(-1)^{\sum_{j \in [K]} ji_j} \hspace{-1cm}
\sum_{\substack{y \in \{-K,...,K\}^d, \\ 
\\ \supp(y) \cap \supp(z)= \emptyset, 
\\
(\forall j \in [K]) |\{i \in [d]: |y_i|=j\}|=i_j
}} e\big(y \cdot (\xi+1/2) \big),
\\
&\phi_{tG,1}(\xi)=\frac{1}{|tG \cap \Z^d|} \sum_{l=0}^a \sum_{\substack{z \in Z(tG),
\\ |\supp(z)|=l, \\ |z|_1 \le a
}} (-1)^{\sum_{i=1}^d z_i} \\
&\cdot\sum_{\substack{(i_1,...,i_K) \in I(z), \\ 
i_1 \geq n_t/2
}} 
(-1)^{\sum_{j \in [K]} ji_j} \hspace{-1cm}
\sum_{\substack{y \in \{-K,...,K\}^d, \\ 
\\ \supp(y) \cap \supp(z)= \emptyset, 
\\
(\forall j \in [K]) |\{i \in [d]: |y_i|=j\}|=i_j
}} \hspace{-0.5cm} e\big(y \cdot (\xi+1/2) \big).
\end{align*}
Hence using similar arguments as before and denoting $\eta=\xi+1/2$ we get
\begin{align*}
&|\widetilde{\mathfrak{m}}_{tG}(\xi)-\phi_{tG,1}(\xi)| \\
&\lesssim \frac{1}{|tG \cap \Z^d|} \sum_{l=0}^a \sum_{\substack{z \in Z(tG),
\\ |\supp(z)|=l, \\ |z|_1 \le a
}} \Big| \prod_{j=1}^d \cos\big(2\pi z_j \eta_j\big)-1 \Big|
\hspace{-0.2cm}\sum_{\substack{(i_1,...,i_K) \in I(z), \\
i_1 \geq n_t/2
}} \hspace{-0.4cm}F(l, \overline{i}) |\beta_{\overline{i}}^{[d] \setminus  \supp(z)}(\eta)|.
\end{align*}
Since $\|\eta\|\le \|\eta+1/2\|$ we see that the quantity above was estimated by $O_K(1/n_t)$ during the proof of \eqref{eq:4.1}. This justifies \eqref{eq:4.2}.

Having proven \eqref{eq:4.1} and \eqref{eq:4.2} we take a closer look at $\phi_{tG,0}(\xi)$ and $\phi_{tG,1}(\xi).$ Notice that 
\begin{align*}
&\phi_{tG,0}(\xi)= \frac{1}{|tG \cap \Z^d|} \sum_{l=0}^a \sum_{\substack{J \subseteq [d], \\ |J|=l}}\sum_{\substack{z \in Z(tG),
\\ \supp(z)=J, \\ |z|_1 \le a
}}  \sum_{\substack{(i_1,...,i_K) \in I(z), 
\\ 
i_1 \geq n_t/2, 
}} 
\hspace{-0.5cm} \sum_{\substack{y \in \{-K,...,K\}^d,  
\\ \supp(y) \cap J= \emptyset, 
\\
(\forall j \in [K]) |\{i \in [d]: |y_i|=j\}|=i_j
}} \hspace{-1.2cm} e(y \cdot \xi) 
\\
&=\frac{1}{|tG \cap \Z^d|} \sum_{l=0}^a \sum_{\substack{J \subseteq [d], \\ |J|=l}} \sum_{\substack{i_1,...,i_K \in [d] \cup\{0\}, 
\\
i_1 \geq n_t/2
}} \hspace{-0.3cm} \sum_{\substack{z \in Z(tG),
\\ \supp(z)=J, \\ |z|_1 \le a, \\ (i_1,\ldots, i_K) \in I(z)
}} 1 \cdot
\hspace{-0.4cm}
\sum_{\substack{y \in \{-K,...,K\}^d, 
\\ \supp(y) \cap J= \emptyset, 
\\
(\forall j \in [K]) |\{i \in [d]: |y_i|=j\}|=i_j
}} \hspace{-0.5cm}e(y \cdot \xi).
\end{align*}
Due to symmetry invariance of $Z(tG)$, for any $J \subseteq [d]$, $|J|=l$ and $i_1,...,i_K \in [d]$ we have
\[
\sum_{\substack{z \in Z(tG),
\\ \supp(z)=J, \\ |z|_1 \le a, \\ (i_1,\ldots, i_K) \in I(z)
}} 1= \frac{1}{\binom{d}{l}}\sum_{\substack{z \in Z(tG),
\\ |\supp(z)|=l, \\ |z|_1 \le a, \\ (i_1,\ldots, i_K) \in I(z)
}} 1,
\]
using the above and changing order of sums we obtain
\[
\phi_{tG,0}(\xi)
=\frac{1}{|tG \cap \Z^d|} \sum_{l=0}^a \sum_{\substack{z \in Z(tG),
\\ |\supp(z)|=l, \\ |z|_1 \le a
}}  
\sum_{\substack{(i_1,\ldots, i_K) \in I(z), \\ 
i_1 \geq n_t/2
}} 
\se{\substack{J \subseteq [d], \\ |J|=l}}\hspace{-0.7cm}
\sum_{\substack{y \in \{-K,...,K\}^d,  
\\ \supp(y) \cap J= \emptyset, 
\\
(\forall j \in [K]) |\{i \in [d]: |y_i|=j\}|=i_j
}} \hspace{-1.2cm} e(y \cdot \xi).
\]
Notice that 
\begin{align*}
&\se{\substack{J \subseteq [d], \\ |J|=l}}
\sum_{\substack{y \in \{-K,...,K\}^d,  
\\ \supp(y) \cap J= \emptyset, 
\\
(\forall j \in [K]) |\{i \in [d]: |y_i|=j\}|=i_j
}} e(y \cdot \xi)\\
&= \frac{1}{\binom{d}{l}}\sum_{\substack{y \in \{-K,...,K\}^d,  
\\
(\forall j \in [K]) |\{i \in [d]: |y_i|=j\}|=i_j
}} e(y \cdot \xi) | \{ J \subseteq [d]: |J|=l, J \cap \supp(y)= \emptyset \}|
\\
&=\frac{\binom{d-\sum_{j \in[K]}i_j}{l}}{\binom{d}{l}} \sum_{\substack{y \in \{-K,...,K\}^d,  
\\
(\forall j \in [K]) |\{i \in [d]: |y_i|=j\}|=i_j
}} e(y \cdot \xi)= \frac{\binom{d-\sum_{j \in[K]}i_j}{l}}{\binom{d}{l}} |D_{\overline{i}}| \beta_{\overline{i}}(\xi), 
\end{align*}
where $\overline{i}=(i_1,\ldots,i_K),$
and this implies 
\[
\phi_{tG,0}(\xi)
=\frac{1}{|tG \cap \Z^d|} \sum_{l=0}^a \sum_{\substack{z \in Z(tG),
\\ |\supp(z)|=l, \\ |z|_1 \le a
}}  
\sum_{\substack{(i_1,\ldots, i_K) \in I(z), \\ 
i_1 \geq n_t/2
}} 
\frac{\binom{d-\sum_{j \in[K]}i_j}{l}}{\binom{d}{l}} |D_{\overline{i}}| \beta_{\overline{i}}(\xi).
\]
Using exactly the same argument one can obtain
\[\phi_{tG,1}(\xi)
=\frac{1}{|tG \cap \Z^d|} \sum_{l=0}^a \sum_{\substack{z \in Z(tG),
\\ |\supp(z)|=l, \\ |z|_1 \le a
}}  (-1)^{\sum_{j=1}^dz_j}
\hspace{-0.7cm}\sum_{\substack{(i_1,\ldots, i_K) \in I(z), \\ 
i_1 \geq n_t/2
}} 
\hspace{-0.8cm}\frac{\binom{d-\sum_{j \in[K]}i_j}{l}}{\binom{d}{l}} |D_{\overline{i}}| \beta_{\overline{i}}(\xi).
\]
Since $\phi_{tG,0}(0)=\widetilde{\mathfrak{m}}_{tG}(0)=\mathfrak{m}_{tG}(0)-r_{tG}(0) \leq 1$, we see that
\begin{equation*} 
\frac{1}{|tG \cap \Z^d|} \sum_{l=0}^a \sum_{\substack{z \in Z(tG),
\\ |\supp(z)|=l, \\ |z|_1 \le a
}}  
\sum_{\substack{(i_1,\ldots, i_K) \in I(z), \\ 
i_1 \geq n_t/2
}} 
\frac{\binom{d-\sum_{j \in[K]}i_j}{l}}{\binom{d}{l}} |D_{\overline{i}}| \leq 1.
\end{equation*}
This implies that for $\epsilon \in \{0,1\}$ and $f\in \ell^2$ we have the pointwise bound, valid for $x\in \Z^d,$
\begin{equation}
\label{eq:4.4}
\begin{split}
\Big|\mathcal{F}^{-1} \Big( \phi_{tG,\epsilon}\widehat{f} \Big)(x)\Big| &\leq \frac{1}{|tG \cap \Z^d|} \sum_{l=0}^a \sum_{\substack{z \in Z(tG),
\\ |\supp(z)|=l, \\ |z|_1 \le a
}}  
\sum_{\substack{(i_1,\ldots, i_K) \in I(z), \\ 
i_1 \geq n_t/2
}} 
\\
&\frac{\binom{d-\sum_{j \in[K]}i_j}{l}}{\binom{d}{l}} |D_{\overline{i}}| \Big|\mathcal{F}^{-1} \Big( \beta_{\overline{i}}\widehat{f} \Big)(x)\Big|
\leq \sup_{i_1,...,i_K \leq \frac{d}{2K}} |\mathcal{D}_{\overline{i}}f|(x).
\end{split}
\end{equation}
Above we used observation that
\[
z \in Z(tG), (i_1,...,i_K) \in I(z) \implies (j \in [K]) \ i_j \le \sum_{j=1}^K ji_j \le d \widetilde{\kappa}(tG)=n_t
\]
and the fact that $n_t \leq \frac{d}{2K}$, which follows from assumptions \\ $d \geq (2K)^{K+1}$ and $\widetilde{\kappa}(tG) \leq d^{-\frac{1+\varepsilon}{K+1}}$.

Having established \eqref{eq:rsnbound}, \eqref{eq:4.1}, \eqref{eq:4.2} and \eqref{eq:4.4} we can now finish the proof of Proposition \ref{prop MG bounded by D}. Take any $f \in \ell^2(\Z^d)$ and decompose $f=f_0+f_1$, where
\[
\supp(\widehat{f_0}) \subseteq \{\xi \in \T^d: \|\xi \| \leq \| \xi + 1/2 \| \},
\]
\[
\supp(\widehat{f_1}) \subseteq \{\xi \in \T^d: \|\xi \| > \| \xi + 1/2 \| \}.
\]
Then for the sequence $(t_j)_{j=0}^d$ satisfying assumptions from the statement of Proposition \ref{prop MG bounded by D} we have
\begin{equation}
\label{eq:sf0f1}
\Big\|\sup_{0 \le j \le d} |\mathcal{M}_{t_j}^Gf| \Big\|_{\ell^2(\Z^d)}\lesssim \Big\|\sup_{0 \le j \le d} |\mathcal{M}_{t_j}^Gf_0| \Big\|_{\ell^2(\Z^d)}  +\Big\|\sup_{0 \le j \le d} |\mathcal{M}_{t_j}^Gf_1| \Big\|_{\ell^2(\Z^d)}. 
\end{equation}

The  term on the right hand side of \eqref{eq:sf0f1}  containing $f_0$ is estimated with the aid of \eqref{eq:4.4} 
\begin{align*}
&\Big\|\sup_{0 \le j \le d} |\mathcal{M}_{t_j}^Gf_0| \Big\|_{\ell^2(\Z^d)} \\
&\leq \Big\|  \sup_{0 \le j \le d} \Big|\mathcal{F}^{-1} \Big( \phi_{t_jG,0}\widehat{f_0} \Big)\Big| \Big\|_{\ell^2(\Z^d)} +\Big\|  \sup_{ 0 \le j \le d} \Big|\mathcal{F}^{-1} \Big( (\mathfrak{m}_{t_jG}-\phi_{t_jG,0})\widehat{f_0} \Big)\Big| \Big\|_{\ell^2(\Z^d)}\\
&\le \Big\|\sup_{i_1,...,i_K \leq \frac{d}{2K}} |\mathcal{D}_{\overline{i}}f_0|\Big\|_{\ell^2(\Z^d)}+\Big\|  \sup_{ 0 \le j \le d} \Big|\mathcal{F}^{-1} \Big( (\mathfrak{m}_{t_jG}-\phi_{t_jG,0})\widehat{f_0} \Big)\Big| \Big\|_{\ell^2(\Z^d)}.
\end{align*} 
For the second term of the last inequality we use Parseval's formula, \eqref{eq:rsnbound}, and \eqref{eq:4.1} to get
\begin{align*}
&\Big\|  \sup_{ 0 \le j \le d} \Big|\mathcal{F}^{-1} \Big( (\mathfrak{m}_{t_jG}-\phi_{t_jG,0})\widehat{f_0} \Big)\Big| \Big\|_{\ell^2(\Z^d)}^2 \\
&\leq \Big\| \Big(\sum_{j=0}^d \Big|\mathcal{F}^{-1} \Big( (\mathfrak{m}_{t_jG}-\phi_{t_jG,0})\widehat{f_0} \Big)\Big|^2 \Big)^{1/2} \Big\|_{2}^2 =\sum_{j=0}^d \Big\|\mathcal{F}^{-1} \Big( (\mathfrak{m}_{t_jG}-\phi_{t_jG,0})\widehat{f_0} \Big)\Big\|_{2}^2  \\
&=  \int_{\T^d} | \widehat{f_0}(\xi)|^2 \sum_{j=0}^d  |\mathfrak{m}_{t_jG}(\xi)-\phi_{t_jG,0}(\xi)|^2 \,d\xi 
\\
&\lesssim_{K, \varepsilon}  \int_{\T^d} | \widehat{f_0}(\xi)|^2\,d\xi \cdot \sum_{j=0}^d\big( \frac{1}{d^2}+ \frac{1}{d}\big)  \lesssim_{K, \varepsilon} \| f_0 \|_2^2.
\end{align*}
In summary, we justified that
\[
\Big\|\sup_{0 \le j \le d} |\mathcal{M}_{t_j}^Gf_0| \Big\|_{\ell^2(\Z^d)} \lesssim_{K,\varepsilon}  \Big\| \sup_{n_1,...,n_K \leq \frac{d}{2K}} |\mathcal{D}_{\overline{n}}f_0| \Big\|_{\ell^2(\Z^d)}+ \|f_0 \|_{\ell^2(\Z^d)}.
\]

Using similar arguments, but with $\phi_{tG,0}$ replaced by $\phi_{tG,1}$ and \eqref{eq:4.2} used in place of \eqref{eq:4.1}, we can also estimate the term on the right hand side of \eqref{eq:sf0f1} containing $f_1,$ namely
\[
\Big\|\sup_{0 \le j \le d} |\mathcal{M}_{t_j}^Gf_1| \Big\|_{\ell^2(\Z^d)} \lesssim_{K,\varepsilon}  \Big\| \sup_{n_1,...,n_K \leq \frac{d}{2K}} |\mathcal{D}_{\overline{n}}f_1| \Big\|_{\ell^2(\Z^d)}+ \|f_1 \|_{\ell^2(\Z^d)}.
\]
Thus if $d$ is sufficiently large we obtain
\[
   \Big\|\sup_{0 \le j \le d} |\mathcal{M}_{t_j}^Gf| \Big\|_{\ell^2(\Z^d)} \lesssim_{K,\varepsilon} \sum_{i=0}^1 \Big\| \sup_{n_1,n_2,...,n_K \leq \frac{d}{2K}} |\mathcal{D}_{n_1,...,n_K}f_i| \Big\|_{\ell^2(\Z^d)} + \|f\|_{\ell^2(\Z^d)},
   \]
and this completes the proof of Proposition \ref{prop MG bounded by D}.

\end{proof}


\begin{thebibliography}{1}


\normalsize
\baselineskip=17pt


\bibitem{bob-naz}
S.G. Bobkov, F.L. Nazarov (2003). \textit{On Convex Bodies and Log-Concave Probability Measures with Unconditional Basis.} In: Milman, V.D., Schechtman, G. (eds) Geometric Aspects of Functional Analysis. Lecture Notes in Mathematics, vol 1807. Springer, Berlin, Heidelberg. \url{https://doi.org/10.1007/978-3-540-36428-3_6}
    \bibitem{B1} 
J. Bourgain, 
\textit{On high dimensional maximal
  functions associated to convex bodies}, 
	Amer. J.  Math. {\bf 108}, (1986), 1467--1476.
        
\bibitem{B2} 
J. Bourgain, 
\textit{On $L^p$ bounds for maximal
  functions associated to convex bodies in $\R^n$}, 
	Israel J. Math. {\bf 54}, (1986), 257--265.

\bibitem{B3} 
J. Bourgain, 
\textit{On the Hardy-Littlewood maximal
  function for the cube}, 
	Israel J. Math.  {\bf 203}, (2014), 275--293.

    \bibitem{BMSW1} 
J. Bourgain, M. Mirek, E.M. Stein, B. Wr\'obel,
\textit{Dimension-free variational estimates on $L^p(\R^d)$ for
  symmetric convex bodies}, 
	Geom. Funct. Anal.
  {\bf 28}, (2018),  58--99.


\bibitem{balls} J. Bourgain, M. Mirek, E. Stein and B. Wróbel,
\emph{On Discrete Hardy--Littlewood Maximal Functions over the Balls in {$\mathbb{Z}^d$}: dimension-free estimates},
Geometric Aspects of Functional Analysis. Lecture Notes in Mathematics 2256, Springer, 2020, pages 127--169.



  \bibitem{cubes} J. Bourgain, M. Mirek, E.M. Stein, B. Wr\'obel,
  \textit{Dimension-free estimates for discrete Hardy--Littlewood
  	averaging operators over the cubes in
  	$\mathbb Z^d$}, 
		Amer. J. Math. {\bf 141}, (2019), 857--905.


  \bibitem{BMSW4} 
	J. Bourgain, M. Mirek, E.M. Stein, B. Wr\'obel,
  \textit{On the Hardy--Littlewood maximal functions in high
  dimensions: Continuous and discrete perspective}.
P.\ Ciatti, A.\ Martini (eds.), Geometric Aspects of
Harmonic Analysis. A conference proceedings  on the occasion of Fulvio Ricci's 70th birthday,
Cortona, Italy, 25--29.06.2018. Springer INdAM Series 45, (2021), pp.\ 456.

\bibitem{conv book}
S. Brazitikos, A. Giannopoulos, P. Valettas and B.-H. Vritsiou,\textit{ Geometry of Isotropic Convex Bodies, Mathematical Surveys and Monographs}, Vol. 196 (American Mathematical Society, Providence, RI, 2014).

\bibitem{Car1} 
A. Carbery, 
\textit{An almost-orthogonality principle
  with applications to maximal functions associated to convex bodies},
Bull. Amer. Math. Soc.  {\bf 14}, (1986), 269--274.

\bibitem{Cow1}
M. Cowling,
\textit{Harmonic analysis on semigroups}, Annals of Mathematics (2) {\bf 117} 1983, pp.\ 267-283.


\bibitem{HL-cont} L. Deleaval, O. Guédon and B. Maurey,
\emph{Dimension free bounds for the Hardy–Littlewood maximal operator associated to convex sets.},  
Ann. Fac. Sci. Toulouse Math. (6) 27, no.1, 2018,1--198.

\bibitem{HKS}

D. Halikias, B. Klartag, B. A. Slomka. \textit{Discrete variants of Brunn–Minkowski type inequalities.} Annales de la Faculté des sciences de Toulouse : Mathématiques, Serie 6, Volume spécial à l’occasion du semestre thématique “Calculus of Variations and Probability”, Volume 30 (2021) no. 2, pp. 267-279. doi: 10.5802/afst.1674

\bibitem{HSX}
M. Henk, M. Schymura, and F. Xue,
\textit{Packing minima and lattice points in convex bodies},
Moscow J. Comb. Number Theory 10 (2021), no. 1, 25--48.

\bibitem{He}
D. Hensley (1980). \textit{ Slicing Convex Bodies-Bounds for Slice Area in Terms of the Body’s Covariance.} Proceedings of the American Mathematical Society, 79(4), 619–625. \url{https://doi.org/10.2307/2042510}

\bibitem{INZ}
D. Iglesias, J. Yepes Nicholás, A. Zvavitch, \textit{Brunn–Minkowski type inequalities for the lattice point enumerator}, Adv. Math. 370 (2020), article no. 107193.

\bibitem{iso conj}
B. Klartag, J. Lehec, \textit{Affirmative Resolution of Bourgain’s Slicing Problem Using Guan’s Bound.} Geom. Funct. Anal. 35, 1147–1168 (2025). \url{https://doi.org/10.1007/s00039-025-00718-w}
\bibitem{KNW}
D. Kosz, J. Niksiński, B. Wróbel, \textit{Uniform estimates for Delannoy numbers \\ and dimension-free estimates for discrete maximal functions over cross-polytopes}, preprint, arXiv:2604.15844
\bibitem{KMPW} 
D. Kosz, M. Mirek, P. Plewa and B. Wróbel,
\emph{Some remarks on dimension-free estimates for the discrete Hardy—Littlewood maximal functions.},
Israel J. Math. 254, 2023, 1--38. 

\bibitem{Lev}V. I. Levenshtein,
\emph{Krawtchouk polynomials and universal bounds for codes and designs in Hamming spaces},
IEEE Trans. Inform. Theory 41, no. 5, 1995, 1303--1321.
\bibitem{matusek}
 J. Matoušek, \textit{Lectures on Discrete Geometry}, Graduate Texts in Mathematics, vol. 212, Springer-Verlag, New York, 2002.
\bibitem{MP} 
V. D. Milman, A. Pajor. \textit{Isotropic position and inertia ellipsoids and zonoids of the unit ball of a normed n-dimensional space.} In: Lindenstrauss, J., Milman, V.D. (eds) Geometric Aspects of Functional Analysis. Lecture Notes in Mathematics, vol 1376, (1989). Springer, Berlin, Heidelberg.
 \bibitem{MiSzWr} M.\ Mirek, T.Z.\ Szarek, B.\ Wr\'obel,
\emph{Dimension-Free Estimates for the Discrete Spherical Maximal Functions}, International Mathematics Research Notices, Volume 2024, Issue 2, January 2024, Pages 901–963. doi: 10.1093/imrn/rnac329
 \bibitem{Mul1} 
D. M\"uller, 
\textit{A geometric bound for maximal
  functions associated to convex bodies}, 
	Pacific J. Math. {\bf 142}, (1990), 297--312.
\bibitem{Ni1}
J. Niksiński,
\textit{Dimension-free estimates on $\ell^2(\Z^d)$ for discrete dyadic maximal function over $\ell^1$ balls: small scales},
Coll. Math. 175 (2024), 37--54.

\bibitem{Ni2}
J. Niksiński, \textit{Remark on dimension-free estimates for discrete maximal functions over $\ell^q$ balls: Small dyadic scales}. Bull. London Math. Soc., 58, (2026): e70220. \url{https://doi.org/10.1112/blms.70220}
\bibitem{NW} J.~Niksi{\'n}ski, B. Wr\'obel,
	\textit{Dimension-free estimates for discrete maximal functions and lattice points in high-dimensional spheres and balls with small radii}, J. Math. Pures Appl. 214, 2026.
\bibitem{Pa}
G. Paouris, \textit{Concentration of mass on convex bodies.} GAFA, Geom. funct. anal. 16, 1021–1049 (2006). https://doi.org/10.1007/s00039-006-0584-5
\bibitem{SZ}
G. Schechtman, J. Zinn. On the Volume of the Intersection of Two $L_n^p$ Balls. Proceedings of the American Mathematical Society, 110(1), 217–224, (1990). \url{https://doi.org/10.2307/2048262}
\bibitem{SZ2}
Schechtman, G., Zinn, J. Concentration on the $\ell^n_p$  ball. In: Milman, V.D., Schechtman, G. (eds) Geometric Aspects of Functional Analysis. Lecture Notes in Mathematics, vol 1745, (2000). Springer, Berlin, Heidelberg. \url{https://doi.org/10.1007/BFb0107218}
\bibitem{Ste1} E. M. Stein, \emph{Topics in harmonic analysis
  related to the Littlewood-Paley theory}, Annals of Mathematics
Studies, Princeton University Press 1970, pp. 1-157.

\bibitem{SteinMax} 
E.M. Stein, 
\textit{The development of square
  functions in the work of A. Zygmund}, 
	Bull.  Amer. Math. Soc. {\bf 7}, (1982), 359--376.
	

\bibitem{StStr} 
E.M. Stein, J.O. Str\"omberg, 
\textit{Behavior of maximal functions in $\mathbb{R}^n$ for large $n$}, 
Ark. Mat. {\bf 21}, (1983), 259--269.

\end{thebibliography}
\end{document}